\documentclass[11pt]{article}
\usepackage{amsfonts}

\usepackage{graphics}
\usepackage{indentfirst}
\usepackage{cite}
\usepackage{latexsym}
\usepackage{amsmath,amsthm}
\usepackage{amssymb}
\usepackage[dvips]{epsfig}
\usepackage{amscd}
\usepackage{mathrsfs}
\usepackage{color}
\newtheorem{theorem}{Theorem}[section]
\newtheorem{remark}{Remark}[section]

\newtheorem{definition}{Definition}[section]
\newtheorem{lemma}[theorem]{Lemma}

\newcommand{\n}{\rho}

\newcommand{\lm}{\lambda}

\renewcommand{\div}{ {\rm div }  }

\newcommand{\na}{\nabla }

\newcommand{\pa}{\partial}

\newcommand{\bt}{\begin{theorem}}
\newcommand{\bl}{\begin{lemma}}
\newcommand{\el}{\end{lemma}}
\newcommand{\et}{\end{theorem}}
\newcommand{\ga}{\gamma}

\newcommand{\OM}{\Omega}

\newcommand{\curl}{{\rm curl} }

\newcommand{\de}{\delta}
\newcommand{\ve}{\varepsilon}
\newcommand{\la}{\label}

\newcommand{\ol}{\overline}

\newcommand{\bn}{\begin{eqnarray}}
\newcommand{\en}{\end{eqnarray}}
\newcommand{\bnn}{\begin{eqnarray*}}
\newcommand{\enn}{\end{eqnarray*}}

\newcommand{\bnnn}{\begin{eqnarray*}}
\newcommand{\ennn}{\end{eqnarray*}}
\newcommand{\ben}{\begin{enumerate}}
\newcommand{\een}{\end{enumerate}}

\newcommand{\ba}{\begin{aligned}}
\newcommand{\ea}{\end{aligned}}
\newcommand{\be}{\begin{equation}}
\newcommand{\ee}{\end{equation}}

\def\p{\partial}
\def\norm[#1]#2{\|#2\|_{#1}}

\def\lam{\lambda}
\def\ep{\varepsilon}

\def\o{\omega}
\def\r{\mathbb{R}}
\def\rr{\mathbb{R}^2}
\def\rrr{\mathbb{R}^3}

\makeatletter      % '@' is now a normal "letter" for TeX
\@addtoreset{equation}{section}
\makeatother       % '@' is restored as a "non-letter" character for TeX

\title{Global Well-Posedness of Strong Solutions to the Two-Dimensional Compressible Nematic Liquid Crystal Flows with Large Initial Data and Vacuum}

\date{}

\author{$\text{Qinghao L{\small EI}}^{a,b}, \text{Lu W{\small{ANG}}}^{b}\thanks{Email addresses:  leiqinghao22@mails.ucas.ac.cn (Q. H. Lei), wlu1130@163.com (L. Wang) }$\\
a. School of Mathematical Sciences,\\ University of Chinese Academy of Sciences,
Beijing 100049, P. R. China;\\
b. Institute of Applied Mathematics,\\ Academy of Mathematics and Systems Science, \\
Chinese Academy of Sciences, Beijing 100190, P. R. China}

\begin{document}
\maketitle

\begin{abstract}
This paper concerns the global well-posedness of two-dimensional compressible nematic liquid crystal flows in the whole space or in the half-space.
Under the assumptions that the shear viscosity is a positive constant and that the bulk viscosity is given by $\lambda(\rho)=\rho^\beta$ with $\beta>4/3$, we establish the global existence and uniqueness of strong solutions.
It should be mentioned that our results are obtained without any restrictions on the size of the initial data and allow for the presence of vacuum.
In particular, the initial orientation field is not required to satisfy any geometric angle condition. \\
\par\textbf{Keywords:} Compressible nematic liquid crystal flows;
Global strong solutions; Large initial data; Vacuum
\end{abstract}

\section{Introduction and main results}
We study the two-dimensional compressible nematic liquid crystal flows
which read as follows:
\be\ba\la{nlckv}
\begin{cases}
  \rho_t + \div(\rho u) = 0, \\
(\n u)_t + \div(\n u\otimes u) -\mu \Delta u 
-\na ( (\mu + \lm) \div u) + \na P = - \na d \cdot \Delta d, \\
d_t + u \cdot \na d = \Delta d + |\na d|^2 d, \\
|d|=1,
\end{cases}
\ea\ee
where $t \ge 0$ is time, $x \in \OM \subset \rr$ is the spatial coordinate,
$\n=\n(x,t)$, $u(x,t)=(u^1(x,t),u^2(x,t))$ and $d=(d^1(x,t),d^2(x,t))$
represent the density, velocity and macroscopic molecular orientation of the liquid crystal material of the compressible flow, respectively.
The pressure $P$ is given by
\be\ba\la{i1}
P=R \n^\ga,
\ea\ee
with constants $R>0$ and $\ga>1$.
The shear viscosity coefficient $\mu$ and the bulk viscosity coefficient $\lam$ satisfy the following hypothesis:
\be\la{i2}
0<\mu = \text{constant},\quad \lam(\n)=b \n^\beta,
\ee
where $b$ and $\beta$ are positive constants.
Without loss of generality, we assume that $R=b=1$.

The system is supplemented with the given initial data
\be\la{i30}
\n(x,0)=\n_0(x),\quad \n u(x,0)= \n_0 u_0(x), \quad d(x,0)=d_0(x),\quad x\in \OM.
\ee
In this paper, we mainly investigate the following two settings:

(1) Cauchy problem: $\OM=\rr$ with the far-field behavior
\be\la{bjtj2}\ba
(\n,u,d)(x,t) \to (\tilde{\n},0,\mathbf{1}) \ \text{ as } |x| \to \infty, \  t>0.
\ea\ee

(2) Half-space problem: $\OM = \rr_+ = \{x \in \rr : x_2>0 \}$
subject to the Navier-slip and Neumann boundary conditions:
\be\la{bkjbjtj1}\ba
u \cdot n = 0,\quad \curl u = -A u \cdot n^\bot, \quad \frac{\p d}{\p n}=0 \ \text{ on } \ \p \OM,
\ea\ee
where $A$ is a nonnegative smooth function on $\partial \Omega$, $n=(n_1,n_2)$ is the unit outer normal vector to $\partial \Omega$, and $n^\bot\triangleq (n_2,-n_1)$ denotes the corresponding unit tangential vector.
In addition, the following far-field condition is imposed:
\be\la{bkjbjtj2}\ba
(\n,u,d)(x,t) \to (\tilde{\n},0,\mathbf{1}) \ \text{ as } |x| \to \infty, \  t>0,
\ea\ee
where $\tilde{\n} \ge 0$ is a constant and $\mathbf{1}$ is a given unit vector.

The system (\ref{nlckv}) is a simplified hydrodynamic model describing liquid crystal flows, originally proposed by Ericksen \cite{EJL} and Leslie \cite{LFM} in the 1960s.
Mathematically, it is a strongly coupled system consisting of the compressible Navier-Stokes equations and the harmonic map heat flow.
In particular, when $d$ is a constant unit vector, the system (\ref{nlckv})--(\ref{i30}) reduces to the compressible Navier-Stokes system.
The strong solvability of the multidimensional compressible Navier-Stokes equations with constant viscosity coefficients has been the subject of extensive study.
Matsumura-Nishida \cite{MN1} obtained the first global classical solutions for initial data close to a non-vacuum equilibrium in the $H^s$-norm.
Later, Hoff \cite{H1,H3} studied the problem for discontinuous initial data.
For arbitrarily large initial data, Lions \cite{L2} (see also Feireisl \cite{F,FNP}) established the global existence of finite-energy weak solutions, under the assumption that the adiabatic exponent $\ga$ is suitably large.
More recently, Huang-Li-Xin \cite{HLX2} and Li-Xin \cite{LX2} proved the global existence and uniqueness of classical solutions to the three-dimensional and two-dimensional Cauchy problems for initial data with small total energy but possibly large oscillations and vacuum.
Cai-Li \cite{CL} later extended these results to general bounded domains with slip boundary conditions for the velocity field.
By contrast, relatively few results are available on the global existence of strong solutions without size restrictions on the initial data.
When $d$ is a constant unit vector, the system (\ref{nlckv})--(\ref{i30}) corresponds to the model introduced by Vaigant-Kazhikhov \cite{VK}, who proved the existence and uniqueness of global strong solutions in rectangular domains for large initial data with density away from vacuum, provided that $\beta>3$.
Later, Jiu-Wang-Xin \cite{JWX1} extended this result to initial data allowing vacuum in a periodic domain.
Using new techniques based on commutator theory and blow-up criteria, Huang-Li \cite{HL2,HL3} (see also \cite{JWX2}) relaxed the condition $\beta>3$ to $\beta>\frac{4}{3}$ in periodic domains or in the whole space, allowing the density to vanish.
More recently, Fan-Li-Li \cite{FLL} proved the global existence of strong solutions for $\beta>\frac{4}{3}$ in general bounded simply connected domains where the velocity field satisfies the Navier-slip boundary conditions.
Subsequently, Fan-Li-Wang \cite{FLW} obtained a time-uniform upper bound for the density and established the exponential decay of global strong solutions in both periodic and bounded simply connected domains, provided that $\beta>\frac{4}{3}$.

For the compressible nematic liquid crystal system (\ref{nlckv}) with constant viscosity coefficients, there is also a substantial body of work on the global existence of solutions.
In particular, global weak solutions for large initial data were obtained in \cite{JJW1,JJW2,LLW} under a geometric angle condition on the initial orientation field.
Huang-Wang-Wen \cite{HWW} established the local existence and uniqueness of strong solutions, allowing the initial density to vanish in open subsets.
For the three-dimensional Cauchy problem, Hu-Wu \cite{HW} proved the global existence and uniqueness of strong solutions in critical Besov spaces, assuming that the initial data are close to an equilibrium state $(1,0,e)$ for some constant vector $e \in \mathbb{S}^2$.
For initial data close to a non-vacuum equilibrium state in $H^s(\rrr)$ with $s \ge 3$, Gao-Tao-Yao \cite{GTY} proved the global existence and large-time behavior of classical solutions.
More recently, drawing on the ideas in \cite{HLX2,LX2}, Li-Xu-Zhang \cite{LXZ} and Wang \cite{WT} established the global existence and uniqueness of classical solutions to (\ref{nlckv}) for the three- and two-dimensional Cauchy problems, respectively.
These results allow for initial data with large oscillations and vacuum, provided that the initial energy is sufficiently small.
Subsequently, for general three-dimensional bounded domains with slip boundary conditions for the velocity field and Neumann boundary conditions for the orientation field, Liu-Zhong \cite{LZ} employed the approach of Cai-Li \cite{CL} to prove the global existence and uniqueness of classical solutions under the assumption that the initial energy is small enough.

More recently, based on the framework developed by Huang-Li \cite{HL3} and Fan-Li-Li \cite{FLL} for the compressible Navier-Stokes equations, Zhong-Zhou \cite{ZZ,ZZ2} proved the global existence and uniqueness of strong solutions to the system (\ref{nlckv}) in a simply connected bounded smooth domain and in the whole space with vacuum far-field density, under the assumptions that $\beta>\frac{4}{3}$ and that the initial orientation field satisfies the geometric condition
\be\la{jhtj}\ba
d_{02} \ge \ep_0 \ \text{ for some positive } \ep_0>0.
\ea\ee
The purpose of this paper is to establish the global existence and uniqueness of strong solutions in the whole space or in the half-space with either vacuum or non-vacuum far-field density, without imposing the geometric condition (\ref{jhtj}), provided that $\beta>\frac{4}{3}$.

Before stating our main results, we introduce the notations
and conventions used throughout the paper. We set
\be\ba\nonumber
\na^\bot \triangleq (\p_2,-\p_1),\quad
\int f dx \triangleq \int_{\OM} fdx.
\ea\ee
Moreover, $\na d \odot \na d$ denotes the matrix whose
$(i,k)$-th entry is given by $\p_i d \cdot \p_k d$. For any $R>0$, we define
\be\ba\nonumber
B_{R} \triangleq \{x \in \rr | \  |x|<R \}, \quad B^+_{R} \triangleq \{x \in \rr_+ | \  |x|<R \}.
\ea\ee
For any positive integer $s$ and $1\leq r\leq \infty$, we denote the standard Lebesgue and Sobolev spaces as follows:
\be\ba\nonumber
\begin{cases}
L^r =L^r(\OM),\quad W^{s,r} =W^{s,r}(\OM),\quad H^s =W^{s,2}, \\
D^{s,r}=D^{s,r}(\rr )=\{v\in L^1_\mathrm{loc}(\rr )| \nabla^s v\in L^r(\rr )\}, \quad D^1=D^{1,2}, \\
D_+^{s,r}=D_+^{s,r}(\rr_+)=\{v\in L^1_\mathrm{loc}(\rr_+)| \nabla^s v\in L^r(\rr_+)\}, \quad D_+^1=D_+^{1,2}, \\
\tilde{H}^1 =\{v \in H^1(\Omega )\vert v \cdot n=0, \curl v = -A v \cdot n^\bot \,\,\,\text{on}\,\,\, \partial\Omega \}, \\
\tilde{D}_+^1 =\{v \in D_+^1 \vert v \cdot n=0, \curl v = -A v \cdot n^\bot \,\,\,\text{on}\,\,\, \partial\rr_+ \}.
\end{cases}
\ea\ee
The material derivative is defined by
\be\ba\nonumber
\frac{D}{Dt}f=\dot{f} \triangleq f_t + u\cdot\na f.
\ea\ee
We define the effective viscous flux $G$ and the vorticity $\o$ as
\be\ba\la{gw}
G \triangleq (2\mu + \lam)\div u - (P-P(\tilde{\n})), \quad \o \triangleq \na^\bot \cdot u = \pa_2 u^1 - \pa_1 u^2.
\ea\ee
The potential energy density is defined by
\be\nonumber\ba
K(\n) \triangleq \n \int_{\tilde{\rho}}^{\n} \frac{P(s)-P(\tilde{\rho})}{s^2} ds.
\ea\ee

We now give the definition of strong solutions to (\ref{nlckv}).
\begin{definition}
If all derivatives involved in \eqref{nlckv} for $(\n,u,d)$ are regular distributions,
and equations \eqref{nlckv} hold almost everywhere in 
$\OM \times(0,T)$, then $(\n,u,d)$ is called a strong solution.
\end{definition}

The first result concerns the global existence and uniqueness of strong solutions for the Cauchy problem with vacuum far-field density.
\begin{theorem}\la{thcp1}
Let $\OM = \rr$ and $\tilde{\n}=0$.
Assume that 
\be\la{bg}\ba
\beta>\frac{4}{3}, \quad \ga>1,
\ea\ee
and that the initial data $(\n_0,u_0,d_0)$ satisfy, for some $q>2$ and $a \in (1,2)$,
\be\la{cpsol1}\ba
\begin{cases}
\n_{0}\geq 0, \quad \bar{x}^{a}\n_{0}\in L^{1}\cap H^{1}\cap W^{1,q}, \quad
\sqrt{\n_0} u_{0}\in L^{2}, \quad u_0 \in D^{1}, \\
\bar{x}^{\frac{a}{2}} \nabla d_{0} \in L^{2},\quad \nabla^{2} d_{0}\in L^{2},
\quad |d_{0}|=1,
\end{cases}
\ea\ee
with
\be\la{cpsol2}\ba
{\bar{x}}\triangleq (e+|x|^2)^{\frac{1}{2}} \log ^{1+\eta_0} (e+|x|^2),
\quad \eta_0 = \frac{3}{8}-\frac{1}{2\beta}>0.
\ea\ee
Then the Cauchy problem \eqref{nlckv}--\eqref{i30} with the far-field condition \eqref{bjtj2} admits a unique global strong solution $(\n,u,d)$ on $\rr \times (0,\infty)$ satisfying for any $0<T<\infty$,
\be\la{cpsol3}\ba
\begin{cases}
\rho \in C([0,T];L^1 \cap H^1\cap W^{1,q} ), \\
{\bar{x}}^a\rho \in L^\infty ( 0,T ;L^1\cap H^1\cap W^{1,q} ), \\
\sqrt{\rho} u, \nabla u, {\bar{x}}^{-1}u, \sqrt{t} \sqrt{\rho } u_t, \sqrt{t} \na^2 u \in L^\infty (0,T;L^2), \\
\bar{x}^{\frac{a}{2}} \nabla d, \nabla^{2} d,  \sqrt{t} \nabla d_{t},  \sqrt{t} \nabla^{3}d, 
\sqrt{t} \bar{x}^{\frac{a}{2}} \na^2 d \in L^\infty (0,T;L^2), \\
\sqrt{\rho } u_t,  \na^2 u, \sqrt{t} \nabla u_t, \sqrt{t} {\bar{x}}^{-1}u_t \in L^2({\mathbb {R}^2 }\times (0,T)), \\
\nabla^{3} d, \nabla d_{t}, \bar{x}^{\frac{a}{2}} \nabla^{2} d, \sqrt{t} \na^2 d_t, \sqrt{t} \na^4 d \in L^{2}(\mathbb{R}^{2}\times (0,T)), \\
\nabla u \in L^{(q+1)/q}(0,T; W^{1,q}),\quad \sqrt{t}\nabla u\in L^2(0,T; W^{1,q} ), \\
\end{cases}
\ea\ee
and
\be\la{cpsol4}\ba
\inf_{0 \le t \le T} \int _{B_{N_1}}\rho (x,t) dx \ge \frac{1}{4} \int_{\rr} \n_0(x) dx,
\ea\ee
for some positive constant $N_1$ depending on
$T$, $\mu$, $\| \sqrt{\n_0} u_0 \|_{L^2}$, $\| \rho_0 \|_{L^1}$, and $\| \na d_0 \|_{L^2}$.
\end{theorem}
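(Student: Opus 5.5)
The plan is a local existence result plus uniform a priori estimates and a continuation argument, following Huang--Li for the Navier--Stokes part and Zhong--Zhou for the orientation field. The new point is to avoid the angle condition (\ref{jhtj}).

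\textbf{Step 1: local solution and continuation.} Local existence and uniqueness of a strong solution with the regularity (\ref{cpsol3}) on some $[0,T_*]$ follows from the standard theory for the whole space with vacuum, e.g. the Huang--Wang--Wen approach adapted to weighted spaces as in Li--Xin. A blow-up criterion then reduces global existence to showing that, for every finite $T$,
\be\nonumber
\sup_{0\le t\le T}\|\n\|_{L^\infty}\le C(T).
\ee
Given this bound, the higher-order norms in (\ref{cpsol3}) follow by routine energy estimates.

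\textbf{Step 2: lower-order estimates.} From the maximum principle applied to $|d|^2$ we get $|d|=1$. The basic energy identity controls $\|\sqrt\n u\|_{L^2}^2+\|K(\n)\|_{L^1}+\|\na d\|_{L^2}^2$ and $\int_0^T(\|\na u\|_{L^2}^2+\|\Delta d+|\na d|^2d\|_{L^2}^2)dt$.

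Without (\ref{jhtj}), the key additional estimate is on $\|\na d\|_{L^4_{t,x}}$ and $\|\na^2 d\|_{L^2_{t,x}}$. In 2D the Ladyzhenskaya inequality gives $\|\na d\|_{L^4}^4\le C\|\na d\|_{L^2}^2\|\na^2 d\|_{L^2}^2$, and the problem is that $\|\na d\|_{L^2}$ need not be small. I would use the Struwe-type local energy argument for the harmonic map heat flow with a transport term. The bound $\int_0^T\|\na u\|_{L^2}^2$ makes the convection term controllable. By absolute continuity of the energy we can choose a radius $r_0(T)$ such that $\sup_{x,t}\int_{B_{r_0}(x)}|\na d|^2<\ep_1$ on short time intervals. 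Combined with a localized Ladyzhenskaya inequality, this gives $\|\na d\|_{L^4_{t,x}}+\|\na^2d\|_{L^2_{t,x}}\le C(T)$, iterating over finitely many time steps because the energy is finite.

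After that, $\|\na^2 d\|_{L^\infty_tL^2}$ and $\|\na d_t\|_{L^2_{t,x}}$ follow by differentiating the $d$-equation in time. The weighted bounds on $\bar x^{a/2}\na d$ follow by multiplying the equation by $\bar x^a\Delta d$. The lower bound (\ref{cpsol4}) comes from testing the mass equation against a cutoff and using the energy bound; this is what makes $N_1$ depend on the listed quantities.

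\textbf{Step 3: density bound (main obstacle).} I would follow Huang--Li. Write the momentum equation as
\be\nonumber
\n\dot u=\na G-\mu\na^\bot\o-\div(\na d\odot\na d-\tfrac12|\na d|^2I).
\ee
Split $u$ accordingly: the part coming from $\na d\odot\na d$ is handled via the Riesz transform $F=(-\Delta)^{-1}\div\div(\na d\odot\na d)$. The density then satisfies, along particle paths,
\be\nonumber
\frac{D}{Dt}\theta(\n)+P=-G-F+\text{commutator terms},\qquad \theta(\n)=2\mu\log\n+\beta^{-1}\n^\beta .
\ee
Using the commutator estimates of Coifman--Lions--Meyer--Semmes type, one bounds the $L^p$ norms of $\n$ with growth $p^{\text{power}}$, then an $L^\infty_tL^2$ bound for $\na u$, then $\|\n\|_{L^\infty}$. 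The weight $\bar x$ with exponent $\eta_0=\frac38-\frac1{2\beta}$ handles the whole-space Hardy-type control of $u$ near vacuum: we bound $\|\bar x^{-1}u\|$ using (\ref{cpsol4}), and $\eta_0>0$ is exactly where $\beta>4/3$ enters.

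The hardest part is closing the estimate $\|\n\|_{L^\infty}\le C\|\n\|^{\text{power}<1}_{L^\infty}+C$ when the extra forcing $\int_0^T\|\na d\|_{L^\infty}^2$-type terms are present. For these I would use $\|\na d\|_{L^\infty}\le C\|\na^2 d\|_{L^2}^{1/2}\|\na^3 d\|_{L^2}^{1/2}$ together with the Step 2 bounds, which only grow polynomially in $\|\n\|_{L^\infty}$. Finally, uniqueness follows by a weighted $L^2$ energy estimate on the difference of two solutions, using the time-weighted bounds in (\ref{cpsol3}).
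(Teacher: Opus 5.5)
\textbf{The gap is in Step 2, where the angle condition \eqref{jhtj} has to be removed.} Your Struwe-type iteration does not give $\int_0^T\|\nabla^2 d\|_{L^2}^2\,dt\le C(T)$.

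The radius $r_0$ on which the local energy is below $\varepsilon_1$ is set by the profile of $d(\cdot,t_k)$ at each restart time, not by the total energy. The admissible time step shrinks with $r_0$. Finite energy only bounds the number of times energy can concentrate, since each concentration costs at least $\varepsilon_1$; it does not rule concentration out.

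Your argument never uses the target. For a three-component director ($S^2$ target) the same reasoning would show that the two-dimensional harmonic map heat flow never blows up, and finite-time bubbling is known to occur there. So you are missing a non-concentration mechanism specific to the target.

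The paper's mechanism is elementary. Since $d$ is $\mathbb{R}^2$-valued with $|d|=1$ on the simply connected $\mathbb{R}^2$, it lifts as $d=(\cos\theta,\sin\theta)$ with $\nabla\theta=-d^2\nabla d^1+d^1\nabla d^2$ (Lemma \ref{jzb}). Then $|\nabla d|=|\nabla\theta|$ and $|\Delta d+|\nabla d|^2d|=|\Delta\theta|$. The energy dissipation therefore already gives $\int_0^T\|\Delta\theta\|_{L^2}^2\,dt\le C$. Ladyzhenskaya's inequality together with $\|\nabla^2\theta\|_{L^2}=\|\Delta\theta\|_{L^2}$ then yields $\|\nabla^2 d\|_{L^2}^2\le C(\|\nabla\theta\|_{L^4}^4+\|\nabla^2\theta\|_{L^2}^2)\le C(1+\|\nabla d\|_{L^2}^2)\|\Delta\theta\|_{L^2}^2$.

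So your Ladyzhenskaya idea does work, but only once it is applied to $\theta$, where the dissipation is linear in second derivatives; no localization is needed. From this bound the paper gets $\sup_t\|\nabla d\|_{L^p}\le C$ for every $p<\infty$, by testing the differentiated director equation with $|\nabla d|^{p-2}\nabla d$. That $L^p$ bound is what later controls $F_2=-(-\Delta)^{-1}\mathrm{div}(\nabla d\cdot\Delta d)$ and the coupling terms.

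\textbf{Two further points also need repair.}

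First, $\sup_t\|\nabla^2 d\|_{L^2}$ cannot be obtained in Step 2 before the velocity estimates. The term $\int|\nabla u|^2|\nabla d|^2\,dx$ in the $\Delta d$-estimate produces a power of $\|\nabla u\|_{L^2}$ strictly larger than $2$, so it needs more than $\int_0^T\|\nabla u\|_{L^2}^2\,dt$. The paper puts $\|\Delta d\|_{L^2}^2$ into $A_1^2$ and $\|\nabla\Delta d\|_{L^2}^2+\|\nabla d_t\|_{L^2}^2$ into $A_2^2$. It then closes them together with the velocity in $\sup_t\log A_1^2+\int_0^T A_2^2/A_1^2\,dt\le CR_T^{4/3}$ (Lemma \ref{cp1l5}). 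This is also what makes the director forcing enter the density estimate with at most $R_T^{4/3}$; for example $\|F_2\|_{L^\infty}\le CA_1^2+A_2^2/A_1^2$. That power is needed to close $R_T^\beta\le CR_T^{1+\beta/4+\beta\eta_0}$ in Lemma \ref{cp1l6}. Growth that is merely polynomial with an unspecified power would not close.

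Second, $\|\nabla d\|_{L^\infty}\le C\|\nabla^2 d\|_{L^2}^{1/2}\|\nabla^3 d\|_{L^2}^{1/2}$ is the three-dimensional Agmon inequality and fails on $\mathbb{R}^2$ by scaling. The two-dimensional form is $\|\nabla d\|_{L^\infty}\le C\|\nabla d\|_{L^2}^{1/2}\|\nabla^3 d\|_{L^2}^{1/2}$.
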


We next consider the Cauchy problem with non-vacuum far-field density. The corresponding global existence and uniqueness result for strong solutions is stated below.
\begin{theorem}\la{thcp2}
Let $\OM = \rr$ and $\tilde{\n}>0$.
Assume that \eqref{bg} holds and that the initial data $(\n_0,u_0,d_0)$ satisfy, for some $q>2$ and $\alpha \in (0,1)$,
\be\la{cp2sol1}\ba
\begin{cases}
\n_{0}\geq 0, \quad \n_{0}-\tilde{\n} \in H^{1}\cap W^{1,q}, \quad
u_{0}\in H^{1},\quad \tilde{x}^{2 \alpha} K(\n_0) \in L^1, \\
\tilde{x}^{\alpha} \sqrt{\n_0} u_0 \in L^2, \quad \tilde{x}^{\alpha} \nabla d_{0} \in L^{2},\quad \nabla^{2} d_{0}\in L^{2},
\quad |d_{0}|=1,
\end{cases}
\ea\ee
with
\be\la{cp2sol2}\ba
\tilde{x} \triangleq (e+|x|^2)^{\frac{1}{2}}.
\ea\ee
Then the Cauchy problem \eqref{nlckv}--\eqref{i30} with the far-field condition \eqref{bjtj2} admits a unique global strong solution $(\n,u,d)$ on $\rr \times (0,\infty)$ satisfying for any $0<T<\infty$,
\be\la{cp2sol3}\ba
\begin{cases}
\rho-\tilde{\rho} \in C([0,T];H^1 \cap W^{1,q} ), \\
u\in L^\infty(0,T; H^1), \quad \tilde{x}^{2 \alpha} K(\n) \in L^\infty(0,T;L^1), \\
\tilde{x}^{\alpha} \sqrt{\n} u, \tilde{x}^{\alpha} \na d, \sqrt{t} \sqrt{\n} u_t,
\sqrt{t} \na^2 u, \sqrt{t} \na d_t, \sqrt{t} \na^3 d \in L^\infty(0,T;L^2), \\
\sqrt{\n} u_t, \na^2 u, \na d_t, \na^3 d, \sqrt{t} \na u_t, \sqrt{t} \na^2 d_t, \sqrt{t} \na^4 d \in L^2(\rr \times (0,T)), \\
\na u \in L^{(q+1)/q}(0,T;W^{1,q}), \quad \sqrt{t} \na u \in L^2(0,T;W^{1,q}).
\end{cases}
\ea\ee
\end{theorem}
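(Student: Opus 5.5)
The plan is the standard continuation argument. Local existence and uniqueness of strong solutions with non-vacuum far field (Huang--Wang--Wen type local theory, adapted to $\lambda(\rho)=\rho^\beta$) gives a solution on $[0,T_*)$. It then suffices to derive a priori bounds that depend only on $T$ and the data, and in particular the key bound
\[
\sup_{0\le t\le T}\|\rho\|_{L^\infty}\le C(T).
\]
Once this holds, the higher-order estimates close in the usual way and extend the solution beyond $T_*$.

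\textbf{Basic estimates.} I would carry them out in the following order.

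First, the energy identity. It is obtained by testing the momentum equation with $u$ and the director equation with $-\Delta d-|\nabla d|^2d$, which is legitimate because $|d|=1$. This bounds $\sqrt\rho u$, $K(\rho)$ and $\nabla d$ in $L^\infty L^2$, and $\nabla u$, $\Delta d+|\nabla d|^2 d$ in $L^2L^2$.

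Second, the weighted bounds. I would run weighted versions with $\tilde x^{2\alpha}$ for $\tilde x^\alpha\sqrt\rho u$, $\tilde x^{2\alpha}K(\rho)$ and $\tilde x^\alpha\nabla d$. These replace the angle condition: instead of a maximum principle for $d_2$, I control $\|\nabla d\|_{L^4}$ through a weighted Gagliardo--Nirenberg / Ladyzhenskaya argument together with $\|\nabla^2 d\|_{L^2}$.

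Third, the $L^2$ bound on $\nabla^2 d$. This comes from testing $\nabla(d_t+u\cdot\nabla d)=\nabla(\Delta d+|\nabla d|^2 d)$ with $\nabla\Delta d$. The resulting cubic term $\int|\nabla d|^6$ is handled by a localized smallness of $\|\nabla d\|_{L^2(B_R(x_0))}$, uniform in $x_0$ on short time intervals. That smallness comes from a local energy inequality combined with the $L^2$ integrability of $\tilde x^{\alpha}\nabla d$, which makes the tail small uniformly. Iterating over finitely many short intervals covers $[0,T]$.

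\textbf{Main step: the upper bound on the density (Huang--Li / Fan--Li--Li scheme).} I write $\nabla^\perp\!\cdot$ and $\div$ of the momentum equation using $G$ and $\omega$:
\[
\Delta G = \div(\rho\dot u+\nabla d\cdot\Delta d),\qquad \mu\Delta\omega=\nabla^\perp\!\cdot(\rho\dot u+\nabla d\cdot\Delta d).
\]
The stress term can be written as $\div(\nabla d\odot\nabla d-\tfrac12|\nabla d|^2I)$. Its contribution to $G$ is a Calder\'on--Zygmund term $\mathcal R(\nabla d\odot\nabla d)$, bounded in $L^p$ by $\|\nabla d\|_{L^{2p}}^2$.

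Next, with $\theta(\rho)=2\mu\log\rho+\beta^{-1}\rho^\beta$, the mass equation gives
\[
\frac{D}{Dt}\theta(\rho)=-(P-P(\tilde\rho))-G.
\]
Following Huang--Li, I decompose $G$ through $\rho u$ using $\Delta^{-1}\div(\rho u)_t$ and the commutator $[u_i,R_{ij}](\rho u_j)$. I would first prove $\sup_t\|\rho\|_{L^p}\le Cp^{2/(\beta-1)}$ and a time-weighted bound for $\|\nabla u\|_{L^2}$ of the form $\rho^{\beta}$-dependent growth ($\beta>4/3$ is used exactly here, as in \cite{HL3,FLL}). The extra terms coming from $d$ are $\int|\nabla d|^2|\nabla u|$ and $\int\rho|\nabla d|^2|u|$. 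These are absorbed using the $L^\infty L^2$ bound on $\nabla^2 d$ obtained above, which is why $d$ is estimated before $\rho$.

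The only genuinely new obstacle compared with \cite{ZZ2} is the combination of non-vacuum far field with the absence of the angle condition. I expect the hardest step to be the closure of $\|\nabla^2 d\|_{L^2}$ and $\|\nabla u\|_{L^2}$ simultaneously. That bound must not depend on the density bound yet, since it precedes the commutator argument, but it feeds back into it. I would first try a Gronwall argument on
\[
\|\nabla u\|_{L^2}^2+\|\nabla^2 d\|_{L^2}^2,
\]
with coefficients $\|\rho\|_{L^\infty}^{1+\epsilon}$, and close it through the logarithmic-type estimate on $\theta(\rho)$.

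\textbf{Higher-order estimates and uniqueness.} With $\rho$ bounded, I estimate $\sqrt t\sqrt\rho\dot u$ via Hoff's material-derivative estimates, and $\nabla\rho$ in $L^2\cap L^q$ via the Beale--Kato--Majda-type inequality. Then $\nabla u\in L^{(q+1)/q}W^{1,q}$, and the $\sqrt t$-weighted bounds on $\nabla d_t$, $\nabla^3 d$, $\nabla^4 d$ follow by differentiating the director equation. Uniqueness follows from an $L^2$ energy estimate for the difference of two solutions, using $\nabla u\in L^1(0,T;L^\infty)$.
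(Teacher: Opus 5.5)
\textbf{Gap in your third step.} Your third step has a genuine gap, and it sits exactly where the angle condition \eqref{jhtj} was used in earlier work.

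The energy identity controls only $\int_0^T\|\Delta d+|\nabla d|^2d\|_{L^2}^2\,dt$. Since $d\cdot\Delta d=-|\nabla d|^2$, one has $\|\Delta d\|_{L^2}^2=\|\Delta d+|\nabla d|^2d\|_{L^2}^2+\|\nabla d\|_{L^4}^4$. Ladyzhenskaya's inequality only gives $\|\nabla d\|_{L^4}^4\le C\|\nabla d\|_{L^2}^2\|\nabla^2 d\|_{L^2}^2$, which cannot be absorbed when the energy is large.

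Yet $\int_0^T\|\nabla^2 d\|_{L^2}^2\,dt\le C$ is indispensable. It is the Gr\"onwall coefficient in the $L^\infty(0,T;L^p)$ bound on $\nabla d$ (Lemma \ref{cp2l2}), which controls $F_2$ and every coupling term. It also gives $\int_0^TA_1^2\,dt\le C$, which is used in Lemmas \ref{cp2l5} and \ref{cp2l6}.

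Your replacement does not close, for three reasons.
\begin{itemize}
\item By the local energy inequality, an interval on which $\sup_{x_0}\|\nabla d\|_{L^2(B_R(x_0))}$ stays small has length of order $R^2$, where $R$ is the smallness radius at its initial time. Nothing in your argument bounds this radius from below uniformly in time, so ``finitely many short intervals'' is unjustified.
\item The weight $\tilde x^\alpha\nabla d\in L^2$ only prevents energy from escaping to infinity, not from concentrating at a finite point. Your iteration never uses that the target is $S^1$. For $S^2$-valued harmonic map heat flow in two dimensions, energy concentration and finite-time bubbling do occur (Chang--Ding--Ye), so a target-independent iteration cannot work.
\item The local energy inequality for the coupled system contains $u\cdot\nabla d$ and fluid flux terms. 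These do not cancel locally, and you cannot control them before the density bound.
\end{itemize}

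\textbf{The missing idea.} What you need is the lifting of Lemma \ref{jzb}. Since $\mathbb{R}^2$ is simply connected and $|d|=1$, write $d=(\cos\theta,\sin\theta)$. Then $|\nabla d|=|\nabla\theta|$ and $|\Delta d+|\nabla d|^2d|=|\Delta\theta|$, so
\[
\|\nabla^2 d\|_{L^2}^2\le C\bigl(\|\nabla\theta\|_{L^4}^4+\|\nabla^2\theta\|_{L^2}^2\bigr)\le C\bigl(1+\|\nabla\theta\|_{L^2}^2\bigr)\|\nabla^2\theta\|_{L^2}^2\le C\|\Delta d+|\nabla d|^2d\|_{L^2}^2 .
\]
The quartic term is now bounded by the tension field instead of being absorbed, so no smallness is required. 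The bound $\int_0^T\|\nabla^2 d\|_{L^2}^2\,dt\le C$ then follows directly from the energy identity.

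\textbf{Two further corrections.}
\begin{itemize}
\item Your plan to bound $\sup_t\|\nabla^2 d\|_{L^2}$ before $\rho$ cannot work. Testing with $\nabla\Delta d$ produces $\int(|\nabla u|^2|\nabla d|^2+|u|^2|\nabla^2 d|^2)\,dx$, which needs $\|\nabla u\|_{L^4}$ or $\sup_t\|\nabla u\|_{L^2}$. The paper instead puts $\|\Delta d\|_{L^2}^2$ into $A_1^2$ and accepts $\log A_1^2\le CR_T^{4/3}$, as in your own fallback.
\item The weights $\tilde x^\alpha$ do not replace the angle condition. Their role is to make $\psi=(-\Delta)^{-1}\div(\rho u)$ integrable when $\tilde\rho>0$, since then $\rho u$ lies only in the critical space $L^2$. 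This is done via the splitting $\rho u=\sqrt\rho(\sqrt\rho-\sqrt{\tilde\rho})u+\sqrt{\tilde\rho}\sqrt\rho u$. It is needed both for the bound on $\rho-\tilde\rho$ in $L^{4\beta\gamma+1}$ and for $\|\psi\|_{L^{4/\alpha}}$ in the Brezis--Wainger estimate of $\|\psi\|_{L^\infty}$. Your sketch does not address this point.
\end{itemize}
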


Consider the half-space problem with vacuum far-field density.
We have the following result on the global existence and uniqueness of strong solutions.
\begin{theorem}\la{thbkj1}
Let $\OM = \rr_+$ and $\tilde{\n}=0$, and let $A$ be a nonnegative smooth function satisfying, for some $\eta_1, \eta_2, \eta_3>0$,
\be\la{bkja1}\ba
\bar{x}^{1+\eta_1} A \in H^1(\p \rr_+), \quad \bar{x}^{\eta_2} \na A \in L^{2+\eta_3}(\p \rr_+).
\ea\ee
Assume that \eqref{bg} holds and that the initial data $(\n_0,u_0,d_0)$ satisfy, for some $q>2$ and $a \in (1,2)$,
\be\la{bkjsol1}\ba
\begin{cases}
\n_{0}\geq 0, \quad \bar{x}^{a}\n_{0}\in L^{1}\cap H^{1}\cap W^{1,q}, \quad
\sqrt{\n_0} u_{0}\in L^{2}, \quad u_0 \in \tilde{D}_+^1, \\
\bar{x}^{\frac{a}{2}} \nabla d_{0} \in L^{2},\quad \nabla^{2} d_{0}\in L^{2},
\quad |d_{0}|=1,
\end{cases}
\ea\ee
where $\bar{x}$ is defined as in \eqref{cpsol2}.
Then the half-space problem \eqref{nlckv}--\eqref{i30} with the boundary conditions \eqref{bkjbjtj1} and the far-field condition \eqref{bkjbjtj2} admits a unique global strong solution $(\n,u,d)$ on $\rr_+ \times (0,\infty)$ satisfying for any $0<T<\infty$,
\be\la{bkjsol3}\ba
\begin{cases}
\rho \in C([0,T];L^1 \cap H^1\cap W^{1,q} ), \\
{\bar{x}}^a\rho \in L^\infty ( 0,T ;L^1\cap H^1\cap W^{1,q} ), \\
\sqrt{\rho} u, \nabla u, {\bar{x}}^{-1}u, \sqrt{t} \sqrt{\rho } u_t, \sqrt{t} \na^2 u \in L^\infty (0,T;L^2), \\
\bar{x}^{\frac{a}{2}} \nabla d, \nabla^{2} d,  \sqrt{t} \nabla d_{t},  \sqrt{t} \nabla^{3}d, 
\sqrt{t} \bar{x}^{\frac{a}{2}} \na^2 d \in L^\infty (0,T;L^2), \\
\sqrt{\rho } u_t,  \na^2 u, \sqrt{t} \nabla u_t, \sqrt{t} {\bar{x}}^{-1}u_t \in L^2({\mathbb{R}_+^2 }\times (0,T)), \\
\nabla^{3} d, \nabla d_{t}, \bar{x}^{\frac{a}{2}} \nabla^{2} d, \sqrt{t} \na^2 d_t, \sqrt{t} \na^4 d \in L^{2}(\mathbb{R}_+^{2}\times (0,T)), \\
\nabla u \in L^{(q+1)/q}(0,T; W^{1,q}),\quad \sqrt{t}\nabla u\in L^2(0,T; W^{1,q} ), \\
\end{cases}
\ea\ee
and
\be\la{bkjsol4}\ba
\inf_{0 \le t \le T} \int _{B^+_{N_2}}\rho (x,t) dx \ge \frac{1}{4} \int_{\rr_+} \n_0(x) dx,
\ea\ee
for some positive constant $N_2$ depending on
$T$, $\mu$, $\| \sqrt{\n_0} u_0 \|_{L^2}$, $\| \rho_0 \|_{L^1}$, and $\| \na d_0 \|_{L^2}$.
\end{theorem}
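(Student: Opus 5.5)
The plan is to follow the Huang--Li / Fan--Li--Li framework for the compressible Navier--Stokes equations with $\lambda=\rho^\beta$, adapted to the half-space with Navier-slip conditions, and to control the orientation field through the energy structure of the harmonic map heat flow rather than through the angle condition (\ref{jhtj}).

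\textbf{Local theory and continuation.} I would first establish a local strong solution on $\rr_+\times(0,T_*)$, using the Neumann condition for $d$ and the slip condition for $u$. This is done by a standard linearization and fixed point after regularizing the density from below, then passing to the limit with weighted estimates. The global statement then reduces to an a priori bound
\[
\sup_{0\le t\le T}\|\rho\|_{L^\infty}\le C(T)
\]
together with the higher-order estimates listed in (\ref{bkjsol3}).

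\textbf{Basic estimates.} The first ingredient is the basic energy identity
\[
\frac{d}{dt}\int\Big(\tfrac12\rho|u|^2+\tfrac{\rho^\gamma}{\gamma-1}+\tfrac12|\na d|^2\Big)+\int\big(\mu|\omega|^2+(2\mu+\lambda)(\div u)^2+|\Delta d+|\na d|^2d|^2\big)+\int_{\p\rr_+}A|u|^2=0.
\]
Here I use $\Delta u=\na\div u-\na^\bot\omega$, the slip condition, and $|d|=1$. Next, applying the maximum principle to the $d$-equation keeps $|d|=1$. For $\na d$, I would not rely on $d_{02}\ge\ep_0$. Instead, I would use the two-dimensional Ladyzhenskaya inequality on small balls together with the energy bound on $\int|\Delta d+|\na d|^2d|^2$, and obtain $\sup_t\|\na^2 d\|_{L^2}^2+\int_0^T\|\na d_t\|_{L^2}^2\le C(T)$ by a Gronwall argument. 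This argument requires the local smallness of $\int_{B_r}|\na d|^2$ uniformly in time, obtained via a localized energy inequality in the style of Struwe. The mass-concentration bound (\ref{bkjsol4}) follows by testing the mass equation with a cutoff, as in the Cauchy case.

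\textbf{Density bound.} The central step follows Huang--Li. I write $\div u$ and $\omega$ through $G$ and introduce $\theta(\rho)=\int_0^\rho s^{-1}(2\mu+\lambda(s))\,ds$. The transport identity is
\[
\tfrac{D}{Dt}\theta(\rho)+P=-G.
\]
I then split $u=v+w$, where $v$ solves a Lamé-type problem with source $\rho u$ and captures $\rho\dot u$ via $\Delta^{-1}\div(\rho u)$, with commutator terms $[u_i,R_{ij}](\rho u_j)$. In $\rr_+$, this requires an even/odd reflection adapted to $u\cdot n=0$ together with the boundary term produced by $A$. The weight $\bar{x}$, combined with the hypotheses (\ref{bkja1}), gives Hardy-type control of $\rho u$ in the vacuum far-field setting.

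Next come the $L^p$ bounds $\|\rho\|_{L^p}\le Cp^{2/(\beta-1)}$, obtained by testing with $\rho^{p}$. These feed the estimate of $\int\rho|u|^{2+\nu}$ and $\|\na u\|_{L^2}$, through the functional $\int(\mu\omega^2+G^2/(2\mu+\lambda))$ whose boundary terms are controlled by the hypotheses on $A$. Finally, a Zlotnik-type argument yields $\rho\le C$, and this is where $\beta>4/3$ enters. The liquid crystal forcing $-\na d\cdot\Delta d=-\div(\na d\odot\na d-\frac12|\na d|^2I)$ is added to $G$ as a divergence term, bounded using the $d$ estimates above.

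\textbf{Higher order and uniqueness.} Once $\rho$ is bounded, I would derive the $\sqrt t$-weighted estimates of $\dot u$, $\na^2u$, $\na^3 d$ and $\na^2 d_t$, and then $\na\rho\in L^q$ via the Beale--Kato--Majda type bound $\|\na u\|_{L^\infty}\le C(1+\|\div u\|_{L^\infty}+\|\omega\|_{L^\infty})\log(e+\|\na^2u\|_{L^q})$. Uniqueness follows from a weighted $L^2$ stability estimate, using $\bar{x}^{-1}u$ and the Hardy-type inequality.

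The main obstacle I anticipate is the $\na d$ estimate without the angle condition when $\rho$ is not yet bounded, since the coupling term $\na d\cdot\Delta d$ must be absorbed into the density $L^p$ estimates. My first attempt would be to estimate $\|\na d\|_{L^4}^4$ by $\|\na d\|_{L^2}^2\|\na^2 d\|_{L^2}^2$, localized on small balls, keeping the resulting bound independent of the density.
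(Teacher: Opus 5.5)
Your treatment of the director field has a genuine gap, and it sits exactly where this theorem departs from earlier work. A Struwe-type localized energy inequality gives $\sup_x\int_{B_r(x)}|\nabla d(t)|^2\,dx<\varepsilon$ only for $t\lesssim \varepsilon r^2/E_0$, where $r$ is fixed by the initial data. It does not give local smallness uniformly on an arbitrary $[0,T]$ for large data. Going past that time is exactly the question of whether energy concentrates. Concentration does occur for $\mathbb{S}^2$-valued directors, so any argument must use the fact that here $d$ takes values in $\mathbb{S}^1$, and your outline never does. In the coupled system the localized inequality also picks up the transport term $\int (u\cdot\nabla d)\cdot(\Delta d+|\nabla d|^2d)\phi^2\,dx$, which you do not address.

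A second problem is the claimed density-independent bound $\sup_t\|\nabla^2 d\|_{L^2}^2+\int_0^T\|\nabla d_t\|_{L^2}^2\,dt\le C(T)$. Testing the differentiated director equation with $-\nabla\Delta d$ produces $\int(|\nabla u|^2|\nabla d|^2+|u|^2|\nabla^2 d|^2)\,dx$. This leads to terms such as $\|\nabla u\|_{L^2}^4$, which are not known to be integrable in time before $\rho$ is bounded.

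The missing idea is the lifting $d=(\cos\theta,\sin\theta)$ on the simply connected half-plane. Here $\nabla\theta=-d^2\nabla d^1+d^1\nabla d^2$, so $\partial_n\theta=0$ on the boundary. Then $|\nabla d|=|\nabla\theta|$ and $|\Delta d+|\nabla d|^2d|=|\Delta\theta|$. Using Gagliardo--Nirenberg and the Neumann estimate $\|\nabla^2 f\|_{L^2}\le C\|\Delta f\|_{L^2}$, one gets
\[
\|\nabla^2 d\|_{L^2}^2\le C\big(\|\nabla\theta\|_{L^4}^4+\|\nabla^2\theta\|_{L^2}^2\big)\le C\big(1+\|\nabla d\|_{L^2}^2\big)\|\nabla^2\theta\|_{L^2}^2\le C\|\Delta\theta\|_{L^2}^2 .
\]
The quartic term is bounded by energy times dissipation, so neither smallness nor localization is needed. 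The energy identity then gives $\int_0^T\|\nabla^2 d\|_{L^2}^2\,dt\le C$ directly.

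Next, test the differentiated equation with $p|\nabla d|^{p-2}\nabla d$. The boundary term vanishes because the boundary is flat and $\partial_n d=0$. This gives $\sup_t\|\nabla d\|_{L^p}\le C$ for every $p<\infty$, which is what the density argument actually needs to handle $\nabla d\cdot\Delta d$.

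The quantity $\|\Delta d\|_{L^2}^2$ is not bounded in advance. Instead it is placed in the same functional as $\int(\mu\omega^2+G^2/(2\mu+\lambda))\,dx+\int_{\partial\mathbb{R}^2_+}A|u|^2\,ds$. This yields only a bound $\log(\cdot)\le CR_T^{1+\varepsilon}$, and it becomes uniform only after $\beta>4/3$ closes the estimate for $R_T$.

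The rest of your outline differs from the paper mainly in using reflection and commutators. The paper instead uses a Neumann decomposition of $G$ and a Green's-function pointwise bound, with $J$ controlled by $\int|u(x)-u(y)||x-y|^{-2}\rho|u|\,dy$. Your route is a reasonable alternative provided the $A$-induced boundary term is handled.

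A small correction: $\int_0^\rho s^{-1}(2\mu+\lambda(s))\,ds$ diverges at $0$. The correct choice is $2\mu\log\rho+\beta^{-1}\rho^\beta$.
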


Finally, we address the half-space problem (\ref{nlckv})--(\ref{i30}), (\ref{bkjbjtj1}), (\ref{bkjbjtj2}) with non-vacuum far-field density and establish the global existence and uniqueness of strong solutions.
\begin{theorem}\la{thbkj2}
Let $\OM = \rr_+$ and $\tilde{\n}>0$, and let $A$ be a nonnegative smooth function satisfying, for some $s_1 \in [1,\infty]$ and $s_2 \in (2,\infty]$,
\be\la{bkja2}\ba
A \in L^{s_1}(\p \rr_+), \quad \na A \in L^{s_2}(\p \rr_+).
\ea\ee
Assume that \eqref{bg} holds and that the initial data $(\n_0,u_0,d_0)$ satisfy, for some $q>2$ and $\alpha \in (0,1)$,
\be\la{bkj2sol1}\ba
\begin{cases}
\n_{0}\geq 0, \quad \n_{0}-\tilde{\n} \in H^{1}\cap W^{1,q}, \quad
u_{0}\in \tilde{H}^{1},\quad \tilde{x}^{2 \alpha} K(\n_0) \in L^1, \\
\tilde{x}^{\alpha} \sqrt{\n_0} u_0 \in L^2, \quad \tilde{x}^{\alpha} \nabla d_{0} \in L^{2},\quad \nabla^{2} d_{0}\in L^{2},
\quad |d_{0}|=1,
\end{cases}
\ea\ee
where $\tilde{x}$ is defined as in \eqref{cp2sol2}.
Then the half-space problem \eqref{nlckv}--\eqref{i30} with the boundary conditions \eqref{bkjbjtj1} and the far-field condition \eqref{bkjbjtj2} admits a unique global strong solution $(\n,u,d)$ on $\rr_+ \times (0,\infty)$ satisfying for any $0<T<\infty$,
\be\la{bkj2sol3}\ba
\begin{cases}
\rho-\tilde{\rho} \in C([0,T];H^1 \cap W^{1,q} ), \\
u\in L^\infty(0,T; H^1), \quad \tilde{x}^{2 \alpha} K(\n) \in L^\infty(0,T;L^1), \\
\tilde{x}^{\alpha} \sqrt{\n} u, \tilde{x}^{\alpha} \na d, \sqrt{t} \sqrt{\n} u_t,
\sqrt{t} \na^2 u, \sqrt{t} \na d_t, \sqrt{t} \na^3 d \in L^\infty(0,T;L^2), \\
\sqrt{\n} u_t, \na^2 u, \na d_t, \na^3 d, \sqrt{t} \na u_t, \sqrt{t} \na^2 d_t, \sqrt{t} \na^4 d \in L^2(\rr_+ \times (0,T)), \\
\na u \in L^{(q+1)/q}(0,T;W^{1,q}), \quad \sqrt{t} \na u \in L^2(0,T;W^{1,q}).
\end{cases}
\ea\ee
\end{theorem}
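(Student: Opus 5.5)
The plan is the standard route for Vaigant–Kazhikhov type systems: local existence plus a priori estimates independent of the lower bound of the density, followed by a continuation argument. First I would invoke a local well-posedness result for strong solutions of \eqref{nlckv} in $\rr_+$ with the boundary conditions \eqref{bkjbjtj1} and non-vacuum far field, as in Huang-Wang-Wen. This is obtained by approximating $\n_0$ by $\n_0+\delta$ and passing $\delta\to0$. It then suffices to show that, for every $T<\infty$, $\sup_{0\le t\le T}\|\n\|_{L^\infty}\le C(T)$. Once that bound is in hand, the higher-order estimates in \eqref{bkj2sol3} follow by now-standard arguments, and uniqueness follows from an $L^2$ energy estimate for the difference of two solutions.

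\textbf{Lower-order estimates.} I would start with the basic energy identity. The Navier-slip condition contributes $\int_{\p\rr_+}A|u|^2\ge0$, and the Neumann condition gives the harmonic map energy $\|\na d\|_{L^2}^2$ together with $\int_0^T\|\Delta d+|\na d|^2d\|_{L^2}^2$. To avoid the angle condition \eqref{jhtj}, I would not look for a maximum principle for a component of $d$. Instead, in 2D I would use a Ladyzhenskaya-type bound $\|\na d\|_{L^4}^4\le C\|\na d\|_{L^2}^2\|\na^2 d\|_{L^2}^2$ together with a smallness-on-small-balls argument (Struwe-type). The energy of $\na d$ is controlled locally in time on small balls, and the localized $L^2$-energy inequality for $\na d$ then gives $\int_0^T\|\na^2 d\|_{L^2}^2\le C(T)$. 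The weights $\tilde x^\alpha$ are propagated by multiplying the momentum and $d$ equations by $\tilde x^{2\alpha}$; since $\alpha<1$, the commutator terms $\na\tilde x^{2\alpha}$ are handled by Hardy-type inequalities. This weighted control compensates for the lack of an $L^1$ bound on $\n-\tilde\n$ and is what allows the analogue of Lemma-type $L^p$ density estimates in the whole space.

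\textbf{Density upper bound (the main obstacle).} Following Huang-Li and Fan-Li-Li, I would write the momentum equation as $\n\dot u=\na G-\mu\na^\bot\o-\div(\na d\odot\na d-\frac12|\na d|^2 I)$. In the half-space, $\o$ and $G$ obey elliptic problems whose boundary data involve $Au\cdot n^\bot$. Reflecting across $x_2=0$ (even/odd extension) converts these into whole-space problems. The boundary terms are then controlled by \eqref{bkja2} and trace inequalities, using $s_2>2$ for $\na A$. Next, introduce $\theta(\n)=2\mu\log\n+\frac1\beta\n^\beta$, so that
\be\nonumber
\frac{D}{Dt}\theta(\n)+P-P(\tilde\n)=-G .
\ee
Decompose $G$ via $\Delta^{-1}\div$ into a commutator part $[u_i,R_{ij}](\n u_j)$ plus a time-derivative part. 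I would bound the commutator with the Coifman-Rochberg-Weyl estimates in BMO, using $\|\na u\|_{L^2}\lesssim \|\n\|_{L^\infty}^{\beta/2+\epsilon}$-type growth. The Oseen-tensor contributions $\na d\odot\na d$ are treated as forcing with $\int_0^T\|\na d\|_{L^\infty}^2<\infty$, obtained via the estimates on $\na^2 d,\na^3 d$ and a Brezis-Wainger-type logarithmic inequality. The delicate point is closing the bootstrap $\|\n\|_{L^\infty}\le C\,\|\n\|_{L^\infty}^{\theta}$ with exponent $\theta<1$. This requires the $L^p$ growth estimate $\|\n\|_{L^p}\le Cp^{2/(\beta-1)}$ and the bound on $\sup_t\log(e+\|\na u\|_{L^2})$, and this is exactly where $\beta>4/3$ enters. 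The extra terms coming from $d$ and from the boundary must be shown to be lower order.

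\textbf{Higher-order estimates.} With $\n$ bounded, I would obtain $\sup_t\|\na u\|_{L^2}$ and $\int\|\sqrt\n\dot u\|_{L^2}^2$ by testing with $\dot u$, handling boundary integrals through $u\cdot n=0$. Then the $t$-weighted estimates for $\sqrt t\sqrt\n\dot u$, $\sqrt t\na^3 d$ and $\sqrt t\na d_t$ follow by differentiating the equations in time. The $W^{1,q}$ bound for $\n$ comes from the Beale-Kato-Majda-type estimate $\|\na u\|_{L^\infty}\le C(1+\|\div u\|_{L^\infty}+\|\o\|_{L^\infty})\log(e+\|\na^2u\|_{L^q})$ combined with Gronwall. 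These bounds give the regularity class \eqref{bkj2sol3}, and the continuation argument finishes the proof.
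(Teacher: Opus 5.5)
The proposal has a genuine gap at the point the paper identifies as its main new difficulty: proving $\int_0^T\|\na^2 d\|_{L^2}^2\,dt\le C(T)$ without the angle condition \eqref{jhtj}.

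\textbf{Why the Struwe-type argument fails.} The energy identity controls only $\int_0^T\|\Delta d+|\na d|^2 d\|_{L^2}^2\,dt$. Since $|d|=1$ gives $|\Delta d+|\na d|^2 d|^2=|\Delta d|^2-|\na d|^4$, your Ladyzhenskaya bound closes only when $C\|\na d\|_{L^2}^2<1$.
\begin{itemize}
\item A Struwe-type localization does not repair this for large energy. The energy on balls of radius $R$ is controlled only for times of order $R^2/E_0$. Reaching an arbitrary $T$ requires that the concentration radius not collapse, and that is exactly what can fail.
\item Nothing in your argument uses that $d$ takes values in $S^1$. It would apply verbatim to $S^2$-valued directors, where the harmonic map heat flow can blow up in finite time. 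So it proves too much.
\item The localized energy inequality for $d$ contains $\int (u\cdot\na d)\cdot\Delta d\,\phi^2\,dx$ for a cutoff $\phi$. This term cancels against the stress only in the global energy. Localizing it would need local energy control of the compressible fluid, which you do not have for large data before $\n$ is bounded.
\end{itemize}
Every later step relies on $\int_0^T\|\na^2 d\|_{L^2}^2\,dt\le C$: the uniform $L^p$ bound on $\na d$, the weighted estimates, and the bounds on $H_3$ and on the $d$-forcing in $G$. Without it the argument does not start.

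\textbf{The missing idea} is the lifting of Lemma \ref{jzb}. On the simply connected $\rr_+$, write $d=(\cos\theta,\sin\theta)$ with $\na\theta=-d^2\na d^1+d^1\na d^2$. Then $|\na d|=|\na\theta|$, $|\Delta d+|\na d|^2 d|=|\Delta\theta|$, and $\frac{\p\theta}{\p n}=0$ on $\p\rr_+$. Lemma \ref{tygj} and \eqref{gn11} then give $\|\na^2 d\|_{L^2}^2\le C(\|\na\theta\|_{L^4}^4+\|\na^2\theta\|_{L^2}^2)\le C(1+\|\na d\|_{L^2}^2)\|\Delta\theta\|_{L^2}^2$. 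Hence the dissipation alone bounds $\int_0^T\|\na^2 d\|_{L^2}^2\,dt$.

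\textbf{The density step is also unsupported as written.} There is no bound $\|\na u\|_{L^2}\lesssim R_T^{\beta/2+\ep}$ before $\n$ is bounded. What is available (Lemma \ref{bkj2l5}) is $\sup_t\log B_1^2+\int_0^T A_2^2/B_1^2\,dt\le CR_T^{1+\ep}$. You also assert that $\beta>4/3$ follows from $\|\n\|_{L^p}\le Cp^{2/(\beta-1)}$ without any exponent count. In the paper the threshold comes from the Neumann Green's function \eqref{bkjglhs}:
\begin{itemize}
\item $\psi=\int_{\rr_+}\na_yN\cdot\n u\,dy$ is bounded by $CR_T^{(2+\beta)/3}$. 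This uses $\sup_t\int\n|u|^{2+\nu}dx\le C$ with $\nu=\nu_2R_T^{-\beta/2}$ near $x$, and the weight $\tilde{x}^\alpha\sqrt{\n}u$ far away.
\item The remainder $J$ is bounded by $\|\na u\|_{L^p}\int|x-y|^{-1-2/p}\n|u|(y)\,dy$.
\item One closes with $R_T^\beta\le CR_T^{\max\{(2+\beta)/3,\,1+\beta/4+3\ep\}}$.
\end{itemize}
A reflection-plus-commutator route may be workable, but you would have to do this bookkeeping yourself. That includes the boundary-layer source on $\{x_2=0\}$ created by the jump of the odd extension of $\o$, which equals $-Au\cdot n^\bot\neq0$ there.
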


A few remarks are in order.

\begin{remark}\la{lrk1}
If the initial data $(\n_0,u_0,d_0)$ further satisfy higher regularity
and the compatibility condition
\be\la{csol2}\ba
- \mu \Delta u_0 - \nabla( (\mu + \lm(\n_0) ) \div u_0 ) + \nabla P(\n_0) + \na d_0 \cdot \Delta d_0 =\n_0^{1/2}g,
\ea\ee
for some $g \in L^2$, then the strong solutions obtained in Theorems \ref{thcp1}, \ref{thcp2}, \ref{thbkj1}, and \ref{thbkj2} become classical solutions for positive time.
The detailed proofs follow from arguments analogous to those in \cite{JWX1,LZZ,HL,HLX2}.
\end{remark}

\begin{remark}\la{lrk3}
Compared with the results of Zhong-Zhou \cite{ZZ}, Theorem \ref{thcp1} establishes the global existence of strong solutions without imposing the geometric condition \eqref{jhtj}.
Thus, our results generalize and improve upon those in \cite{ZZ}.
\end{remark}

\begin{remark}\la{lrk4}
It appears that the condition $\beta>1$ is critical for the system \eqref{nlckv}--\eqref{i2}; see \cite{VK}.
Therefore, it would be interesting to investigate the case $1<\beta \le 4/3$, which is left for the future.
\end{remark}

\begin{remark}\la{lrk5}
When $\beta=0$, the system \eqref{nlckv}--\eqref{i30} reduces to the compressible nematic liquid crystal flows with constant viscosity coefficients.
For this system, Jiang-Jiang-Wang \cite{JJW1,JJW2} established the global existence of finite-energy weak solutions in two-dimensional bounded domains and in the whole space, under the assumptions that $\ga>1$ and that the initial orientation field satisfies the geometric condition \eqref{jhtj}.
The method developed in Lemma \ref{cp1l1} for deriving the basic energy estimate allows us to remove the geometric condition \eqref{jhtj} and thereby extend their results.
\end{remark}

We now make some comments on the analysis of this paper.
First, the local existence and uniqueness of strong solutions can be established by arguments similar to those in \cite{LLL}.
To extend these solutions globally in time, we need to derive global a priori estimates, where the key issue is to obtain the upper bound of the density.

In previous works \cite{ZZ,ZZ2}, the initial orientation field was assumed to satisfy the geometric condition (\ref{jhtj}).
The main difficulty in removing this assumption lies in deriving an $L^2(\OM \times (0,T))$ estimate for $\na^2 d$.
Indeed, the standard energy estimate yields only an $L^2(\OM \times (0,T))$ bound for $\Delta d + |\na d|^2 d$.
To obtain an estimate for $\na^2 d$, the arguments in \cite{ZZ,ZZ2} make use of the geometric condition (\ref{jhtj}), together with the constraint $|d|=1$, and apply the maximum principle to $d$, yielding
\be\la{gjgj}\ba
\| \Delta d + |\na d|^2 d \|^2_{L^2} \ge \frac{\bar{\o}}{2} ( \| \Delta d \|^2_{L^2} + \| \na d \|^4_{L^4} ), \  \text{ for some } \bar{\o} \in (0,1).
\ea\ee
Combining this inequality with the standard energy estimate gives the desired $L^2(\OM \times (0,T))$ estimate for $\na^2 d$.
Without the geometric condition (\ref{jhtj}), however, the inequality (\ref{gjgj}) is no longer available.
To overcome this difficulty, we fully exploit the condition $|d|=1$ by representing $d$ in polar coordinates as $d=(\cos \theta, \sin \theta )$; see Lemma \ref{jzb}.
A direct calculation shows that
\be\nonumber\ba
|\Delta d + |\na d|^2 d|^2 = |\Delta \theta|^2, \quad  |\na d|^2 = |\na \theta|^2.
\ea\ee
These identities, together with the standard energy estimate and the Gagliardo-Nirenberg inequality, yield the required $L^2(\OM \times (0,T))$ estimate for $\na^2 d$.

Moreover, in contrast to the compressible Navier-Stokes equations, the presence of the liquid crystal director field $d$ gives rise to additional difficulties due to the strongly coupled term $u \cdot \na d$ and the nonlinear terms $\na d \cdot \Delta d$ and $|\na d|^2 d$.
To control these terms, we exploit the parabolic structure of the director equation $(\ref{nlckv})_3$.
More precisely, by differentiating the director equation, testing the resulting equation against $|\na d|^{p-2} \na d$, and applying the Gagliardo-Nirenberg inequality, we derive the key $L^\infty(0,T;L^p)$ estimate for $\na d$, where $p>2$.
This estimate plays a crucial role in controlling the coupling terms throughout the analysis.

For the Cauchy problem with vacuum far-field density, the main difficulty is due to the lack of integrability of $u$ and $\dot{u}$.
To overcome this, we introduce weighted estimates (see Lemma \ref{jqgj}) developed in \cite{HL3}.
Combining these estimates with the analytical framework in \cite{HL3}, we obtain the upper bound for the density under the assumption $\beta>4/3$.
For the case of non-vacuum far-field density, we first derive a Poincar\'e-type inequality (\ref{2cp15}) following the approach in \cite{WX}.
The main difficulty in this case is that the integrability of $(-\Delta)^{-1} \div (\n u)$ is unavailable, since $\n u$ belongs only to the critical space $L^2$.
To address this issue, inspired by \cite{WX}, we decompose $\n u$ as $\n u = \sqrt{\n} ( \sqrt{\n} - \sqrt{\tilde{\n}} )u + \sqrt{\tilde{\n}} \sqrt{\n} u$, and establish a weighted estimate for $\sqrt{\n} u$ (see (\ref{2cp03})).
Using these estimates, we derive the upper bound for the density, provided that $\beta>4/3$.

Similar to the Cauchy problem, the half-space problem also presents difficulties due to the unboundedness of the domain.
However, the weighted estimates for $\rr$ (Lemma \ref{jqgj}) are not directly applicable because of the presence of the boundary.
Fortunately, the half-space possesses a special geometric structure with a flat boundary, which allows us to establish weighted estimates of the same form via an even extension argument (see (\ref{1bkj95})).
Moreover, using the Green's function for the half-space, we derive pointwise estimates for the effective viscous flux (see Lemma \ref{bkj1l8}).
Following the proof strategy of the Cauchy problem, we then obtain the upper bound for the density under the condition $\beta>4/3$.
To handle the boundary terms, we employ the following equalities:
\be\la{bkjbjds}\ba
\dot{u} \cdot n =0, \quad  u \cdot \na u \cdot n = 0 \  \text{ on } \p \rr_+,
\ea\ee
and
\be\la{bkjbjds2}\ba
\na d^j \cdot \na (n \cdot \na d^j) = 0 \quad \text{ on } \p \rr_+,
\ea\ee
which hold because the boundary is flat and $u \cdot n = n \cdot \na d = 0$ on $\p \rr_+$.

The rest of this paper is organized as follows: Section 2 collects some preliminary results and inequalities required for our analysis.
In Sections 3 and 4, we derive the upper bound for the density and establish higher-order derivative estimates in the whole space and in the half-space, respectively.
Finally, Section 5 presents the proofs of the main results, Theorems \ref{thcp1}--\ref{thbkj2}.

\section{Preliminaries}
In this section, we collect several known facts and elementary inequalities that will be used frequently throughout this paper.

We begin with the following local existence result for strong solutions, which can be established by arguments similar to those in \cite{LLL}.
\begin{lemma}\la{lct}
Assume that $\beta \ge 1$, $\ga>1$ and the initial data $\left(\n_0,u_0,d_0 \right)$ satisfy \eqref{cpsol1}, \eqref{cp2sol1}, \eqref{bkjsol1}, or \eqref{bkj2sol1}.
Then there exists a small time $T>0$ such that the problem \eqref{nlckv}--\eqref{i30} admits a unique strong solution $(\n,u,d)$ in $\OM \times (0,T]$ subject to the corresponding boundary conditions.
Moreover, the solution satisfies the corresponding regularity properties listed in \eqref{cpsol3}, \eqref{cp2sol3}, \eqref{bkjsol3}, or \eqref{bkj2sol3}, respectively.
\end{lemma}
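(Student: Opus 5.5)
The local existence in Lemma \ref{lct} will be proved by the standard linearization and iteration scheme of \cite{LLL} (see also \cite{HWW,HL3}). A lower positive bound on the density is used throughout the construction and then removed at the end. I will carry out the four settings in parallel, with only the function spaces and boundary conditions changing.

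\textbf{Regularized data.} First regularize the data. Replace $\n_0$ by $\n_0^\delta=\n_0+\delta$ in the vacuum far-field cases, and by $\n_0+\delta$ near vacuum regions (or simply $\max\{\n_0,\delta\}$ after mollification) when $\tilde\n>0$. Then mollify $u_0$ and $d_0$, keeping $|d_0^\delta|=1$ by projecting onto the sphere. This projection is legitimate because $\na^2 d_0\in L^2$ gives $d_0\in C^{0,\alpha}$, so a mollification at small scale stays uniformly away from the origin. In the half-space the data must also be compatible with \eqref{bkjbjtj1}. To arrange this, solve a Lamé-type elliptic problem for $u_0^\delta$ with the Navier-slip condition, and use even reflection for $d_0$ so that $\p_n d_0^\delta=0$ on $\p\rr_+$.

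\textbf{Iteration scheme.} Given $(u^{k},d^{k})$, define the next iterate in three steps.
\begin{itemize}
\item Solve the transport equation $\n_t+\div(\n u^k)=0$ for $\n^{k+1}$. Since $\na u^k\in L^1(0,T;L^\infty)$, this gives $\n^{k+1}$ in $W^{1,q}\cap H^1$, the weighted bounds on $\bar{x}^a\n$ via the characteristics, and positivity.
\item Solve the linear parabolic Lamé system with coefficients $\n^{k+1}$ and $\lam(\n^{k+1})$ for $u^{k+1}$, with right-hand side $-\na d^k\cdot\Delta d^k$.
\item Solve the linear heat equation $d_t-\Delta d=-u^{k}\cdot\na d^{k}+|\na d^k|^2 d^k$ for $d^{k+1}$.
\end{itemize}

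Next, derive uniform bounds on a short time $T_0$, independent of $k$ and $\delta$, in the norms of \eqref{cpsol3} (respectively \eqref{cp2sol3}, \eqref{bkjsol3}, \eqref{bkj2sol3}). These follow from standard energy estimates on $\n u_t$ and $\na u_t$ with weights $t$, from elliptic regularity for the Lamé operator (in the half-space, with the slip condition and the flat boundary identities \eqref{bkjbjds}), and from parabolic regularity for $d$. The time-weighted estimates are what allow one to avoid the compatibility condition \eqref{csol2}. The condition $\beta\ge1$ guarantees that $\lam(\n)$ is $C^1$ in $\n$, so $\na\lam(\n)$ is controlled by $\na\n$; this enters the elliptic estimates.

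\textbf{Contraction, limits and uniqueness.} Show contraction of the scheme in a lower norm: $\n$ in $L^2$, $\sqrt{\n}u$ in $L^2$ and $\na u$ in $L^2L^2$, $\na d$ in $L^\infty L^2$ with $\na^2 d$ in $L^2L^2$. In the vacuum far-field case, $u$ itself is not integrable, so the difference estimates use the weights $\bar{x}^{-1}$ from Lemma \ref{jqgj}-type Hardy inequalities. Passing to the limit gives a solution for each $\delta>0$, and then $\delta\to0$ by compactness using the $\delta$-independent bounds. The constraint $|d|=1$ is recovered by applying the maximum principle to $|d|^2-1$. This quantity satisfies a linear parabolic equation, so it vanishes identically, because the heat flow with the term $|\na d|^2 d$ preserves the sphere. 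Uniqueness follows from the same difference estimate applied to two solutions with the same data, using $\sqrt{t}\na u\in L^2(0,T;W^{1,q})$ to control $\int_0^T\|\na u\|_{L^\infty}\,dt$.

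\textbf{Expected obstacle.} The main difficulty is the vacuum far-field case, both in the half-space and in the whole space. There the weighted estimate for $\bar{x}^a\n$ and the control of $u$ in $L^2(B_R)$ through the density lower bound, of the type \eqref{cpsol4}, must be propagated uniformly in $\delta$. In the half-space this also has to be done together with the slip boundary terms. The remainder of the argument is routine adaptation of \cite{LLL}.
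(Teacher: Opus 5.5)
\textbf{The regularization step fails in the vacuum far-field cases.} The paper gives no argument beyond deferring to \cite{LLL}, so your outline has to be measured against that method, and it departs from it exactly where it matters. In Theorems \ref{thcp1} and \ref{thbkj1} you make the density positive by a constant lift, $\n_0^\delta=\n_0+\delta$ on all of $\OM$. Then:
\begin{itemize}
\item $\n_0^\delta\notin L^1$, $\bar{x}^a\n_0^\delta\notin L^1$, and $\int(\n_0^\delta)^\ga\,dx=\infty$;
\item $u_0$ lies only in $D^1$ (or $\tilde{D}^1_+$), and mollification does not put it in $L^2$, so $\int\n_0^\delta|u_0^\delta|^2\,dx$ can also be infinite.
\end{itemize}
Consequently the transport step cannot give weighted bounds for $\bar{x}^a\n^{k+1}$. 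The hypotheses $\n\in L^1$ and $\int\n\bar{x}^{\ga_2}dx\le M_3$ of Lemma \ref{jqgj} fail. The localization argument behind \eqref{cpsol4}, which uses $\int\n\,dx<\infty$, is unavailable as written. So the $\delta$-problem has no finite bound in the norms of \eqref{cpsol3}, let alone a $\delta$-uniform one.

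What you have set up is really a non-vacuum far-field problem with background state $\delta$. Making that route work would need three things:
\begin{itemize}
\item the relative energy about $\delta$;
\item a truncation of $u_0$ that keeps $\|\na u_0^\delta\|_{L^2}$ bounded while $\delta\|u_0^\delta\|_{L^2}^2\to0$;
\item a splitting of $\n^\delta$ into an integrable part carrying the weights plus an $O(\delta)$ part whose contribution to each estimate is shown to vanish.
\end{itemize}
None of this is in the proposal, and it is precisely where the vacuum far-field difficulty you single out actually sits.

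\textbf{How \cite{LLL} obtains positivity.} To my recollection, \cite{LLL} approximates the Cauchy problem by Dirichlet problems on balls $B_R$. The initial density is lifted by a small, rapidly decaying positive function (e.g.\ $R^{-1}e^{-|x|^2}$). This lifted density is bounded below on $B_R$, yet $\bar{x}^a\n_0^R$ stays uniformly bounded in $L^1\cap H^1\cap W^{1,q}$. Lemma \ref{jqgj}-type weighted inequalities then give $R$-independent time-weighted estimates on a common interval, and one lets $R\to\infty$. Uniqueness is proved separately by weighted difference estimates. The half-space vacuum case needs an analogous exhaustion, or the full $\delta$-background argument above.

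\textbf{What is sound.} The remaining ingredients you list are fine: $\beta\ge1$ for the $C^1$ dependence of $\lam$ on $\n$ near vacuum; time weights in place of \eqref{csol2}; recovery of $|d|=1$ from the linear parabolic equation satisfied by $|d|^2-1$; and even reflection to preserve $\partial_n d_0^\delta=0$.
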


The following Gagliardo-Nirenberg inequalities (see \cite{NI,TG}) will be used frequently.
\begin{lemma}\la{gn1}
Let $\OM$ be either $\rr$ or $\rr_+$. There exists a positive constant $C$ such that for any $2<p<\infty$ and $f \in H^1(\OM)$, 
\be\ba\la{gn11}
\| f \|_{L^p} \le Cp^{1/2}\| f \|^{2/p}_{L^2} \| \na f \|^{1-2/p}_{L^2}.
\ea\ee
\end{lemma}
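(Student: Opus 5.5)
We prove Lemma \ref{gn1} first on $\rr$, where the dependence of the constant on $p$ has to be tracked explicitly, and then pass to $\rr_+$ by even reflection. Throughout we take $f\in C_c^\infty(\OM)$ and conclude by density in $H^1$. Both sides of \eqref{gn11} behave the same way under the scaling $f(x)\mapsto f(\lambda x)$: the left side picks up $\lambda^{-2/p}$, and the right side picks up $\lambda^{-2/p}\cdot\lambda^{0}$, since $\|\na f\|_{L^2}$ is scale invariant in two dimensions. So no extra factor appears in the exponents, and only the size of the constant needs work.

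\textbf{Whole space.} The standard route is to bound $\|f\|_{L^p}$ using $|f|^{p/2}$ and the two-dimensional Ladyzhenskaya inequality $\|g\|_{L^2}^2\le C\|g\|_{L^1}\|\na g\|_{L^1}$ applied to $g=|f|^{p/2}$. Iterating that bound produces constants that grow like $p$ rather than $p^{1/2}$, so it loses. I will use Fourier analysis instead to obtain $\sqrt p$ directly.

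Split $\hat f=\hat f\,\mathbf 1_{|\xi|\le R}+\hat f\,\mathbf 1_{|\xi|>R}$, so that $f=f_1+f_2$. For the low-frequency part, Hausdorff--Young and Cauchy--Schwarz give
\[
\|f_1\|_{L^\infty}\le C\|\hat f_1\|_{L^1}\le C R\|f\|_{L^2}.
\]
This leads to a logarithmic Brezis--Gallouet-type bound, which is not directly what we want. A cleaner approach is the Sobolev embedding $\dot H^{s}\subset L^p$ with $s=1-2/p$. By Hausdorff--Young,
\[
\|f\|_{L^p}\le C\|\hat f\|_{L^{p'}},
\]
and Hölder's inequality then gives
\[
\|\hat f\|_{L^{p'}}\le \|\,|\xi|^{s}\hat f\|_{L^2}\,\|\,|\xi|^{-s}\|_{L^{r}},
\]
but the second factor diverges. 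I therefore split into $|\xi|<R$ and $|\xi|>R$ instead:
\[
\int_{|\xi|<R}|\hat f|^{p'}\,d\xi\le \|\hat f\|_{L^2}^{p'}\,(\pi R^2)^{1-p'/2},
\qquad
\int_{|\xi|>R}|\hat f|^{p'}\,d\xi\le \|\,|\xi|\hat f\|_{L^2}^{p'}\Big(\int_{|\xi|>R}|\xi|^{-\frac{2p'}{2-p'}}\,d\xi\Big)^{1-p'/2}.
\]
The last integral equals $C(p-2)^{-1}R^{2-\frac{2p'}{2-p'}}$, up to factors bounded in $p$. Its power $(p-2)^{-(1-p'/2)}$ stays bounded as $p\to\infty$, because $1-p'/2\sim 1/(2p)$. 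It blows up as $p\to2^+$, however, which would destroy uniformity near $p=2$.

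To handle all $p$ uniformly I will avoid Hausdorff--Young altogether and use the layer-cake and truncation argument, which is the expected main obstacle. Write
\[
\|f\|_{L^p}^p=p\int_0^\infty t^{p-1}\,|\{|f|>t\}|\,dt.
\]
For each level $t$, decompose $f=f_{<}+f_{>}$ with a frequency cutoff $R(t)$ chosen so that $\|f_{<}\|_{L^\infty}\le t/2$. Then estimate $|\{|f_>|>t/2\}|\le 4t^{-2}\|f_>\|_{L^2}^2$ and sum over dyadic frequency shells, in the manner of Chemin's proof of Sobolev embedding. Tracking constants in that argument yields $\|f\|_{L^p}^p\le C^p p^{p/2}\|f\|_{L^2}^2\|\na f\|_{L^2}^{p-2}$. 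The factor $p^{p/2}$ comes from $\int_0^\infty$ with a Gaussian-type tail, since the dyadic sum behaves like $\log$ and $e^{\text{level}^2}$ growth corresponds to the Trudinger regime. Taking $p$-th roots gives \eqref{gn11} on $\rr$.

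\textbf{Half-space.} For $\OM=\rr_+$, extend $f$ evenly across $x_2=0$ by setting $Ef(x_1,x_2)=f(x_1,-x_2)$ for $x_2<0$. Then $Ef\in H^1(\rr)$, with $\|Ef\|_{L^r}=2^{1/r}\|f\|_{L^r}$ and $\|\na Ef\|_{L^2}=\sqrt2\,\|\na f\|_{L^2}$. Applying the whole-space inequality to $Ef$ gives \eqref{gn11} on $\rr_+$ with a constant changed only by a bounded factor. Since the result is classical (\cite{NI,TG}), I would in practice cite it and record only the reflection step and the scaling check.
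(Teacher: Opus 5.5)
Your even-reflection step for $\mathbb{R}^2_+$ is fine. It applies to every $f\in H^1(\mathbb{R}^2_+)$ directly, so do not route that case through density: $C_c^\infty(\mathbb{R}^2_+)$ is not dense in $H^1(\mathbb{R}^2_+)$.

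The gap is the whole-space inequality, which is the entire content of the lemma. Your final argument is only asserted. You never say which bound on $\|f_<\|_{L^\infty}$ determines $R(t)$, and you never carry out the Fubini step. The sentence ``tracking constants yields $C^pp^{p/2}$'', with the Gaussian-tail/Trudinger heuristic, is not a derivation of the $\sqrt p$. Your stated motivation is also off. In the natural implementation (Chebyshev at every level, $R(t)$ increasing from $0$) one gets $\|f\|_{L^p}^p\le 4p\int|\hat f(\xi)|^2\int_0^{\tau(\xi)}t^{p-3}\,dt\,d\xi=\frac{4p}{p-2}\int|\hat f|^2\tau^{p-2}\,d\xi$. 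After taking $p$-th roots this leaves a factor $(p/(p-2))^{1/p}$, which blows up as $p\to2^+$. So the layer-cake route is not automatically uniform near $p=2$ either.

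The Hausdorff--Young splitting you abandoned actually closes; you dropped it because of computational slips. The tail integral is
\[
\int_{|\xi|>R}|\xi|^{-\frac{2p}{p-2}}\,d\xi=\frac{\pi(p-2)}{2}\,R^{-\frac{4}{p-2}},
\]
with a factor $p-2$, not $(p-2)^{-1}$. Also $1-\frac{p'}{2}=\frac{p-2}{2(p-1)}$, which tends to $0$ as $p\to2^+$ and to $\frac12$ as $p\to\infty$, not $\sim\frac{1}{2p}$. Hence $(p-2)^{\pm(1-p'/2)}\to1$ as $p\to2^+$, so there is no blow-up there even with your sign.

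Take $R=\|\nabla f\|_{L^2}/\|f\|_{L^2}$. Up to $2\pi$-normalisations, both pieces become constants times $\|f\|_{L^2}^{\frac{2}{p-1}}\|\nabla f\|_{L^2}^{\frac{p-2}{p-1}}$, and the high-frequency constant is $(\pi(p-2)/2)^{\frac{p-2}{2(p-1)}}$. Raising to the power $1/p'=(p-1)/p$ gives
\[
\|f\|_{L^p}\le\|\hat f\|_{L^{p'}}\le C\Big(1+(p-2)^{\frac{p-2}{2p}}\Big)\|f\|_{L^2}^{2/p}\|\nabla f\|_{L^2}^{1-2/p}\le C\sqrt p\,\|f\|_{L^2}^{2/p}\|\nabla f\|_{L^2}^{1-2/p},
\]
uniformly for $p\in(2,\infty)$. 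The factor $(p-2)^{\frac{p-2}{2p}}$ is precisely where $\sqrt p$ comes from.

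The paper itself gives no proof and only cites Nirenberg and Talenti. The argument behind that citation is Talenti's sharp Sobolev inequality $\|g\|_{L^r}\le S_r\|\nabla g\|_{L^{2r/(r+2)}}$, whose constant grows like $\sqrt r$. One applies it to $g=|f|^{p/(p-2)}$ with $r=p-2$ for $p\ge4$, and interpolates between $L^2$ and $L^4$ for $2<p<4$. This is what removes the linear-in-$p$ loss of the naive iteration you mention.
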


The following elliptic estimate plays an important role in the analysis of the half-space problem.
Its proof can be found in \cite{ADN,GT}.
\begin{lemma}\la{tygj}
For any $f\in H^{2}(\rr_+)$ satisfying $\frac{\p f}{\p n}=0$ on $\p \rr_+$, there exists a positive constant $C$ such that
\be\la{tygj1}\ba
\| \na^2 f \|_{L^2} \le C \| \Delta f \|_{L^2}.
\ea\ee
\end{lemma}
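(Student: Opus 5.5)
To prove $\|\na^2 f\|_{L^2}\le C\|\Delta f\|_{L^2}$ for $f\in H^2(\rr_+)$ with $\p f/\p n=0$ on $\p\rr_+$, I would integrate by parts twice and exploit the flat boundary. By density, it suffices to treat $f$ smooth up to the boundary with compact support in $\overline{\rr_+}$ and $\p_2 f=0$ on $\{x_2=0\}$. The approximation step needs some care: we must approximate $f$ in $H^2(\rr_+)$ while keeping the Neumann condition. The even reflection makes this straightforward. The even extension
\[
\tilde f(x_1,x_2)=f(x_1,|x_2|)
\]
belongs to $H^2(\rr)$ precisely because $\p_2 f=0$ on the boundary, so no jump of the normal derivative arises. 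Mollifying $\tilde f$ with an even mollifier and cutting off with an even cutoff preserves evenness in $x_2$. The resulting approximants therefore satisfy the Neumann condition.

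This reflection argument also gives a second, essentially complete proof. Since $\tilde f\in H^2(\rr)$, Plancherel on the whole plane yields
\[
\|\na^2\tilde f\|_{L^2(\rr)}=\|\Delta \tilde f\|_{L^2(\rr)}.
\]
Because $\tilde f$ is even in $x_2$, we have $\Delta\tilde f$ equal to the even extension of $\Delta f$. Likewise $|\na^2\tilde f|^2$ is even, so each side is exactly twice the corresponding half-space quantity. This gives the inequality with $C=1$.

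The direct computation gives the same result. For smooth $f$, write
\[
\|\Delta f\|_{L^2}^2=\sum_{i,j}\int \p_{ii}f\,\p_{jj}f\,dx
\]
and integrate by parts, moving $\p_i$ off $\p_{ii}f$ and then $\p_j$. The boundary terms are of the form $\int_{\p\rr_+}\p_i f\,\p_{jj}f\,n_i$ and $\int_{\p\rr_+}\p_{ij}f\,\p_j f\,n_i$. With $n=(0,-1)$, only $i=2$ survives, and these terms contain either $\p_2 f$ or $\p_{12}f=\p_1(\p_2 f)$. Both vanish on $\{x_2=0\}$ since $\p_2 f\equiv0$ there, and tangential derivatives preserve this. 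Hence
\[
\|\Delta f\|^2_{L^2}=\|\na^2 f\|^2_{L^2},
\]
and the claim follows by density.

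The only genuinely delicate point is the density/approximation step. It is resolved by the even extension, which I would present as the main proof.
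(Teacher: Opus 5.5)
Your proof is correct, and it takes a different route from the paper. The paper gives no argument for this lemma and simply cites the general boundary-regularity theory of Agmon--Douglis--Nirenberg and Gilbarg--Trudinger.

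\begin{itemize}
\item \emph{What the paper's route buys.} That theory covers curved domains and general complementing boundary operators. For a homogeneous estimate on an unbounded domain one still has to remove the lower-order terms those estimates usually carry, which on $\mathbb{R}^2_+$ can be done by dilation invariance.
\item \emph{What your route uses.} You exploit only the flatness of the boundary. The even reflection lies in $H^2(\mathbb{R}^2)$ precisely because the trace of $\partial_2 f$ vanishes, so $\partial_2\tilde f$, the odd extension of $\partial_2 f$, belongs to $H^1(\mathbb{R}^2)$. Plancherel, using $\sum_{i,j}\xi_i^2\xi_j^2=|\xi|^4$, then gives the identity $\|\nabla^2 f\|_{L^2(\mathbb{R}^2_+)}=\|\Delta f\|_{L^2(\mathbb{R}^2_+)}$.
\item \emph{What your route buys.} It is elementary and self-contained, and it yields the sharp constant $C=1$, in fact equality.
\item \emph{Your second proof.} The integration-by-parts computation is the same identity seen directly. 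Your check is right that every surviving boundary term contains $\partial_2 f$ or $\partial_1\partial_2 f$, both zero on $\{x_2=0\}$. Your density step (even mollifier, even cutoff) correctly preserves the Neumann condition.
\item \emph{What your route gives up.} Generality: on a curved boundary the same integration by parts leaves a boundary term involving the curvature of $\partial\Omega$ and tangential derivatives of $f$, which is where the cited theory is needed. For this half-space lemma, though, your argument is complete. It is also in the spirit of the reflection arguments the paper itself uses for $\mathbb{R}^2_+$, e.g.\ the even extension in (\ref{1bkj15}) and in the proof of Lemma \ref{bkj1l3}.
\end{itemize}
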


We will frequently use the following div-curl estimates (see \cite{AJ,MD,WWV}) to bound $\na u$.
\begin{lemma}\la{dc}
Let $k \ge 0$ be an integer and let $1<p<\infty$.
Then there exists a positive constant $C$ depending only on $k$ and $p$ such that for every $u\in W^{k+1,p}(\rr_{+})$ satisfying $u \cdot n=0$ on $\p \rr_{+}$,
\be\ba\la{dc1}
\| \na u \|_{W^{k,p}} \le C\left( \|\div u\|_{W^{k,p}} +\| \curl u \|_{W^{k,p}} \right).
\ea\ee
\end{lemma}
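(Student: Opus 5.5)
The plan is to reduce the half-space estimate to the whole-space one by reflection, treating $k=0$ first and then getting higher $k$ from tangential derivatives plus the equations themselves. By density I may assume $u$ is smooth up to the boundary with $u^2=0$ on $\{x_2=0\}$, since here $n=(0,-1)$.

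\textbf{Case $k=0$.} Define the extension $U$ on $\rr$ by taking $U^1$ as the even reflection of $u^1$ in $x_2$ and $U^2$ as the odd reflection of $u^2$.
\begin{itemize}
\item Since $u^2$ has zero trace, the odd reflection does not jump across $\{x_2=0\}$. Hence $U\in W^{1,p}(\rr)$ and $\|\na U\|_{L^p(\rr)}\le 2^{1/p}\|\na u\|_{L^p(\rr_+)}$.
\item A parity check gives the extended divergence: $\p_1 U^1$ is even, and $\p_2 U^2$ (the derivative of an odd function) is even. So $\div U$ is the even reflection of $\div u$.
\item Similarly for the curl: $\p_2 U^1$ and $\p_1 U^2$ are both odd, so $\curl U$ is the odd reflection of $\curl u$.
\item Only $L^p$ integrability of these reflections is needed, and no trace condition is required for that. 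Consequently $\|\div U\|_{L^p(\rr)}+\|\curl U\|_{L^p(\rr)}\le 2^{1/p}(\|\div u\|_{L^p}+\|\curl u\|_{L^p})$.
\end{itemize}
On $\rr$ we have $\Delta U=\na \div U-\na^\bot \curl U$. Therefore each $\p_jU^i$ is a finite combination of $R_jR_i\div U$ and $R_jR_l\curl U$, where $R_j$ are Riesz transforms (modulo constants, which are excluded by $\na U\in L^p$). The Calder\'on--Zygmund theorem for $1<p<\infty$ then gives $\|\na U\|_{L^p(\rr)}\le C(\|\div U\|_{L^p}+\|\curl U\|_{L^p})$. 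Restricting to $\rr_+$ yields \eqref{dc1} for $k=0$.

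\textbf{Case $k\ge1$.} Here I would not reflect higher derivatives, because the odd reflection of $\curl u$ is generally not in $W^{1,p}$: its trace need not vanish. This is the main obstacle. Instead I induct on the number of normal derivatives.
\begin{itemize}
\item \emph{Tangential derivatives.} The operator $\p_1^j$ preserves the condition $u^2=0$ on the flat boundary and commutes with $\div$ and $\curl$. So the $k=0$ estimate applied to $\p_1^ju$ controls $\|\na\p_1^ju\|_{L^p}$ by $\|\div u\|_{W^{k,p}}+\|\curl u\|_{W^{k,p}}$ for $j\le k$.
\item \emph{Normal derivatives.} These are recovered algebraically from
\[
\p_2u^2=\div u-\p_1u^1,\qquad \p_2u^1=\curl u+\p_1u^2 .
\]
Thus any derivative $\p_1^{a}\p_2^{b+1}u$ with $a+b=k$ equals $\p_1^a\p_2^{b}$ applied to $\div u$ or $\curl u$, plus a term $\p_1^{a+1}\p_2^{b}u$ that has one fewer normal derivative.
\item Iterating this on $b$ bounds all of $\na^{k+1}u$. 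The lower-order parts of $\|\na u\|_{W^{k,p}}$ are handled by the same argument at smaller $k$.
\end{itemize}
Finally, remove the smoothness assumption by approximation. Approximate $u$ in $W^{k+1,p}(\rr_+)$ by smooth functions with $u^2=0$ on the boundary, for instance by mollifying the reflected field $U$ and restricting back to $\rr_+$.
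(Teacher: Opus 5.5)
Your argument is correct in substance. It takes a different route from the paper, which gives no proof of the lemma and only cites the general $L^p$ div--curl theory of \cite{AJ,MD,WWV}.

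You exploit the flatness of $\p\rr_+$. The parity reflection ($u^1$ even, $u^2$ odd) makes $\div U$ and $\curl U$ the even and odd reflections of $\div u$ and $\curl u$, so the $k=0$ case reduces to the whole-space Riesz-transform bound. For $k\ge1$ you rightly decline to reflect $\curl u$. Instead you use that $\p_1$ preserves $u^2|_{x_2=0}=0$, and you recover normal derivatives from $\p_2u^2=\div u-\p_1u^1$ and $\p_2u^1=\curl u+\p_1u^2$.

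What this buys is a short, self-contained proof of exactly the scale-invariant estimate, with no lower-order terms, that the paper uses on the unbounded half-space. It is in the same spirit as the reflection arguments in Lemma~\ref{bkj1l3}. The cited theory is what one needs for curved or nonsmooth boundaries, where no reflection is available. (Minor: with $\o=\p_2u^1-\p_1u^2$ one has $\Delta U=\na\div U+\na^\bot\curl U$; the sign is immaterial.)

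The one step that fails as written is your final density argument when $k\ge1$. Mollifying the reflected field $U$ converges only in $W^{1,p}$. The even reflection $U^1$ has a kink across $\{x_2=0\}$ wherever $\p_2u^1\neq0$ on the boundary, so $\p_2^2U^1$ carries a line mass. Consequently $\na^2(U*\eta_\ep)$ restricted to $\rr_+$ has $L^p$ norm of order $\ep^{1/p-1}\to\infty$. This is the same obstruction you noticed for $\curl U$: in general $\curl(U*\eta_\ep)=(\curl U)*\eta_\ep$ does not stay bounded in $W^{1,p}(\rr_+)$. The right-hand sides for the approximants therefore blow up, and nothing passes to the limit.

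The repair is to drop boundary-preserving smooth approximation altogether.
\begin{itemize}
\item Your $k=0$ reflection argument works directly for every $W^{1,p}$ field with $\mathrm{tr}\,u^2=0$, so no smoothing is needed there.
\item For $v\in W^{2,p}(\rr_+)$, the identity $\mathrm{tr}(\p_1v)=\p_1(\mathrm{tr}\,v)$ in $\mathcal{D}'(\mathbb{R})$ follows by density of $C^\infty(\ol{\rr_+})$, with no boundary condition required on the approximants.
\item Hence $\p_1^ju\in W^{1,p}$ has vanishing normal trace for every $j\le k$, and the $k=0$ bound applies to it.
\item Your algebraic recovery of the $\p_2$-derivatives then holds a.e. for Sobolev functions.
\end{itemize}
Alternatively, you could subtract from smooth approximants a bounded lifting of the trace of their second component.
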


By virtue of the weighted estimate \cite[Theorem B.1]{L1}, we have the following weighted $L^p$-estimates for functions in $D^1(\rr)$, whose proofs can be found in \cite[Lemma 2.5]{HL3}.
\begin{lemma}\la{jqgj}
Let $\bar{x}$ and $\eta_0$ be as in \eqref{cpsol2}.
Assume that $\n \in L^1(\rr)\cap L^\infty(\rr)$ is a nonnegative function such that
\be\nonumber\ba
\int_{B_{N_1}} \n dx \ge M_1, \quad \int_{\rr} \n^{\ga_1} dx \le M_2, \quad \int_{\rr} \n \bar{x}^{\ga_2} dx \le M_3,
\ea\ee
for positive constants $M_i (i=1,2,3)$, $ N_1 \ge 1$, $\ga_1 >1$, and $\ga_2 \in (1,2)$.
Then there is a positive constant $C$ depending only on $ M_i (i=1,2,3)$,
$N_1$, $\ga_1$, $\ga_2$, and $\eta_0$ such that for any $v\in D^1(\rr)$ and $r\in (1,\infty)$,
\be\la{jqgj1}\ba
\|\n v\|_{L^r(\rr)} \le C r^{\eta_0 + \frac{1}{2}}
\left( 1 + \|\n\|_{L^\infty(\rr)} \right)
\left( \| \sqrt{\n} v \|_{L^2(B_{N_1})} + \|\na  v\|_{L^2(\rr)} \right).
\ea\ee

Furthermore, for any $\ep>0$ and $0< \eta \le 1$, there is a positive constant $C$ depending only on
$ M_i (i=1,2,3)$, $N_1$, $\ga_1$, $\ga_2$, $\eta_0$, $\ep$, and $\eta$ such that every $v \in D^1(\rr)$ satisfies
\be\la{jqgj2}\ba
\| v \bar{x}^{-\eta} \|_{ L^{(2+\ep)/\eta} (\rr) }
\le C \| \sqrt{\n} v \|_{L^2(\rr)} + C \| \na v \|_{L^2(\rr)}.
\ea\ee
\end{lemma}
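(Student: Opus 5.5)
The plan for Lemma \ref{jqgj} is to reduce both estimates to a weighted Hardy-type inequality on $\rr$ combined with the mass-concentration hypothesis. The central object is the weighted norm $\|v\bar{x}^{-1}\|_{L^2}$, or a variant with a logarithmic loss. The key auxiliary claim is
\be\nonumber\ba
\| v \bar{x}^{-1}\|_{L^2(\rr)} \le C\left( \|\sqrt{\n} v\|_{L^2(B_{N_1})} + \|\na v\|_{L^2(\rr)}\right),
\ea\ee
together with its $L^p$ analogues. To prove it, split $v = (v - \bar v_{B_{N_1}}) + \bar v_{B_{N_1}}$, where $\bar v_{B_{N_1}}$ is the average over $B_{N_1}$. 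The mean-zero part is handled by the two-dimensional Hardy-type inequality of \cite[Theorem B.1]{L1}. That inequality gives
\be\nonumber\ba
\int |v-\bar v_{B_{N_1}}|^2 \bar{x}^{-2}\,dx \le C(N_1)\|\na v\|^2_{L^2}.
\ea\ee
The $\log^{1+\eta_0}$ factor in $\bar{x}$ is exactly what makes the borderline $\int |x|^{-2}\log^{-2-2\eta_0}|x|\,dx$ converge in 2D.

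The constant $\bar v_{B_{N_1}}$ is controlled through the density. From $\int_{B_{N_1}}\n\,dx\ge M_1$ we get
\be\nonumber\ba
M_1 |\bar v_{B_{N_1}}| \le \int_{B_{N_1}} \n |v|\,dx + \int_{B_{N_1}} \n|v-\bar v_{B_{N_1}}|\,dx.
\ea\ee
The first term is bounded by $\|\n\|_{L^1}^{1/2}\|\sqrt\n v\|_{L^2(B_{N_1})}$. For the second, use H\"older with $\int\n^{\ga_1}\le M_2$ and Poincar\'e--Sobolev on the ball. This avoids needing $\|\n\|_{L^\infty}$ at this stage. The same splitting also gives the $L^r$ version: $\|(v-\bar v)\bar{x}^{-\eta}\|_{L^{s}}$ is controlled by $\|\na v\|_{L^2}$ for a suitable exponent $s$. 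This follows by interpolating the Hardy bound with $\|v-\bar v\|_{L^p(B_R)}\le C p^{1/2} R^{2/p}\|\na v\|_{L^2}$ on dyadic annuli, with Lemma \ref{gn1} tracking the $p^{1/2}$ constant.

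For \eqref{jqgj2}, write $|v|^{(2+\ep)/\eta}\bar{x}^{-(2+\ep)}$. On each dyadic annulus $\{2^k\le|x|<2^{k+1}\}$, estimate $v$ in $L^{(2+\ep)/\eta}$ by the local Sobolev inequality with a $\log$ growth of the mean. The decay $\bar{x}^{-(2+\ep)}$ beats the logarithmic growth of $|\bar v_{\text{annulus}}|$ (at most $\sqrt{k}\,\|\na v\|_{L^2}$), so the sum over $k$ converges. The constant $\bar v_{B_{N_1}}$ is handled as above, using $\|\sqrt\n v\|_{L^2}$.

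For \eqref{jqgj1}, write
\be\nonumber\ba
\n v = \n^{1-\theta}\bar{x}^{\,\cdot}\cdot \n^{\theta}\, v\bar{x}^{-\eta}.
\ea\ee
Take H\"older so that the $\n$-factor uses the moment $\int \n\bar{x}^{\ga_2}\le M_3$ interpolated with $\|\n\|_{L^\infty}$. Choose $\eta$ of size about $1/r$ so that $\bar{x}^{\eta}$ is absorbed by the moment $\bar{x}^{\ga_2}$, which is possible since $\ga_2>1$. Then apply the weighted bound with $\ep$ fixed, carefully tracking how the constant depends on the Lebesgue exponent $(2+\ep)/\eta\sim r$.

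The main obstacle is the sharp power $r^{\eta_0+1/2}$. The $r^{1/2}$ comes from the Gagliardo--Nirenberg constant in Lemma \ref{gn1}, applied to the local Sobolev step. The extra $r^{\eta_0}$ comes from converting the $\log^{1+\eta_0}$ weight loss into powers of $r$ when $\eta\sim 1/r$: we have $\log^{(1+\eta_0)}$-type factors raised to power $\sim r$, optimized over the annulus scale $R\sim e^{r}$. I would carry this out by explicitly summing over annuli with $R=2^k$, bounding $\sup_k k^{\cdot}2^{-k/r}$, and checking that the resulting exponent is $\eta_0+\frac12$.
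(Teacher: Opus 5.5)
Your argument is sound, but it is not the paper's route. The paper does not prove Lemma \ref{jqgj}; it imports it from Lions's weighted inequality \cite[Theorem B.1]{L1} and from \cite[Lemma 2.5]{HL3}.

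Your control of the mean $\bar v_{B_{N_1}}$ through $\int_{B_{N_1}}\rho\,dx\ge M_1$ and $\int\rho^{\gamma_1}dx\le M_2$ is the same ingredient the cited works rely on. It is what keeps $\|\rho\|_{L^\infty}$ out of \eqref{jqgj2}.

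Your dyadic bookkeeping is a self-contained substitute for the cited weighted estimate. It rests on three facts: a scale-invariant Poincar\'e--Sobolev inequality on each annulus with constant $Cp^{1/2}$; a drift of the annular means of at most $C\sqrt{k}\,\|\nabla v\|_{L^2}$; and a net decay $2^{-\varepsilon k}$, coming from $\bar{x}^{-(2+\varepsilon)}$ against $|A_k|\sim 4^k$. The citation buys brevity; your route makes the dependence on the exponent explicit.

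One imprecision: $\|v-\bar v\|_{L^p(B_R)}\le Cp^{1/2}R^{2/p}\|\nabla v\|_{L^2}$ holds with the mean over $B_R$ itself, not with the fixed mean over $B_{N_1}$. Relative to $\bar v_{B_{N_1}}$ you must add the $\sqrt{\log R}$ drift, as you do later.

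Your last paragraph misidentifies the source of $r^{\eta_0}$, and you should not try to reproduce it. The factor $\log^{1+\eta_0}$ enters $\bar{x}^{-\eta}$ with a negative power and the moment $\int\rho\bar{x}^{\gamma_2}dx$ with a positive power, so it only helps.

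Fix $\varepsilon$ and take $\eta=c/r$, $p=(2+\varepsilon)/\eta$, $\frac1s=\frac1r-\frac1p$. Then
$\|\rho v\|_{L^r}\le\|\rho\bar{x}^{\eta}\|_{L^s}\|v\bar{x}^{-\eta}\|_{L^p}$. Since $\eta s=c/(1-c/(2+\varepsilon))\le\gamma_2$ for $c$ small, you get $\|\rho\bar{x}^{\eta}\|_{L^s}\le\|\rho\|_{L^\infty}^{1-1/s}M_3^{1/s}$. Moreover,
\[ \int|v|^p\bar{x}^{-(2+\varepsilon)}dx\le C^p\sum_{k\ge0}2^{-\varepsilon k}\left(\left(p^{p/2}+k^{p/2}\right)\|\nabla v\|_{L^2}^p+|\bar v_{B_{N_1}}|^p\right)\le C_\varepsilon^p\left(p^{p/2}\|\nabla v\|_{L^2}^p+|\bar v_{B_{N_1}}|^p\right), \]
because $\sum_k2^{-\varepsilon k}k^{p/2}\le(C_\varepsilon p)^{p/2}$.

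This gives $\|\rho v\|_{L^r}\le Cr^{1/2}(1+\|\rho\|_{L^\infty})(\|\sqrt{\rho}v\|_{L^2(B_{N_1})}+\|\nabla v\|_{L^2})$. That is stronger than \eqref{jqgj1}, since $r>1$ and $\eta_0>0$.

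The one thing to avoid is letting $\varepsilon$ shrink with $r$. With a net decay of only $2^{-k/r}$ per annulus, the same sum costs a full power $r^{1}$, which would not suffice.
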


Let $\mathcal{H}^1(\rr)$ and $\mathcal{BMO}(\rr)$ denote the usual Hardy and $\mathcal{BMO}$ spaces.
For a function $b$, define the commutator
\be\ba\la{p4}
[b,R_iR_j](f) \triangleq bR_i\circ R_j(f) - R_i \circ R_j(bf),\quad i,j=1,2,
\ea\ee
where $R_i = (-\Delta)^{-\frac{1}{2}}\pa_i$ is the usual Riesz transform on $\rr$.

We recall the following well-known commutator estimates due to Coifman-Rochberg-Weiss \cite{CRW} and Coifman-Meyer \cite{CM}.
\begin{lemma}\la{jhzyl}
Let $b,f\in C_0^\infty(\rr)$.
Then for $p\in (1,\infty)$, there is a positive constant $C$ depending only on $p$ such that
\be\ba\la{jhz1}
\|[b,R_iR_j](f)\|_{L^p}\leq C \| b\|_{\mathcal{BMO}}\|f\|_{L^p}. \ea\ee
Moreover, for $p,q,r \in (1,\infty)$ with $\frac{1}{r} = \frac{1}{p}+ \frac{1}{q}$, there exists a positive constant $C$ depending only on $p$, $q$, and $r$ such that
\be\ba\la{jhz2}
\|\na[b,R_iR_j](f)\|_{L^r}\leq C \|\na b\|_{L^p}\|f\|_{L^q}.
\ea\ee
\end{lemma}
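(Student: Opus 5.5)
The statement is the classical commutator theorem, and the plan is to reduce it to known harmonic analysis rather than prove it from scratch.

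\textbf{Estimate (\ref{jhz1}).} For the first estimate I would follow Coifman--Rochberg--Weiss. The operator $T=R_iR_j$ is a Calder\'on--Zygmund operator: it is bounded on every $L^p$, $1<p<\infty$, and its kernel $K(x-y)$ is smooth, homogeneous of degree $-2$, and has mean zero on circles (up to a multiple of the identity when $i=j$, which commutes with $b$ and drops out of the commutator).

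The route is by duality with $\mathcal{H}^1$. For $f\in L^p$ and $g\in L^{p'}$, $1/p+1/p'=1$, one has
\[
\int [b,T]f\, g\,dx=\int b\,\bigl(g\,Tf - f\,T^*g\bigr)\,dx .
\]
The key claim is that $g\,Tf-f\,T^*g\in\mathcal{H}^1$ with norm $\le C\|f\|_{L^p}\|g\|_{L^{p'}}$. This is the compensated-compactness/weak-factorization fact. Once it holds, Fefferman's $\mathcal{H}^1$--$\mathcal{BMO}$ duality gives the bound, first for $b,f\in C_0^\infty$, which is exactly the stated setting.

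An alternative route avoids the $\mathcal{H}^1$ step. Take the sharp maximal function of $[b,T]f$ and use John--Nirenberg to obtain
\[
M^\#([b,T]f)\le C\|b\|_{\mathcal{BMO}}\bigl(M_s(Tf)+M_s f\bigr)\quad\text{for some } s\in(1,p),
\]
then apply the Fefferman--Stein inequality. I would present this maximal-function route since it is self-contained.

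\textbf{Estimate (\ref{jhz2}).} Write $\partial_k[b,T]f=[b,T]\partial_k f+(\partial_k b)\,Tf$. Neither term is controlled separately, so use the Coifman--Meyer viewpoint instead. On the Fourier side, $\partial_k[b,T]f$ is a bilinear operator in $(\nabla b,f)$ with symbol
\[
\sigma(\xi,\eta)=\xi_k\,\frac{m(\eta)-m(\xi+\eta)}{\,\cdot\,},
\]
where $\xi$ is the frequency of $b$, $\eta$ that of $f$, and $m(\zeta)=-\zeta_i\zeta_j/|\zeta|^2$. Expanding the difference $m(\eta)-m(\xi+\eta)$ via
\[
\int_0^1 \xi\cdot\nabla m(\eta+t\xi)\,dt
\]
shows that the operator can be written as $\sum_l B_l(\partial_l b,f)$, with each $B_l$ having a symbol homogeneous of degree $0$ that satisfies Coifman--Meyer bilinear multiplier bounds away from the origin. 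Coifman--Meyer then gives
\[
\|B_l(F,f)\|_{L^r}\le C\|F\|_{L^p}\|f\|_{L^q},\qquad \tfrac1r=\tfrac1p+\tfrac1q .
\]

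\textbf{Main obstacle.} The hard step is the symbol regularity check: near $\xi+\eta=0$ and $\eta=0$ the symbol is not smooth. I would handle this with a paraproduct decomposition of $bf$ into the pieces $T_b f$, $T_f b$, and $R(b,f)$:
\begin{itemize}
\item The commutator $[b,T]$ essentially annihilates the low-frequency-$b$ piece up to a term where a derivative falls on $b$.
\item The remaining pieces are estimated directly by Littlewood--Paley square functions, H\"older's inequality, and Bernstein's inequality.
\end{itemize}

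The range $p,q,r\in(1,\infty)$ is needed because the argument relies on square-function bounds in $L^r$ for $r>1$.
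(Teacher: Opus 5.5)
Your plan agrees with the paper, which gives no argument of its own and simply invokes Coifman--Rochberg--Weiss for \eqref{jhz1} and Coifman--Meyer for \eqref{jhz2}; your sharp-maximal-function and paraproduct sketches are standard proofs of exactly those cited results. There are two slips to fix if you write it out: first, $\partial_k[b,T]f=[b,T]\partial_kf+[\partial_kb,T]f$, since you dropped $-T((\partial_kb)f)$, and this second commutator is bounded directly by H\"older and the $L^r$-boundedness of $T$; second, the mean-value expansion of $m(\eta)-m(\xi+\eta)$ gives a genuine Coifman--Meyer symbol only where $|\xi|\ll|\eta|$, because its individual components blow up like $\log(|\xi|/|\eta|)$ as $\eta\to0$ and jump across the segment $\{\eta=-s\xi:\ 0<s<1\}$, so the paraproduct split is essential rather than a local patch.
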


The following Brezis-Wainger inequality (see \cite{BW,E}) will be applied to obtain the upper bound for the density in the Cauchy problem.
\begin{lemma}\la{BWI}
For $2<q<\infty$, there exists some positive constant $C$ depending only on $q$
such that for any $v \in D^1(\rr) \cap W^{1,q}(\rr)$,
\be\la{bwi}\ba
\| v \|_{L^\infty(\rr)} \le C  \left( \| v \|_{L^q(\rr)} + \| \na v \|_{L^2(\rr)} \right) \log^{1/2}(e + \|v\|_{W^{1,q}(\rr)} ) + C.
\ea\ee
\end{lemma}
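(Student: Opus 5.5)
This is the Brezis--Wainger inequality in the form used by Huang--Li, and I would prove it by splitting $v$ into low- and high-frequency parts with a cutoff radius chosen logarithmically. Fix a cutoff $\chi\in C_0^\infty(\rr)$ with $\chi(\xi)=1$ for $|\xi|\le 1$ and $\chi(\xi)=0$ for $|\xi|\ge 2$. For a parameter $N\ge 1$, to be chosen later, set $\hat v=\chi(\xi/N)\hat v+(1-\chi(\xi/N))\hat v\triangleq \hat v_1+\hat v_2$. By density, it suffices to treat Schwartz functions $v$, or to work with the Fourier transform of $v\in L^q$ in the distributional sense.

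\textbf{Low frequencies.} Split $\hat v_1$ further as $\chi(\xi)\hat v+(\chi(\xi/N)-\chi(\xi))\hat v$.
\begin{itemize}
\item The very low part $\mathcal F^{-1}(\chi\hat v)=\check\chi * v$ satisfies, by Young's inequality, $\|\check\chi * v\|_{L^\infty}\le \|\check\chi\|_{L^{q'}}\|v\|_{L^q}\le C\|v\|_{L^q}$.
\item On the annulus $1\le|\xi|\le 2N$, Cauchy--Schwarz gives
\[
\int_{1\le|\xi|\le 2N}|\hat v|\,d\xi\le \Big(\int |\xi|^2|\hat v|^2 d\xi\Big)^{1/2}\Big(\int_{1\le |\xi|\le 2N}|\xi|^{-2}d\xi\Big)^{1/2}\le C\|\na v\|_{L^2}\log^{1/2}(e+N).
\]
\end{itemize}
Together these give $\|v_1\|_{L^\infty}\le C\|v\|_{L^q}+C\|\na v\|_{L^2}\log^{1/2}(e+N)$.

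\textbf{High frequencies.} The step I expect to need the most care is estimating $v_2$ using only $\|\na v\|_{L^q}$ with $q>2$. I would use the Littlewood--Paley decomposition $v_2=\sum_{j\gtrsim\log_2 N}\Delta_j v_2$. Bernstein's inequality gives
\[
\|\Delta_j v\|_{L^\infty}\le C2^{2j/q}\|\Delta_j v\|_{L^q}\le C2^{2j/q-j}\|\na v\|_{L^q}.
\]
Since $q>2$, the exponent $2/q-1$ is negative and the geometric series converges, so $\|v_2\|_{L^\infty}\le C N^{2/q-1}\|\na v\|_{L^q}$. An alternative is to write $v_2=K_N*\na v$ with the kernel $\widehat{K_N}=(1-\chi(\xi/N))\,i\xi/|\xi|^2$. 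This kernel scales as $K_N(x)=N\,K_1(Nx)$, and $\|K_1\|_{L^{q'}}<\infty$ because $q'<2$ and $K_1$ decays rapidly with a $|x|^{-1}$ singularity. Hence $\|K_N\|_{L^{q'}}=CN^{1-2/q'}=CN^{2/q-1}$, which gives the same bound.

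\textbf{Choice of $N$.} Take $N=(e+\|v\|_{W^{1,q}})^{q/(q-2)}$. Then $N^{2/q-1}\|\na v\|_{L^q}\le 1$, while $\log(e+N)\le C\log(e+\|v\|_{W^{1,q}})$. Adding the two pieces gives
\[
\|v\|_{L^\infty}\le C\|v\|_{L^q}+C\|\na v\|_{L^2}\log^{1/2}(e+\|v\|_{W^{1,q}})+C.
\]
Since $\log\ge1$, the $L^q$ term can be absorbed into the product form, which yields \eqref{bwi}. The only point needing care is that $v\in D^1\cap W^{1,q}$ has a well-defined Fourier transform; this holds because $v\in L^q$, so $\hat v$ is a tempered distribution and the computations above are justified on a dense class.
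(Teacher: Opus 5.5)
Your proof is correct. The paper gives no argument of its own for this lemma. It simply quotes Brezis--Wainger \cite{BW} and Engler \cite{E}, so your Fourier splitting is a self-contained substitute rather than a reconstruction of something in the text.

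The natural comparison is with the standard real-variable proof, which mirrors yours in physical space. It uses the potential estimate $|v(x)-\bar v_{B_1(x)}|\le C\int_{B_1(x)}|\nabla v(y)|\,|x-y|^{-1}dy$ together with $|\bar v_{B_1(x)}|\le C\|v\|_{L^q}$, and splits the integral at radius $\delta$:
\begin{itemize}
\item on $B_\delta(x)$, H\"older with $\|\nabla v\|_{L^q}$ gives a factor $\delta^{1-2/q}$;
\item on $\delta<|x-y|<1$, Cauchy--Schwarz with $\|\nabla v\|_{L^2}$ gives a factor $\log^{1/2}(1/\delta)$;
\item one then takes $\delta=(e+\|v\|_{W^{1,q}})^{-q/(q-2)}=N^{-1}$.
\end{itemize}

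That route needs neither Littlewood--Paley theory nor the kernel scaling, and it never touches $\hat v$. This matters because $v\in L^q$ with $q>2$ need not lie in $L^2$. Your version instead makes the mechanism transparent: the logarithm appears exactly as $\int_{1\le|\xi|\le 2N}|\xi|^{-2}d\xi$, and the band $1\lesssim|\xi|\lesssim N$ corresponds to the annulus $N^{-1}\lesssim|x-y|\lesssim 1$. It also handles the fact that only $\|v\|_{L^q}$, not $\|v\|_{L^2}$, is available, via Young's inequality on $\check\chi*v$.

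On your closing remark about justification, the direct argument is cleaner than density. The two pieces in which you treat $\hat v$ as a function are supported in $|\xi|\ge 1$. There $\hat v=-i\xi\cdot\widehat{\nabla v}/|\xi|^2$ is a genuine $L^2_{\mathrm{loc}}$ function, and the lowest-frequency piece is handled in physical space. If you do prefer density, you need approximants converging simultaneously in $W^{1,q}$ and in $\dot H^1$. Mollification plus a cutoff at scale $R$ provides this, since the cutoff error in $\|\nabla\cdot\|_{L^2}$ is bounded by $CR^{-2/q}\|v\|_{L^q}$, which tends to zero because $q>2$.
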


To estimate $\| \na u\|_{L^{\infty}}$ and $\| \na \n\|_{L^{q}}$, we require the following Beale-Kato-Majda type inequality; its proof can be found in \cite{K,BKM,CL}.
\begin{lemma}\la{bkm}
Let $2<q<\infty$.
Assume either that $\na u \in L^2(\rr) \cap D^{1,q}(\rr)$ or that $u \in \tilde{D}^1_+(\rr_+)$ and $\na u \in D_+^{1,q}(\rr_+)$.
Then there exists a positive constant $C$ depending only on $q$ such that
\be\ba\la{bkm1}
\|\na u\|_{L^\infty} \le C \left( \|\div u \|_{L^\infty}
+ \|\curl u\|_{L^\infty} \right) \log \left(e+ \|\na^2 u\|_{L^q} \right)+ C\|\na u\|_{L^2}+C.
\ea\ee
\end{lemma}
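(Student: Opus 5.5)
The plan is to prove \eqref{bkm1} by representing $\na u$ through singular integrals of $\div u$ and $\curl u$. The logarithmic gain comes from splitting the kernel into near, intermediate and far zones. I treat the whole space first and then adapt the argument to $\rr_+$.

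\emph{Whole space.} Set $f_1=\div u$ and $f_2=\curl u$. Since $\na u\in L^2$, we can write
$$u=\na(-\Delta)^{-1}\!\cdot\!\big(-f_1\big)+\na^\bot(-\Delta)^{-1}\!\cdot\!\big(-f_2\big),$$
up to sign conventions, which gives $\na u=\sum_k T_k f_k$. Here each $T_k$ is a matrix of operators of the form $R_iR_j$, i.e. Calder\'on--Zygmund operators with kernels $K(z)=\Omega(z/|z|)/|z|^2$ that have mean zero on circles.

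Fix $x$ and $0<\de\le1$, and split $T f(x)$ into three pieces.
\begin{itemize}
\item Near zone $|x-y|<\de$: by the cancellation of $K$, this piece equals $\int_{|x-y|<\de}K(x-y)(f(y)-f(x))\,dy$. Morrey's inequality gives $|f(y)-f(x)|\le C|x-y|^{1-2/q}\|\na f\|_{L^q}$, so the piece is bounded by $C\de^{1-2/q}\|\na f\|_{L^q}$.
\item Intermediate zone $\de\le|x-y|\le1$: this piece is at most $C\|f\|_{L^\infty}\log(1/\de)$.
\item Far zone $|x-y|>1$: by Cauchy--Schwarz this piece is at most $C\|f\|_{L^2}\le C\|\na u\|_{L^2}$.
\end{itemize}
Next, $\|\na f_k\|_{L^q}\le C\|\na^2u\|_{L^q}$. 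Choose $\de=\min\{1,\|\na^2u\|_{L^q}^{-q/(q-2)}\}$. Then the near zone contributes at most $C$, and $\log(1/\de)\le C\log(e+\|\na^2u\|_{L^q})$. Summing the three pieces gives \eqref{bkm1} on $\rr$.

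\emph{Half-space.} Since $\p\rr_+$ is flat and $u\cdot n=0$, I would decompose $u=\na\psi+\na^\bot\phi$ with
$$\Delta\psi=\div u,\ \ \p_n\psi=0\ \text{on }\p\rr_+;\qquad \Delta\phi=\curl u\ (\text{up to sign}),\ \ \phi=0\ \text{on }\p\rr_+.$$
These Neumann and Dirichlet problems are solved by the reflection Green functions
$$\Gamma(x-y)\pm\Gamma(x-y^*),\qquad y^*=(y_1,-y_2).$$
Equivalently, $\na^2\psi$ is a Riesz-transform expression of the even extension of $\div u$, and $\na^2\phi$ is one of the odd extension of $\curl u$. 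The even extension of $\div u$ lies in $W^{1,q}(\rr)$, so the whole-space argument applies to it verbatim. The intermediate and far zones are handled exactly as before for both pieces, since the reflected kernels obey the same size bounds and $\|\curl u\|_{L^2}\le C\|\na u\|_{L^2}$.

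\emph{Main obstacle.} The difficulty is the near-zone term for $\curl u$. Its odd extension jumps across $\p\rr_+$, because the Navier condition gives $\curl u=-Au\cdot n^\bot\neq0$ there. So the Morrey cancellation argument fails for points $x$ within distance $\de$ of the boundary.

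I would handle this as follows. Write $\curl u=(\curl u-\curl u(\bar x))+\curl u(\bar x)$, where $\bar x$ is the boundary projection of $x$. The first part is H\"older continuous up to the boundary and vanishes at $\bar x$, so its near-zone integral is again bounded by $C\de^{1-2/q}\|\na^2 u\|_{L^q}$. The constant part $\curl u(\bar x)$ produces, against the truncated Dirichlet kernel $\na^2(\Gamma(x-y)-\Gamma(x-y^*))$ over $B_\de(x)\cap\rr_+$, an integral bounded uniformly in $x$ and $\de$. This is the standard boundary estimate for the Dirichlet Laplacian with constant right-hand side, which is explicitly computable on a flat boundary; it contributes at most $C\|\curl u\|_{L^\infty}$. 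This is the Cai--Li style argument in \cite{CL}. If that uniform bound proves awkward to verify directly, I would fall back on local boundary $W^{2,q}$ estimates for $\phi$ on $B^+_{2\de}$ after rescaling, which yield the same $\de^{1-2/q}$ factor.

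Adding the $\na^2\psi$ and $\na^2\phi$ contributions and making the same choice of $\de$ as in the whole-space case gives \eqref{bkm1} on $\rr_+$.
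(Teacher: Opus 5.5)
Your argument is correct. The paper gives no proof of this lemma; it only cites \cite{K,BKM,CL}, so the comparison is with those sources.

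On $\mathbb{R}^2$, your near/intermediate/far splitting with $\delta=\min\{1,\|\nabla^2u\|_{L^q}^{-q/(q-2)}\}$ is exactly the classical Kato/Beale--Kato--Majda argument. On $\mathbb{R}^2_+$ you take a more self-contained route than importing the bounded-domain slip-boundary version of \cite{CL}. The flat boundary reduces everything to Riesz transforms of the even extension of $\div u$ and the odd extension of $\curl u$. You then show explicitly that the jump of the odd extension costs only $C\|\curl u\|_{L^\infty}$. That jump is genuinely present here, since $\curl u=-Au\cdot n^\bot\neq0$ on the boundary. This makes clear that only $u\cdot n=0$ is used, which is precisely what a reader must check when transplanting \cite{CL} to the half-space with the Navier condition.

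You can skip the auxiliary Neumann/Dirichlet problems altogether. Since $u^2=0$ on $\partial\mathbb{R}^2_+$, the field $(u^1\text{ even},\,u^2\text{ odd})$ lies in $D^1(\mathbb{R}^2)$. Its divergence and curl are exactly these even and odd extensions. The fallback via local $W^{2,q}$ estimates is unnecessary, and it would not give pointwise bounds anyway.

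Three details need tightening.

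First, $R_iR_j$ is not a pure mean-zero singular integral. Its symbol splits as $\frac{\delta_{ij}}{2}$ plus a mean-zero part, so there is an extra local term $c\,\delta_{ij}f(x)$. It is harmless, since it is bounded by $C(\|\div u\|_{L^\infty}+\|\curl u\|_{L^\infty})$.

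Second, ``vanishes at $\bar x$'' does not by itself control the near zone. The direct kernel is singular at $y=x$, not at $\bar x$, and $\omega(y)-\omega(\bar x)$ need not vanish near $y=x$. Handle the ball $B_{x_2}(x)\subset\mathbb{R}^2_+$ by the interior cancellation argument, which gives $Cx_2^{1-2/q}\|\nabla\omega\|_{L^q}$. Use $|y-\bar x|\le 2|y-x|$ only on $x_2\le|x-y|<\delta$. The reflected kernel is harmless throughout because $|x-y^*|\ge\max\{|x-y|,x_2\}$.

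Third, the uniform bound you invoke for the constant part does hold, but it deserves one line. The angular part of $\nabla^2\Gamma$ is a combination of $\cos2\theta$ and $\sin2\theta$, so it has zero mean over every half circle. This gives $\bigl|\mathrm{p.v.}\int_{B_\delta(x)\cap\mathbb{R}^2_+}\nabla^2\Gamma(x-y)\,dy\bigr|\le C\int_{x_2}^{\delta}\frac{x_2}{r^2}\,dr\le C$. The reflected piece is bounded by the same kind of elementary computation.
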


Finally, we show that any unit vector field defined on a simply connected domain in $\rr$ admits a polar representation.
\begin{lemma}\la{jzb}
Let $\OM \subset \rr$ be a simply connected domain.
Assume that $d=(d^1,d^2) \in C^2(\OM)$ with $|d|=1$.
Then there exists a function $\theta \in C^2(\OM)$ such that
\be\la{jzb1}\ba
d=(\cos \theta, \sin \theta).
\ea\ee
\end{lemma}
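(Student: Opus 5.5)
The plan is a lifting argument via covering space theory, made concrete through a line integral of a closed $1$-form. Since $|d|=1$, we have $d\cdot \pa_i d = 0$ for $i=1,2$. Set $d^\bot \triangleq (-d^2,d^1)$. Every vector orthogonal to $d$ is then a multiple of $d^\bot$, so
\be\nonumber
\pa_i d = \phi_i\, d^\bot,\qquad \phi_i \triangleq d^1\pa_i d^2 - d^2 \pa_i d^1,\quad i=1,2.
\ee
Because $d\in C^2$, the form $\phi = (\phi_1,\phi_2)$ is $C^1$. The candidate angle is a primitive of $\phi$: any $\theta$ with $\na\theta=\phi$ satisfies $\pa_i\theta = d^1\pa_i d^2 - d^2\pa_i d^1$. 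Note that when $d=(\cos\theta,\sin\theta)$ this expression equals $\pa_i\theta$, so the choice is consistent.

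The first key step is to check that $\phi$ is closed. A direct computation gives
\be\nonumber
\pa_1\phi_2-\pa_2\phi_1 = 2\left(\pa_1 d^1\pa_2 d^2 - \pa_1 d^2 \pa_2 d^1\right) = 2\det(\na d).
\ee
The second-derivative terms cancel here, because $d\in C^2$ and mixed partials commute. The determinant vanishes, since both columns $\pa_1 d$ and $\pa_2 d$ are parallel to $d^\bot$: the Jacobian has rank at most one.

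Next, fix $x_0\in\OM$ and choose $\theta_0$ with $d(x_0)=(\cos\theta_0,\sin\theta_0)$. Define
\be\nonumber
\theta(x) \triangleq \theta_0 + \int_{\gamma_x} \phi\cdot dl,
\ee
where $\gamma_x$ is any piecewise $C^1$ path in $\OM$ from $x_0$ to $x$. Since $\OM$ is simply connected and $\phi$ is a closed $C^1$ form, Green's theorem (or the Poincaré lemma on simply connected domains) makes this integral path-independent. Hence $\theta\in C^2(\OM)$ and $\na\theta = \phi$.

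The final step is to show that $e\triangleq(\cos\theta,\sin\theta)$ coincides with $d$. Consider $f\triangleq d\cdot e$ and $g\triangleq d\cdot e^\bot$. Using $\pa_i d = \phi_i d^\bot$ and $\pa_i e = \phi_i e^\bot$, compute
\be\nonumber
\pa_i f = \phi_i\,(d^\bot\cdot e + d\cdot e^\bot) = 0 .
\ee
This holds because $d^\bot\cdot e = -d\cdot e^\bot$. The same argument gives $\pa_i g = 0$. Since $\OM$ is connected, $f$ and $g$ are constant, and their values at $x_0$ are $f=1$ and $g=0$. Therefore $d=e$ everywhere.

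The only genuinely delicate point is the exactness step, which is where simple connectivity is used. For that step one could cite the standard de Rham fact that closed $C^1$ one-forms on a simply connected planar domain are exact.
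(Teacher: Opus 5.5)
Your proof is correct and follows the paper's argument: it uses the same closed $1$-form $\phi = d^1\na d^2 - d^2\na d^1$ (the paper's $Q$), the same curl computation that vanishes because $|d|=1$, and the same appeal to simple connectivity to obtain a primitive $\theta$. Your last step, showing $d\cdot e$ and $d\cdot e^\bot$ are constant, is a repackaging of the paper's check that $|d-e|^2 = 2-2\,d\cdot e$ is constant; the computation for $g = d\cdot e^\bot$ is not needed, since $f\equiv 1$ together with $|d|=|e|=1$ already forces $d=e$.
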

\begin{proof}
Define the vector field
\be\nonumber\ba
Q = (Q_1,Q_2),
\ea\ee
where
\be\ba\nonumber
Q_1 \triangleq -d^2 \frac{\p d^1}{\p x_1} + d^1 \frac{\p d^2}{\p x_1}, \quad
Q_2 \triangleq -d^2 \frac{\p d^1}{\p x_2} + d^1 \frac{\p d^2}{\p x_2}.
\ea\ee
A direct computation gives
\be\ba\la{jzb11}
\frac{\p Q_1}{\p x_2} - \frac{\p Q_2}{\p x_1}
= 2 \left( \frac{\p d^1}{\p x_2} \frac{\p d^2}{\p x_1} - \frac{\p d^1}{\p x_1} \frac{\p d^2}{\p x_2} \right).
\ea\ee
Since $|d|=1$, we have
\be\ba\la{jzb12}
d^1\frac{\p d^1}{\p x_1} + d^2\frac{\p d^2}{\p x_1} = 0, \quad
d^1\frac{\p d^1}{\p x_2} + d^2\frac{\p d^2}{\p x_2} = 0.
\ea\ee
Using (\ref{jzb11}), (\ref{jzb12}), and the fact that $|d|=1$, we arrive at
\be\ba\nonumber
\na^\bot \cdot Q = \frac{\p Q_1}{\p x_2} - \frac{\p Q_2}{\p x_1} = 0.
\ea\ee
Since $\OM$ is simply connected, Green's theorem (see \cite[Chapter 10]{RW}) implies that there exists $\theta \in C^2(\OM)$ such that
\be\ba\la{jzb13}
\na \theta = Q = -d^2 \na d^1 + d^1 \na d^2.
\ea\ee

It remains to verify that $\theta$ satisfies (\ref{jzb1}).
Since $\theta$ is determined by (\ref{jzb13}) only up to an additive constant, we may choose this constant such that
\be\ba\nonumber
d(x_0)=(\cos \theta(x_0), \sin \theta(x_0)),
\ea\ee
for some $x_0 \in \OM$.
Let
\be\ba\la{jzb14}
U_1 \triangleq d^1 - \cos \theta, \quad U_2 \triangleq d^2 - \sin \theta.
\ea\ee
A straightforward calculation together with (\ref{jzb13}) yields
\be\ba\nonumber
\na U_1 = \left( 1 - d^2 \sin \theta \right) \na d^1 + d^1 \sin \theta \na d^2, \quad
\na U_2 = d^2 \cos \theta \na d^1 + \left( 1 - d^1 \cos \theta \right) \na d^2.
\ea\ee
Thus, we have
\be\ba\la{jzb15}
\frac{1}{2}\na \left( U^2_1 + U^2_2 \right) = U_1 \na U_1 + U_2 \na U_2
= \left( 1 - d^2 \sin \theta - d^1 \cos \theta \right) \left( d^1 \na d^1 + d^2 \na d^2 \right).
\ea\ee
By (\ref{jzb12}), the right-hand side of (\ref{jzb15}) vanishes identically.
Hence, $U^2_1 + U^2_2$ is constant in $\OM$.
Since $U_1(x_0)=U_2(x_0)=0$, it follows that $U_1(x)=U_2(x)=0$ for all $x \in \OM$.
Combining this with (\ref{jzb14}), we obtain (\ref{jzb1}) and complete the proof.
\end{proof}

\section{A Priori Estimates for the Cauchy Problem}
In this section, we establish a uniform upper bound for $\n$ and derive higher-order estimates for the solution to the Cauchy problem, thereby extending the local solution globally in time.

We set
\be\la{a1}\ba
A_1^2(t) \triangleq 1 + \int \left( \mu \o^2 (t) + \frac{ G^2(t) }{2\mu+\lam(\n(t))} + |\Delta d(t)|^2 \right) dx,
\ea\ee
\be\ba\la{a2}
A_2^2(t) \triangleq \int \left( \rho |\dot{u}|^2 + |\na \Delta d|^2 + |\na d_t|^2 \right) dx,
\ea\ee
and
\be\ba\la{mdsj}
R_T \triangleq 1 + \sup_{0 \le t \le T} \| \n(t) \|_{L^\infty}.
\ea\ee

\subsection{The Case of Vacuum Far-field Density}
In this subsection, we assume the initial data $(\n_0,u_0,d_0)$ satisfy (\ref{cpsol1}) and
\be\la{rho00}\ba
\n_0 >0, \quad \int_{B_{N_0}} \n_0 dx \ge \frac{1}{2} \int_{\rr} \n_0 dx \ge \frac{1}{2},
\ea\ee
for some positive constant $N_0$.
Moreover, we suppose that $(\n,u,d)$ is the strong solution of (\ref{nlckv})--(\ref{i30}), (\ref{bjtj2}) on $\rr \times (0,T]$ with $\tilde{\n}=0$.

In addition, we set
\be\la{e1}\ba
E_1 \triangleq \| \sqrt{\n_0} u_0 \|_{L^2} + \| \n_0 \|_{L^\infty} + \| {\bar{x}}^a \rho_0 \|_{L^1 \cap W^{1,q}} +
\| \na u_0 \|_{L^2} + \| \bar{x}^{\frac{a}{2}} \na d_0 \|_{L^2} + \| \na^2 d_0 \|_{L^2}.
\ea\ee

First, we establish the following basic energy estimate.

\begin{lemma}\la{cp1l1}
There exist positive constants $C$ and $N_1$ both depending only on
$T$, $a$, $\ga$, $\mu$, $N_0$, $\| \sqrt{\n_0} u_0 \|_{L^2}$, $\| {\bar{x}}^a \rho_0 \|_{L^1}$,
$\| \n_0 \|_{L^\ga}$, and $\| \na d_0 \|_{L^2}$ such that
\be\la{1cp01}\ba
& \sup\limits_{0\le t\le T}\int\left(\n|u|^2+\n^\ga+\n\bar x^a
+ |\na d|^2 \right) dx \\
& + \int_0^T \int\left( \mu |\na u|^2+ \lambda(\n) (\div u)^2 + | \na^2 d |^2 \right) dxdt
\le C,
\ea\ee
and
\be\la{1cp001}\ba
\inf\limits_{0\le t\le T}\int_{B_{N_1} }\n  dx \ge \frac{1}{4} \int_{\rr} \n_0 dx.
\ea\ee
\end{lemma}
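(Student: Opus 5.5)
We first derive the standard energy identity. We multiply $(\ref{nlckv})_2$ by $u$ and use $(\ref{nlckv})_1$ to handle the convective and pressure terms; since $\tilde\n=0$, the potential energy is $\frac{1}{\ga-1}\n^\ga$. We multiply $(\ref{nlckv})_3$ by $-(\Delta d+|\na d|^2 d)$ and use $|d|=1$, so that $d\cdot d_t=0$ and $d\cdot\na d=0$. The coupling terms $-\na d\cdot\Delta d\cdot u$ and $(u\cdot\na d)\cdot\Delta d$ then cancel, because $(u\cdot\na d)\cdot |\na d|^2 d=0$. Integrating over $\rr\times(0,t)$ gives
\be\nonumber\ba
\sup_{0\le t\le T}\int\Big(\tfrac12\n|u|^2+\tfrac{\n^\ga}{\ga-1}+\tfrac12|\na d|^2\Big)dx+\int_0^T\!\!\int\big(\mu|\na u|^2+(\mu+\lam)(\div u)^2+|\Delta d+|\na d|^2d|^2\big)dxdt\le C.
\ea\ee
Here we use the identity $\int|\na u|^2=\int(\o^2+(\div u)^2)$ on $\rr$. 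This bounds all terms except $\int\n\bar x^a$ and $\int_0^T\|\na^2d\|_{L^2}^2$.

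The main obstacle is the $\na^2 d$ bound without the geometric condition (\ref{jhtj}). We apply Lemma \ref{jzb} on the simply connected domain $\rr$ to write $d=(\cos\theta,\sin\theta)$, with $\na\theta=-d^2\na d^1+d^1\na d^2$. This gives $|\na d|^2=|\na\theta|^2$. A direct computation gives $\Delta d+|\na d|^2d=\Delta\theta\,(-\sin\theta,\cos\theta)$, and hence $|\Delta d+|\na d|^2d|=|\Delta\theta|$. Consequently $\int_0^T\|\Delta\theta\|_{L^2}^2\le C$, and $\|\na^2\theta\|_{L^2}=\|\Delta\theta\|_{L^2}$ on $\rr$. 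Differentiating $d$ twice gives
\be\nonumber
|\na^2d|\le C(|\na^2\theta|+|\na\theta|^2).
\ee
By Lemma \ref{gn1} with $p=4$,
\be\nonumber
\|\na\theta\|_{L^4}^4\le C\|\na\theta\|_{L^2}^2\|\na^2\theta\|_{L^2}^2\le C\|\na d\|_{L^2}^2\|\Delta\theta\|_{L^2}^2.
\ee
This is integrable in time by the energy bound, so $\int_0^T\|\na^2d\|_{L^2}^2\le C$. For this to be rigorous we need enough regularity to apply Lemma \ref{jzb}. We would use the $C^2$ regularity of the local strong solution for $t>0$, or a mollification argument, noting that $\theta$ itself never needs to be integrable, only $\na\theta$ and $\na^2\theta$.

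Next, the weighted density bound. We multiply $(\ref{nlckv})_1$ by $\bar x^a$ and integrate:
\be\nonumber
\frac{d}{dt}\int\n\bar x^a=\int\n u\cdot\na\bar x^a\le C\int\n\bar x^{a-1}\log^{1+\eta_0}(e+|x|^2)|u|.
\ee
Since $a<2$, we have $\bar x^{a-1}\log^{1+\eta_0}\le C\bar x^{a/2}$ for the relevant growth. Cauchy--Schwarz then bounds the right-hand side by $C(\int\n|u|^2)^{1/2}(\int\n\bar x^{a})^{1/2}$. Gronwall's inequality yields $\sup_t\int\n\bar x^a\le C(T)$.

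Finally, (\ref{1cp001}) follows the standard argument of \cite{HL3}. We take a cutoff $\varphi_N$ supported in $B_N$ with $\varphi_N=1$ on $B_{N/2}$ and $|\na\varphi_N|\le C/N$. Then
\be\nonumber
\Big|\frac{d}{dt}\int\n\varphi_N\Big|=\Big|\int\n u\cdot\na\varphi_N\Big|\le CN^{-1}\|\n\|_{L^1}^{1/2}\|\sqrt\n u\|_{L^2}.
\ee
Integrating in time and using (\ref{rho00}) with mass conservation,
\be\nonumber
\int_{B_N}\n\ge\int_{B_{N_0}}\n_0-CT/N\ge\tfrac12\int\n_0-CT/N.
\ee
Choosing $N_1=\max\{2N_0,4CT/\int\n_0\}$ gives the lower bound $\frac14\int\n_0$.
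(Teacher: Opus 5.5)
Your proof is correct and follows the paper's route: the same energy identity, then the polar lifting $d=(\cos\theta,\sin\theta)$ from Lemma \ref{jzb} with $|\Delta d+|\na d|^2d|=|\Delta\theta|$, and Gagliardo--Nirenberg to bound $\int_0^T\|\na^2 d\|_{L^2}^2$. For the weighted moment $\int\n\bar x^a$ and the mass-localization bound (\ref{1cp001}), the paper here defers to \cite[Lemma 3.2]{HL3}, and your Gronwall and cutoff arguments are exactly the ones it writes out in the half-space Lemma \ref{bkj1l1}.
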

\begin{proof}
	First, multiplying $(\ref{nlckv})_2$ and $(\ref{nlckv})_3$ by $u$ and $-(\Delta d + |\na d|^2 d)$, respectively, integrating by parts, adding the results, and using $(\ref{nlckv})_1$ and $(\ref{nlckv})_4$, we arrive at
	\be\la{bp11}\ba
	& \frac{d}{dt} \int \left( \frac{1}{2}\rho |u|^2 + \frac{P}{\ga-1} + \frac{1}{2} |\na d|^2 \right) dx \\
	& + \int \left( (\mu + \lam(\n)) (\div u)^2 + \mu |\nabla u|^2 + |\Delta d + |\na d|^2 d|^2 \right) dx = 0.
	\ea\ee
	Integrating (\ref{bp11}) over $(0,T)$ yields
	\be\la{bp12}\ba
	& \sup_{0\le t \le T} \int \left( \frac{1}{2}\rho |u|^2 + \frac{P}{\ga-1} + \frac{1}{2} |\na d|^2 \right) dx \\
	& + \int_0^T \int \left( (\mu + \lam(\n)) (\div u)^2 + \mu |\nabla u|^2 + |\Delta d + |\na d|^2 d|^2 \right) dx dt \le C.
	\ea\ee

	We next estimate the $L^2(\mathbb{R}^2 \times (0,T))$-norm of $\na^2 d$. To this end, we exploit the geometric constraint $|d|=1$ by representing $d$ in polar coordinates.
	
	By Lemma \ref{jzb}, there exists a function $\theta \in C^2(\mathbb{R}^2)$ such that
	\be\la{bp13}\ba
	d=(\cos \theta, \sin \theta ).
	\ea\ee
	A direct calculation gives for any $i,j=1,2$,
	\be\la{bp103}\ba
	\p_i d = (-\sin \theta, \cos \theta) \p_i \theta
	= (-d^2,d^1) \p_i \theta,
	\ea\ee
	and
	\be\la{bp14}\ba
	\p_{ij} d = (- \sin \theta, \cos \theta)\p_{ij} \theta - (\cos \theta, \sin \theta) \p_i \theta \p_j \theta.
	\ea\ee
	Consequently, (\ref{bp103}) and the fact that $|d|=1$ imply $|\na d|=|\na \theta|$.
	Combining this with (\ref{bp13}) and (\ref{bp14}), we obtain
	\be\la{bp16}\ba
	\Delta d & = (-\sin \theta, \cos \theta) \Delta \theta - d |\na \theta|^2 \\
	& = (-\sin \theta, \cos \theta) \Delta \theta - d |\na d|^2,
	\ea\ee
	which yields
	\be\la{bp19}\ba
	|\Delta d + |\na d|^2 d|^2 = |\Delta \theta|^2.
	\ea\ee
	By (\ref{bp12}) and (\ref{bp19}), we have
	\be\la{bp110}\ba
	\int_0^T \| \Delta \theta \|^2_{L^2} dt \leq C.
	\ea\ee
	From (\ref{gn11}), (\ref{bp12}), (\ref{bp14}), and the standard elliptic estimates, we deduce that
	\be\la{bp115}\ba
	\| \na^2 d \|^2_{L^2} & \le C \left( \| \na \theta \|^4_{L^4} + \| \na^2 \theta \|^2_{L^2} \right) \\
	& \le C \left( \| \na^2 \theta \|^2_{L^2} \| \na \theta \|^2_{L^2} + \| \na^2 \theta \|^2_{L^2} \right) \\
	& \le C \| \na^2 \theta \|^2_{L^2} \le C \| \Delta \theta \|^2_{L^2},
	\ea\ee
	which together with (\ref{bp110}) leads to
	\be\la{bp116}\ba
	\int_0^T \| \na^2 d \|^2_{L^2} dt
	\leq C \int_0^T \| \Delta \theta \|^2_{L^2} dt \leq C.
	\ea\ee
	Combining this with (\ref{bp12}) gives (\ref{1cp01}).
    
    Finally, using (\ref{1cp01}) and adapting the argument in \cite[Lemma 3.2]{HL3}, we arrive at (\ref{1cp001}) and complete the proof of Lemma \ref{cp1l1}.
\end{proof}

\begin{lemma}\la{cp1l2}
There exists a positive constant $C$ depending only on
$T$, $a$, $\ga$, $\mu$, $N_0$, $\| \sqrt{\n_0} u_0 \|_{L^2}$, $\| {\bar{x}}^a \rho_0 \|_{L^1}$,
$\| \n_0 \|_{L^\ga}$, and $\| \bar{x}^{\frac{a}{2}} \na d_0 \|_{L^2}$ such that
\be\ba\la{1cp02}
\sup_{0\leq t\leq T} \|\nabla d \bar{x}^{\frac{a}{2}} \|_{L^{2}}^{2}
+ \int_{0}^{T} \|\nabla^{2} d\bar{x}^{\frac{a}{2}}\|_{L^{2}}^{2} dt
\leq C.
\ea\ee
Moreover, for any $2<p<\infty$, there exists a positive constant $C$ depending only on $p$,
$T$, $a$, $\ga$, $\mu$, $N_0$, $\| \sqrt{\n_0} u_0 \|_{L^2}$, $\| {\bar{x}}^a \rho_0 \|_{L^1}$,
$\| \n_0 \|_{L^\ga}$, and $\| \na d_0 \|_{H^1}$ such that
\be\la{1cp002}\ba
\sup_{0 \le t \le T} \| \na d \|_{L^p} \le C.
\ea\ee
\end{lemma}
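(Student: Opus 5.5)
Both estimates follow from testing the director equation against suitable multipliers and closing with Lemma \ref{cp1l1}, which gives $\sup_t\|\na d\|_{L^2}^2+\int_0^T(\|\na u\|_{L^2}^2+\|\na^2 d\|_{L^2}^2)dt\le C$. A useful first observation is that $|d|=1$ gives $\Delta d\cdot d=-|\na d|^2$. Hence the tension field $\Delta d+|\na d|^2 d$ is the projection of $\Delta d$ orthogonal to $d$.

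\emph{Weighted estimate \eqref{1cp02}.} I would differentiate $(\ref{nlckv})_3$ in $x_k$, multiply by $\p_k d\,\bar{x}^{a}$, sum over $k$, and integrate by parts. This gives
\begin{align*}
\frac12\frac{d}{dt}\int |\na d|^2\bar{x}^a dx+\int|\na^2 d|^2\bar{x}^a dx
={}&-\int \p_{ik}d\cdot\p_k d\,\p_i\bar{x}^a dx-\int \p_k(u\cdot\na d)\cdot\p_k d\,\bar{x}^a dx\\
&+\int \p_k(|\na d|^2 d)\cdot\p_k d\,\bar{x}^a dx.
\end{align*}
The terms are handled as follows.
\begin{itemize}
\item \emph{Weight term.} Since $|\na\bar{x}^a|\le C\bar{x}^{a-1}\log^{1+\eta_0}(e+|x|^2)\cdot$(bounded)$\le C\bar{x}^a$, the first term is bounded by $\frac18\|\na^2d\bar{x}^{a/2}\|_{L^2}^2+C\|\na d\bar{x}^{a/2}\|_{L^2}^2$.
\item \emph{Convection term.} I would write it as $\int \p_k u\cdot\na d\cdot\p_k d\,\bar{x}^a+\frac12\int u\cdot\na|\na d|^2\,\bar{x}^a$. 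Integrating the second piece by parts produces $\div u$ and $u\cdot\na\bar{x}^a$ terms.
\item \emph{Nonlinear term.} This is bounded by $C\int|\na d|^2|\na^2 d|\bar{x}^a+\int|\na d|^4\bar{x}^a$.
\end{itemize}

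For the $u$-terms, I use $|\na u|\,|\na d|^2\bar{x}^a$ together with Gagliardo–Nirenberg \eqref{gn11} applied to $f=|\na d|\bar{x}^{a/2}$:
\[
\|\na d\bar{x}^{a/2}\|_{L^4}^2\le C\|\na d\bar{x}^{a/2}\|_{L^2}\bigl(\|\na^2 d\bar{x}^{a/2}\|_{L^2}+\|\na d\bar{x}^{a/2}\|_{L^2}\bigr).
\]
This bounds the term by $\frac18\|\na^2 d\bar{x}^{a/2}\|^2_{L^2}+C(1+\|\na u\|_{L^2}^2)\|\na d\bar{x}^{a/2}\|_{L^2}^2$.

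The piece $\int |u|\,|\na d|^2|\na\bar{x}^a|$ is the delicate one. I would use Lemma \ref{jqgj} \eqref{jqgj2}, which applies thanks to Lemma \ref{cp1l1} and \eqref{1cp001}, to get $\|u\bar{x}^{-\eta}\|_{L^{(2+\ep)/\eta}}\le C(1+\|\na u\|_{L^2})$. Since $|\na\bar{x}^a|\le C\bar{x}^{a-1+\eta'}$, I put $\bar{x}^{-\eta}$ on $u$ and absorb the remaining weight into $\bar{x}^{a}$ on $|\na d|^2$. This uses the margin $\bar{x}^{a-1}\log$ against $\bar{x}^a$, so $\eta$ small suffices. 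The $L^{r}$ norm of $|\na d|^2\bar{x}^{a}$ with $r$ close to $1$ is then interpolated between $L^1$ and the $L^2$ (that is, $L^4$-squared) bound above.

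The quartic terms are bounded using $\int|\na d|^4\bar{x}^a\le \|\na d\|_{L^4}^2\|\na d\bar{x}^{a/2}\|_{L^4}^2$ and $\|\na d\|_{L^4}^2\le C\|\na d\|_{L^2}\|\na^2 d\|_{L^2}$. They are absorbed in the same way, producing a Gronwall coefficient $1+\|\na u\|_{L^2}^2+\|\na^2 d\|_{L^2}^2\in L^1(0,T)$. Gronwall then gives \eqref{1cp02}. I expect this weighted convection term to be the main obstacle. If the weight bookkeeping fails, I would first try replacing $\na\bar{x}^a$ by $\bar{x}^{a-1}\log^{1+\eta_0}$ exactly and then splitting the log factor.

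\emph{$L^p$ estimate \eqref{1cp002}.} Here I differentiate $(\ref{nlckv})_3$ and test with $|\na d|^{p-2}\p_k d$, as announced in the introduction. This gives
\begin{align*}
\frac1p\frac{d}{dt}\|\na d\|_{L^p}^p+\int|\na d|^{p-2}|\na^2 d|^2dx
\le{}& C\int|\na u|\,|\na d|^p\,dx+C\int|\na d|^{p}|\na^2d|\,dx\\
&+C\int|\na d|^{p+2}\,dx.
\end{align*}
The transport part $u\cdot\na|\na d|^p$ contributes a $\div u$ term, which is included in the first term on the right. Setting $w=|\na d|^{p/2}$, we have $\int|\na w|^2\le C\int|\na d|^{p-2}|\na^2d|^2$.

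For the first term, $\int|\na u|w^2\le\|\na u\|_{L^2}\|w\|_{L^4}^2\le \ep\|\na w\|_{L^2}^2+C\|\na u\|_{L^2}^2\|w\|_{L^2}^2$. For the remaining terms, $\int|\na d|^2w^2\le\|\na d\|_{L^4}^2\|w\|_{L^4}^2$ is handled similarly, with coefficient $\|\na d\|_{L^4}^4\le C\|\na^2 d\|_{L^2}^2$. Gronwall with the integrable coefficient from Lemma \ref{cp1l1} closes the argument. The initial value $\|\na d_0\|_{L^p}$ is controlled by $\|\na d_0\|_{H^1}$ by Sobolev embedding.
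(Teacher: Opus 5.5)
Your proposal is correct and follows the paper's proof essentially step by step. You test the differentiated director equation with $\na d\,\bar{x}^{a}$, controlling $u$ through \eqref{jqgj2} and $\na d\,\bar{x}^{a/2}$ in $L^4$ via Gagliardo--Nirenberg, and with $|\na d|^{p-2}\na d$, then close by Gr\"onwall with the integrable coefficient $1+\|\na u\|_{L^2}^2+\|\na^2 d\|_{L^2}^2$ from Lemma \ref{cp1l1}. The differences are cosmetic: the paper handles $\int|u||\na d|^2|\na\bar{x}^a|$ by putting $\bar{x}^{-3/4}$ on $u$ in $L^4$ instead of your small-$\eta$ interpolation, and it uses $d\cdot\p_k d=0$ to eliminate the $|\na d|^2|\na^2 d|$ cross terms rather than estimating them.
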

\begin{proof}
According to (\ref{1cp01}), (\ref{1cp001}) and Lemma \ref{jqgj}, for any $\ep>0$ and $0<\eta\le 1$, we have
\be\la{jqgju}\ba
\| u \bar{x}^{-\eta} \|_{ L^{(2+\ep)/\eta} }
\le C \left( 1 + \| \na u \|_{L^2} \right).
\ea\ee
Applying $\na$ to $\eqref{nlckv}_3$ leads to
\be\la{1cp21}\ba
\na d_t-\Delta \na d=-\na (u\cdot\na d)+\na(|\na d|^2 d).
\ea\ee
Multiplying (\ref{1cp21}) by $\nabla d \bar{x}^{a}$ and integrating by parts over $\rr$ yields
\be\la{1cp22}\ba
\frac{1}{2}\frac{d}{dt} \|\nabla d \bar{x}^{\frac{a}{2}}\|_{L^{2}}^{2} 
+ \|\nabla^{2} d \bar{x}^{\frac{a}{2}}\|_{L^{2}}^{2}
\leq & C \int |\nabla d| |\nabla^{2} d| |\nabla \bar{x}^{a}| dx
+ C \int |\nabla u||\nabla d|^{2} \bar{x}^{a} dx \\
& + C \int |u||\nabla d|^{2} |\nabla \bar{x}^{a}| dx
+  C \int |\nabla d|^{4} \bar{x}^{a} dx \\
& \triangleq \sum_{i=1}^{4}J_i.
\ea\ee
We now estimate the terms $J_i$ on the right-hand side of (\ref{1cp22}).

Using Young's inequality, we have
\be\la{1cp23}\ba
J_{1}\leq C \int |\nabla d| |\nabla^{2} d| \bar{x}^{a}dx\leq \frac{1}{10} \|\nabla^{2} d \bar{x}^{\frac{a}{2}}\|_{L^{2}}^{2}
+C\|\nabla d \bar{x}^{\frac{a}{2}}\|_{L^{2}}^{2}.
\ea\ee
Furthermore, (\ref{gn11}) and Young's inequality ensure that for any $\ep>0$,
\be\la{1cp24}\ba
\|\nabla d \bar{x}^{\frac{a}{2}}\|_{L^{4}}^{2}
& \le C \|\nabla d \bar{x}^{\frac{a}{2}}\|_{L^{2}} \|\nabla (\nabla d \bar{x}^{\frac{a}{2}})\|_{L^{2}} \\
& \le C \|\nabla d \bar{x}^{\frac{a}{2}}\|_{L^{2}}
\left( \|\nabla^{2}d \bar{x}^{\frac{a}{2}}\|_{L^{2}}
+ \|\nabla d \nabla \bar{x}^{\frac{a}{2}}\|_{L^{2}} \right) \\
& \le C \|\nabla^{2}d \bar{x}^{\frac{a}{2}}\|_{L^{2}} \|\nabla d \bar{x}^{\frac{a}{2}}\|_{L^{2}}
+ C \|\nabla d \bar{x}^{\frac{a}{2}}\|^2_{L^{2}}.
\ea\ee
This, together with H\"older's inequality, shows that
\be\la{1cp25}\ba
J_{2} \leq C \|\nabla u\|_{L^{2}} \|\nabla d \bar{x}^{\frac{a}{2}}\|_{L^{4}}^{2}
& \leq C \|\nabla^{2}d \bar{x}^{\frac{a}{2}}\|_{L^{2}} \|\nabla d \bar{x}^{\frac{a}{2}}\|_{L^{2}} \| \na u \|_{L^2}
+ C \|\nabla d \bar{x}^{\frac{a}{2}}\|^2_{L^{2}} \| \na u \|_{L^2} \\
& \leq \frac{1}{10} \|\nabla^{2} d \bar{x}^{\frac{a}{2}}\|_{L^{2}}^{2}
+ C \|\nabla d \bar{x}^{\frac{a}{2}}\|_{L^{2}}^{2} \left( 1 + \| \na u \|^2_{L^2} \right).
\ea\ee
Moreover, from (\ref{1cp01}), (\ref{jqgju}), (\ref{1cp24}) and H\"older's inequality, we deduce
\be\la{1cp26}\ba
J_{3}\leq C \int |u| |\nabla d|^2 \bar{x}^{a-\frac{3}{4}}dx
\leq & \|\nabla d \bar{x}^{\frac{a}{2}}\|_{L^{4}} \|\nabla d \bar{x}^{\frac{a}{2}}\|_{L^{2}}\|u\bar{x}^{-\frac{3}{4}}\|_{L^{4}} \\
\leq & C\|\nabla d \bar{x}^{\frac{a}{2}}\|_{L^{4}}^{2}
+ C (\| \sqrt{\n} u \|_{L^{2}}^{2}
+ \|\nabla u\|_{L^{2}}^{2})\|\nabla d \bar{x}^{\frac{a}{2}}\|_{L^{2}}^{2} \\
\leq & \frac{1}{10} \|\nabla^{2} d \bar{x}^{\frac{a}{2}}\|_{L^{2}}^{2}
+ C \|\nabla d\bar{x}^{\frac{a}{2}}\|_{L^{2}}^{2} \left( 1 + \| \na u \|^2_{L^2} \right),
\ea\ee
and
\be\la{1cp27}\ba
J_{4} \leq C \|\nabla d\|^2_{L^{4}} \|\nabla d \bar{x}^{\frac{a}{2}}\|_{L^{4}}^{2}
& \le C \|\nabla^2 d\|_{L^{2}} \left( \|\nabla^{2}d \bar{x}^{\frac{a}{2}}\|_{L^{2}} \|\nabla d \bar{x}^{\frac{a}{2}}\|_{L^{2}}
+ \|\nabla d \bar{x}^{\frac{a}{2}}\|^2_{L^{2}} \right) \\
& \le \frac{1}{10} \|\nabla^{2} d \bar{x}^{\frac{a}{2}}\|_{L^{2}}^{2}
+ C \|\nabla d\bar{x}^{\frac{a}{2}}\|_{L^{2}}^{2} \left( 1 + \| \na^2 d \|^2_{L^2} \right).
\ea\ee
Substituting (\ref{1cp23}), (\ref{1cp25}), (\ref{1cp26}) and (\ref{1cp27}) into (\ref{1cp22}) yields
\be\la{1cp28}\ba
\frac{d}{dt} \|\nabla d \bar{x}^{\frac{a}{2}}\|_{L^{2}}^{2} 
+ \|\nabla^{2} d \bar{x}^{\frac{a}{2}}\|_{L^{2}}^{2}
\leq C \|\nabla d\bar{x}^{\frac{a}{2}}\|_{L^{2}}^{2}
\left( 1 + \| \na u \|^2_{L^2} + \| \na^2 d \|^2_{L^2} \right),
\ea\ee
which together with (\ref{1cp01}) and Gr\"onwall's inequality gives (\ref{1cp02}).

Next, multiplying (\ref{1cp21}) by $p |\na d|^{p-2} \na d$, integrating by parts over $\rr$, and using $|d|=1$, we derive
\be\la{bp22}\ba
& \frac{d}{dt} \int |\na d|^p dx
- p \int |\na d|^{p-2} \p_i d^j \p_k\p_k \p_i d^j dx \\
& \le -p \int |\na d|^{p-2} \p_i d^j \p_k \p_i d^j u^k dx
+ C \int \left( |\na d|^p |\na u| + |\na d|^{p+2} \right) dx \\
& \quad + p \int |\na d|^{p-2} \p_i d^j \p_i(|\na d|^2) d^j dx \\
& \le C \| \na d \|^{p+2}_{L^{p+2}} + C \| |\na d|^{\frac{p}{2}} \|^2_{L^4} \| \na u \|_{L^2}.
\ea\ee
Integration by parts yields
\be\la{bp23}\ba
- p \int |\na d|^{p-2} \p_i d^j \p_k\p_k \p_i d^j dx 
& = p \int \p_k( |\na d|^{p-2} \p_i d^j ) \p_k (\p_i d^j) dx \\
& = p \int |\na d|^{p-2} |\na^2 d|^2 dx
+ \frac{4(p-2)}{p} \int |\na (|\na d|^\frac{p}{2})|^2 dx.
\ea\ee

Consequently, we conclude from (\ref{bp22}), (\ref{bp23}), (\ref{gn11}), (\ref{1cp01}), and Young's inequality that
\be\la{bp025}\ba
& \frac{d}{dt} \int |\na d|^p dx + p \int |\na d|^{p-2} |\na^2 d|^2 dx \\
& \le C \| \na d \|^{p+2}_{L^{p+2}} + C \| |\na d|^{\frac{p}{2}} \|^2_{L^4} \| \na u \|_{L^2} \\
& \le C \| \na d \|^p_{L^p} \| \na^2 d \|^2_{L^2}
+ C \| |\na d|^{\frac{p}{2}} \|_{L^2} \| |\na d|^{\frac{p}{2}} \|_{H^1} \| \na u \|_{L^2} \\
& \le C \| \na d \|^p_{L^p} \| \na^2 d \|^2_{L^2} + C \| \na d \|^p_{L^p} \| \na u \|^2_{L^2}
+ \frac{p}{2} \int |\na d|^{p-2} |\na^2 d|^2 dx \\
& \le \frac{p}{2} \int |\na d|^{p-2} |\na^2 d|^2 dx 
+ C \| \na d \|^p_{L^p} ( \| \na^2 d \|^2_{L^2} + \| \na u \|^2_{L^2} ),
\ea\ee
which implies
\be\la{bp25}\ba
\frac{d}{dt} \int |\na d|^p dx + \frac{p}{2} \int |\na d|^{p-2} |\na^2 d|^2 dx
\le C \| \na d \|^p_{L^p} ( \| \na^2 d \|^2_{L^2} + \| \na u \|^2_{L^2} ).
\ea\ee

Applying Gr\"onwall's inequality to (\ref{bp25}) and using (\ref{1cp01}), we obtain (\ref{1cp002}) and complete the proof of Lemma \ref{cp1l2}.
\end{proof}

\begin{lemma}\la{cp1l3}
There exists a positive constant $C$ depending only on
$T$, $a$, $\ga$, $\mu$, $N_0$, $\beta$, and $E_1$ such that
\be\la{1cp03}\ba
\sup_{0\le t\le T}\int \left(\n+\n^{2\beta\ga+1 }\right) dx  \le C.
\ea\ee
\end{lemma}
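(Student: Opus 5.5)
The $L^1$ part is immediate: mass is conserved, $\int \n\,dx=\int\n_0\,dx$. For the higher integrability I will follow the Huang--Li approach \cite{HL3}, which is based on the effective viscous flux and is essentially the Vaigant--Kazhikhov argument. Taking the divergence and curl of $(\ref{nlckv})_2$, the momentum equation can be written as
\begin{equation*}
\n\dot u=\na G+\mu\na^\bot\o-\div(\na d\odot\na d-\tfrac12|\na d|^2 I).
\end{equation*}
Here I use the identity $\na d\cdot\Delta d=\div(\na d\odot\na d)-\tfrac12\na|\na d|^2$. Applying $\div$ and $(-\Delta)^{-1}$ gives
\begin{equation*}
G=(-\Delta)^{-1}\div(\n\dot u)+R_iR_j\big(\p_id\cdot\p_jd-\tfrac12|\na d|^2\delta_{ij}\big).
\end{equation*}
Next I write $\n\dot u=(\n u)_t+\div(\n u\otimes u)$ and define $\xi\triangleq(-\Delta)^{-1}\div(\n u)$. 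Then
\begin{equation*}
G=\xi_t+u\cdot\na\xi-[u^i,R_iR_j](\n u^j)+R_iR_j(\p_id\cdot\p_jd)-\tfrac12|\na d|^2.
\end{equation*}
Set $\theta(\n)=2\mu\log\n+\beta^{-1}\n^\beta$, so that $\dot\theta=-(2\mu+\lam)\div u=-G-P$. The continuity equation then yields the transport identity
\begin{equation*}
\frac{D}{Dt}\big(\theta(\n)+\xi\big)+P=[u^i,R_iR_j](\n u^j)-R_iR_j(\p_id\cdot\p_jd)+\tfrac12|\na d|^2\triangleq F.
\end{equation*}

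I multiply this identity by $\n(\n^\beta)_+^{k}$-type weights; concretely, I multiply it by $\n^{1+\beta k}$ (in practice by $\n\,\theta_+^{k}$) and integrate. This gives an estimate of the form
\begin{equation*}
\frac{d}{dt}\int\n\,\theta^{k}... \;\le\; \int\n^{1+\beta k}|F|+\int\n^{1+\beta k}|\xi|\,|{\rm div}\,u|+\dots
\end{equation*}
The standard route is to rewrite it in the form of \cite[Lemma 3.3]{HL3}: the pressure term gives the good contribution $\int \n^{\ga+1+\beta k}$. The bad terms are the commutator, the $\xi$ terms, and, new here, the director terms $R_iR_j(\p_id\cdot\p_jd)$ and $|\na d|^2$.

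The fluid terms are handled exactly as in \cite{HL3}. I bound $\|\xi\|_{L^\infty}$ through Lemma \ref{BWI} (or via $\|\na\xi\|_{L^{p}}\le C\|\n u\|_{L^p}$ combined with Lemma \ref{jqgj}) and the commutator through (\ref{jhz1}) and (\ref{jhz2}). For $\|\n u\|_{L^r}$ I use (\ref{jqgj1}) with the lower mass bound (\ref{1cp001}) and the weight bound $\int\n\bar x^a\le C$ from Lemma \ref{cp1l1}. In addition, $\|\n^{1/2}u\|$ is controlled using the $\beta$-weighted $L^2(0,T)$ bound on $\lam(\n)(\div u)^2$. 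The director contributions are new but harmless. By (\ref{1cp002}), $\na d\in L^\infty(0,T;L^p)$ for every $p<\infty$, and the Riesz transforms are bounded on $L^p$. Hence
\begin{equation*}
\int\n^{1+\beta k}\big(|R_iR_j(\p_id\cdot\p_jd)|+|\na d|^2\big)\le \|\n\|^{1+\beta k}_{L^{(1+\beta k)s'}}\|\na d\|^2_{L^{2s}}\le C\|\n\|^{1+\beta k}_{L^{(1+\beta k)s'}},
\end{equation*}
and the right-hand side is absorbed by interpolation between $\int\n$ and the pressure term $\int\n^{\ga+1+\beta k}$ via Young's inequality.

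Choosing $k$ so that $1+\beta k=2\beta\ga+1$, i.e.\ $k=2\ga$, and running Gr\"onwall with coefficients in $L^1(0,T)$ (namely $1+\|\na u\|_{L^2}^2+\|\na^2 d\|_{L^2}^2$ from (\ref{1cp01})) gives (\ref{1cp03}).

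I expect the main obstacle to be the control of $\|\n u\|_{L^r}$ and of the commutator with the exact powers of $R_T$. In \cite{HL3} this is resolved by an auxiliary bound of the form $\sup_t\|\na u\|_{L^2}^2\le CR_T^{1+\beta\varepsilon}$, which is only available later. I would avoid it here as \cite[Lemma 3.3]{HL3} does: estimate $\|[u^i,R_iR_j](\n u^j)\|$ through $\|u\|_{BMO}\le C\|\na u\|_{L^2}$ together with $\|\n u\|_{L^{p}}$ from (\ref{jqgj1}). I would then integrate in time against the $L^1(0,T)$ quantity $\|\na u\|^2_{L^2}$, using only the energy estimate, so that the Gr\"onwall argument closes.
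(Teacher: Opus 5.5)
There is a genuine gap. The estimates you propose for the fluid terms bring in $\|\n\|_{L^\infty}$, so your Gr\"onwall argument would give a bound depending on $R_T$. The statement, however, requires a constant independent of $R_T$, and this is exactly what later lets the inequality $R_T^\beta\le CR_T^{1+\beta/4+\beta\eta_0}$ in Lemma \ref{cp1l6} close.

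Concretely, (\ref{jqgj1}) carries the factor $1+\|\n\|_{L^\infty}$. Combined with $\|[u^i,R_iR_j](\n u^j)\|_{L^p}\le C\|\na u\|_{L^2}\|\n u\|_{L^p}$, it produces the coefficient $R_T(1+\|\na u\|^2_{L^2})$. The Brezis--Wainger bound for $\xi$ needs $\|\n u\|_{L^q}$ and $\|\na u\|_{L^2}$ at fixed time. Compare (\ref{1cp69}): it is available only after the present lemma, and even then it yields only $CR_T^{4/3}$.

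The missing idea is to estimate $\n u$ by H\"older against the spatial weight rather than against $\|\n\|_{L^\infty}$. With $s=2(\beta-1)/(6\beta\ga+1)$ and $\n u=(\n\bar x^a)^s(u\bar x^{-as})\n^{1-s}$,
\begin{equation*}
\|\n u\|_{L^{(2\beta\ga+1)/\beta}}\le\|\n\bar x^a\|_{L^1}^s\,\|u\bar x^{-as}\|_{L^{2/s}}\,\|\n\|_{L^{2\beta\ga+1}}^{1-s}\le C\,(1+\|\na u\|_{L^2})\,\|\n\|_{L^{2\beta\ga+1}}^{1-s},
\end{equation*}
where the last step uses (\ref{jqgju}); this is where $a>1$ enters. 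Hence $\|F_1\|_{L^{(2\beta\ga+1)/\beta}}\le C(\|\n\|_{L^{2\beta\ga+1}}+1)(\|\na u\|^2_{L^2}+1)$, which involves only the unknown norm and an $L^1(0,T)$ coefficient.

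Likewise, $\psi=(-\Delta)^{-1}\div(\n u)$ is never needed in $L^\infty$. Sobolev embedding and the energy bound give $\|\psi\|_{L^{2(2\beta\ga+1)}}\le C\|\n u\|_{L^{(2\beta\ga+1)/(\beta\ga+1)}}\le C\|\sqrt{\n}u\|_{L^2}\|\n\|^{1/2}_{L^{2\beta\ga+1}}$. Therefore $\int\n|\psi|^{2\ga}dx\le C(1+\|\n\|_{L^{2\beta\ga+1}})\|\n\|^{\ga}_{L^{2\beta\ga+1}}$, which can be absorbed because $\ga+1<2\beta\ga+1$.

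Your test function also does not close. Multiplying by $\n\theta_+^k$ leaves the $\frac{D}{Dt}\xi$ part of the transported quantity unpaired. Moving it by integration by parts in time produces the terms $\int\n^{1+\beta k}|\xi||\div u|dx$ that you list. Since $\div u$ is controlled only through $\int_0^T\!\int\lam(\n)(\div u)^2dxdt$, Cauchy--Schwarz leads to $\int\n^{2+2\beta k-\beta}|\xi|^2dx$. This is a higher power of $\n$ than you control, and for $k=2\ga$ it is not dominated by the pressure term either.

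The paper instead tests with $\n g^{2\ga-1}$, where $g=\max\{\theta(\n)-\psi,0\}$ is the transported quantity itself. Then
\begin{equation*}
\frac{d}{dt}\int\n g^{2\ga}dx\le C\int\n g^{2\ga-1}(|F_1|+|F_2|)dx\le C\|\n g^{2\ga}\|_{L^1}^{1-1/(2\ga)}\|\n\|_{L^{2\beta\ga+1}}^{1/(2\ga)}\big(\|F_1\|_{L^{(2\beta\ga+1)/\beta}}+\|F_2\|_{L^{(2\beta\ga+1)/\beta}}\big).
\end{equation*}
The pressure term has a sign and is simply dropped. One then returns to $\int\n^{2\beta\ga+1}$ via $\theta\le g+|\psi|$ on $\{\n>2\}$ and the bound on $\psi$ above.

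Your handling of the director terms, $\|F_2\|_{L^p}\le C\|\na d\|^2_{L^{2p}}\le C$ by (\ref{1cp002}), is correct and matches the paper.

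There is also a sign slip: $\Delta G=\div(\n\dot u+\na d\cdot\Delta d)$ gives $G=-(-\Delta)^{-1}\div(\n\dot u)+\cdots$. The correct identity is therefore $\frac{D}{Dt}(\theta(\n)-\psi)+P=-F_1-F_2$. This is harmless once absolute values are taken.
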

\begin{proof}
First, using the effective viscous flux $G$ defined in (\ref{gw}), we rewrite $(\ref{nlckv})_2$ as
\be\la{1cp31}\ba
\n\dot{u}= \na G + \mu \na^\bot \o - \na d \cdot \Delta d,
\ea\ee
which yields
\be\la{1cp32}\ba
\Delta G = \div(\n \dot{u} + \na d \cdot \Delta d).
\ea\ee
Furthermore, we have
\be\la{1cp33}\ba
G + \frac{D}{Dt} \left( (- \Delta)^{-1} \div(\n u) \right)
= F_1 + F_2,
\ea\ee
where
\be\la{1cp34}\ba
F_1 \triangleq \sum_{i,j=1}^{2} [u^i,R_iR_j](\n u^j), \quad
F_2 \triangleq - (- \Delta)^{-1} \div(\na d \cdot \Delta d).
\ea\ee
From $(\ref{nlckv})_1$, (\ref{gw}), and (\ref{1cp33}), we conclude that
\be\la{1cp35}\ba
\frac{D}{Dt} (\theta(\n)-\psi) + P = - F_1 - F_2,
\ea\ee
where
\be\la{1cp36}\ba
\theta(\n) \triangleq 2 \mu \log \n + \beta^{-1} \n^\beta, \quad
\psi \triangleq (- \Delta)^{-1} \div(\n u).
\ea\ee

Next, multiplying (\ref{1cp35}) by $\n g^{2\ga-1}$ with $g \triangleq \max\{\theta(\n)-\psi,0 \}$ and integrating over $\rr$, we derive
\be\la{1cp37}\ba
\frac{d}{dt} \int \n g^{2\ga} dx
& \le C \int \n g^{2\ga-1} |F_1| dx + C \int \n g^{2\ga-1} |F_2| dx \\
& \le C \| \n g^{2\ga} \|^{1-(1/2\ga)}_{L^1} \| \n \|_{L^{2\beta\ga+1}}^{1/(2\ga)}
\left( \| F_1 \|_{L^{(2\beta\ga+1)/\beta}} + \| F_2 \|_{L^{(2\beta\ga+1)/\beta}} \right).
\ea\ee
On the one hand, it follows from (\ref{jhz1}), (\ref{jqgju}), (\ref{1cp01}) and H\"older's inequality that
\be\la{1cp38}\ba
\| F_1 \|_{L^{(2\beta\ga+1)/\beta}} & \le C \| u \|_{\mathcal{BMO}} \| \n u \|_{L^{(2\beta\ga+1)/\beta}} \\
& \le C \| \na u \|_{L^2} \| \n \bar{x}^a \|^s_{L^1} \| u \bar{x}^{-a s} \|_{L^{2/s}} \| \n \|_{L^{2\beta\ga+1}}^{1-s} \\
& \le C \left( \| \n \|_{L^{2\beta\ga+1}} + 1 \right)
\left( \| \na u \|^2_{L^2} + 1 \right),
\ea\ee
where $s=2(\beta-1)/(6 \beta \ga+1) <1/3$.

On the other hand, note that
\be\la{1cp39}\ba
\na d \cdot \Delta d = \div \left( \na d \odot \na d \right) - \frac{1}{2} \na (|\na d|^2),
\ea\ee
which together with (\ref{1cp002}) yields for any $1<p<\infty$,
\be\la{1cp310}\ba
\| F_2 \|_{L^p} \le C \| \na d \|^2_{L^{2p}} \le C.
\ea\ee
Moreover, we deduce from (\ref{1cp01}) and Young's inequality that
\be\la{1cp311}\ba
\int \n^{2 \beta \ga +1} dx
& = \int_{ \{\n \le 2\} } \n^{2 \beta \ga +1} dx + \int_{ \{\n > 2\} } \n^{2 \beta \ga +1} dx \\
& \le C \int_{ \{\n \le 2\} } \n dx + C \int_{ \{\n > 2\} } \n g^{2 \ga} dx
+ C \int_{ \{\n > 2\} } \n |\psi|^{2 \ga} dx \\
& \le C + C \int \n g^{2 \ga} dx + C \| \n \|_{L^{(2 \beta \ga+1)/( 2\beta \ga +1-\ga )}}
\| \psi \|^{2 \ga}_{ L^{2(2\beta \ga+1)} } \\
& \le C + C \int \n g^{2 \ga} dx
+ C ( 1 + \| \n \|_{L^{2 \beta \ga+1}} ) \| \n \|^\ga_{ L^{2\beta \ga+1} } \\
& \le \frac{1}{2} \int \n^{2 \beta \ga +1} dx + C + C \int \n g^{2 \ga} dx,
\ea\ee
where we have used the following estimate:
\be\la{1cp312}\ba
\| \psi \|_{L^{2(2\beta \ga+1)}} \le C \| \na \psi \|_{L^{(2\beta \ga+1)/(\beta \ga+1)}}
\le C \| \n u \|_{L^{(2\beta \ga+1)/(\beta \ga+1)}}
\le C \| \n \|^{1/2}_{ L^{2\beta \ga+1} },
\ea\ee
due to (\ref{1cp01}) and Sobolev embedding.

Combining (\ref{1cp37}), (\ref{1cp38}), (\ref{1cp310}) and (\ref{1cp311}), we arrive at
\be\la{1cp313}\ba
\frac{d}{dt} \int \n g^{2\ga} dx
& \le C \left( \| \n g^{2\ga} \|_{L^1} + 1 \right)
\left( \| \na u \|^2_{L^2} + 1 \right),
\ea\ee
which together with Gr\"onwall's inequality and (\ref{1cp01}) implies (\ref{1cp03}) and completes the proof of Lemma \ref{cp1l3}.
\end{proof}

\begin{lemma}\la{cp1l4}
For any $p>4$, there exists a positive constant $C$ depending only on
$T$, $a$, $\ga$, $\mu$, $N_0$, $\beta$, and $E_1$ such that
\be\la{1cp04}\ba
\|\n u\|_{L^p} \le  C(p) R_T^{1+\beta/4+(\beta \eta_0)/2}  (1+ \|\na u\|_{L^2})^{ 1-2/p},
\ea\ee
with $\eta_0$ as in \eqref{cpsol2} and $R_T$ given by \eqref{mdsj}.
\end{lemma}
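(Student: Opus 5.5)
The plan is to follow the argument of Huang--Li \cite[Lemma 3.4]{HL3}. The idea is to write $\|\n u\|_{L^p}$ through the weighted inequality (\ref{jqgj1}) of Lemma \ref{jqgj}, but applied with a reduced power of the density, so that only a fractional power of $R_T$ appears. The hypotheses of Lemma \ref{jqgj} hold uniformly on $[0,T]$ with $\ga_1=\ga$ and $\ga_2=a$. Indeed, (\ref{1cp01}) gives bounds on $\int\n^\ga$ and on $\int \n\bar x^a$, and (\ref{1cp001}) gives the lower bound on $\int_{B_{N_1}}\n$. The same bounds hold for any power $\n^{s}$ with $s$ close to $1$, by interpolating between (\ref{1cp01}) and the bound on $\int\n^{2\beta\ga+1}$ from Lemma \ref{cp1l3}.

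\textbf{Step 1: interpolation between $L^2$ and $L^r$.} First I would use H\"older's inequality:
\be\nonumber\ba
\|\n u\|_{L^p}\le \|\n u\|_{L^2}^{\frac{2(r-p)}{p(r-2)}}\|\n u\|_{L^r}^{\frac{r(p-2)}{p(r-2)}}
\ea\ee
with $r$ to be chosen. The $L^2$ factor is handled by $\|\n u\|_{L^2}\le R_T^{1/2}\|\sqrt\n u\|_{L^2}\le CR_T^{1/2}$.

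\textbf{Step 2: estimate of $\|\n u\|_{L^r}$.} For this factor I would apply (\ref{jqgj1}) with $v=u$, which gives
\be\nonumber\ba
\|\n u\|_{L^r}\le Cr^{\eta_0+1/2}R_T\,(1+\|\na u\|_{L^2}).
\ea\ee
Combining Steps 1 and 2 gives a power of $R_T$ strictly between $1/2$ and $1$, times $(1+\|\na u\|_{L^2})^{\frac{r(p-2)}{p(r-2)}}$. This exponent is too large compared with the claimed $1-2/p$, so the crude interpolation loses.

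\textbf{Step 3: finer Gagliardo--Nirenberg splitting.} To recover the exact exponent $1-2/p$, I would instead write $\n u=\n^{1/2}\cdot\n^{1/2}u$. Then I would apply (\ref{gn11}) to a cut-off version of $u$, or more precisely interpolate
\be\nonumber\ba
\|\n u\|_{L^p}\le \|\n\|_{L^\infty}^{\frac12+\frac1p}\,\|\n^{\frac12-\frac1p}u\|_{L^p}
\ea\ee
and control $\|\n^{\frac12-\frac1p}u\|_{L^p}$ by $\|\sqrt\n u\|_{L^2}^{2/p}$ times an $L^\infty$-type quantity of $\n^{1/2}u$. That quantity is bounded via (\ref{jqgj1}) at an exponent $r\sim p$, taking $\eta_0$-powers into account. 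Tracking the powers is what should produce $R_T^{1+\beta/4+\beta\eta_0/2}$. The $\beta/4$ and $\beta\eta_0/2$ parts presumably come from choosing $r$ comparable to $R_T^{\beta/2}$-scaled quantities, in the way the constant $r^{\eta_0+1/2}$ is optimized in \cite{HL3}.

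\textbf{Main obstacle.} The hard part is this bookkeeping: choosing the exponents, and possibly an $r$ depending on $R_T$, so that the $\|\na u\|_{L^2}$ power is exactly $1-2/p$ while the $R_T$ exponent matches the claim. If the direct splitting fails, I would fall back on reproducing the proof of \cite[Lemma 3.4]{HL3} line by line. That argument is purely kinematic, uses only (\ref{1cp01}), (\ref{1cp001}), (\ref{1cp03}) and Lemma \ref{jqgj}, and does not involve $d$, so the estimate should transfer verbatim.
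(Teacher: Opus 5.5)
There is a genuine gap. Steps~1--3 use only bounds that hold at a fixed time (the energy bounds, \eqref{1cp001}, Lemma~\ref{cp1l3}, \eqref{jqgj1}). No argument of that kind can produce the exact exponent $1-2/p$ with a pure power of $R_T$. Here is a counterexample to any such static bound:
\begin{itemize}
\item[(a)] Take $\n=1$ on $B_\delta$, plus a fixed amount of mass far from the origin where $u=0$.
\item[(b)] Take $u=\delta^{-1}\phi_\delta$, where $\phi_\delta=1$ on $B_\delta$, $\phi_\delta=\log(1/|x|)/\log(1/\delta)$ on $B_1\setminus B_\delta$, and $\phi_\delta=0$ outside $B_1$.
\item[(c)] Then $R_T\le 2$, $\|\sqrt{\n}u\|_{L^2}^2=\pi$ and $\|\na u\|_{L^2}^2=2\pi\delta^{-2}/\log(1/\delta)$.
\item[(d)] But $\|\n u\|_{L^p}^p\ge\pi\delta^{2-p}$, so $\|\n u\|_{L^p}/(1+\|\na u\|_{L^2})^{1-2/p}\gtrsim(\log(1/\delta))^{\frac{p-2}{2p}}\to\infty$.
\end{itemize}
This is exactly the logarithmic loss you hit in Step~1. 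Interpolating against $L^2$ leaves a factor $(1+\|\na u\|_{L^2})^{\frac{2(p-2)}{p(r-2)}}$. Removing it by taking $r\sim\log(e+\|\na u\|_{L^2})$ turns $r^{\eta_0+1/2}$ into a power of $\log\|\na u\|_{L^2}$, not of $R_T$.

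Step~3 cannot repair this. Since $\int\n^{p/2-1}|u|^p\,dx=\int\n|u|^2\,\n^{p/2-2}|u|^{p-2}\,dx$, you would need $\sup_x\n^{\frac{p-4}{2(p-2)}}|u|$. That is an endpoint quantity which $\|\na u\|_{L^2}$ does not control in two dimensions. Nor can \eqref{jqgj1} supply it, at $r\sim p$ or otherwise, because its constant $r^{\eta_0+1/2}$ blows up as $r\to\infty$. Your fallback also misreads \cite{HL3}: the corresponding step there is not kinematic, and here it must involve $d$.

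The missing idea is the dynamic bound $\sup_{0\le t\le T}\int\n|u|^{2+\nu}\,dx\le C$ with $\nu=\frac{\mu^{1/2}}{2(\mu+1)}R_T^{-\beta/2}$. In the example above this quantity is of order $\delta^{-\nu}$, so the bound rules it out. You obtain it by testing $(\ref{nlckv})_2$ with $|u|^\nu u$:
\begin{itemize}
\item[(i)] The term $\nu\int(\mu+\lam)|\div u||u|^\nu|\na u|\,dx$ produced by $\na|u|^\nu$ can be absorbed because $\nu^2(\mu+\lam)\le\nu^2(\mu+1)R_T^\beta\le\mu/4$. This is where the scale $R_T^{-\beta/2}$ comes from.
\item[(ii)] The pressure produces $\int\n|u|^{2+\nu}\,dx$, which is the Gr\"onwall term, plus $\int\n^{(2+\nu)\ga-\nu/2}\,dx$, which is bounded by Lemma~\ref{cp1l3}.
\item[(iii)] The coupling term produces $C\int|u|^\nu|\na d|^4\,dx\le C+C\|u\bar x^{-a/2}\|_{L^4}\|\na d\,\bar x^{a/2}\|_{L^2}\|\na d\|_{L^{12}}^3$. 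Bounding this requires \eqref{jqgju}, \eqref{1cp02} and \eqref{1cp002}.
\end{itemize}
Gr\"onwall's inequality then closes the estimate.

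With this in hand, interpolate $\|\n u\|_{L^p}\le\|\n u\|_{L^{2+\nu}}^{2/p}\|\n u\|_{L^r}^{1-2/p}$ with $r=(p-2)(2+\nu)/\nu$. The first factor is at most $CR_T^{(1+\nu)/p}$ and does not depend on $\|\na u\|_{L^2}$, so the exponent on $1+\|\na u\|_{L^2}$ is exactly $1-2/p$. Since $r\sim R_T^{\beta/2}$, \eqref{jqgj1} gives $\|\n u\|_{L^r}\le C(p)R_T^{1+\beta/4+\beta\eta_0/2}(1+\|\na u\|_{L^2})$. You guessed the right scale $r\sim R_T^{\beta/2}$, but without the $L^{2+\nu}$ estimate there is nothing to interpolate against.
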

\begin{proof}
First, we set
\be\la{1cp41}\ba
\nu \triangleq \frac{\mu^{1/2}}{2(\mu+1)} R_T^{-\beta/2} \in (0,1/2].
\ea\ee
Multiplying $(\ref{nlckv})_2$ by $|u|^\nu u$ and integrating over $\rr$, after using (\ref{1cp02}), (\ref{1cp002}) and (\ref{1cp03}), we derive
\be\la{1cp42}\ba
& \frac{1}{(2+\nu)} \frac{d}{dt} \int \n |u|^{2+\nu} dx
+ \int |u|^\nu \left(\mu |\na u|^2 + (\mu+\lam) (\div u)^2 \right) dx \\
& \le \nu \int (\mu+\lam) |\div u| |u|^\nu |\na u| dx
+ C \int \n^\ga |u|^\nu |\na u|dx + C \int |u|^\nu |\na u| |\na d|^2 dx \\
& \le \frac{1}{2} \int (\mu+\lam) |u|^\nu (\div u)^2 dx
+ \frac{\nu^2}{2} \int (\mu+\lam) |u|^\nu |\na u|^2 dx
+ C \int \n^\ga |u|^\nu |\na u| dx \\
& \quad + \frac{\mu}{8} \int |u|^{\nu} |\na u|^2 dx + C \int |u|^\nu |\na d|^4 dx \\
& \le \frac{1}{2} \int (\mu+\lam) |u|^\nu (\div u)^2 dx
+ \frac{\mu}{8(\mu+1)} \int |u|^{\nu} |\na u|^2 dx + \frac{\mu}{4} \int |u|^{\nu} |\na u|^2 dx \\
& \quad + C \int \n |u|^{2+\nu} dx + C \int \n^{(2+\nu)\ga-\nu/2} dx
+ C \| u \bar{x}^{-\frac{a}{2}} \|_{L^4} \| \na d \bar{x}^{\frac{a}{2}} \|_{L^2} \| \na d \|^3_{L^{12}} + C \| \na d \|^4_{L^4} \\
& \le \frac{1}{2} \int (\mu+\lam) |u|^\nu (\div u)^2 dx
+ \frac{\mu}{2} \int |u|^{\nu} |\na u|^2 dx + C \int \n |u|^{2+\nu} dx
+ C \left(1 + \| \na u \|^2_{L^2} \right),
\ea\ee
which together with Gr\"onwall's inequality and (\ref{1cp01}) yields
\be\la{1cp43}\ba
\sup_{0 \le t \le T} \int \n |u|^{2+\nu} dx \le C.
\ea\ee
Next, using (\ref{jqgj1}), (\ref{1cp01}), (\ref{1cp001}), (\ref{1cp41}) and (\ref{1cp43}), we obtain for $p>2$ and $r=(p-2)(2+\nu)/\nu$,
\be\nonumber\ba
\| \n u \|_{L^p} & \le \|\n u\|_{L^{2+\nu}}^{2/p} \|\n u\|_{L^r}^{1-2/p} \\
& \le C R_T^{ (1+\nu)/p} \|\n^{1/(2+\nu)} u\|_{L^{2+\nu}}^{2/p}
\left( r^{\eta_0+1/2} R_T (1+ \|\na  u\|_{L^2}) \right)^{ 1-2/p} \\
& \le C(p) R_T^{ (1+\nu)/p} \left( R_T^{1+\beta/4+(\beta\eta_0)/2} (1+ \|\na  u\|_{L^2}) \right)^{ 1-2/p} \\
& \le C(p) R_T^{1+\beta/4+(\beta\eta_0)/2} (1+ \|\na  u\|_{L^2})^{ 1-2/p},
\ea\ee
which gives (\ref{1cp04}) and completes the proof of Lemma \ref{cp1l4}.
\end{proof}

\begin{lemma}\la{cp1l5}
There exists a positive constant $C$ depending only on
$T$, $a$, $\ga$, $\mu$, $N_0$, $\beta$, and $E_1$ such that
\be\la{1cp05}\ba
&\sup_{0\le t\le T} \log A_1^2(t) + \int_0^T\frac{A_2^2(t)}{A_1^2(t)}dt
\le C R_T^{4/3}.
\ea\ee
\end{lemma}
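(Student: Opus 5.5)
We follow Huang--Li \cite{HL3}: test the momentum equation against $\dot u$ and the differentiated director equation against $\nabla d_t$, and close with a logarithmic Gr\"onwall argument.

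\textbf{Step 1 (momentum).} Using the form (\ref{1cp31}), we multiply by $\dot u$ and integrate. For the $G$ and $\o$ terms we write $\dot u=u_t+u\cdot\na u$ and integrate by parts, using $(\ref{nlckv})_1$. This gives
\[
\frac{1}{2}\frac{d}{dt}\int\Big(\mu\o^2+\frac{G^2}{2\mu+\lam}\Big)dx+\int\n|\dot u|^2dx
\]
on the left. The right-hand side consists of cubic terms: $\int|\na u|\,|G|^2/(2\mu+\lam)$-type terms, $\int G^2\,|\div u|\,\lam'(\n)\n/(2\mu+\lam)^2$, $\int|\o|^2|\na u|$, and the pressure terms. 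The coupling term $-\int \dot u\cdot(\na d\cdot\Delta d)\,dx$ is bounded by $\|\sqrt\n\dot u\|_{L^2}$ in the bad-weight region. To avoid the vacuum there, we instead integrate by parts via (\ref{1cp39}) and move the time derivative onto $\na d\odot\na d$, which produces $\int|\na u|\,|\na d|\,|\na d_t|$ and $\int|\na u|^2|\na d|^2$. We control these with Lemma \ref{cp1l2}, namely $\|\na d\|_{L^p}\le C$.

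\textbf{Step 2 (director).} Apply $\na$ to $(\ref{nlckv})_3$, test with $\na d_t$ and $\na\Delta d$, and combine. This yields
\[
\frac{d}{dt}\|\Delta d\|_{L^2}^2+\|\na d_t\|_{L^2}^2+\|\na\Delta d\|_{L^2}^2\le C\big(\|\na u\|_{L^2}^2+\|\na^2 d\|_{L^2}^2+1\big)A_1^2+\ep A_2^2,
\]
using $\|\na^2 u\|$-free bounds: $u\cdot\na d$ is handled by (\ref{jqgju}) and the weighted bound (\ref{1cp02}), and $|\na d|^2 d$ by Gagliardo--Nirenberg together with (\ref{1cp002}).

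\textbf{Step 3 (cubic terms).} To bound $\|\na u\|_{L^3}^3$-type quantities by $A_1$ and $A_2$ with powers of $R_T$, we use the decomposition
\[
\div u=\frac{G+P}{2\mu+\lam}
\]
and the elliptic estimates $\|\na G\|_{L^2}+\mu\|\na\o\|_{L^2}\le C(R_T^{1/2}A_2+\|\na d\,\Delta d\|_{L^2})$ coming from (\ref{1cp31}). Combined with (\ref{gn11}), these give
\[
\int\frac{|G|^3}{(2\mu+\lam)}+\int|\o|^3\le C R_T^{\beta/2}\ldots
\]
The main obstacle is the term involving $\lam'(\n)$, that is $\beta\n^\beta G^2\div u/(2\mu+\lam)^2$, together with the lack of integrability of $u$. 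For these we follow \cite{HL3}: write $G=\na\Delta^{-1}\div(\n\dot u+\ldots)$ and split $\n\dot u$ using (\ref{1cp04}) and the commutator estimate (\ref{jhz1}). Optimizing exponents is what yields the total power $R_T^{4/3}$ on the right side.

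\textbf{Step 4 (conclusion).} Adding Steps 1 and 2, we obtain
\[
\frac{d}{dt}A_1^2+A_2^2\le C R_T^{4/3}\big(1+\|\na u\|_{L^2}^2+\|\na^2 d\|_{L^2}^2\big)A_1^2\log A_1^2+\tfrac12 A_2^2.
\]
Here the logarithm appears when we interpolate the $L^p$ norms of $u$ with $p\sim\log A_1$, using the $p$-dependence $p^{1/2}$ in (\ref{gn11}) and $r^{\eta_0+1/2}$ in (\ref{jqgj1}). Dividing by $A_1^2$ gives an inequality for $\log A_1^2$ whose coefficient is integrable in time by (\ref{1cp01}). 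Gr\"onwall's inequality then yields $\sup_t\log A_1^2\le CR_T^{4/3}$, and integrating once more bounds $\int_0^T A_2^2/A_1^2\,dt$, which is (\ref{1cp05}).
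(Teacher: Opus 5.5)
\textbf{Gap in Step 4.} Your differential inequality carries a factor $\log A_1^2$ on the right. After dividing by $A_1^2$ it reads $y'\le CR_T^{4/3}g(t)\,y$ for $y=\log A_1^2\ge 0$, with $g=1+\|\nabla u\|_{L^2}^2+\|\nabla^2 d\|_{L^2}^2\in L^1(0,T)$. Gr\"onwall then gives only $y(t)\le y(0)\exp\bigl(CR_T^{4/3}\int_0^T g\,dt\bigr)\le Ce^{CR_T^{4/3}}$, and the same exponential enters your bound for $\int_0^T A_2^2/A_1^2\,dt$. It does not give $\sup_t\log A_1^2\le CR_T^{4/3}$.

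This is not a cosmetic loss. In Lemma \ref{cp1l6} the density bound is closed by comparing powers, $R_T^\beta\le CR_T^{\max\{4/3,\,1+\beta/4+\beta\eta_0\}}$. That requires $\int_0^T A_2^2/A_1^2\,dt$ and $\log(e+A_1^2)$ to grow at most like $R_T^{4/3}$, which is exactly where $\beta>4/3$ enters. The logarithm you introduce by interpolating with $p\sim\log A_1$ has no place in this lemma. Neither do the commutator estimate (\ref{jhz1}) and the bound (\ref{1cp04}) invoked in Step 3; those are tools for $\|F_1\|_{L^\infty}$ and $\|\psi\|_{L^\infty}$ in the next lemma.

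\textbf{What is needed.} You need a right-hand side linear in $A_1^2$ with an $L^1(0,T)$ coefficient, so that the bound becomes additive after integration. The paper proves $\frac{d}{dt}\tilde A_1+\frac14A_2^2\le CR_T^{4/3}A_1^2\bigl(1+\|\nabla u\|_{L^2}^2+\|\nabla^2 d\,\bar x^{a/2}\|_{L^2}^2\bigr)$, divides by $\tilde A_1+\hat C_1\simeq A_1^2$, and integrates.

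The power comes from the interpolation
\[
\|G^2/(2\mu+\lambda)\|_{L^2}\le C\|G/\sqrt{2\mu+\lambda}\|_{L^2}^{1-\delta}\|G\|_{L^{2(1+\delta)/\delta}}^{1+\delta}\le C(\delta)R_T^{(1+\delta\beta)/2}A_1(A_1+A_2),
\]
which uses $\|G\|_{L^2}\le CR_T^{\beta/2}A_1$ and $\|\nabla G\|_{L^2}\le CA_1+CR_T^{1/2}A_2$. All the $G^2\,\mathrm{div}\,u$ and pressure terms, including the $\lambda'$ term (which needs no special treatment), are bounded by $C\|\nabla u\|_{L^2}\|G^2/(2\mu+\lambda)\|_{L^2}+C(1+\|\nabla u\|_{L^2}^2)$. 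Young's inequality then gives $\frac18A_2^2+CR_T^{1+\delta\beta}A_1^2\|\nabla u\|_{L^2}^2$, and the choice $\delta=1/(3\beta)$ yields $R_T^{4/3}$.

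The term $\int G\,\nabla u^1\cdot\nabla^\perp u^2\,dx$ is controlled by $\mathcal{H}^1$--$\mathcal{BMO}$ duality, giving $\le C\|\nabla G\|_{L^2}\|\nabla u\|_{L^2}^2$. The term $\int\omega^2\,\mathrm{div}\,u\,dx$ is controlled by Gagliardo--Nirenberg. Each costs only a factor $R_T$.

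\textbf{Two smaller points.}
\begin{enumerate}
\item Moving the time derivative in the coupling term means you must differentiate $\tilde A_1$ from (\ref{1cp510}), not $A_1^2$, and prove the equivalence (\ref{1cp512}).
\item That integration by parts also produces $\int|u||\nabla u||\nabla d||\nabla^2 d|\,dx$, not only the two terms you list. Since $u$ is not known to lie in any $L^p$ here, this term must be handled with the weighted bounds (\ref{jqgju}) and (\ref{1cp02}).
\end{enumerate}
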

\begin{proof}
First, we conclude from (\ref{1cp31}) that $G$ and $\o$ satisfy
\be\la{1cp52}\ba
\Delta G = \div(\n \dot{u} + \na d \cdot \Delta d), \quad \mu \Delta \o = \na^\bot \cdot (\n \dot{u} + \na d \cdot \Delta d).
\ea\ee
The standard elliptic estimates imply that for any $p \in (1,\infty)$,
\be\la{1cp53}\ba
\| \na G \|_{L^p} + \| \na \o \|_{L^p} \le C \left( \| \n \dot{u} \|_{L^p} + \| \na d \cdot \Delta d \|_{L^p} \right).
\ea\ee
Consequently, by virtue of (\ref{1cp002}), (\ref{1cp53}) and H\"older's inequality, we have
\be\la{1cp54}\ba
\| \na G \|_{L^2} + \| \na \o \|_{L^2}
& \le C \left( \| \n \dot{u} \|_{L^2} + \| \na d \cdot \Delta d \|_{L^2} \right) \\
& \le C R_T^{1/2} \| \sqrt{\n} \dot{u} \|_{L^2} + C \| \na d \|_{L^4} \| \Delta d \|_{L^4} \\
& \le C R_T^{1/2} \| \sqrt{\n} \dot{u} \|_{L^2} + C \| \Delta d \|_{L^2} + C \| \na \Delta d \|_{L^2} \\
& \le C A_1 + C R_T^{1/2} A_2.
\ea\ee

Next, multiplying (\ref{1cp31}) by $2 \dot{u}$ and integrating by parts over $\rr$, we derive
\be\la{1cp55}\ba
& \frac{d}{dt} \int \left(\mu \o^2 + \frac{G^2}{2\mu + \lam}\right)dx
+ 2 \int \n |\dot{u}|^2 dx \\
& = - \mu \int \o^2 \div u dx + 4 \int G \nabla u^1 \cdot\nabla^{\perp}u^2 dx
-2 \int G (\div u)^2 dx \\
& \quad - \int \frac{ (\beta-1)\lam - 2\mu }{(2\mu + \lam)^2} G^2 \div u dx
- 2 \beta \int \frac{ \lam P }{ (2\mu +\lam)^2 } G \div u dx \\
& \quad + 2 \ga \int \frac{P}{2\mu +\lam} G \div u dx
-2 \int \dot{u} \cdot \na d \cdot \Delta d dx
\triangleq \sum_{i=1}^7 I_i,
\ea\ee
where we have used the identities:
\be\ba\la{bp701}
\na^{\bot}\cdot \dot u= \frac{D}{Dt}\o - (\p_1 u\cdot\na) u^2
+ (\p_2 u \cdot \na) u^1
= \frac{D}{Dt}\o + \o \div u,
\ea\ee
and
\be\ba\la{bp7001}
\div \dot u&=\frac{D}{Dt}\div u +(\p_1u\cdot\na) u^1
+(\p_2u\cdot\na)u^2 \\ 
& = \frac{D}{Dt} \left( \frac{G}{2\mu+\lam} \right)
+ \frac{D}{Dt} \left( \frac{P}{2\mu+\lam} \right)
- 2\nabla u^1\cdot\nabla^{\perp}u^2 + (\div u)^2.
\ea\ee
In view of (\ref{1cp01}), (\ref{1cp03}) and (\ref{1cp54}),
and following the arguments as in \cite[Lemma 3.5]{HL3}, we arrive at
\be\la{1cp56}\ba
\sum_{i=1}^6 I_i \le \frac{1}{4} A^2_2 + C R_T^{4/3} A^2_1 (1 + \| \na u \|^2_{L^2} ).
\ea\ee

We now estimate $I_7$.
Integrating by parts over $\rr$ and using (\ref{1cp01}), (\ref{1cp02}), (\ref{1cp002}), (\ref{jqgju}) and Young's inequality yields
\be\la{1cp57}\ba
I_7 & = - 2 \int u_t \cdot \na d \cdot \Delta d dx
- 2 \int u \cdot \na u \cdot \na d \cdot \Delta d dx \\
& = 2 \frac{d}{dt} \int (\na d \odot \na d) \cdot \na u dx
- 2 \int (\na d_t \odot \na d)\cdot\na u dx \\
& \quad - 2 \int(\na d \odot \na d_t)\cdot\na u dx
- \int \div u_t |\na d|^2 dx - 2 \int u \cdot \na u \cdot \na d \cdot \Delta d dx \\
& \le \frac{d}{dt} \int \left( 2 (\na d \odot \na d) \cdot \na u - |\na d|^2 \div u \right) dx
+ C \| \nabla d_t \|_{L^2} \| \na d \|^{\frac{1}{2}}_{L^2} \| \na^3 d \|^{\frac{1}{2}}_{L^2} \| \na u \|_{L^2} \\
& \quad + C \| u \bar{x}^{-\frac{a}{4}} \|_{L^8} \| \na u \|_{L^2}
\| \na d \bar{x}^{\frac{a}{2}} \|^{\frac{1}{2}}_{L^2} \| \na d \|^{\frac{1}{2}}_{L^8}
\| \Delta d \|^{\frac{1}{8}}_{L^{2}} \| \na \Delta d \|^{\frac{7}{8}}_{L^{2}} \\
& \le \frac{d}{dt} \int \left( 2 (\na d \odot \na d) \cdot \na u - |\na d|^2 \div u \right) dx
+ \frac{1}{8} \| \nabla d_t \|^2_{L^2} + C A_2 \| \na u \|^2_{L^2} \\
& \quad + C ( 1 + \| \na u \|^2_{L^2} ) A_1^{\frac{1}{8}} A_2^{\frac{7}{8}} \\
& \le \frac{d}{dt} \int \left( 2 (\na d \odot \na d) \cdot \na u - |\na d|^2 \div u \right) dx
+ \frac{1}{4} A^2_2 + C A^2_1 (1 + \| \na u \|^2_{L^2} ),
\ea\ee
where we have used the following estimate:
\be\la{1cp057}\ba
\| \na u \|^2_{L^2} & \le C \left( \| \div u \|^2_{L^2} + \| \o \|^2_{L^2} \right)
\le C \left( \left\| \frac{G}{2\mu+\lam} \right\|^2_{L^2} + \| P \|^2_{L^2} + \| \o \|^2_{L^2} \right)
\le C A^2_1,
\ea\ee
due to (\ref{gw}) and (\ref{1cp03}).

On the other hand, we take the inner product on both sides of (\ref{1cp21}),
and use (\ref{gn11}), (\ref{1cp01}), (\ref{1cp02}), (\ref{1cp002}), and
(\ref{jqgju}) to obtain
\be\la{1cp58}\ba
& \frac{d}{dt} \| \Delta d \|^2_{L^2} + \| \na d_t \|^2_{L^2} + \| \na \Delta d \|^2_{L^2} \\
& \le C \int |\na (u \cdot \na d)|^2 + |\na (|\na d|^2 d)|^2 dx \\
& \le C \int |\na u|^2 |\na d|^2 + |u|^2 |\na^2 d|^2
+ |\na^2 d|^2 |\na d|^2 + |\na d|^6 dx \\
& \le C \| \na u \|^2_{L^2} \| \na d \|_{L^2} \| \na^3 d \|_{L^2}
+ C \| u \bar{x}^{-\frac{a}{8}} \|^2_{L^{16}}
\| \na^2 d \bar{x}^{\frac{a}{2}} \|^{\frac{1}{2}}_{L^2}
\| \na^2 d \|^{\frac{3}{2}}_{L^{\frac{12}{5}}} \\
& \quad + C \| \na^2 d \|^2_{L^4} \| \na d \|^2_{L^4} + C \| \na d \|^6_{L^6} \\
& \le C \| \na u \|^2_{L^2} \| \na^3 d \|_{L^2}
+ C (1 + \| \na u \|^2_{L^2}) \| \na^2 d \bar{x}^{\frac{a}{2}} \|^{\frac{1}{2}}_{L^2}
\| \na d\|^{\frac{3}{4}}_{L^3} \| \na^3 d\|^{\frac{3}{4}}_{L^2} \\
& \quad + C \| \na^2 d \|_{L^2} \| \na^3 d \|_{L^2} + C \\
& \le \frac{1}{8} \| \na \Delta d \|^2_{L^2} + C \| \na u \|^4_{L^2} + C \| \na^2 d \|^2_{L^2}
+ C (1 + \| \na u \|^2_{L^2})^{\frac{8}{5}} \| \na^2 d \bar{x}^{\frac{a}{2}} \|^{\frac{4}{5}}_{L^2} + C \\
& \le \frac{1}{8} \| \na \Delta d \|^2_{L^2}
+ C A^2_1 \left( 1 + \| \na u \|^2_{L^2} + \| \na^2 d \bar{x}^{\frac{a}{2}} \|^2_{L^2} \right).
\ea\ee
Substituting (\ref{1cp56}) and (\ref{1cp57}) into (\ref{1cp55}) and adding the result to (\ref{1cp58}) gives
\be\la{1cp59}\ba
\frac{d}{dt} \tilde{A}_1 + \frac{1}{4}A^2_2
\le C R_T^{4/3} A^2_1 \left( 1 + \| \na u \|^2_{L^2} + \| \na^2 d \bar{x}^{\frac{a}{2}} \|^2_{L^2} \right),
\ea\ee
where
\be\la{1cp510}\ba
\tilde{A}_1(t) \triangleq A^2_1(t) - \int \left( 2 (\na d \odot \na d) \cdot \na u - |\na d|^2 \div u \right) dx.
\ea\ee
Note that (\ref{1cp002}), (\ref{1cp057}) and Young's inequality imply
\be\la{1cp511}\ba
\int \left(2 (\na d \odot \na d) \cdot \na u - |\na d|^2 \div u \right) dx
\le C \| \na d \|^2_{L^4} \| \na u \|_{L^2}
\le \frac{1}{2} A^2_1 + \hat{C}_1,
\ea\ee
which together with (\ref{1cp510}) shows that
\be\la{1cp512}\ba
\frac{1}{2} A^2_1(t) \le \tilde{A}_1(t) + \hat{C}_1 \le 2 \left( A^2_1(t) + \hat{C}_1 \right).
\ea\ee

Finally, dividing (\ref{1cp59}) by $\tilde{A}_1(t) + \hat{C}_1$ and integrating over $(0,T)$,
we arrive at (\ref{1cp05}) after using (\ref{1cp01}), (\ref{1cp02}) and (\ref{1cp512}) and complete the proof of Lemma \ref{cp1l5}.
\end{proof}

\begin{lemma}\la{cp1l6}
There exists a positive constant $C$ depending only on
$T$, $a$, $\ga$, $\mu$, $N_0$, $\beta$, and $E_1$ such that
\be\ba\la{1cp06}
& \sup_{0\leq t\leq T} \left( \|\n\|_{L^\infty} + \| \na u \|_{L^2} + \| \na d \|_{H^1} \right) \\
& + \int_0^T \| \na u \|^2_{L^2} + \| \sqrt{\n} \dot{u} \|^2_{L^2} + \| \na^2 d \|^2_{H^1} + \| \na d_t \|^2_{L^2} dt
\le C.
\ea\ee
\end{lemma}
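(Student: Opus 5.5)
The core of the argument is to close the bound on $R_T$ by following Huang--Li \cite{HL3}: we derive an upper bound for $\n$ in terms of a power of $R_T$ strictly smaller than one. Once $\|\n\|_{L^\infty}\le C$ is in hand, the remaining bounds in \eqref{1cp06} come from Lemma \ref{cp1l5}, \eqref{1cp057} and \eqref{1cp58}.

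The density equation is handled through the transport identity \eqref{1cp35}. Along a particle path $X(t)$ we have
\[
\theta(\n)(X(t),t)\le \theta(\n_0)(X(0))+\psi(X(t),t)-\psi(X(0),0)+\int_0^t\left(\|F_1\|_{L^\infty}+\|F_2\|_{L^\infty}\right)ds ,
\]
where $P\ge0$ has been dropped. Since $\theta(\n)\ge\beta^{-1}\n^\beta - C$ for $\n\ge1$, this gives
\[
R_T^\beta\le C+C\sup_t\|\psi\|_{L^\infty}+C\int_0^T\left(\|F_1\|_{L^\infty}+\|F_2\|_{L^\infty}\right)dt .
\]

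Each term on the right is then estimated separately.

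\emph{The term $\psi$.} We apply the Brezis--Wainger inequality (Lemma \ref{BWI}) with $\|\na\psi\|_{L^q}\le C\|\n u\|_{L^q}$ and $\|\na\psi\|_{L^2}\le C\|\n u\|_{L^2}$. Lemma \ref{cp1l4} bounds these by $R_T^{1+\beta/4+\beta\eta_0/2}(1+\|\na u\|_{L^2})^{1-2/p}$. Using $\|\na u\|_{L^2}\le CA_1$ and Lemma \ref{cp1l5}, which gives $\log A_1\le CR_T^{4/3}$, we get $\|\na u\|_{L^2}^{1-2/p}\le \exp(CR_T^{4/3})$. That is too crude, so instead I would take $p$ large and keep only the power $\|\na u\|_{L^2}^{\ve}$, or more precisely use $\|\psi\|_{L^\infty}\le C\|\n u\|_{L^{2}\cap L^{p}}$ with the $L^2$ part uniformly bounded, exactly as in \cite[Lemma 3.6]{HL3}. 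This should give $\|\psi\|_{L^\infty}\le CR_T^{1+\beta/4+\beta\eta_0/2+\ve}$. With $\eta_0=\tfrac38-\tfrac1{2\beta}$ the exponent is $1+\tfrac{7\beta}{16}-\tfrac14+\ve$, which is less than $\beta$ precisely when $\beta>4/3$.

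\emph{The term $F_1$.} The Gagliardo--Nirenberg inequality and the commutator estimates of Lemma \ref{jhzyl} give
\[
\|F_1\|_{L^\infty}\le C\|F_1\|_{L^4}^{1/2}\|\na F_1\|_{L^4}^{1/2}\le C\|\na u\|_{L^2}^{1/2}\|\n u\|_{L^4}^{1/2}\|\na u\|_{L^{8}}^{1/2}\|\n u\|_{L^8}^{1/2}.
\]
The factor $\|\na u\|_{L^8}$ is controlled by
\[
\|\div u\|_{L^8}+\|\o\|_{L^8}\le C\left(\|G\|_{L^8}+\|P\|_{L^8}+\|\o\|_{L^8}\right),
\]
and the Gagliardo--Nirenberg inequality applied with \eqref{1cp54} bounds this by powers of $A_1$, $A_2$ and $R_T$. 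After time integration, the ratio $A_2^2/A_1^2$ together with $\log A_1\le CR_T^{4/3}$ should produce a bound $CR_T^{\kappa}$ with $\kappa<\beta$, as in \cite{HL3}.

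\emph{The term $F_2$, the new contribution from $d$.} By \eqref{1cp39} and Sobolev embedding,
\[
\|F_2\|_{L^\infty}\le C\|\na d\|_{L^{\infty}}^2\le C\|\na d\|_{L^4}\|\na^2 d\|_{L^4}\le C\left(1+\|\na^3 d\|_{L^2}\right),
\]
using \eqref{1cp002}. Its time integral is controlled by $\int_0^T A_2\,dt$. By Cauchy--Schwarz this is at most $\left(\int_0^T A_2^2/A_1^2\,dt\right)^{1/2}\sup_t A_1$, which grows too fast. So for the director I would instead first derive an independent bound: \eqref{1cp58} is Gronwall-closable once $\int_0^T\|\na u\|_{L^2}^4\,dt$ is controlled. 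Alternatively, one can interpolate so that $\|F_2\|_{L^\infty}\le C\|\na^3 d\|_{L^2}^{\ve}$, making its contribution $C\exp(\ve CR_T^{4/3})$, which is also bad. The cleanest route is therefore to bound $\|F_2\|_{L^\infty}$ by $\|\na d\|_{L^{2p}}^2$ plus a logarithm of a higher norm via Lemma \ref{BWI}, giving a polynomial power of $\log A_1$, hence $R_T^{\ve'}$ times $R_T^{4/3}\cdot$ small. This step is the main new obstacle, and I would try the Brezis--Wainger route first.

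Combining the three estimates gives $R_T^\beta\le C+CR_T^{\kappa}$ with $\kappa<\beta$, so $R_T\le C$. Lemma \ref{cp1l5} then bounds $\sup_t A_1$ and $\int_0^T A_2^2\,dt$. Together with \eqref{1cp057}, Lemma \ref{cp1l1} and Lemma \ref{cp1l2}, this yields \eqref{1cp06}.
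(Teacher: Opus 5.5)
Your skeleton is the paper's: integrate \eqref{1cp35} along particle paths, then show that $\sup_t\|\psi\|_{L^\infty}$ and $\int_0^T(\|F_1\|_{L^\infty}+\|F_2\|_{L^\infty})\,dt$ are $O(R_T^{\kappa})$ with $\kappa<\beta$. However, none of the three bounds is actually established.

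\textbf{The $F_2$ term.} You leave this step open, but it is blocked only by a missing observation: $\int_0^T A_1^2\,dt\le C$. This follows from \eqref{1cp01} and \eqref{1cp03}, since $G^2/(2\mu+\lambda)\le 2(2\mu+\lambda)(\div u)^2+2P^2/(2\mu+\lambda)$. With it, Lemma \ref{cp1l5} gives $\int_0^T A_2\,dt\le(\int_0^TA_1^2\,dt)^{1/2}(\int_0^TA_2^2/A_1^2\,dt)^{1/2}\le CR_T^{2/3}$. Pairing with $\sup_tA_1$ instead is what made the term look hopeless. In addition, your first inequality $\|F_2\|_{L^\infty}\le C\|\nabla d\|_{L^\infty}^2$ is not valid. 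By \eqref{1cp39}, $F_2$ contains $R_iR_j(\partial_id\cdot\partial_jd)$, and $R_iR_j$ is unbounded on $L^\infty$. The paper instead uses $\|F_2\|_{L^\infty}\le C\|F_2\|_{L^4}^{1/2}\|\nabla F_2\|_{L^4}^{1/2}$, with $\|F_2\|_{L^4}\le C$ by \eqref{1cp310} and $\|\nabla F_2\|_{L^4}\le C\|\nabla d\|_{L^8}\|\Delta d\|_{L^8}\le C(\|\Delta d\|_{L^2}+\|\nabla\Delta d\|_{L^2})$. This gives $\|F_2\|_{L^\infty}\le CA_1^2+A_2^2/A_1^2$, hence $\int_0^T\|F_2\|_{L^\infty}dt\le CR_T^{4/3}$. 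Your Gr\"onwall alternative via \eqref{1cp58} is unavailable at this stage, because it needs $\int_0^T\|\nabla u\|_{L^2}^4dt$, which is known only after the density bound. No Brezis--Wainger argument is needed for $F_2$.

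\textbf{The $\psi$ term.} In \eqref{bwi}, $\|\psi\|_{W^{1,2\gamma}}$ appears only inside $\log^{1/2}$. The prefactor satisfies $\|\psi\|_{L^{2\gamma}}+\|\nabla\psi\|_{L^2}\le C+C\|\rho u\|_{L^2}\le CR_T^{1/2}$ uniformly in time, by \eqref{1cp610} and \eqref{1cp01}. So the bound by $\exp(CR_T^{4/3})$ that you discard is harmless: it gives $\|\psi\|_{L^\infty}\le CR_T^{1/2}\log^{1/2}(e+A_1^2)+CR_T\le CR_T^{4/3}$. Your replacement $\|\psi\|_{L^\infty}\le C\|\rho u\|_{L^2\cap L^p}$ is false in $\mathbb{R}^2$. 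The kernel $|x-y|^{-1}$ is not square integrable at infinity, so $\nabla\psi\in L^2$ only gives $\psi\in\mathcal{BMO}$. In any case, Lemma \ref{cp1l4} bounds $\|\rho u\|_{L^p}$ only with the factor $(1+\|\nabla u\|_{L^2})^{1-2/p}$, which is not polynomial in $R_T$.

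\textbf{The $F_1$ term.} Your balanced interpolation $\|F_1\|_{L^4}^{1/2}\|\nabla F_1\|_{L^4}^{1/2}$ puts a fixed power on $\|\nabla u\|_{L^8}$. Through the Gagliardo--Nirenberg inequality and \eqref{1cp54}, your bound then contains $(R_T^{1/2}A_2)^{3/8}$. After H\"older in time this costs an extra $R_T^{3/16}\cdot R_T^{(4/3)(3/16)}=R_T^{7/16}$ on top of $R_T^{1+\beta/4+\beta\eta_0/2}$. The condition $\tfrac{3}{4}+\tfrac{7\beta}{16}+\tfrac{7}{16}<\beta$ then requires $\beta>19/9$. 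The paper takes $\|F_1\|_{L^\infty}\le C\|F_1\|_{L^p}^{1-4/p}\|\nabla F_1\|_{L^{4p/(p+4)}}^{4/p}$, with $\|\nabla F_1\|_{L^{4p/(p+4)}}\le C\|\nabla u\|_{L^4}\|\rho u\|_{L^p}$ and $p$ large. Then $A_2$ enters only through $(A_2^2/A_1^2)^{1/p}$, and the exponent is $1+\beta/4+\beta\eta_0/2+O(1/p)$. It is this $F_1$ term, not $\psi$, that uses $\beta>4/3$.
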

\begin{proof}
First, it follows from (\ref{gn11}), (\ref{1cp03}), (\ref{gw}), and (\ref{1cp54}) that
\be\la{1cp66}\ba
\| \na u \|_{L^4}
& \le C ( \|\div u\|_{L^4} + \|\omega\|_{L^4} ) \\
& \le C \left\| \frac{G}{2\mu+\lambda} \right\|_{L^4}
+ C \left\| \frac{P}{2\mu+\lambda} \right\|_{L^4} + C \| \o \|_{L^4} \\
& \le C \left\| G \right\|_{L^4} + C R^\ga_T + C \| \o \|_{L^4} \\
& \le C \| G \|^{1/2}_{L^2} \| \na G \|^{1/2}_{L^2}
+ C R^\ga_T + C \| \o \|^{1/2}_{L^2} \| \na \o \|^{1/2}_{L^2} \\
& \le C R^{\beta/4}_T A_1^{1/2}
\left( A_1 + R_T^{1/2} A_2 \right)^{1/2} + C R^\ga_T \\
& \le C R_T^{2\beta\ga} A_1 \left(1+\frac{A_2^2}{A_1^2} \right)^{1/4}.
\ea\ee
This, combined with (\ref{jhz1}), (\ref{jhz2}), and (\ref{1cp04}), implies that for any $p>4$,
\be\la{1cp67}\ba
\| F_1 \|_{L^\infty}
& \le C(p) \| F_1 \|_{L^p}^{1-4/p} \|\na F_1\|_{L^{4p/(p+4)}}^{4/p} \\
& \le C(p) \left( \|\na u\|_{L^2} \|\n  u\|_{L^p} \right)^{1-4/p}
\left(\|\na u\|_{L^4} \|\n  u\|_{L^p} \right)^{4/p} \\
& \le C(p) \|\na u\|_{L^2}^{1-4/p} \|\na u\|_{L^4}^{4/p} \|\n  u\|_{L^p} \\
& \le C(p) R_T^{1 +\beta/4+(\beta\eta_0)/2+(8\beta\ga)/p} A_1^{ 2-2/p}
\left(1+\frac{A_2^2}{A_1^2} \right)^{1/p} \\
& \le C(p) R_T^{1 +(\beta/4+(\beta\eta_0)/2)p/(p-1)+(9\beta\ga)/(p-1)} A_1^2
+ C \frac{A_2^2}{A_1^2}.
\ea\ee
On the other hand, using the Gagliardo-Nirenberg inequality, (\ref{1cp002}), and (\ref{1cp310}), we derive
\be\la{1cp68}\ba
\| F_2 \|_{L^\infty}
& \le C \| F_2 \|^{1/2}_{L^4} \| \na F_2 \|^{1/2}_{L^4}
\le C \| \na d \cdot \Delta d \|^{1/2}_{L^4} \\
& \le C \| \na d \|^{1/2}_{L^8} \| \Delta d \|^{1/2}_{L^8}
\le C \left( \| \Delta d \|_{L^2} + \| \na \Delta d \|_{L^2} \right)^{1/2} \\
& \le C A_1 + C A^{1/2}_1 \left( \frac{A^2_2}{A^2_1} \right)^{1/4}
\le C A^2_1 + \frac{A^2_2}{A^2_1}.
\ea\ee
Moreover, we conclude from (\ref{bwi}), (\ref{1cp01}), (\ref{1cp04}) and (\ref{1cp05}) that
\be\la{1cp69}\ba
\|\psi\|_{L^\infty}
& \le C \left( \|\psi\|_{L^{2\ga}} + \| \na \psi \|_{L^2} \right)
\log^{1/2} ( e + \|\psi \|_{W^{1,2\ga}} ) + C \\
& \le C \left( 1 + \| \n u\|_{L^2} \right) \log^{1/2}(e + \|\n u \|_{L^{2\ga}}) + C \\
& \le C R_T^{1/2} \log^{1/2} \left( R_T^{1+\beta/4+(\beta\eta_0)/2} (e+\|\na u \|_{L^2}) \right) + C \\
& \le C R_T^{1/2} \log^{1/2}(e+ A^2_1) + C R_T \\
& \le C R_T^{4/3},
\ea\ee
where in the second inequality we have used the following estimate:
\be\la{1cp610}\ba
\|\psi\|_{L^{2\ga}} \le C \| \na \psi \|_{L^{(2\ga)/(\ga+1)}}
\le C \| \n u \|_{L^{(2\ga)/(\ga+1)}}
\le C \| \n \|^{1/2}_{L^\ga} \| \sqrt{\n} u \|_{L^2} \le C,
\ea\ee
owing to (\ref{1cp01}) and Sobolev embedding.

Hence, integrating (\ref{1cp35}) over $(0,T)$ and using (\ref{1cp67}), (\ref{1cp68}) and (\ref{1cp69}), after choosing $p$ suitably large, we have
\be\la{1cp611}\ba
R_T^\beta \le C R_T^{1 + \beta/4 + \beta\eta_0},
\ea\ee
which together with $\beta>4/3$ and the definition of $\eta_0$ implies
\be\la{1cp612}\ba
\sup_{0 \le t \le T} \| \n \|_{L^\infty} \le C.
\ea\ee

Finally, combining (\ref{1cp612}), (\ref{1cp01}) and (\ref{1cp05}) yields (\ref{1cp06}) and completes the proof of Lemma \ref{cp1l6}.
\end{proof}

\begin{lemma}\la{cp1l7}
There exists a positive constant $C$ depending only on
$T$, $a$, $\ga$, $\mu$, $N_0$, $\beta$, and $E_1$ such that
\be\ba\la{1cp07}
\sup_{0\le t\le T}
t \left( \| \sqrt{\n} \dot{u} \|^2_{L^2} + \| \na d_t \|^2_{L^2} \right)
+ \int_0^{T} t \left( \| \na \dot{u} \|^2_{L^2} + \| \na^2 d_t \|^2_{L^2} \right) dt \le C.
\ea\ee
\end{lemma}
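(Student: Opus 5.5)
The proof follows the standard Hoff-type time-weighted estimate for the material derivative, in the form used in \cite[Lemma 3.7]{HL3}. It is coupled with a time-weighted estimate for $\na d_t$ obtained by differentiating the director equation in time. By Lemma \ref{cp1l6}, the density is now bounded by a fixed constant. We also have uniform-in-time bounds on $\|\na u\|_{L^2}$ and $\|\na d\|_{H^1}$, together with $\int_0^T(\|\sqrt{\n}\dot u\|_{L^2}^2+\|\na^2 d\|_{H^1}^2+\|\na d_t\|_{L^2}^2)dt\le C$. The estimates for $\|\na u\|_{L^p}$, $\|G\|_{L^p}$ and $\|\o\|_{L^p}$ from (\ref{1cp53})--(\ref{1cp66}) therefore become $C(1+\|\sqrt{\n}\dot u\|_{L^2}+\|\na^3 d\|_{L^2})$.

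\textbf{Step 1 (momentum).} Apply $\dot u^j(\p_t+\div(u\,\cdot))$ to the $j$-th component of (\ref{1cp31}), sum over $j$, and integrate over $\rr$. This gives
\[
\frac12\frac{d}{dt}\int\n|\dot u|^2dx=\int \dot u\cdot(\na G_t+\div(u\otimes\na G))+\mu\int\dot u\cdot(\na^\bot\o_t+\div(u\otimes\na^\bot\o))-\int\dot u^j(\p_t+\div(u\,\cdot))(\p_id^j\Delta d^i)dx.
\]
The $G$ and $\o$ terms are handled exactly as in \cite{HL3}, using the transport equation for $G$ and the identities (\ref{bp701})--(\ref{bp7001}). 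They produce a dissipation of order $\mu\|\na\dot u\|_{L^2}^2$ and remainders bounded by $C(\|\na u\|_{L^4}^4+\|P\|\cdots)$, which are controlled via (\ref{1cp66}) and Lemma \ref{cp1l6}. For the director term, integrate by parts to move derivatives onto $\dot u$. The leftover terms are then of the forms $\int|\na\dot u||\na d||\na d_t|$, $\int|\na\dot u||u||\na d||\na^3 d|$ and $\int|\na\dot u||\na u||\na d||\na^2 d|$. Bound these by $\frac{\mu}{4}\|\na\dot u\|_{L^2}^2+C\|\na d_t\|^2_{L^2}\|\na d\|^2_{L^\infty}+\cdots$. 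For the factor involving $u$, use the weighted estimate (\ref{jqgju}) together with the weight bounds on $\na d$ and $\na^2 d$ from Lemma \ref{cp1l2}, as in (\ref{1cp58}).

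\textbf{Step 2 (director).} Differentiate $(\ref{nlckv})_3$ in $t$, multiply by $-\Delta d_t$, and integrate. This gives
\[
\frac{d}{dt}\|\na d_t\|_{L^2}^2+\|\na^2 d_t\|_{L^2}^2\le C\int(|u_t|^2|\na d|^2+|u|^2|\na d_t|^2+|\na d|^4|d_t|^2+|\na d|^2|\na d_t|^2)dx.
\]
There is no $L^2$ control of $u_t$, since vacuum is allowed. To handle this, write $u_t=\dot u-u\cdot\na u$ and bound $\|\dot u\,\bar x^{-a/4}\|_{L^8}$ by $C(\|\sqrt\n\dot u\|_{L^2}+\|\na\dot u\|_{L^2})$ via (\ref{jqgj2}). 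The factor $\bar x^{a/4}\na d$ is then absorbed using Lemma \ref{cp1l2}, with $\|\na d\|_{L^\infty}$ interpolated between $\na d\in L^p$ and $\na^3 d\in L^2$. The $\na\dot u$ contribution gets a small coefficient, so it can be absorbed by the dissipation from Step 1 after adding the two inequalities with a suitable weight.

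\textbf{Step 3 (closing).} Add the two inequalities, multiply by $t$, and apply Gr\"onwall. The factor $t$ makes the initial contributions vanish, and the term $\int_0^T(\|\sqrt\n\dot u\|^2_{L^2}+\|\na d_t\|^2_{L^2})dt$ produced by $\frac{d}{dt}t$ is bounded by (\ref{1cp06}). The Gr\"onwall coefficients are of the form $1+\|\na u\|_{L^4}^4+\|\na^3 d\|_{L^2}^2+\|\na^2d\,\bar x^{a/2}\|^2_{L^2}$. The term $\|\na u\|^4_{L^4}\le C(1+\|\sqrt\n\dot u\|^2_{L^2}+\|\na^3d\|^2_{L^2})\|\na u\|^2_{L^2}$ is integrable in time, and in the multiplied-by-$t$ version it becomes a Gr\"onwall factor rather than a source.

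\textbf{Main obstacle.} I expect the hardest part to be the cross terms involving $u_t$ and $\dot u$ against $\na d$ with no integrability of $u$. The fix is to consistently pair the velocity with the negative weight $\bar x^{-\eta}$ and the director derivatives with the positive weight $\bar x^{a/2}$, using Lemma \ref{jqgj} and (\ref{1cp02}).
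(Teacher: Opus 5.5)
Your overall architecture is the same as the paper's: a Hoff-type estimate for $\dot u$, the time-differentiated director equation tested against $-\Delta d_t$, then multiplication by $t$ and Gr\"onwall. However, Step 2 as written leaves a term you cannot close.

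Bounding $\int (u\cdot\na d_t)\cdot\Delta d_t\,dx$ by Cauchy--Schwarz produces $C\int|u|^2|\na d_t|^2dx$, and none of your tools controls it. The velocity is only known with a decaying weight, $\|u\bar x^{-\eta}\|_{L^{(2+\ep)/\eta}}\le C$, and a $D^1$ field may grow at infinity. The derivative $\na d_t$ is only controlled in unweighted $L^2$ and $H^1$; no bound on $\bar x^{\eta}\na d_t$ is available anywhere. So your recipe of pairing $u$ with $\bar x^{-\eta}$ and director derivatives with $\bar x^{a/2}$ does not reach this term.

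The fix, which is what the paper does in (\ref{1cp74}), is to integrate by parts before estimating:
\[
\int (u\cdot\na d_t)\cdot\Delta d_t\,dx=-\int \p_i u^k\,\p_k d_t\cdot\p_i d_t\,dx+\frac12\int \div u\,|\na d_t|^2dx\le C\|\na u\|_{L^2}\|\na d_t\|_{L^4}^2\le C\|\na d_t\|_{L^2}\|\na^2 d_t\|_{L^2}.
\]
This is absorbed by the dissipation up to a Gr\"onwall term, and no weight on $u$ is needed.

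A similar problem appears in your Step 1 list of leftover terms. A term $\int|\na\dot u||u||\na d||\na^3 d|\,dx$ would also be uncontrollable, since $\na^3 d$ is only in $L^2$ and you would need $u\,\na d\in L^\infty$. It does not actually arise if you do two things. First, integrate $\div(u\,\p_j d\cdot\Delta d)$ by parts in full. Second, rewrite the time-derivative part using (\ref{1cp39}). Then only $\int|\na\dot u||\na d||\na d_t|$ and $\int|\na\dot u||u||\na d||\Delta d|$ remain. The latter is handled by bounding $\||u||\na d||\Delta d|\|_{L^2}^2$ through $\|u\bar x^{-a/4}\|_{L^8}^2\|\na d\,\bar x^{a/2}\|_{L^2}\|\na d\|_{L^8}\|\Delta d\|_{L^{16}}^2$, as in (\ref{1cp72}).

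With these two corrections your argument goes through. The paper integrates $\int(\dot u\cdot\na d)\cdot\Delta d_t\,dx$ by parts to handle the $\|\na\dot u\|_{L^2}^2$ produced by $u_t=\dot u-u\cdot\na u$. Your alternative, giving the director inequality a small weight so this term can be absorbed, is valid. It works because your Step 1 borrows no $\|\na^2 d_t\|_{L^2}^2$, thanks to $\|\na d\|_{L^\infty}^2\le C\|\na d\|_{L^2}\|\na^3 d\|_{L^2}$.
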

\begin{proof}
First, operating $\p_t + \div(u \cdot)$ to $(\ref{nlckv})^j_2$ gives
\be\la{1cp71}\ba
& (\n \dot u^j)_t + \div(\n u \dot u^j) - \mu \Delta \dot u^j - \p_j ((\mu+\lm) \div \dot u) \\
& = \mu \p_i ( -\p_i u \cdot \na u^j + \div u \p_i u^j) -\mu \div(\p_i u\p_i u^j) \\
& \quad -\p_j \left[ (\mu+\lambda) \pa_i u \cdot \na u^i - ( \mu+(1-\beta) \lam )(\div u)^2\right] \\
& \quad - \div (\p_j u (\mu+\lambda) \div u) + (\ga-1) \p_j (P\div u) + \text{div} (P\p_ju) \\
& \quad - \p_k ( \p_j d \cdot \p_k d )_t + \frac{1}{2} \p_j ( |\na d|^2 )_t - \div (u \p_j d \cdot \Delta d).
\ea\ee
Then, multiplying (\ref{1cp71}) by $\dot{u}$ and integrating by parts over $\rr$,
we derive after using (\ref{1cp02}), (\ref{jqgju}), (\ref{1cp06}), (\ref{1cp66}) and Young's inequality that
\be\la{1cp72}\ba
& \frac{1}{2} \frac{d}{dt} \int \n |\dot u|^2 dx
+ \mu \int |\na \dot u|^2 dx + \int (\mu+\lambda) (\div \dot u)^2 dx \\
& \le \frac{\mu}{4} \| \na \dot{u} \|^2_{L^2}
+ C \left( \|\na u\|_{L^4}^4 + \|\nabla u\|^2_{L^2}
+ \| \na d \|^2_{L^4} \| \na d_t \|^2_{L^4} + \| |u| |\na d| |\Delta d| \|^2_{L^2} \right) \\
& \le \frac{\mu}{4} \| \na \dot{u} \|^2_{L^2} + C (1 + A^2_2)
+ C \| \na d_t \|_{L^2} \| \na^2 d_t \|_{L^2} \\
& \quad + C \| u \bar{x}^{-a/4} \|^2_{L^8} \| \na d \bar{x}^{a/2} \|_{L^2} \| \na d \|_{L^8} \| \Delta d \|^2_{L^{16}} \\
& \le \frac{\mu}{4} \| \na \dot{u} \|^2_{L^2} + \frac{1}{4} \| \na^2 d_t \|^2_{L^2} + C (1 + A^2_2).
\ea\ee
On the other hand, differentiating (\ref{1cp21}) with respect to $t$ yields
\be\la{1cp73}\ba
\nabla d_{tt}-\Delta\nabla d_{t} =-\nabla (u\cdot \nabla d)_{t}+\nabla (|\nabla d|^{2}d)_{t}.
\ea\ee
Then, we multiply (\ref{1cp73}) by $\na d_t$, integrate over $\rr$ and apply
(\ref{1cp02}), (\ref{jqgju}), (\ref{1cp24}), (\ref{1cp06}), (\ref{1cp66}), and Young's inequality to obtain
\be\la{1cp74}\ba
& \frac{1}{2}\frac{d}{dt}\|\nabla d_{t}\|_{L^{2}}^{2} + \| \na^2 d_{t}\|_{L^{2}}^{2} \\
& = \int \left( u_t \cdot \na d + u \cdot \na d_t - (|\nabla d|^{2}d)_{t} \right) \cdot \Delta d_t dx \\
& = \int \left( \dot{u} \cdot \na d - u \cdot \na u \cdot \na d
+ u \cdot \na d_t - (|\nabla d|^{2}d)_{t} \right) \cdot \Delta d_t dx \\
& = -\int \left( \p_i \dot{u} \cdot \na d + \dot{u} \cdot \na \p_i d \right) \cdot \p_i d_t dx
- \int \left( u \cdot \na u \cdot \na d
- u \cdot \na d_t + (|\nabla d|^{2}d)_{t} \right) \cdot \Delta d_t dx \\
& \le C \left( \| \na \dot{u} \|_{L^2} \| \na d \|_{L^4}
+ \| \dot{u} \bar{x}^{-\frac{a}{4}} \|_{L^8} \| \na^2 d \bar{x}^{\frac{a}{2}} \|^{\frac{1}{2}}_{L^2} \| \na^2 d \|^{\frac{1}{2}}_{L^{\frac{4}{3}}} \right) \| \na d_t \|_{L^4} \\
& \quad + C \| u \bar{x}^{-\frac{a}{8}} \|_{L^{16}} \| \na u \|_{L^4}
\| \na d \bar{x}^{\frac{a}{2}} \|^{\frac{1}{4}}_{L^2} \| \na d \|^{\frac{3}{4}}_{L^{12}} \| \na^2 d_t \|_{L^2}
+ C \| \na u \|_{L^2} \| \na d_t \|^2_{L^4} \\
& \quad + C \left( \| \na d \|^2_{L^8} \| d_t \|_{L^4}
+ \| \na d \|_{L^4} \| \na d_t \|_{L^4}  \right) \| \Delta d_t \|_{L^2} \\
& \le \frac{1}{4} \| \na^2 d_t \|^2_{L^2} + \frac{\mu}{4} \| \na \dot{u} \|^2_{L^2}
+ C \left( 1 + \| \nabla d_t \|^2_{L^{2}} \right)
\left( 1 + A^2_2 + \| \nabla^{2}d \bar{x}^{\frac{a}{2}} \|^2_{L^{2}} \right),
\ea\ee
where we have used the following estimate:
\be\la{1cp75}\ba
\| d_t \|^2_{L^4} & \le C \left( \| u \cdot \na d \|^2_{L^4}
+ \| \na^2 d \|^2_{L^4} + \| |\na d|^2 \|^2_{L^4} \right) \\
& \le C \left( \| u \bar{x}^{-\frac{a}{8}} \|^2_{L^\infty}
\| \na d \bar{x}^{\frac{a}{8}} \|^2_{L^4} + \| \na^2 d \|^2_{H^1}
+ \| \na d \|^4_{L^8} \right) \\
& \le C \left( 1 + A^2_2 + \| \nabla^{2}d \bar{x}^{\frac{a}{2}} \|^2_{L^{2}} \right),
\ea\ee
due to (\ref{jqgju}), (\ref{1cp24}), (\ref{1cp06}), and (\ref{1cp66}).

Combining (\ref{1cp72}) and (\ref{1cp74}) leads to
\be\la{1cp76}\ba
& \frac{d}{dt} \left( \| \sqrt{\n} \dot{u} \|^2_{L^2} + \|\nabla d_{t}\|_{L^{2}}^{2} \right)
+ \mu \| \na \dot{u} \|^2_{L^2} + \| \na^2 d_{t}\|_{L^{2}}^{2} \\
& \le C \left( 1 + \| \sqrt{\n} \dot{u} \|^2_{L^2} + \| \nabla d_t \|^2_{L^{2}} \right)
\left( 1 + A^2_2 + \| \nabla^{2}d \bar{x}^{\frac{a}{2}} \|^2_{L^{2}} \right).
\ea\ee

Multiplying (\ref{1cp76}) by $t$ and applying Gr\"onwall's inequality, we arrive at
(\ref{1cp07}) after using (\ref{1cp02}) and (\ref{1cp06}).
This completes the proof of Lemma \ref{cp1l7}.
\end{proof}

\begin{lemma}\la{cp1l8}
There exists a positive constant $C$ depending only on
$T$, $a$, $\ga$, $\mu$, $N_0$, $\beta$, and $E_1$ such that
\be\ba\la{1cp08}
\sup_{0\leq t\leq T} t \left( \|\nabla^{2} d\bar{x}^{\frac{a}{2}}\|_{L^{2}}^{2} + \| \na^3 d \|^2_{L^2} \right)
+ \int_{0}^{T} t\|\nabla^{3} d \bar{x}^{\frac{a}{2}}\|_{L^{2}}^{2} dt
\leq C.
\ea\ee
\end{lemma}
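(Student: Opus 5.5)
The bound on $\sup_t t\|\na^3 d\|_{L^2}^2$ comes almost for free from the elliptic structure of the director equation, so I would start there. Rewrite $(\ref{nlckv})_3$ as $\Delta d = d_t + u\cdot\na d - |\na d|^2 d$ and take one derivative. Then
\be\nonumber\ba
\| \na^3 d \|_{L^2} \le C\| \na \Delta d\|_{L^2}
\le C\left( \| \na d_t \|_{L^2} + \| \na(u\cdot \na d)\|_{L^2} + \| \na(|\na d|^2 d)\|_{L^2}\right).
\ea\ee
The nonlinear terms are handled exactly as in (\ref{1cp58}), now using the uniform bounds of Lemma \ref{cp1l6}:
\begin{itemize}
\item $\|\na u\|_{L^4}\|\na d\|_{L^4}$ is controlled by (\ref{1cp66}), which now reads $\|\na u\|_{L^4}\le C(1+A_2)^{1/2}$;
\item $\| u \na^2 d\|_{L^2}$ is controlled through $\| u\bar{x}^{-a/8}\|_{L^{16}}$ (bounded by (\ref{jqgju})) times an interpolation of $\|\na^2 d \bar x^{a/2}\|_{L^2}$ and $\|\na^3 d\|_{L^2}$;
\item the cubic terms are bounded by Lemma \ref{cp1l2}.
\end{itemize}
Absorbing the $\na^3 d$ pieces gives
\be\nonumber
t\|\na^3 d\|^2_{L^2}\le C t\left(1+\|\na d_t\|^2_{L^2}+\|\na u\|^2_{L^4}+\|\na^2 d\bar x^{a/2}\|^2_{L^2}\right).
\ee
The term $t\|\na d_t\|_{L^2}^2$ is bounded by Lemma \ref{cp1l7}. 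The term $t\|\na u\|^2_{L^4}\le Ct(1+\|\sqrt\rho\dot u\|_{L^2})$ is bounded by Lemma \ref{cp1l7} together with (\ref{1cp54}) and Lemma \ref{cp1l6}. What remains is the weighted second-derivative term, and this is the real content of the lemma.

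For the weighted estimate, I would differentiate $(\ref{nlckv})_3$ twice, i.e.\ apply $\na$ to (\ref{1cp21}), multiply by $\na^2 d\,\bar x^{a}$ and integrate. This yields
\be\nonumber
\frac12\frac{d}{dt}\|\na^2 d\bar x^{a/2}\|^2_{L^2}+\|\na^3 d\bar x^{a/2}\|^2_{L^2}\le \sum_i K_i,
\ee
where the $K_i$ are of the following types:
\begin{itemize}
\item a commutator term $\int|\na^2 d||\na^3 d||\na\bar x^a|$, which is at most $\frac1{10}\|\na^3 d\bar x^{a/2}\|^2+C\|\na^2 d\bar x^{a/2}\|^2$ since $|\na \bar x^a|\le C\bar x^a$;
\item transport terms $\int(|\na^2u||\na d|+|\na u||\na^2 d|+|u||\na^3 d|)|\na^3 d|\bar x^{a}$, after integrating the outermost derivative by parts onto the weighted test function;
\item nonlinear director terms $\int(|\na d|^2|\na^2 d|+|\na d||\na^2d|\,|\na d|+|\na d|^3)|\na^3 d|\bar x^a$.
\end{itemize}

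I expect the transport terms to be the main obstacle, because they carry the weight $\bar x^{a}$ against $u$ and $\na^2 u$, which have no decay. My plan is to put all of the weight on the $d$-factors. For the $u\na^3 d$ piece I would not integrate by parts fully. Instead I would use $\int u\cdot\na(\na^2 d)\cdot\na^2 d\,\bar x^a=-\frac12\int(\div u\,\bar x^a+u\cdot\na\bar x^a)|\na^2 d|^2$. The $\div u$ part is bounded by $\|\na u\|_{L^4}$ times $\|\na^2 d\bar x^{a/2}\|_{L^{8/3}}^2$, and the latter is interpolated via (\ref{gn11}) as in (\ref{1cp24}). The $u\cdot\na\bar x^a$ part uses $|\na\bar x^a|\le C\bar x^{a-3/4}$ and $\|u\bar x^{-3/4}\|_{L^4}$, exactly as in $J_3$. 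For $|\na^2 u||\na d||\na^3 d|\bar x^a$, I would bound it by $\|\na^2 u\|_{L^2}\,\|\na d\bar x^{a/2}\|_{L^\infty}\,\|\na^3 d\bar x^{a/2}\|_{L^2}$. The $L^\infty$ factor is controlled via a Gagliardo-Nirenberg/Agmon bound by $\|\na d\bar x^{a/2}\|_{L^2}^{1/2}$ times weighted $H^2$ norms, so it is absorbable after Young. The factor $\|\na^2u\|^2_{L^2}\le C(1+A_2^2)$ is integrable by Lemma \ref{cp1l6} and (\ref{1cp54}), using the div-curl bound with $\na G,\na\o$ and $\|\na P\|_{L^2}$. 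I anticipate the resulting inequality
\be\nonumber
\frac{d}{dt}\|\na^2 d\bar x^{a/2}\|^2_{L^2}+\|\na^3 d\bar x^{a/2}\|^2_{L^2}\le C\left(1+A_2^2+\|\na^2u\|^2_{L^2}\right)\left(1+\|\na^2 d\bar x^{a/2}\|^2_{L^2}\right).
\ee
However, the $\|\na u\|_{L^\infty}$-type factor may force a $t$-weight on $A_2$-related terms, since $\|\na^2 u\|_{L^2}$ is only $L^2$ in time, and this is handled by the $t$ multiplier.

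Finally, I would multiply this inequality by $t$. The extra term $\|\na^2 d\bar x^{a/2}\|^2_{L^2}$ produced by $\frac{d}{dt}t$ is integrable by (\ref{1cp02}). Gr\"onwall's inequality together with Lemmas \ref{cp1l6} and \ref{cp1l7} then gives the bounds on $\sup_t t\|\na^2 d\bar x^{a/2}\|^2_{L^2}$ and $\int_0^T t\|\na^3 d\bar x^{a/2}\|^2_{L^2}\,dt$. Inserting the former into the first step bounds $t\|\na^3 d\|_{L^2}^2$, which completes (\ref{1cp08}).
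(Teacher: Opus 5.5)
Your architecture (weighted energy estimate for $\na^2 d$, multiplication by $t$ and Gr\"onwall, then the elliptic estimate for $\|\na^3 d\|_{L^2}$) is the paper's, but your treatment of the transport term has a genuine gap. You generate $\int|\na^2 u||\na d||\na^3 d|\bar{x}^{a}dx$ and claim $\|\na^2 u\|^2_{L^2}\le C(1+A_2^2)$ from Lemma \ref{cp1l6} and (\ref{1cp54}). That bound is not available at this stage. The div-curl estimate involves $\na\div u=\na\big((G+P)/(2\mu+\lam)\big)$, which contains $\na P$ and $(G+P)\lam'(\n)\na\n/(2\mu+\lam)^2$. No bound on $\na\n$ in any $L^p$ is known before Lemma \ref{cp1l9}.

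Even granting that bound, the inequality you anticipate does not follow. The Agmon-type bound $\|\na d\,\bar{x}^{a/2}\|_{L^\infty}\le C\|\na d\,\bar{x}^{a/2}\|^{1/2}_{L^2}\|\na d\,\bar{x}^{a/2}\|^{1/2}_{H^2}$ contributes a factor $\|\na^3 d\,\bar{x}^{a/2}\|^{1/2}_{L^2}$. After Young's inequality you are therefore left with $C\|\na^2u\|^4_{L^2}$, not a factor $\|\na^2u\|^2_{L^2}$. Closing your route would require first proving bounds on $\na\n$ (the density estimate of Lemma \ref{cp1l9} does not use the present lemma) and then a separate argument for $\int_0^T t\|\na^2u\|^4_{L^2}dt$.

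None of this is needed, because the $\na^2 u$ term is self-inflicted. Move the outermost derivative onto the weighted test function, as you say you will. Equivalently, test (\ref{1cp21}) directly against $-\Delta\na d\,\bar{x}^{a}$, which is what the paper does. The transport contribution is then $\int\na(u\cdot\na d)\cdot\na\Delta d\,\bar{x}^{a}dx$, which involves only $u$ and $\na u$.

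The $\na u\,\na d$ piece is bounded by $\|\na u\|_{L^4}\|\na d\,\bar{x}^{a/2}\|_{L^4}\|\na^3 d\,\bar{x}^{a/2}\|_{L^2}$. Here $\|\na u\|^2_{L^4}\le C(1+A_2)$ by (\ref{1cp66}) and Lemma \ref{cp1l6}, and $\|\na d\,\bar{x}^{a/2}\|_{L^4}$ is controlled by the interpolation (\ref{1cp24}) together with (\ref{1cp02}). The $u\cdot\na\na d$ piece integrates by parts into terms of the form $|\na u||\na^2 d|^2\bar{x}^{a}$ and $|u||\na^2 d|^2|\na\bar{x}^{a}|$, which you handle as you describe. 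This yields
\[
\frac{d}{dt}\|\na^2 d\,\bar{x}^{a/2}\|^2_{L^2}+\|\na^3 d\,\bar{x}^{a/2}\|^2_{L^2}\le C\left(1+A_2^2+\|\na^2 d\,\bar{x}^{a/2}\|^2_{L^2}\right),
\]
and the rest of your plan goes through.

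One minor bookkeeping point in your first step: (\ref{1cp54}) contains $\|\na\Delta d\|_{L^2}$, so you only get $\|\na u\|^2_{L^4}\le C(1+\|\sqrt{\n}\dot{u}\|_{L^2}+\|\na^3 d\|_{L^2})$. The last term must be absorbed into the left-hand side, as in (\ref{1cp84}).
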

\begin{proof}
First, multiplying (\ref{1cp21}) by $-\Delta \nabla d \bar{x}^{a}$ and
integrating by parts over $\rr$, we get after using (\ref{jqgju}),
(\ref{1cp02}), (\ref{1cp24}), (\ref{1cp06}) and (\ref{1cp66}) that
\be\la{1cp81}\ba
& \frac{1}{2}\frac{d}{dt} \|\nabla^{2}d\bar{x}^{\frac{a}{2}}\|_{L^{2}}^{2}
+ \|\nabla^{3} d\bar{x}^{\frac{a}{2}}\|_{L^{2}}^{2} \\
\le & C \int |\nabla d_{t}| |\nabla^{2} d| |\nabla \bar{x}^{a}| dx
+ \int \nabla(u\cdot\nabla d)\nabla \Delta d\bar{x}^{a} dx
-\int \nabla(|\nabla d|^{2}d)\nabla \Delta d\bar{x}^{a} dx \\
& + C \int |\nabla^{3}d| |\nabla^{2}d| |\nabla\bar{x}^{a}| dx \\
\le & C \int \left( |\nabla d_{t}| + | \na^3 d | \right) |\nabla^{2} d| \bar{x}^{\frac{a}{2}} dx
+ C \int |\na u| \left( |\nabla d| |\nabla^{3} d| + |\nabla^2 d|^2 \right) \bar{x}^{a} dx \\
& + C \int |u| |\nabla^{2}d|^{2} |\nabla\bar{x}^{a}| dx
+ C \int |\nabla d| |\nabla^3 d| \left( |\nabla^2 d| + |\na d|^2 \right) \bar{x}^{a} dx \\
& \le C A_2 \| \na^2 d \bar{x}^{\frac{a}{2}} \|_{L^{2}}
+ C \| \na u \|_{L^4} \| \na d \bar{x}^{\frac{a}{2}} \|_{L^4}
\| \na^3 d \bar{x}^{\frac{a}{2}} \|_{L^{2}}
+ C \| \na u \|_{L^2} \| \na^2 d \bar{x}^{\frac{a}{2}} \|^2_{L^{4}} \\
& + C \|\nabla^2 d \bar{x}^{\frac{a}{2}}\|_{L^{4}}
\|\nabla^2 d \bar{x}^{\frac{a}{2}}\|_{L^{2}}
\|u\bar{x}^{-\frac{3}{4}}\|_{L^{4}}
+ C \| \na d \bar{x}^{\frac{a}{2}} \|_{L^4} \| \na^3 d \bar{x}^{\frac{a}{2}} \|_{L^2}
\left( \| \na^2 d \|_{L^4} + \| \na d \|^2_{L^8} \right) \\
& \le \frac{1}{4} \| \na^3 d \bar{x}^{\frac{a}{2}} \|^2_{L^2}
+ C \| \na^2 d \bar{x}^{\frac{a}{2}} \|^2_{L^{4}}
+ C \left( 1 + A^2_2 + \| \nabla^{2}d \bar{x}^{\frac{a}{2}} \|^2_{L^{2}} \right) \\
& \le \frac{1}{4} \| \na^3 d \bar{x}^{\frac{a}{2}} \|^2_{L^2}
+ C \|\nabla^2 d \bar{x}^{\frac{a}{2}}\|_{L^{2}} \|\nabla (\nabla^2 d \bar{x}^{\frac{a}{2}})\|_{L^{2}}
+ C \left( 1 + A^2_2 + \| \nabla^{2}d \bar{x}^{\frac{a}{2}} \|^2_{L^{2}} \right) \\
& \le \frac{1}{2} \| \na^3 d \bar{x}^{\frac{a}{2}} \|^2_{L^2}
+ C \left( 1 + A^2_2 + \| \nabla^{2}d \bar{x}^{\frac{a}{2}} \|^2_{L^{2}} \right),
\ea\ee
which yields
\be\la{1cp82}\ba
\frac{d}{dt} \|\nabla^{2}d\bar{x}^{\frac{a}{2}}\|_{L^{2}}^{2}
+ \|\nabla^{3} d\bar{x}^{\frac{a}{2}}\|_{L^{2}}^{2}
\le C \left( 1 + A^2_2 + \| \nabla^{2}d \bar{x}^{\frac{a}{2}} \|^2_{L^{2}} \right).
\ea\ee
Hence, multiplying (\ref{1cp82}) by $t$, integrating over $(0,T)$ and using (\ref{1cp02}) and (\ref{1cp06}), we obtain
\be\ba\la{1cp83}
\sup_{0\leq t\leq T} t \|\nabla^{2} d\bar{x}^{\frac{a}{2}}\|_{L^{2}}^{2}
+ \int_{0}^{T} t\|\nabla^{3} d \bar{x}^{\frac{a}{2}}\|_{L^{2}}^{2} dt
\leq C.
\ea\ee
Furthermore, applying standard elliptic estimate to (\ref{1cp21})
and using (\ref{jqgju}), (\ref{1cp06}) and Young's inequality, we conclude
\be\la{1cp84}\ba
\|\nabla^{3}d\|_{L^{2}}^{2}
\leq & C \left( \|\nabla d_{t}\|_{L^{2}}^{2} + \||\nabla u||\nabla d|\|_{L^{2}}^{2}
+ \||u||\nabla^{2}d|\|_{L^{2}}^{2} + \||\nabla d|^{3}\|_{L^{2}}^{2}
+ \||\nabla^{2}d||\nabla d|\|_{L^{2}}^{2} \right) \\
\leq & C \left( \|\nabla d_{t}\|_{L^{2}}^{2} + \|\nabla u\|_{L^{4}}^{2} \|\nabla d\|_{L^{4}}^{2}
+ \|u\bar{x}^{-\frac{a}{4}}\|_{L^{8}}^{2} \|\nabla^{2}d\bar{x}^{\frac{a}{2}}\|_{L^{2}}\|\nabla^{2}d\|_{L^{4}} \right) \\
& + C \left( \|\nabla d\|_{L^{6}}^{6} + \| \na d \|^2_{L^4} \| \na^2 d \|^2_{L^4} \right) \\
\leq & C \left( \|\nabla d_{t}\|_{L^{2}}^{2} + A_2
+ \|\nabla^{2}d\bar{x}^{\frac{a}{2}}\|_{L^{2}}^{2}
+ \|\nabla^{2}d\|_{L^{4}}^{2} + 1 \right) \\
\leq & C \left( \|\nabla d_{t}\|_{L^{2}}^{2} + \| \sqrt{\n} \dot{u} \|_{L^2}
+ \|\nabla^{2}d\bar{x}^{\frac{a}{2}}\|_{L^{2}}^{2} + \| \na^3 d \|_{L^2} + 1 \right) \\
\leq & \frac{1}{2} \|\nabla^{3}d\|_{L^{2}}^{2}
+ C \left( \|\nabla d_{t}\|_{L^{2}}^{2} + \| \sqrt{\n} \dot{u} \|^2_{L^2}
+ \|\nabla^{2}d\bar{x}^{\frac{a}{2}}\|_{L^{2}}^{2} + 1 \right),
\ea\ee
which together with (\ref{1cp07}) and (\ref{1cp83}) gives
\be\la{1cp85}\ba
\sup_{0\leq t\leq T} t \|\nabla^{3}d\|_{L^{2}}^{2} \leq C.
\ea\ee
Combining this with (\ref{1cp83}) implies (\ref{1cp08}) and completes the proof of Lemma \ref{cp1l8}.
\end{proof}

\begin{lemma}\la{cp1l9}
There exists a positive constant $C$ depending only on
$T$, $a$, $\ga$, $\mu$, $N_0$, $\beta$, $q$, $\| \n_0 \|_{W^{1,q}}$, and $E_1$ such that
\be\la{1cp09}\ba
&\sup_{0\le t\le T} \left( \| \n \|_{W^{1,q}} + t \| \na^2 u \|^2_{L^2} \right) \\
& + \int_0^T \left( \| \na^2 u \|^2_{L^2} + \|\nabla^2 u\|^{(q+1)/q}_{L^q}
+ t \|\nabla^2 u\|_{L^q}^2 + t \| \na^4 d \|^2_{L^2} \right) dt\le C.
\ea\ee
\end{lemma}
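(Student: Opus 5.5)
The plan is the standard Huang–Li route: use the bounds of Lemmas \ref{cp1l6}--\ref{cp1l8}, in particular $\sup_t\|\n\|_{L^\infty}\le C$, to get elliptic control of $\na^2 u$ through $G$ and $\o$. Then close a Gr\"onwall argument for $\|\na\n\|_{L^q}$ via the Beale–Kato–Majda inequality (\ref{bkm1}).

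\emph{Step 1: $L^2$ bound for $\na^2 u$.} Since $\n$ is bounded, $\|\na G\|_{L^p}+\|\na\o\|_{L^p}\le C(\|\n\dot u\|_{L^p}+\|\na d\cdot\Delta d\|_{L^p})$ by (\ref{1cp53}). From $\div u=(G+P)/(2\mu+\lam)$ we get
$$\na\div u=\frac{\na G+\na P}{2\mu+\lam}-\frac{(G+P)\lam'(\n)\na\n}{(2\mu+\lam)^2}.$$
Combined with Lemma \ref{dc} (or its whole-space analogue), this gives
$$\|\na^2u\|_{L^2}\le C\big(\|\sqrt\n\dot u\|_{L^2}+\|\na d\|_{L^4}\|\na^2 d\|_{L^4}\big)+C\big(1+\|G\|_{L^\infty}\big)\|\na\n\|_{L^2}.$$
A similar bound holds for $\|\na^2u\|_{L^q}$, with $\|\n\dot u\|_{L^q}$ and $\|\na\n\|_{L^q}$ in place of the $L^2$ quantities. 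I would estimate $\|\n\dot u\|_{L^q}$ by Lemma \ref{jqgj}, using $\|\sqrt\n\dot u\|_{L^2}+\|\na\dot u\|_{L^2}$ together with $\|\sqrt\n\dot u\|_{L^2(B_{N_1})}$. The term $\|G\|_{L^\infty}$ comes from Gagliardo–Nirenberg: $\|G\|_{L^\infty}\le C\|G\|_{L^2}^{\theta}\|\na G\|_{L^q}^{1-\theta}$.

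\emph{Step 2: the transport estimate for $\na\n$.} Differentiating $(\ref{nlckv})_1$ gives
$$\frac{d}{dt}\|\na\n\|_{L^p}\le C\|\na u\|_{L^\infty}\|\na\n\|_{L^p}+C\|\na^2u\|_{L^p},\qquad p=2,q.$$
In $\|\na^2u\|_{L^p}$ the $\na\n$ contribution is multiplied by $1+\|G\|_{L^\infty}$, and the remainder is $C(\|\na G\|_{L^p}+\|\na\o\|_{L^p})$. Lemma \ref{bkm} gives
$$\|\na u\|_{L^\infty}\le C\big(\|\div u\|_{L^\infty}+\|\o\|_{L^\infty}\big)\log\big(e+\|\na^2u\|_{L^q}\big)+C,$$
with $\|\div u\|_{L^\infty}\le C(1+\|G\|_{L^\infty})$. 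Write $f=e+\|\na\n\|_{L^2}+\|\na\n\|_{L^q}$. This yields
$$\frac{d}{dt}\log f\le C\big(1+\|\na G\|_{L^q}+\|\na\o\|_{L^q}\big)\log f+C\big(1+\|\na G\|_{L^q}+\|\na\o\|_{L^q}\big).$$
The point of Step 1 is that $\|\na G\|_{L^q}+\|\na\o\|_{L^q}$ is integrable in time, independently of $\na\n$. By interpolation and Lemma \ref{jqgj} it is bounded by
$$C\big(1+\|\sqrt\n\dot u\|_{L^2}+\|\na\dot u\|_{L^2}^{1-2/q}\|\sqrt\n\dot u\|_{L^2}^{2/q}+\|\na^2 d\|_{H^1}^2\big),$$
whose $(q+1)/q$ power is integrable on $(0,T)$. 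To see this, write $\|\na\dot u\|_{L^2}=t^{-1/2}\,(t^{1/2}\|\na\dot u\|_{L^2})$ and use Lemmas \ref{cp1l6}--\ref{cp1l7}; the power $(q+1)/q$ is chosen exactly so that the resulting $t^{-1/2}$-type singularity is integrable, via H\"older. A double-logarithmic Gr\"onwall then gives $\sup_t\|\na\n\|_{L^2\cap L^q}\le C$, and Step 1 bounds $\int_0^T\|\na^2u\|_{L^q}^{(q+1)/q}dt$.

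\emph{Step 3: remaining terms.} Once $\na\n$ is bounded, Step 1 gives $\|\na^2u\|_{L^2}^2\le C(1+\|\sqrt\n\dot u\|^2_{L^2}+\|\na^2 d\|_{H^1}^2)$. Lemmas \ref{cp1l6}--\ref{cp1l7} then yield the bounds on $\int_0^T\|\na^2u\|_{L^2}^2\,dt$ and $t\|\na^2u\|_{L^2}^2$. Likewise $t\|\na^2u\|^2_{L^q}\le Ct(1+\|\na\dot u\|^2_{L^2}+\cdots)$ is integrable by (\ref{1cp07}). For $t\|\na^4d\|_{L^2}^2$, I would apply the elliptic estimate to $\Delta\na^2 d=\na^2(d_t+u\cdot\na d-|\na d|^2d)$. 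The right side is controlled by $\|\na^2 d_t\|_{L^2}$, $\|\na^2u\|_{L^2}\|\na d\|_{L^\infty}$, and $\|u\bar x^{-a/4}\|_{L^\infty}$-weighted terms via (\ref{jqgju}) and (\ref{1cp08}); multiplying by $t$ and integrating uses (\ref{1cp07}) and (\ref{1cp08}).

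\emph{Main obstacle.} The hard step is the logarithmic Gr\"onwall in Step 2. In particular, $\|\n\dot u\|_{L^q}$ must be controlled without integrability of $\dot u$ itself, which requires the weighted Lemma \ref{jqgj} and the mass lower bound (\ref{1cp001}). This must be done so that the time-integrable coefficient is only $L^{(q+1)/q}$ near $t=0$.
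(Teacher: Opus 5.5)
Your proposal is correct and follows the paper's proof. It uses elliptic bounds for $G$ and $\o$, the Beale--Kato--Majda inequality with a logarithmic Gr\"onwall argument for $\na\n$, and time integrability of $\|\n\dot u\|_{L^q}^{(q+1)/q}$ obtained by interpolating with Lemma \ref{jqgj} and the $t$-weighted bounds of Lemma \ref{cp1l7}. One small correction: Lemma \ref{jqgj} only covers finite $r$, so the exponent on $\|\na\dot u\|_{L^2}$ is $q(q-2)/(q^2-2)$ rather than exactly $1-2/q$, which does not affect the H\"older argument.

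The only other difference is cosmetic. The paper transports $\Phi=(2\mu+\lam(\n))\na\n$ so that the source term is just $\n\na G$. You keep $\na\n$ and absorb the extra $\|G\|_{L^\infty}\|\na\n\|_{L^p}$ term into the same time-integrable coefficient, which works equally well.
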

\begin{proof}
First, define $\Phi = (\Phi^1,\Phi^2)$, where $\Phi^i \triangleq (2\mu+\lam(\n)) \p_i \n$ $(i=1,2)$. It follows from $(\ref{nlckv})_1$ that $\Phi^{i}$ satisfies
\be\la{bpg21}\ba
\p_t \Phi^i + (u \cdot \na) \Phi^i
+ (2\mu+\lam(\n)) \na \n \cdot \p_i u + \n \p_i G + \n \p_i P
+ \Phi^i \div u = 0.
\ea\ee
Multiplying (\ref{bpg21}) by $|\Phi|^{q-2} \Phi^i$, summing over $i=1,2$, and 
integrating by parts over $\mathbb{R}^2$, we obtain 
\be\la{bpg22}\ba
\frac{d}{dt} \| \Phi \|_{L^q}
& \le C ( 1 + \| \na u \|_{L^\infty} ) \| \Phi \|_{L^q}
+ C \| \na G \|_{L^q}.
\ea\ee
By (\ref{1cp002}), (\ref{1cp53}), (\ref{1cp06}), and H\"older's inequality, we have
\be\la{bpg202}\ba
\| \na G \|_{L^q} + \| \na \o \|_{L^q} & \le C \left( \| \n \dot{u}\|_{L^q}
+ \| \na d \cdot \Delta d \|_{L^q}\right) \\
& \le C \left( 1 + A_2 +\| \n \dot{u}\|_{L^q}\right),
\ea\ee
which together with the Gagliardo-Nirenberg inequality and (\ref{1cp06}) yields
\be\la{bpg23}\ba
& \| \div u \|_{L^\infty}+\| \o \|_{L^\infty} \\
& \le C \left( \| G \|_{L^\infty} + \| P \|_{L^\infty} \right)
+ \| \o \|_{L^\infty} \\
& \le C \left( 1 + \| G \|_{L^2}^{\frac{q-2}{2(q-1)}}
\| \na G \|_{L^q}^{\frac{q}{2(q-1)}}
+ \| \o \|_{L^2}^{\frac{q-2}{2(q-1)}} \| \na \o \|_{L^q}^{\frac{q}{2(q-1)}} \right) \\
& \le C \left( 1+ A_2+\| \n \dot{u} \|_{L^q} \right)^{\frac{q}{2(q-1)}}.
\ea\ee
Moreover, (\ref{dc1}), (\ref{1cp06}), (\ref{bpg202}), and (\ref{bpg23}) imply that
\be\la{bpg25}\ba
\|\na^2 u\|_{L^q}
& \le C \left( \| \nabla \div u \|_{L^q} + \|\nabla \o \|_{L^q} \right) \\
& \le C \left(1+\| \na ( (2\mu+\lam) \div u ) \|_{L^q}
+ \| \div u \|_{L^\infty} \| \na \n \|_{L^q}
+ \| \n \dot{u} \|_{L^q} \right) \\
&\le C( 1 + \| \na G \|_{L^q} + \|\na \n\|_{L^q} + \|\div u\|_{L^\infty}\|\na \n\|_{L^q}+\|\n \dot{u}\|_{L^q})\\
& \le C \left( 1 +A_2+ \| \n \dot{u} \|_{L^q}\right)
\left( e + \| \na \n \|_{L^q} \right).
\ea\ee
Combining (\ref{bpg23}), (\ref{1cp06}), and (\ref{bpg25}), and applying Lemma \ref{bkm}, we obtain 
\be\ba\la{bpg24}
\|\na u\|_{L^\infty} 
& \le C \left( \|\div u \|_{L^\infty}
+ \|\o\|_{L^\infty} \right) \log \left(e+ \|\na^2 u\|_{L^q} \right)+ C\|\na u\|_{L^2}+C \\
& \le C \left( 1 +A_2+ \| \n \dot{u} \|_{L^q} \right)
\log \left(e + \| \na \n \|_{L^q} \right).
\ea\ee
The definition of $\Phi$ and (\ref{1cp06}) give
\be\la{bpg26}\ba
2\mu \| \na \n \|_{L^q} \le \| \Phi \|_{L^q} \le C \| \na \n \|_{L^q},
\ea\ee
which along with (\ref{bpg22}) and (\ref{bpg24}) leads to
\be\la{bpg27}\ba
\frac{d}{dt} \log( e + \| \Phi \|_{L^q} )
& \le C \left( 1 +A_2+ \| \n \dot{u} \|_{L^q} \right)
\log \left(e + \| \Phi \|_{L^q} \right).
\ea\ee
Furthermore, it follows from (\ref{1cp06}), (\ref{jqgj1}), and H\"{o}lder's inequality that
\be\ba\nonumber
\| \rho \dot{u} \|_{L^q} 
& \le C\| \rho \dot{u} \|_{L^2}^{2(q-1)/(q^2-2)}
\| \n \dot{u} \|_{L^{q^2}}^{q(q-2)/(q^2-2)} \\
& \le C\| \rho \dot{u} \|_{L^2}^{2(q-1)/(q^2-2)} \left(\|\sqrt{\n} \dot{u}\|_{L^2}+\|\na \dot{u}\|_{L^2}\right)^{q(q-2)/(q^2-2)} \\
& \le  C \| \sqrt{\n} \dot{u} \|_{L^2}
+ C\| \sqrt{\n} \dot{u} \|_{L^2}^{2(q-1)/(q^2-2)}
\| \na \dot{u} \|_{L^2}^{q(q-2)/(q^2-2)},
\ea\ee
which together with (\ref{1cp07}) yields
\be\la{bpg29}\ba
&\int_0^T \left(\| \rho \dot{u} \|^{1+1 /q}_{L^q}
+ t \| \n \dot{u} \|^2_{L^q} \right) dt \le C.
\ea\ee
Consequently, from (\ref{bpg26}), (\ref{bpg27}), (\ref{bpg29}), and Gr\"onwall's inequality, we conclude that
\be\la{bpg210}\ba
\sup_{0 \le t \le T} \| \n \|_{W^{1,q}} \le C.
\ea\ee
This, together with (\ref{1cp07}), (\ref{bpg25}), and (\ref{bpg29}), gives 
\be\la{bpg211}\ba
\sup_{0\le t\le T} t \| \na^2 u \|^2_{L^2}
+ \int_0^T \left( \| \nabla^2 u \|^{(q+1)/q}_{L^q}
+ t \|\nabla^2 u \|_{L^q}^2 \right) dt
\le C.
\ea\ee
Finally, applying the $\na^2$ operator to $(\ref{nlckv})_3$, we arrive at
\be\la{1cp92}\ba
\nabla^2 d_t + \nabla^2(u \cdot \na d) = \nabla^2 \Delta d + \nabla^2(|\nabla d|^2 d).
\ea\ee
Taking the $L^2$-inner product of (\ref{1cp92}) with $\nabla^2 \Delta d$, integrating by parts over $\rr$, and using (\ref{1cp06}), (\ref{gn11}), and H\"older's inequality, we derive
\be\la{1cp93}\ba
\| \na^4 d \|^2_{L^2}
& \le C \| \na^4 d \|_{L^2} \left( \| \na^2 d_t \|_{L^2} + \| |\na^2 u| |\na d| \|_{L^2} + \| |\na u| |\na^2 d| \|_{L^2} \right)
+ C \| \na u \|_{L^2} \| \na^3 d \|^2_{L^4} \\
& \quad + C \| \na^4 d \|_{L^2} \left( \| |\na^3 d| |\na d| \|_{L^2} + \| \na d \|^2_{L^4} + \| |\na^2 d| |\na d|^2 \|_{L^2} \right) \\
& \le \frac{1}{2} \| \na^4 d \|^2_{L^2}
+ C \left( \| \na^2 d_t \|^2_{L^2} + \| \na^2 u \|^2_{L^q} + \| \na^2 u \|^2_{L^2} + \| \na^3 d \|^2_{L^2} + 1 \right),
\ea\ee
which together with (\ref{1cp06}), (\ref{1cp07}), and (\ref{bpg211}) yields
\be\la{1cp94}\ba
\int_0^T t \| \na^4 d \|^2_{L^2} dt \le C.
\ea\ee
Combining this with (\ref{bpg210}) and (\ref{bpg211}), we obtain (\ref{1cp09}), thereby completing the proof of Lemma \ref{cp1l9}.
\end{proof}

Finally, combining the above estimates with arguments similar to those in \cite[Lemma 4.3]{HL3}, we obtain the following result.

\begin{lemma}\la{cp1l10}
There exists a positive constant $C$ depending only on
$T$, $a$, $\ga$, $\mu$, $N_0$, $\beta$, $q$, $\| {\bar{x}}^a \rho_0 \|_{W^{1,q}}$, and $E_1$ such that
\be\ba\la{1cp010}
\sup_{0\leq t\leq T} \| \rho \bar{x}^a \|_{L^1 \cap H^{1}\cap W^{1,q}} \leq C.
\ea\ee
\end{lemma}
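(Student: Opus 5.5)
We argue in three stages, following the scheme of \cite[Lemma 4.3]{HL3}: first the weighted $L^1$ bound, then the weighted $W^{1,r}$ bound for $r\in\{2,q\}$ through the equation for $\na(\n\bar{x}^a)$, and finally Gr\"onwall's inequality driven by the integrable quantity $\|\na u\|_{L^\infty}$.

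\textbf{Weighted $L^1$ bound.} Lemma \ref{cp1l1} already gives $\sup_t\|\n\bar x^a\|_{L^1}\le C$. Alternatively, multiply $(\ref{nlckv})_1$ by $\bar x^a$, integrate, and use $|\na\bar x^a|\le C\bar x^{a-1}\log^{1+\eta_0}(e+|x|^2)$. The resulting term $\int\n|u|\bar x^{a-1}\log^{1+\eta_0}dx$ is controlled by $\|\sqrt\n u\|_{L^2}$ together with $\|\n^{1/2}\bar x^{a/2}\|_{L^2}$ and (\ref{jqgju}), with the logarithm absorbed into $\bar x^{\varepsilon}$.

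\textbf{Equation for the weighted gradient.} Set $w\triangleq\n\bar x^a$. It satisfies
\[
w_t+u\cdot\na w-a\,w\,u\cdot\na\log\bar x+w\,\div u=0 .
\]
Since $|\na\log\bar x|\le C$, $|\na^2\log\bar x|\le C\bar x^{-1}$, and $\n$ is bounded by Lemma \ref{cp1l6}, applying $\na$ gives
\[
\frac{d}{dt}\|\na w\|_{L^r}\le C(1+\|\na u\|_{L^\infty})\|\na w\|_{L^r}+C\|w\,\na^2 u\|_{L^r}+C\|w\,|\na u|\,\bar x^{-1}\|_{L^r}+C\|w\,|u|\,\bar x^{-1}\|_{L^r}.
\]

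\textbf{The two delicate terms.} Both require trading the weight against $u$.
\begin{itemize}
\item For $\|w\,|u|\,\bar x^{-1}\|_{L^r}$, write $w|u|\bar x^{-1}=\n\bar x^{a-1+\delta}\cdot|u|\bar x^{-\delta}$ and use $\|u\bar x^{-\delta}\|_{L^{\tilde p}}\le C(1+\|\na u\|_{L^2})$ from (\ref{jqgju}). The factor $\n\bar x^{a-1+\delta}$ is bounded in $L^{s}$ for $s$ large by interpolating $\|w\|_{L^\infty}$ (which follows from $\|w\|_{W^{1,q}}$) against $\|\n\bar x^{a}\|_{L^1}$, since $a-1+\delta<a$. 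This yields a bound $C(1+\|\na u\|_{L^2})(1+\|\na w\|_{L^r}+\|\na w\|_{L^q})$.
\item For $\|w\na^2 u\|_{L^r}\le\|w\|_{L^\infty}\|\na^2u\|_{L^r}$, use $\|w\|_{L^\infty}\le C(\|w\|_{L^1}+\|\na w\|_{L^q})$. With $\int_0^T\|\na^2u\|_{L^q}^{(q+1)/q}dt\le C$ (Lemma \ref{cp1l9}) and $\int_0^T\|\na^2 u\|_{L^2}^2dt\le C$, this term becomes integrable-coefficient times $\|\na w\|_{L^q}$.
\end{itemize}
For the coefficient $\|\na u\|_{L^\infty}$, use (\ref{bpg24}) together with $\sup_t\|\n\|_{W^{1,q}}\le C$ to get $\|\na u\|_{L^\infty}\le C(1+A_2+\|\n\dot u\|_{L^q})$. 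This lies in $L^1(0,T)$ by (\ref{1cp06}) and (\ref{bpg29}).

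\textbf{Closing the argument.} Sum the estimates for $r=2$ and $r=q$ and apply Gr\"onwall's inequality. This gives $\sup_t\|\na w\|_{L^2\cap L^q}\le C$. Combined with the $L^1$ bound and interpolation, it yields (\ref{1cp010}).

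The main obstacle is the $u\bar x^{-1}$ term. One must check that the exponents admissible in (\ref{jqgju}) (requiring $\eta\le1$ and the $L^{(2+\varepsilon)/\eta}$ integrability) fit Hölder's inequality with the weighted density in $L^{s}$. Here $a\in(1,2)$ gives the room needed. If this fails directly, I would split $\bar x^{-1}\le\bar x^{-\delta}$ with small $\delta$ and rely on the logarithmic factor in $\bar x$.
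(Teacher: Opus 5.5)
Your scheme (the transport equation for $w=\n\bar x^a$, differentiation in $x$, and Gr\"onwall with time-integrable coefficients for $r=2$ and $r=q$) matches the paper's. The paper defers to \cite[Lemma 4.3]{HL3} here and writes the argument out for the half-space in Lemma \ref{bkj1l12}.

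There is one term you never actually estimate. Differentiating $-a\,w\,u\cdot\na\log\bar x$ produces $-a\,\na w\,(u\cdot\na\log\bar x)$, which contributes $a\|u\cdot\na\log\bar x\|_{L^\infty}\|\na w\|_{L^r}$. You absorb it into $C(1+\|\na u\|_{L^\infty})\|\na w\|_{L^r}$ by citing $|\na\log\bar x|\le C$. That only yields $C\|u\|_{L^\infty}\|\na w\|_{L^r}$, and with vacuum at infinity you have no a priori bound on $\|u\|_{L^\infty}$, only weighted norms such as \eqref{jqgju}.

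Handling this term is the one nontrivial step in the paper's argument, cf. (\ref{1bkj121}). Since $|\na\log\bar x|\le C(e+|x|^2)^{-1/2}\le C\bar x^{-\ve}$ for $\ve\in(0,1)$, Gagliardo--Nirenberg and \eqref{jqgju} give
\[
\|u\cdot\na\log\bar x\|_{L^\infty}\le C\|u\bar x^{-\ve}\|_{L^\infty}\le C\|u\bar x^{-\ve}\|_{L^{q/\ve}}+C\|\na u\|_{L^q}+C\|u\bar x^{-1}\|_{L^q}\le C+C\|\na u\|_{L^q}.
\]
The right-hand side is integrable on $(0,T)$ by Lemma \ref{cp1l9}. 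Alternatively, averaging $|u(x)|\le|u(y)|+\|\na u\|_{L^\infty}|x-y|$ over $y\in B_1$ gives $|u(x)|\le C+C\|\na u\|_{L^\infty}(1+|x|)$. That does justify the coefficient you wrote, but some such argument is needed, and the coefficient is integrable in time, not bounded. With this inserted, your Gr\"onwall step closes.

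The term you single out as the main obstacle, $\|w\,|u|\,|\na^2\log\bar x|\|_{L^r}$, is routine by comparison. Since $|\na^2\log\bar x|\le C(e+|x|^2)^{-1}$, the product $\|w\|_{L^\infty}\|u\bar x^{-2/5}\|_{L^{4r}}\|\bar x^{-3/2}\|_{L^{4r/3}}$ bounds it, as in (\ref{1bkj122}).

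One smaller correction: $|\na\log\bar x|$ is comparable to $(e+|x|^2)^{-1/2}$, which exceeds $\bar x^{-1}$ by a logarithmic factor. So your term $\|w\,|\na u|\,\bar x^{-1}\|_{L^r}$ should carry $|\na\log\bar x|$ instead. This is harmless, since $\|w\|_{L^\infty}\|\na u\|_{L^r}$ controls it.
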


\subsection{The Case of Non-vacuum Far-field Density}

In this subsection, for initial data $(\n_0,u_0,d_0)$ satisfying (\ref{cp2sol1}) and $\n_0 >0$,
we assume that $(\n,u,d)$ is the strong solution to (\ref{nlckv})--(\ref{i30}), (\ref{bjtj2}) on $\rr \times (0,T]$ with $\tilde{\n}>0$.

Furthermore, we define
\be\la{e2}\ba
E_2 \triangleq \| \n_0 - \tilde{\n} \|_{L^2 \cap L^\infty} + \| u_0 \|_{H^1}
+ \| \na^2 d_0 \|_{L^2} + \| \tilde{x}^\alpha \sqrt{\n_0} u_0 \|_{L^2}
+ \| \tilde{x}^\alpha K(\n_0) \|_{L^1} + \| \tilde{x}^\alpha \na d_0 \|_{L^2}.
\ea\ee

We begin with the standard energy estimate.

\begin{lemma}\la{cp2l1}
There exists a positive constant $C$ depending only on $T$, $\mu$, $\ga$,
$\| K(\n_0) \|_{L^1}$, $\| \sqrt{\n_0} u_0 \|_{L^2}$, and $\| \na d_0 \|_{L^2}$ such that
\be\la{2cp01}\ba
& \sup\limits_{0\le t\le T} \int\left( K(\n) + \n |u|^2 + |\na d|^2 \right) dx \\
& + \int_0^T \int\left( \mu |\na u|^2+ \lambda(\n) (\div u)^2 + | \na^2 d |^2 \right) dxdt
\le C,
\ea\ee
and
\be\la{2cp001}\ba
\| u \|_{H^1} \le C \left( 1 + \| \na u \|_{L^2} \right), \quad t \in [0,T].
\ea\ee
\end{lemma}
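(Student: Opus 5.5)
The plan follows the proof of Lemma \ref{cp1l1}, with the potential energy $K(\n)$ now playing the role of $P/(\ga-1)$. Multiplying $(\ref{nlckv})_2$ by $u$ and $(\ref{nlckv})_3$ by $-(\Delta d+|\na d|^2 d)$, and integrating by parts, gives the energy identity. The mass equation yields
\be\nonumber\ba
\frac{d}{dt}\int K(\n)\,dx = -\int (P-P(\tilde\n))\div u\,dx ,
\ea\ee
since $\n K'(\n)-K(\n)=P(\n)-P(\tilde\n)$. The constant $P(\tilde\n)$ integrates to zero against $\div u$, because $u$ decays at infinity. The coupling terms cancel exactly as before: $\int u\cdot\na d\cdot\Delta d$ from the momentum equation is cancelled by the transport term of the director equation, using $d\cdot \Delta d=-|\na d|^2$ and $|d|=1$. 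Integrating in time gives
\be\nonumber\ba
&\sup_{0\le t\le T}\int\Big(\tfrac12\n|u|^2+K(\n)+\tfrac12|\na d|^2\Big)dx \\
&\quad+\int_0^T\!\!\int\Big((\mu+\lam)(\div u)^2+\mu|\na u|^2+|\Delta d+|\na d|^2d|^2\Big)dxdt\le C .
\ea\ee

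For the $\na^2 d$ bound, Lemma \ref{jzb} on $\rr$ gives $d=(\cos\theta,\sin\theta)$ with $|\Delta d+|\na d|^2 d|=|\Delta\theta|$ and $|\na d|=|\na\theta|$. The chain (\ref{bp115}) then applies verbatim. It combines (\ref{gn11}) with the uniform bound on $\|\na\theta\|_{L^2}=\|\na d\|_{L^2}$, and uses $\|\na^2\theta\|_{L^2}=\|\Delta\theta\|_{L^2}$ on $\rr$. This gives $\int_0^T\|\na^2 d\|_{L^2}^2dt\le C$. One technical point is the regularity and far-field behavior of $\theta$. Since $d\to\mathbf 1$ at infinity, $\na\theta=-d^2\na d^1+d^1\na d^2$ is in $H^1$, which is all that is used. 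Together these estimates give (\ref{2cp01}).

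For (\ref{2cp001}), it suffices to bound $\|u\|_{L^2}$, and the idea is a Poincar\'e-type argument that splits $\rr$ by the size of $\n$. On $\{\n\ge\tilde\n/2\}$ we have $|u|^2\le (2/\tilde\n)\n|u|^2$, so this part is bounded by the energy. The set $\Sigma=\{\n<\tilde\n/2\}$ has finite measure, because $K(\n)\ge c(\tilde\n)>0$ there and $\int K(\n)\le C$. On $\Sigma$, Hölder's inequality and (\ref{gn11}) give
\be\nonumber
\|u\|_{L^2(\Sigma)}\le |\Sigma|^{1/4}\|u\|_{L^4}\le C\|u\|_{L^2}^{1/2}\|\na u\|_{L^2}^{1/2}.
\ee
Young's inequality then absorbs $\frac12\|u\|_{L^2}$, leaving $\|u\|_{L^2}\le C(1+\|\na u\|_{L^2})$.

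The main obstacle is that $u\in L^2$ must be known a priori to perform the absorption. This holds for the strong solution by Lemma \ref{lct}, since $u\in H^1$. The only other delicate point is the lower bound $K(\n)\ge c$ on $\{\n<\tilde\n/2\}$. It follows because $K$ is convex with $K(\tilde\n)=0$, and $K(0)=P(\tilde\n)>0$, so $K$ is bounded below by a positive constant on $[0,\tilde\n/2]$.
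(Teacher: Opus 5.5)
Your proposal is correct. The energy estimate and the $\na^2 d$ bound follow the paper's route: the energy identity with $K(\n)$ replacing $P/(\ga-1)$, then the lifting of Lemma~\ref{jzb} and the chain (\ref{bp115}). You simply write out details the paper leaves to ``adapting Lemma~\ref{cp1l1}''.

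The genuine difference is in (\ref{2cp001}). The paper writes $\tilde\n|v|^2=\n|v|^2+(\tilde\n-\n)|v|^2$ and controls $|\n-\tilde\n|$ through the coercivity (\ref{2cp12}) of $K$. It uses $L^2$ on $\{\n<2\tilde\n\}$ and $L^\ga$ on $\{\n\ge2\tilde\n\}$, paired with $\|v\|_{L^4}$ and $\|v\|_{L^{2\ga/(\ga-1)}}$ via (\ref{gn11}). You instead split at $\tilde\n/2$ and use only that $\{\n<\tilde\n/2\}$ has finite measure, so a single Ladyzhenskaya inequality suffices. Your split also makes clear that the high-density region never needs estimating, since there $\tilde\n-\n<0$. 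What the paper's version buys is the intermediate bound (\ref{2cp13}), which it reuses later (e.g.\ in (\ref{2cp318})--(\ref{2cp320}) and (\ref{2cp42})). So the paper is packaging a fact it needs anyway.

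A few small fixes:
\begin{itemize}
\item The lower bound $K\ge c>0$ on $[0,\tilde\n/2]$ does not follow from convexity, $K(\tilde\n)=0$ and $K(0)>0$ alone, since a merely convex function could vanish on an interval. It follows from strict convexity, $K''(\n)=P'(\n)/\n>0$, together with $K'(\tilde\n)=0$: these make $K$ nonincreasing on $[0,\tilde\n]$, so $K\ge K(\tilde\n/2)>0$ there. You can also read it off directly from (\ref{2cp12}).
\item The absorbed term is $\frac12\|u\|_{L^2}^2$, not $\frac12\|u\|_{L^2}$.
\item It is cleaner to write $\na P=\na(P-P(\tilde\n))$ before integrating by parts than to claim $\int P(\tilde\n)\div u\,dx=0$.
\item $\na\theta\in H^1$ uses only $|d|=1$ and $\na d\in H^1$; the far-field condition plays no role.
\end{itemize}
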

\begin{proof}
First, adapting the proof of Lemma \ref{cp1l1}, we can obtain (\ref{2cp01}).

We now prove (\ref{2cp001}).
Note that
\be\la{2cp11}\ba
K(\n) = \frac{1}{\ga-1} \left[ \n^\ga - \ga \n (\tilde{\n})^{\ga-1} + (\ga-1) \tilde{\n}^\ga \right],
\ea\ee
which implies there is a positive constant $C$ depending only on $\tilde{\n}$ and $\ga$, such that
\be\la{2cp12}\ba
(\n - \tilde{\n})^2 \le C K(\n), \text{ if } \n < 2 \tilde{\n}, \quad
(\n - \tilde{\n})^\ga \le C K(\n), \text{ if } \n \ge 2 \tilde{\n}.
\ea\ee
This, together with (\ref{2cp01}), shows that
\be\la{2cp13}\ba
\| \n - \tilde{\n} \|_{L^2( \n < 2 \tilde{\n} )}
+ \| \n - \tilde{\n} \|_{L^\ga( \n \ge 2 \tilde{\n} )} \le C.
\ea\ee
Then, for any $v \in H^1(\rr)$, applying (\ref{gn11}) and (\ref{2cp13}), we obtain
\be\la{2cp14}\ba
\int |v|^2 dx & \le C \int \n |v|^2 dx + C \int |\n - \tilde{\n}| |v|^2 dx \\
& \le C \| \sqrt{\n} v \|^2_{L^2} + C \int_{ \left\{\n < 2 \tilde{\n} \right\} } |\n - \tilde{\n}| |v|^2 dx
+ C \int_{ \left\{\n \ge 2 \tilde{\n} \right\} } |\n - \tilde{\n}| |v|^2 dx \\
& \le C \| \sqrt{\n} v \|^2_{L^2} + C \| \n - \tilde{\n} \|_{L^2( \n < 2 \tilde{\n} )} \| v \|^2_{L^4}
+ C \| \n - \tilde{\n} \|_{L^\ga( \n \ge 2 \tilde{\n} )} \| v \|^2_{L^\frac{2\ga}{\ga-1}} \\
& \le C \| \sqrt{\n} v \|^2_{L^2} + C \| v \|_{L^2} \| \na v \|_{L^2}
+ C \| v \|^{\frac{2(\ga-1)}{\ga}}_{L^2} \| \na v \|^{\frac{2}{\ga}}_{L^2} \\
& \le \frac{1}{2} \| v \|^2_{L^2} + C \| \sqrt{\n} v \|^2_{L^2} + C \| \na v \|^2_{L^2},
\ea\ee
which yields
\be\la{2cp15}\ba
\| v \|_{L^2} \le C \| \sqrt{\n} v \|_{L^2} + C \| \na v \|_{L^2}.
\ea\ee
Combining this with (\ref{2cp01}) gives (\ref{2cp001}) and completes the proof of Lemma \ref{cp2l1}.
\end{proof}

Based on (\ref{2cp01}) and (\ref{2cp001}), following the approach in Lemma \ref{cp1l2},
we can establish the following $L^\infty(0,T;L^p)$ estimate of $\na d$.
\begin{lemma}\la{cp2l2}
For any $2<p<\infty$, there exists a positive constant $C$ depending only on 
$p$, $T$, $\mu$, $\ga$,
$\| K(\n_0) \|_{L^1}$, $\| \sqrt{\n_0} u_0 \|_{L^2}$, and $\| \na d_0 \|_{H^1}$ such that
\be\la{2cp02}\ba
\sup_{0 \le t \le T} \| \na d \|_{L^p} \le C.
\ea\ee
\end{lemma}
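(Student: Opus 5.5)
We will follow the second half of the proof of Lemma \ref{cp1l2}, namely the $L^p$ energy method applied to the differentiated director equation (\ref{1cp21}). The point to check is that this argument never used the weighted estimate (\ref{jqgju}) or the vacuum far-field structure. It uses only three ingredients: the bounds on $\|\na d\|_{L^2}$, $\int_0^T\|\na^2 d\|_{L^2}^2dt$ and $\int_0^T\|\na u\|_{L^2}^2dt$ coming from the basic energy estimate; the Gagliardo--Nirenberg inequality (\ref{gn11}); and the constraint $|d|=1$. All three are available here through (\ref{2cp01}).

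\textbf{Step 1 (the $L^p$ energy inequality).} Multiply (\ref{1cp21}) by $p|\na d|^{p-2}\na d$ and integrate over $\rr$.
\begin{itemize}
\item The diffusion term is handled exactly as in (\ref{bp23}). It produces $p\int|\na d|^{p-2}|\na^2 d|^2dx+\frac{4(p-2)}{p}\|\na(|\na d|^{p/2})\|_{L^2}^2$.
\item In the convection term $-\na(u\cdot\na d)$, the piece $u\cdot\na\na d$ tested against $|\na d|^{p-2}\na d$ equals $\frac1p u\cdot\na|\na d|^p$. Integrating by parts gives a term bounded by $\int|\div u||\na d|^p dx$.
\item The remaining convection piece is bounded by $\int|\na u||\na d|^p dx$.
\item The nonlinear term $\na(|\na d|^2d)$ is controlled using $|d|=1$ and $d\cdot\p_i d=0$. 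This gives $\p_i d^j\p_i(|\na d|^2)d^j=0$ and leaves only $|\na d|^{p+2}$.
\end{itemize}
Together these yield (\ref{bp22}):
\[
\frac{d}{dt}\|\na d\|_{L^p}^p+p\int|\na d|^{p-2}|\na^2d|^2dx\le C\|\na d\|_{L^{p+2}}^{p+2}+C\||\na d|^{p/2}\|_{L^4}^2\|\na u\|_{L^2}.
\]
Boundary terms at infinity vanish because $\na d$ and $u$ have the regularity of Lemma \ref{lct}. One may justify this with a cutoff if desired.

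\textbf{Step 2 (closing the estimate).}
\begin{itemize}
\item For the first term, use Hölder and (\ref{gn11}) to bound it by $\|\na d\|_{L^p}^p\|\na d\|^2_{L^\infty}$-type quantities. More precisely, as in (\ref{bp025}), apply Ladyzhenskaya to $f=|\na d|^{p/2}$: $\|\na d\|_{L^{p+2}}^{p+2}\le \|f\|_{L^4}^2\|\na d\|_{L^4}^2$-type splits. These give $C\|\na d\|_{L^p}^p\|\na^2d\|_{L^2}^2$ plus an absorbable multiple of $\|\na f\|_{L^2}^2\le C\int|\na d|^{p-2}|\na^2 d|^2dx$. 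Here $\|\na d\|_{L^2}\le C$ from (\ref{2cp01}) is used.
\item For the second term, $\|f\|_{L^4}^2\le C\|f\|_{L^2}\|f\|_{H^1}$. Young's inequality then gives $\frac p2\int|\na d|^{p-2}|\na^2d|^2dx+C\|\na d\|_{L^p}^p(1+\|\na u\|_{L^2}^2)$.
\end{itemize}
This reproduces (\ref{bp25}):
\[
\frac{d}{dt}\|\na d\|_{L^p}^p\le C\|\na d\|_{L^p}^p\big(1+\|\na^2 d\|_{L^2}^2+\|\na u\|_{L^2}^2\big).
\]

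\textbf{Step 3 (Grönwall).} Apply Grönwall's inequality. The factor $\int_0^T(1+\|\na^2d\|_{L^2}^2+\|\na u\|_{L^2}^2)dt$ is bounded by (\ref{2cp01}). For the initial value, $\|\na d_0\|_{L^p}\le C\|\na d_0\|_{H^1}$ by (\ref{gn11}). This gives (\ref{2cp02}) with constants depending only on the listed quantities.

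\textbf{Main obstacle.} The only delicate point is confirming that the time-integrability of $\|\na^2 d\|_{L^2}^2$ holds in the non-vacuum setting. That bound came from the polar-coordinate argument (\ref{bp19})--(\ref{bp116}), which is purely local in $d$ and independent of $\n$. It is therefore included in (\ref{2cp01}) via the adapted proof of Lemma \ref{cp1l1}. Note also that (\ref{2cp001}) is not even needed here, since only $\|\na u\|_{L^2}$ enters.
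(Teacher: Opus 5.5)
Your proposal is correct and matches the paper's approach: the paper proves this lemma by repeating the $L^p$ energy argument (\ref{bp22})--(\ref{bp25}) from Lemma \ref{cp1l2}, namely testing (\ref{1cp21}) with $p|\na d|^{p-2}\na d$, then applying Gagliardo--Nirenberg and Gr\"onwall, with all the needed time-integrated bounds (including $\int_0^T\|\na^2 d\|_{L^2}^2dt$ from the polar-coordinate argument) supplied by (\ref{2cp01}). Your remark that (\ref{2cp001}) is not actually needed is also correct, since only $\|\na u\|_{L^2}$ enters the estimate.
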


\begin{lemma}\la{cp2l3}
There exists a positive constant $C$ depending only on $T$, $\alpha$, $\mu$, $\ga$, $\beta$, and $E_2$ such that
\be\la{2cp03}\ba
\sup_{0\le t\le T} \left( \| \tilde{x}^{2 \alpha} K(\rho) \|_{L^1}
+ \| \tilde{x}^{\alpha} \sqrt{\n} u \|_{L^2} + \| \tilde{x}^\alpha \na d \|_{L^2} \right) \le C,
\ea\ee
and
\be\la{2cp003}\ba
\sup_{0\le t\le T} \int \left( |\n - \tilde{\n}|^2 + |\n - \tilde{\n}|^{4\beta\ga+1 }\right) dx \le C.
\ea\ee
\end{lemma}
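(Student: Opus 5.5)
We first prove the weighted bounds in (\ref{2cp03}) by a single Gr\"onwall argument on a combined weighted energy, and then derive (\ref{2cp003}) by the flux argument of Lemma \ref{cp1l3}, adapted to $\tilde{\n}>0$.

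\textbf{Step 1: weighted energy identity.} Multiply $(\ref{nlckv})_2$ by $\tilde{x}^{2\alpha}u$. Multiply the director equation by $-\tilde{x}^{2\alpha}(\Delta d+|\na d|^2 d)$, or equivalently the differentiated equation (\ref{1cp21}) by $\tilde{x}^{2\alpha}\na d$. Add the renormalized mass equation for $K(\n)$, namely
\be\nonumber\ba
K(\n)_t+\div(K(\n)u)+(P-P(\tilde{\n}))\div u=0,
\ea\ee
multiplied by $\tilde{x}^{2\alpha}$. Every commutator term carries $|\na \tilde{x}^{2\alpha}|\le C\tilde{x}^{2\alpha-1}$. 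This gives
\be\nonumber\ba
&\frac{d}{dt}\int \tilde{x}^{2\alpha}\Big(\tfrac12\n|u|^2+K(\n)+\tfrac12|\na d|^2\Big)dx
+\int \tilde{x}^{2\alpha}\big(\mu|\na u|^2+\lam(\div u)^2+|\na^2 d|^2\big)dx\\
&\le C\int \tilde{x}^{2\alpha-1}\Big( (\mu+\lam)|\na u||u| + \n|u|^3+K(\n)|u|+|P-P(\tilde{\n})||u|\\
&\qquad\quad +|\na d|^2|u|+|\na d||\na^2 d|\Big)dx
+\int\tilde{x}^{2\alpha}\big(|\na u||\na d|^2+|\na d|^4\big)dx .
\ea\ee
The terms involving only $d$ are handled exactly as $J_1$--$J_4$ in Lemma \ref{cp1l2}. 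For these we use Lemma \ref{cp2l2} and the analogue of (\ref{1cp24}) with weight $\tilde{x}^{\alpha}$.

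\textbf{Step 2: the velocity terms (main obstacle).} Since $\alpha<1$, we have $\tilde{x}^{2\alpha-1}\le\tilde{x}^{\alpha}$. The harmless terms are split as, for instance,
\be\nonumber\ba
\tilde{x}^{2\alpha-1}|\na u||u|\le \tilde{x}^{2\alpha}|\na u|^2+\tilde{x}^{2\alpha-2}|u|^2 ,
\ea\ee
and the last piece is bounded by $\|u\|_{L^2}^2\le C(1+\|\na u\|_{L^2}^2)$ through (\ref{2cp001}).

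The delicate terms are those with the unbounded coefficient $\lam(\n)|\na u||u|$ and the cubic term $\n|u|^3$, because no bound on $\n$ in $L^\infty$ is available yet.
\begin{itemize}
\item For the $\lam$ term, we write $\lam|\div u||u|\tilde{x}^{2\alpha-1}\le \frac12\lam(\div u)^2\tilde{x}^{2\alpha}+C\lam|u|^2\tilde{x}^{2\alpha-2}$. The remainder $\int\lam|u|^2\tilde{x}^{2\alpha-2}$ is the real difficulty.
\item We plan to control $\|\n\|_{L^{r}}$ for large $r$ simultaneously with Step 3. A cleaner alternative is to use that the shear part alone controls $\na u$, and that $\lam\div u$ only appears through $\div u$. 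Concretely, we integrate by parts once more, $\int \lam\div u\, u\cdot\na\tilde{x}^{2\alpha}$, and replace $\lam\div u$ by $G+P-P(\tilde{\n})-2\mu\div u$. This reduces matters to bounding $\int |G||u|\tilde{x}^{2\alpha-1}$.
\item Using $\|G\|_{L^2}\le C\|\sqrt{2\mu+\lam}\,\div u\|_{L^2}+C$, which is integrable in time by (\ref{2cp01}), this term is bounded by $(1+\|\sqrt{\lam}\div u\|_{L^2}^2)(1+\|\tilde{x}^{\alpha}\sqrt{\n}u\|_{L^2}^2+\|\na u\|_{L^2}^2)$ up to lower-order pieces. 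It is acceptable for Gr\"onwall.
\item For $\n|u|^3\tilde{x}^{2\alpha-1}$, we use the decomposition $\n=\sqrt{\n}(\sqrt{\n}-\sqrt{\tilde{\n}})+\sqrt{\tilde{\n}}\sqrt{\n}$ inspired by \cite{WX}, together with Gagliardo--Nirenberg and (\ref{2cp001}). This bounds it by $C(1+\|\na u\|_{L^2}^2)(1+\|\tilde{x}^{\alpha}\sqrt{\n}u\|_{L^2}^2)$.
\end{itemize}
Gr\"onwall together with (\ref{2cp01}) then gives (\ref{2cp03}).

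\textbf{Step 3: the density integrability (\ref{2cp003}).} The $L^2$ part follows from (\ref{2cp13}) and (\ref{2cp01}). For the high power we repeat Lemma \ref{cp1l3}. Set $\psi=(-\Delta)^{-1}\div(\n u)$ with $\n u=\sqrt{\n}(\sqrt{\n}-\sqrt{\tilde{\n}})u+\sqrt{\tilde{\n}}\sqrt{\n}u$. The first piece lies in $L^{r}$ for some $r<2$, using (\ref{2cp13}). The second piece is handled through the weight: $\tilde{x}^{\alpha}\sqrt{\n}u\in L^2$ from (\ref{2cp03}) places $\sqrt{\n}u$ in $L^{s}$ for some $s<2$ away from the critical space, so $\psi\in L^{p}$ for suitable large $p$.

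Then we multiply (\ref{1cp35}), with $P$ replaced by $P-P(\tilde{\n})$ and $\theta$ adjusted by constants, by $\n g^{4\ga-1}$. Here $g=\max\{\theta(\n)-\psi-M,0\}$ with $M$ large, so that $g>0$ forces $\n>2\tilde{\n}$. We bound $F_1$ by (\ref{jhz1}), with $\|u\|_{\mathcal{BMO}}\le C\|\na u\|_{L^2}$, and $F_2$ by Lemma \ref{cp2l2} as in (\ref{1cp310}). Finally we absorb $\int_{\{\n>2\tilde{\n}\}}\n^{4\beta\ga+1}$ as in (\ref{1cp311}), and close by Gr\"onwall.
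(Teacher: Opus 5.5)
There is a genuine gap: Step 2 cannot be closed using only (\ref{2cp01})--(\ref{2cp001}), so the order ``first (\ref{2cp03}), then (\ref{2cp003})'' breaks down.

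On $\{\n\ge 2\tilde{\n}\}$ the basic energy gives only $\n-\tilde{\n}\in L^{\ga}$. The remainder $\int \lam|u|^2\tilde{x}^{2\alpha-2}dx$ involves $\n^{\beta}|u|^2$, and the flux terms $\int |K(\n)+P-P(\tilde{\n})|\,|u|\,\tilde{x}^{2\alpha-1}dx$ involve $\n^{\ga}|u|$. You implicitly treat the latter as harmless, but they are not. Since $u\in H^1$ is not bounded, both kinds of term need strictly higher integrability of $\n-\tilde{\n}$ than $L^{\ga}$. For the $\lam$ term you need more than $L^{\beta}$, and $\beta$ may exceed $\ga$.

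Your ``cleaner alternative'' does not remove this. The bound $\|G\|_{L^2}\le C\|\sqrt{2\mu+\lam}\,\div u\|_{L^2}+C$ is false at this stage:
\begin{itemize}
\item $\int(2\mu+\lam)^2(\div u)^2dx\le \|2\mu+\lam\|_{L^\infty}\int(2\mu+\lam)(\div u)^2dx$ requires the upper bound on $\n$, which this lemma is a step toward.
\item $\|P-P(\tilde{\n})\|_{L^2}$ requires $\n\in L^{2\ga}$.
\end{itemize}
Rewriting $\lam\div u$ through $G$ just moves the unbounded factor $\lam$ into $G$. Meanwhile, your Step 3 uses (\ref{2cp03}) to control $\psi_2$, so as written the argument is circular.

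The missing idea is to run the two estimates simultaneously, as the paper does.
\begin{enumerate}
\item Derive the differential inequality for $\int\n f^{4\ga}dx$ with $f=\max\{\theta_2(\n)-\psi,0\}$, which is (\ref{2cp37}).
\item Derive the weighted energy inequality for $H(t)$, which is (\ref{2cp315}). On both right-hand sides keep the unknown $\|\n-\tilde{\n}\|_{L^{4\beta\ga+1}}^{4\beta\ga+1}$. For instance, the $\lam$ term is bounded as in (\ref{2cp312}) by $\frac14\int(\mu+\lam)(\div u)^2\tilde{x}^{2\alpha}dx+C(\|\n-\tilde{\n}\|^{\beta}_{L^{4\beta\ga+1}}+1)(\|\na u\|^2_{L^2}+1)$, and the $K$, $P$ terms as in (\ref{2cp314}).
\item Eliminate this unknown via (\ref{2cp321}), that is, $\|\n-\tilde{\n}\|_{L^{4\beta\ga+1}}^{4\beta\ga+1}\le C\int\n f^{4\ga}dx+C\|\tilde{x}^{\alpha}\sqrt{\n}u\|_{L^2}^2+C$, obtained from the splitting $\psi=\psi_1+\psi_2$.
\end{enumerate}
The point you would have to check is that after Young's inequality in (\ref{2cp320}), the weighted norm $\|\tilde{x}^{\alpha}\sqrt{\n}u\|_{L^2}$ enters with power exactly $2$. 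This holds because $4\ga p\cdot\frac{4\beta\ga+1}{4\beta\ga}=2$ for $p=\frac{2\beta}{4\beta\ga+1}$. That is what lets Gr\"onwall close on $\int\n f^{4\ga}dx+H(t)$.

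A smaller issue: since $\psi$ is not yet bounded in $L^\infty$, shifting by a constant $M$ does not make $g>0$ imply $\n>2\tilde{\n}$. The region $\{\n<2\tilde{\n}\}$ must instead be handled using $|P-P(\tilde{\n})|\le C|\n-\tilde{\n}|$ and the $L^2$ bound (\ref{2cp13}), as in (\ref{2cp37}).
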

\begin{proof}
First, we use the effective viscous flux $G$ defined in (\ref{gw}) to rewrite $(\ref{nlckv})_2$ as
\be\la{2cp31}\ba
\n\dot{u}= \na G + \mu \na^\bot \o - \na d \cdot \Delta d.
\ea\ee
Then, following a derivation analogous to Lemma \ref{cp1l3}, we obtain
\be\la{2cp32}\ba
\frac{D}{Dt} \left( \theta_2(\n) - \psi \right) + P - P(\tilde{\n}) = - F_1 - F_2,
\ea\ee
where
\be\la{2cp33}\ba
\theta_2(\n) \triangleq 2\mu (\log \n - \log \tilde{\n}) + \beta^{-1}(\n^\beta-\tilde{\n}^\beta),
\ea\ee
and $F_1$, $F_2$, $\psi$ are as defined in (\ref{1cp34}) and (\ref{1cp36}).

Moreover, from (\ref{1cp39}), (\ref{jhz1}), (\ref{2cp001}) and H\"older's inequality, we deduce that for any $1<p<4\beta \ga+1$,
\be\la{2cp34}\ba
\| F_1 \|_{L^p} & \le C \| u \|_{\mathcal{BMO}} \| \n u \|_{L^p} \\
& \le C \| \na u \|_{L^2} \left( \| (\n -\tilde{\n}) u \|_{L^p} + \| u \|_{L^p} \right) \\
& \le C \| \na u \|_{L^2} \left( \| \n -\tilde{\n} \|_{L^{4\beta\ga+1}} \| u \|_{ L^{ p(4\beta\ga+1)/(4\beta\ga+1-p) } }
+ \| u \|_{L^p} \right) \\
& \le C  \left( \| \n -\tilde{\n} \|_{L^{4\beta\ga+1}} + 1 \right) \| u \|^2_{H^1} \\
& \le C  \left( \| \n -\tilde{\n} \|_{L^{4\beta\ga+1}} + 1 \right)
\left( \| \na u \|^2_{L^2} + 1 \right),
\ea\ee
and
\be\la{2cp36}\ba
\| F_2 \|_{L^p} \le C \| \na d \|^2_{L^{2p}} \le C.
\ea\ee

Denoting $f \triangleq \max\{ \theta_2(\n) - \psi,0 \}$, we multiply (\ref{2cp32}) by $4 \ga \n f^{4\ga-1}$,
integrate over $\rr$,
and apply (\ref{2cp34}) and (\ref{2cp36}) to derive
\be\la{2cp37}\ba
& \frac{d}{dt} \int \n f^{4\ga} dx + 4 \ga \int \n f^{4\ga-1} |P - P(\tilde{\n})| dx \\
& \le C \int_{ \{ \n < 2\tilde{\n} \} } \n f^{4\ga-1} |P - P(\tilde{\n})| dx
+ C \int \n f^{4\ga-1} (|F_1| + |F_2|) dx \\
& \le C \int_{ \{ \n < 2\tilde{\n} \} } \n^{1-1/(4\ga)} f^{4\ga-1} |\n - \tilde{\n}|^{1/(2\ga)} dx \\
& \quad + C \int (1 + |\n-\tilde{\n}|^{1/(4\ga)} ) \n^{1-1/(4\ga)} f^{4\ga-1} (|F_1| + |F_2|) dx \\
& \le C \| \n f^{4\ga} \|^{1-1/(4\ga)}_{L^1} \| \n - \tilde{\n} \|^{1/(2\ga)}_{L^2(\n<2\tilde{\n})}
+ C \| \n f^{4\ga} \|^{1-1/(4\ga)}_{L^1} \left( \| F_1 \|_{L^{4\ga}} + \| F_2 \|_{L^{4\ga}} \right) \\
& \quad + C \| \n f^{4\ga} \|^{1-1/(4\ga)}_{L^1} \| \n -\tilde{\n} \|^{1/(4\ga)}_{L^{4 \beta \ga+1}}
\left( \| F_1 \|_{L^{(4\beta\ga+1)/\beta}}
+ \| F_2 \|_{L^{(4\beta\ga+1)/\beta}} \right) \\
& \le C \left( \| \n f^{4\ga} \|_{L^1} + \| \n -\tilde{\n} \|^{4\ga+1}_{L^{4 \beta \ga+1}} + 1 \right) \left( 1 + \| \na u \|^2_{L^2} \right).
\ea\ee

On the other hand, multiplying $(\ref{nlckv})_1$, $(\ref{nlckv})_2$, and $(\ref{nlckv})_3$ by
$\tilde{x}^{2\alpha} K'(\n)$, $\tilde{x}^{2\alpha} u$,
$-\tilde{x}^{2\alpha}(\Delta d + |\na d|^2 d)$, respectively,
summing them up, and integrating by parts, we obtain
\be\la{2cp38}\ba
& \frac{d}{dt} H(t) + \int \left( \mu |\na u|^2 + (\mu+\lam)(\div u)^2
+ |\Delta d + |\na d|^2 d |^2 \right) \tilde{x}^{2\alpha} dx \\
& \le C \int \n |u|^3 |\na \tilde{x}^{2\alpha}| dx
+ C \int |\na u| |u| |\na \tilde{x}^{2\alpha}| dx
+ C \int (\mu + \lam) |\div u| |u| |\na \tilde{x}^{2\alpha}| dx \\
& \quad + C \int \left( |u| |\na d| + |\na^2 d| + |\na d|^2 \right) |\na d| |\na \tilde{x}^{2\alpha}| dx \\
& \quad + C \int |K(\n) + P - P(\tilde{\n})| |u| |\na \tilde{x}^{2\alpha}| dx \triangleq \sum_{i=1}^{5} J_i,
\ea\ee
where
\be\la{2cp39}\ba
H(t) \triangleq \frac{1}{2} \int \left( \n |u|^2 + |\na d|^2 + 2 K(\n) \right) \tilde{x}^{2\alpha} dx.
\ea\ee
By virtue of (\ref{2cp01}), (\ref{2cp001}), (\ref{2cp02}) and Young's inequality, we estimate each $J_i$ as follows:
\be\la{2cp310}\ba
|J_1| & \le C \int \n^{1/2} |u| \tilde{x}^{\alpha} \left( |u|^2 + |\n -\tilde{\n}|^{1/2} |u|^2 \right) dx \\
& \le C \| \sqrt{\n} u \tilde{x}^{\alpha} \|_{L^2}
\left( \| u \|^2_{L^4} + \|\n -\tilde{\n}\|^{1/2}_{L^{4\beta \ga+1}} \| u \|^2_{L^{(4\beta \ga+1)/(\beta \ga)}} \right) \\
& \le C \left( \| \sqrt{\n} u \tilde{x}^{\alpha} \|^2_{L^2}
+ \|\n -\tilde{\n}\|_{L^{4\beta \ga+1}} + 1 \right) \left( \| \na u \|^2_{L^2} + 1 \right),
\ea\ee
\be\la{2cp311}\ba
J_2 \le \| \na u \tilde{x}^{\alpha} \|_{L^2} \| u \|_{L^2}
\le \frac{\mu}{4} \| \na u \tilde{x}^{\alpha} \|^2_{L^2} + C \left( \| \na u \|^2_{L^2} + 1 \right),
\ea\ee
\be\la{2cp312}\ba
J_3 & \le \frac{1}{4} \int (\mu+\lam)(\div u)^2 \tilde{x}^{2\alpha} dx
+ C \int \left( |\n - \tilde{\n}|^\beta + 1 \right) |u|^2 dx \\
& \le \frac{1}{4} \int (\mu+\lam)(\div u)^2 \tilde{x}^{2\alpha} dx + C \| u \|^2_{L^2} + C \|\n -\tilde{\n}\|^\beta_{L^{4\beta \ga+1}}
\| u \|^2_{L^{ 2(4\beta \ga+1)/(4\beta \ga+1-\beta) }} \\
& \le \frac{1}{4} \int (\mu+\lam)(\div u)^2 \tilde{x}^{2\alpha} dx
+ C \left( \|\n -\tilde{\n}\|^\beta_{L^{4\beta \ga+1}} + 1 \right) \left( \| \na u \|^2_{L^2} + 1 \right),
\ea\ee
\be\la{2cp313}\ba
J_4 & \le C \| \na d \tilde{x}^{\alpha} \|_{L^2}
\left( \| u \|_{L^4} \| \na d \|_{L^4} + \| \na^2 d \|_{L^2} + \| \na d \|^2_{L^4} \right) \\
& \le C \left( \| \na d \tilde{x}^{\alpha} \|^2_{L^2} + 1 \right)
\left( \| \na u \|^2_{L^2} + \| \na^2 d \|^2_{L^2} + 1 \right),
\ea\ee
and
\be\la{2cp314}\ba
J_5 & \le C \int_{ \{\n < 2\tilde{\n}\} } |\n - \tilde{\n}| |u| \tilde{x}^{\alpha} dx
+ C \int_{ \{\n \ge 2\tilde{\n}\} } |\n - \tilde{\n}|^{\ga} |u| \tilde{x}^{\alpha} dx \\
& \le C \int_{ \{\n < 2\tilde{\n}\} } K(\n)^{1/2} |u| \tilde{x}^{\alpha} dx
+ C \int_{ \{\n \ge 2\tilde{\n}\} } K(\n)^{1/2} |\n - \tilde{\n}|^{\ga/2} |u| \tilde{x}^{\alpha} dx \\
& \le C \| K(\n) \tilde{x}^{2\alpha} \|^{1/2}_{L^1}
\left( \| u \|_{L^2} + C \|\n -\tilde{\n}\|^{\ga/2}_{L^{4\beta \ga+1}}
\| u \|_{ L^{2(4\beta \ga+1)/(4\beta \ga +1-\ga)} } \right) \\
& \le C \left( \| K(\n) \tilde{x}^{2\alpha} \|_{L^1} + \|\n -\tilde{\n}\|^{\ga}_{L^{4\beta \ga+1}} \right)
\left( \| \na u \|^2_{L^2} + 1 \right).
\ea\ee
Putting (\ref{2cp310})--(\ref{2cp314}) into (\ref{2cp38}) yields
\be\la{2cp315}\ba
& \frac{d}{dt} H(t) + \frac{1}{2} \int \left( \mu |\na u|^2 + (\mu+\lam)(\div u)^2
+ |\Delta d + |\na d|^2 d |^2 \right) \tilde{x}^{2\alpha} dx \\
& \le C \left( H(t) + \|\n -\tilde{\n} \|^{4\beta \ga+1}_{L^{4\beta \ga+1}} + 1 \right)
\left( \| \na u \|^2_{L^2} + \| \na^2 d \|^2_{L^2} + 1 \right),
\ea\ee
which along with (\ref{2cp37}) gives
\be\la{2cp316}\ba
& \frac{d}{dt} \left( \| \n f^{4\ga} \|_{L^1} + H(t) \right) + \frac{1}{2} \int \left( \mu |\na u|^2 + (\mu+\lam)(\div u)^2
+ |\Delta d + |\na d|^2 d |^2 \right) \tilde{x}^{2\alpha} dx \\
& \le C \left( \| \n f^{4\ga} \|_{L^1} + H(t) + \|\n -\tilde{\n} \|^{4\beta \ga+1}_{L^{4\beta \ga+1}} + 1 \right)
\left( \| \na u \|^2_{L^2} + \| \na^2 d \|^2_{L^2} + 1 \right).
\ea\ee

Next, we estimate $\|\n -\tilde{\n} \|_{L^{4\beta \ga+1}}$.
In view of the definition of $\psi$, we set
\be\la{2cp317}\ba
\psi = (- \Delta)^{-1} \div( \sqrt{\n} u (\sqrt{\n}-\sqrt{\tilde{\n}}) ) + \sqrt{\tilde{\n}} (- \Delta)^{-1} \div( \sqrt{\n} u )
\triangleq \psi_1 + \psi_2.
\ea\ee
Note that (\ref{2cp13}) implies
\be\la{2cp318}\ba
\int |\n -\tilde{\n}|^{4 \beta \ga+1} dx
& \le C \int_{ \{\n<2\tilde{\n} \} } |\n - \tilde{\n}|^2 dx
+ C \int_{ \{\n \ge 2\tilde{\n} \} } |\n - \tilde{\n}|^{4 \beta \ga+1} dx \\
& \le C + C \int_{ \{\n \ge 2\tilde{\n} \} } |\n - \tilde{\n}|
\left( f^{4\ga} + |\psi|^{4\ga} \right) dx \\
& \le C + C \| \n f^{4\ga} \|_{L^1}
+ C \int_{ \{\n \ge 2\tilde{\n} \} } |\n - \tilde{\n}| \left( |\psi_1|^{4\ga} + |\psi_2|^{4\ga}\right) dx.
\ea\ee
On the one hand, it follows from (\ref{2cp01}), (\ref{2cp13}), and Sobolev embedding that
\be\la{2cp319}\ba
\int_{ \{\n \ge 2\tilde{\n} \} } |\n - \tilde{\n}| |\psi_1|^{4\ga} dx
& \le C \| \n -\tilde{\n} \|_{L^{4 \beta \ga+1}} \| \psi_1 \|^{4\ga}_{L^{(4\beta \ga+1)/\beta}} \\
& \le C \| \n -\tilde{\n} \|_{L^{4 \beta \ga+1}} \| \sqrt{\n} u \|^{4\ga}_{L^2}
\| \sqrt{\n} - \sqrt{\tilde{\n}} \|^{4\ga}_{L^{(4 \beta \ga+1)/\beta}} \\
& \le C \| \n -\tilde{\n} \|_{L^{4 \beta \ga+1}}
\| \n - \tilde{\n} \|^{2\ga}_{L^{(4 \beta \ga+1)/(2\beta)}} \\
& \le C \| \n -\tilde{\n} \|_{L^{4 \beta \ga+1}}
\left( \| \n - \tilde{\n} \|^{2\ga}_{L^2(\n<2\tilde{\n})}
+ \| \n - \tilde{\n} \|^{2\ga}_{L^\ga (\n \ge 2\tilde{\n})}
+ \| \n - \tilde{\n} \|^{2\ga}_{L^{4 \beta \ga+1}} \right) \\
& \le C \| \n -\tilde{\n} \|_{L^{4 \beta \ga+1}}
\left( 1 + \| \n - \tilde{\n} \|^{2\ga}_{L^{4 \beta \ga+1}} \right) \\
& \le \ep \| \n - \tilde{\n} \|^{4 \beta \ga+1}_{L^{4 \beta \ga+1}} + C(\ep).
\ea\ee
On the other hand, for $p = \frac{2\beta}{4\beta \ga+1} \in (0,1)$ and $s = \frac{1}{\alpha \ga p} \in (2,\infty)$,
we conclude from (\ref{2cp01}), (\ref{2cp13}) and Young's inequality that
\be\la{2cp320}\ba
& \int_{ \{\n \ge 2\tilde{\n} \} } |\n - \tilde{\n}| |\psi_2|^{4\ga} dx \\
& \le C \| \n -\tilde{\n} \|_{L^{\frac{s}{s-1}} (\n \ge 2\tilde{\n}) } \| \psi_2 \|^{4\ga}_{L^{4\ga s}} \\
& \le C \left( \| \n -\tilde{\n} \|_{L^{4 \beta \ga+1}}
+ \| \n -\tilde{\n} \|_{L^1 (\n \ge 2\tilde{\n})} \right)
\left( \| \sqrt{\n} u \|^{1-p}_{L^2} \| \sqrt{\n} u \tilde{x}^{\alpha} \|^{p}_{L^2}
\| \tilde{x}^{-\alpha p} \|_{L^{4\ga s}} \right)^{4\ga} \\
& \le C \left( \| \n -\tilde{\n} \|_{L^{4 \beta \ga+1}} + \| \n -\tilde{\n} \|_{L^\ga (\n \ge 2\tilde{\n})}^\ga \right)
\| \sqrt{\n} u \tilde{x}^{\alpha} \|^{4\ga p}_{L^2} \\
& \le C \left( \| \n -\tilde{\n} \|_{L^{4 \beta \ga+1}} + 1 \right)
\| \sqrt{\n} u \tilde{x}^{\alpha} \|^{4\ga p}_{L^2} \\
& \le \ep \| \n - \tilde{\n} \|^{4 \beta \ga+1}_{L^{4 \beta \ga+1}}
+ C (\ep) \| \sqrt{\n} u \tilde{x}^{\alpha} \|^2_{L^2} + C (\ep).
\ea\ee
Substituting (\ref{2cp319}) and (\ref{2cp320}) into (\ref{2cp318}) and choosing $\ep$ sufficiently small, we obtain
\be\la{2cp321}\ba
\int |\n -\tilde{\n}|^{4 \beta \ga+1} dx
& \le C \| \n f^{4\ga} \|_{L^1} + C \| \sqrt{\n} u \tilde{x}^{\alpha} \|^2_{L^2} + C.
\ea\ee
Combining this with (\ref{2cp316}) leads to
\be\la{2cp322}\ba
& \frac{d}{dt} \left( \| \n f^{4\ga} \|_{L^1} + H(t) \right) + \frac{1}{2} \int \left( \mu |\na u|^2 + (\mu+\lam)(\div u)^2
+ |\Delta d + |\na d|^2 d |^2 \right) \tilde{x}^{2\alpha} dx \\
& \le C \left( \| \n f^{4\ga} \|_{L^1} + H(t) + 1 \right)
\left( \| \na u \|^2_{L^2} + \| \na^2 d \|^2_{L^2} + 1 \right),
\ea\ee
which together with Gr\"onwall's inequality and (\ref{2cp01}) implies
\be\la{2cp323}\ba
\sup_{0\le t\le T} \int \left( \n f^{4\ga} + \left( \n |u|^2 + |\na d|^2 + 2 K(\n) \right) \tilde{x}^{2\alpha} \right) dx \le C.
\ea\ee

Finally, (\ref{2cp13}) and (\ref{2cp321}) ensure
\be\la{2cp324}\ba
\int \left( |\n - \tilde{\n}|^2 + |\n - \tilde{\n}|^{4\beta\ga+1 }\right) dx
& \le C \int_{ \{\n < 2\tilde{\n}\} } |\n - \tilde{\n}|^2 dx
+ C \int |\n - \tilde{\n}|^{4\beta\ga+1 } dx \\
& \le C \| \n f^{4\ga} \|_{L^1} + C \| \sqrt{\n} u \tilde{x}^{\alpha} \|^2_{L^2} + C.
\ea\ee
This, combined with (\ref{2cp323}), yields (\ref{2cp003}) and finishes the proof of Lemma \ref{cp2l3}.
\end{proof}

\begin{lemma}\la{cp2l4}
For any $p>4$, there exists a positive constant $C$ depending only on $p$, $T$, $\alpha$, $\mu$, $\ga$, $\beta$, and $E_2$ such that
\be\la{2cp04}\ba
\|\n u\|_{L^p} \le  C R_T^{1+\beta/4}  (1+ \|\na u\|_{L^2})^{ 1-2/p},
\ea\ee
with $R_T$ as in \eqref{mdsj}.
\end{lemma}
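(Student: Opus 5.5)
We follow the argument of Lemma \ref{cp1l4}. The difference is that the weighted inequality (\ref{jqgj1}) is replaced by the plain Sobolev embedding, which is available because $\|u\|_{H^1}\le C(1+\|\na u\|_{L^2})$ by (\ref{2cp001}).

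\textbf{Step 1: a uniform $L^{2+\nu}$ bound.} Set $\nu \triangleq \frac{\mu^{1/2}}{2(\mu+1)}R_T^{-\beta/2}\in(0,1/2]$ as in (\ref{1cp41}). Multiply $(\ref{nlckv})_2$ by $|u|^\nu u$ and integrate. The viscous cross term is absorbed exactly as in (\ref{1cp42}), using $\nu^2(\mu+\lam)\le \mu/(4(\mu+1))$. The pressure term is now $\int |P-P(\tilde\n)||u|^\nu|\na u|\,dx$. We bound it by
\[
\int \n|u|^{2+\nu}dx + \int |\n-\tilde\n|^{\ga(2+\nu)/2}\,|u|^{\nu\cdot\ldots}\ldots
\]
after a Young splitting. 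More simply, we use
\[
|P-P(\tilde\n)|^2|u|^\nu \le C|\n-\tilde\n|^2(1+|\n-\tilde\n|^{2\ga-2})(1+|u|^{2}),
\]
and control the $\int|\n-\tilde\n|^{k}|u|^2$ pieces by H\"older, using (\ref{2cp003}) (which gives $\n-\tilde\n\in L^2\cap L^{4\beta\ga+1}$) and $\|u\|_{L^r}\le C(1+\|\na u\|_{L^2})$. For the director term $\int|u|^\nu|\na u||\na d|^2$, the unweighted argument suffices: by (\ref{2cp02}), $\||u|^\nu|\na d|^4\|_{L^1}\le \|u\|_{L^4}^\nu\|\na d\|^4_{L^{16/(4-\nu)}}\le C(1+\|\na u\|_{L^2})$. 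This yields
\[
\frac{d}{dt}\int\n|u|^{2+\nu}dx\le C\int \n|u|^{2+\nu}dx+C(1+\|\na u\|_{L^2}^2).
\]
Gr\"onwall's inequality and (\ref{2cp01}) then give $\sup_t\int\n|u|^{2+\nu}dx\le C$, with $C$ independent of $R_T$.

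\textbf{Step 2: interpolation.} For $p>4$ and $r=(p-2)(2+\nu)/\nu$, interpolate:
\[
\|\n u\|_{L^p}\le \|\n u\|_{L^{2+\nu}}^{2/p}\|\n u\|_{L^r}^{1-2/p}.
\]
The first factor is at most $R_T^{(1+\nu)/(2+\nu)\cdot 2/p}C$. For the second factor, $\|\n u\|_{L^r}\le R_T\|u\|_{L^r}$. Since we are in $\rr$ with $u\in H^1$, (\ref{gn11}) gives
\[
\|u\|_{L^r}\le Cr^{1/2}\|u\|_{H^1}\le Cr^{1/2}(1+\|\na u\|_{L^2}).
\]
Moreover $r^{1/2}\le C(p)\nu^{-1/2}=C(p)R_T^{\beta/4}$. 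Combining,
\[
\|\n u\|_{L^p}\le C(p)R_T^{2/p}\bigl(R_T^{1+\beta/4}(1+\|\na u\|_{L^2})\bigr)^{1-2/p}\le C(p)R_T^{1+\beta/4}(1+\|\na u\|_{L^2})^{1-2/p}.
\]
No $\eta_0$ appears because no logarithmic weight is needed.

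\textbf{Main obstacle.} The main difficulty is Step 1. There the pressure term $P-P(\tilde\n)$ and the director term must be bounded uniformly in $R_T$, since $\nu$ depends on $R_T$. The point to verify is that every exponent generated by H\"older stays below $4\beta\ga+1$ and $2+\nu\le 5/2$, so that (\ref{2cp003}) and (\ref{2cp001}) suffice without any factor of $R_T$.
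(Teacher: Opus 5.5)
Your proposal is correct and is essentially the paper's proof. It uses the same multiplier $|u|^\nu u$ with $\nu\sim R_T^{-\beta/2}$, the same $R_T$-independent Gr\"onwall bound on $\int\n|u|^{2+\nu}dx$, and the same interpolation with $r=(p-2)(2+\nu)/\nu$, where (\ref{gn11}) and (\ref{2cp001}) replace the weighted inequality so that $r^{1/2}\le C(p)R_T^{\beta/4}$.

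The only differences are cosmetic bookkeeping in Step 1. You absorb the pressure term by Cauchy--Schwarz together with $|u|^\nu\le 1+|u|^2$, whereas the paper uses $|u|^\nu\le 1+|u|$ and the $L^2\cap L^4$ bound (\ref{2cp42}) on $P-P(\tilde\n)$. Your variable exponent $16/(4-\nu)\in[4,32/7]$ for $\na d$ is harmless, since (\ref{2cp02}) can be interpolated between fixed exponents to give a uniform bound.
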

\begin{proof}
Following the procedure similar to Lemma \ref{cp1l4}, we multiply $(\ref{nlckv})_2$ by $|u|^\nu u$ with $\nu$ given in (\ref{1cp41}),
and integrate over $\rr$ to derive
\be\la{2cp41}\ba
& \frac{1}{(2+\nu)} \frac{d}{dt} \int \n |u|^{2+\nu} dx
+ \int |u|^\nu \left(\mu |\na u|^2 + (\mu+\lam) (\div u)^2 \right) dx \\
& \le \frac{1}{2} \int (\mu+\lam) |u|^\nu (\div u)^2 dx
+ \frac{\mu}{2} \int |u|^{\nu} |\na u|^2 dx \\
& \quad + C \int |P-P(\tilde{\n})| |u|^\nu |\na u| dx + C \int |u|^\nu |\na d|^4 dx.
\ea\ee
From (\ref{2cp003}), we deduce that for any $s \in [2,4]$,
\be\la{2cp42}\ba
\int |P - P(\tilde{\n})|^s dx \le C \int_{ \{ \n<2\tilde{\n} \} } |\n -\tilde{\n}|^2 dx
+ C \int_{ \{ \n \ge 2\tilde{\n} \} } |\n -\tilde{\n}|^{4\beta \ga+1} dx \le C.
\ea\ee
Combining this with (\ref{2cp001}), (\ref{2cp02}), (\ref{2cp41}) and H\"older's inequality yields
\be\la{2cp43}\ba
& \frac{d}{dt} \int \n |u|^{2+\nu} dx
+ \int |u|^\nu \left(\mu |\na u|^2 + (\mu+\lam) (\div u)^2 \right) dx \\
& \le C \int |P-P(\tilde{\n})| \left( 1 + |u| \right) |\na u| dx
+ C \int \left( 1 + |u| \right) |\na d|^4 dx \\
& \le C \| P - P(\tilde{\n}) \|_{L^2} \| \na u \|_{L^2}
+ C \| P - P(\tilde{\n}) \|_{L^4} \| u \|_{L^4} \| \na u \|_{L^2} \\
& \quad + C \| \na d \|^4_{L^4} + C \| u \|_{L^2} \| \na d \|^4_{L^8} \\
& \le C (1 + \| \na u \|^2_{L^2}),
\ea\ee
Integrating the above inequality and applying (\ref{2cp01}), we obtain
\be\la{2cp44}\ba
\sup_{0 \le t \le T} \int \n |u|^{2+\nu} dx \le C.
\ea\ee
Finally, it follows from (\ref{gn11}), (\ref{2cp001}), (\ref{2cp44}) and H\"older's inequality that for $p>2$ and $r=(p-2)(2+\nu)/\nu$,
\be\nonumber\ba
\| \n u \|_{L^p} & \le \|\n u\|_{L^{2+\nu}}^{2/p} \|\n u\|_{L^r}^{1-2/p} \\
& \le C R_T^{ (1+\nu)/p} \|\n^{1/(2+\nu)} u\|_{L^{2+\nu}}^{2/p}
\left( r^{1/2} R_T \| u \|^{2/r}_{L^2} \|\na  u\|^{1-2/r}_{L^2} \right)^{ 1-2/p} \\
& \le C(p) R_T^{ (1+\nu)/p} \left( R_T^{1+\beta/4} (1+ \|\na  u\|_{L^2}) \right)^{ 1-2/p} \\
& \le C(p) R_T^{1+\beta/4} (1+ \|\na  u\|_{L^2})^{ 1-2/p},
\ea\ee
which implies (\ref{2cp04}) and completes the proof of Lemma \ref{cp2l4}.
\end{proof}

\begin{lemma}\la{cp2l5}
There exists a positive constant $C$ depending only on $T$, $\alpha$, $\mu$, $\ga$, $\beta$, and $E_2$ such that
\be\la{2cp05}\ba
&\sup\limits_{0\le t\le T} \log A_1^2(t) + \int_0^T\frac{A_2^2(t)}{A_1^2(t)} dt \le C R_T^{4/3},
\ea\ee
where $A_1$ and $A_2$ are defined in \eqref{a1} and \eqref{a2}, respectively.
\end{lemma}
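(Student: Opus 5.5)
We will follow the proof of Lemma \ref{cp1l5} almost line by line, substituting the non-vacuum estimates of Lemmas \ref{cp2l1}--\ref{cp2l4} for the weighted ones. From (\ref{2cp31}) we get (\ref{1cp52}), and therefore the elliptic bound (\ref{1cp53}). Combining this with (\ref{2cp02}) gives $\|\na G\|_{L^2}+\|\na\o\|_{L^2}\le CA_1+CR_T^{1/2}A_2$, exactly as in (\ref{1cp54}). Next we multiply (\ref{2cp31}) by $2\dot u$ and use the identities (\ref{bp701}), (\ref{bp7001}). The derivation is unchanged except that $P$ is replaced by $P-P(\tilde\n)$ in $G$. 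This produces the identity (\ref{1cp55}) with terms $I_1,\dots,I_7$. In the $I_5$ and $I_6$ analogues, one factor $P$ becomes $P-P(\tilde\n)$ and the other stays $P$; in the whole computation $P$ is always multiplied by bounded coefficients.

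For $I_1$--$I_6$ we invoke the argument of \cite[Lemma 3.5]{HL3}, as in (\ref{1cp56}). The only inputs needed are:
\begin{itemize}
\item the $L^1$-in-time bound on $\|\na u\|_{L^2}^2$ from (\ref{2cp01});
\item the integrability $\|P-P(\tilde\n)\|_{L^s}\le C$ for $s\in[2,4]$ from (\ref{2cp42});
\item the estimate $\|\n-\tilde\n\|_{L^{4\beta\ga+1}}\le C$ from (\ref{2cp003}).
\end{itemize}
These replace (\ref{1cp03}). We also need the analogue of (\ref{1cp057}), namely $\|\na u\|_{L^2}^2\le C(\|G/(2\mu+\lam)\|_{L^2}^2+\|P-P(\tilde\n)\|_{L^2}^2+\|\o\|_{L^2}^2)\le CA_1^2$, which follows from (\ref{2cp42}). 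The key point is that in the $R_T^{4/3}$ bookkeeping of \cite{HL3}, terms like $\int G^3/(2\mu+\lam)$ are controlled via $\|G\|_{L^3}^3\le C\|G\|_{L^2}^2\|\na G\|_{L^2}$ together with $\|G\|_{L^2}\le CR_T^{\beta/2}A_1$. Those computations never use the far-field value of $\n$.

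The new work is in $I_7$ and in the $\Delta d$ estimate (\ref{1cp58}). In the vacuum case these relied on the weighted bounds (\ref{jqgju}) for $u\bar x^{-\eta}$. Here (\ref{2cp001}) gives $\|u\|_{H^1}\le C(1+\|\na u\|_{L^2})$, so by (\ref{gn11}) $\|u\|_{L^p}\le C(1+\|\na u\|_{L^2})$ for all $p<\infty$, and weights are unnecessary. For instance, in $I_7$ the term $\int u\cdot\na u\cdot\na d\cdot\Delta d$ is bounded by $\|u\|_{L^8}\|\na u\|_{L^2}\|\na d\|_{L^8}\|\Delta d\|_{L^4}$. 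Using (\ref{2cp02}) and $\|\Delta d\|_{L^4}\le C\|\Delta d\|_{L^2}^{1/2}\|\na\Delta d\|_{L^2}^{1/2}$, this gives
\be\nonumber\ba
\int |u||\na u||\na d||\Delta d|\,dx \le C(1+\|\na u\|_{L^2}^2)A_1^{1/2}A_2^{1/2}\le \tfrac18A_2^2+CA_1^2(1+\|\na u\|_{L^2}^2),
\ea\ee
where the last step uses $\|\na u\|_{L^2}\le CA_1$. The time-derivative terms are handled by the same integration by parts as in (\ref{1cp57}). In (\ref{1cp58}), we bound $\int|u|^2|\na^2d|^2\le\|u\|_{L^4}^2\|\na^2d\|_{L^4}^2\le C(1+\|\na u\|_{L^2}^2)\|\na^2d\|_{L^2}\|\na^3d\|_{L^2}$, which is absorbed after Young's inequality into $\frac18\|\na\Delta d\|_{L^2}^2+CA_1^2(1+\|\na u\|_{L^2}^2)$. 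The remaining terms in (\ref{1cp58}) do not involve weights. We thus reach
\be\nonumber\ba
\frac{d}{dt}\tilde A_1+\frac14A_2^2\le CR_T^{4/3}A_1^2(1+\|\na u\|_{L^2}^2+\|\na^2d\|_{L^2}^2),
\ea\ee
with $\tilde A_1$ as in (\ref{1cp510}) and the equivalence (\ref{1cp512}) following from (\ref{2cp02}).

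Finally, we divide by $\tilde A_1+\hat C_1$ and integrate in time, using (\ref{2cp01}); this yields (\ref{2cp05}). The step we expect to be the main obstacle is checking that the \cite{HL3} treatment of $I_1$--$I_6$ still gives the power $R_T^{4/3}$ when $P$ is replaced by $P-P(\tilde\n)$ on an infinite-measure domain. The worry is that terms such as $\int P^2|\div u|/(2\mu+\lam)$ are no longer integrable using bounds on $P$ alone. We will resolve this by always pairing $P-P(\tilde\n)$, which lies in $L^2\cap L^4$ by (\ref{2cp42}), with the $L^\infty$-bounded factor $P\le CR_T^\ga$. These extra powers are harmless: since $\lam=\n^\beta$, the factor $1/(2\mu+\lam)$ decays at large density and offsets the growth of $P$.
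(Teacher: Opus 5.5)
Your skeleton matches the paper's: test (\ref{2cp31}) with $2\dot u$, add the $\Delta d$ estimate, and divide by $\hat A_1+\hat C_2$. Your treatment of $\int u\cdot\nabla u\cdot\nabla d\cdot\Delta d$ in $I_7$ via $\|u\|_{L^8}\le C(1+\|\nabla u\|_{L^2})$ is also fine. There are two genuine gaps.

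\textbf{The exponent $4/3$ in $I_1$--$I_6$.} This is exactly where the exponent is decided, and the mechanism you describe does not produce it. Bounding a cubic term by $\|G\|_{L^3}^3\le C\|G\|_{L^2}^2\|\nabla G\|_{L^2}$, with $\|G\|_{L^2}\le CR_T^{\beta/2}A_1$ and $\|\nabla G\|_{L^2}\le CA_1+CR_T^{1/2}A_2$, gives $CR_T^{2\beta+1}A_1^4$ after Young. What is needed is (\ref{2cp510}):
$\|G^2/(2\mu+\lambda)\|_{L^2}\le C\|G/\sqrt{2\mu+\lambda}\|_{L^2}^{1-\delta}\|G\|_{L^2}^{\delta}\|\nabla G\|_{L^2}$.
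Here only a $\delta$-power of the unweighted norm appears, so the loss is $R_T^{(1+\delta\beta)/2}$, and $\delta=1/(3\beta)$ gives $R_T^{4/3}$. The paper handles $L_2$ through the $\mathcal{H}^1$--$\mathcal{BMO}$ duality (\ref{2cp57})--(\ref{2cp58}), which costs only $R_T$.

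\textbf{The pressure terms.} Your proposed fix is wrong. $P/(2\mu+\lambda)\sim\rho^{\gamma-\beta}$ is bounded only when $\gamma\le\beta$, and the hypotheses impose no relation between $\gamma>1$ and $\beta>4/3$. Pairing with $P\le CR_T^{\gamma}$ therefore creates $\gamma$-dependent factors such as $R_T^{\gamma-\beta}$. These contradict the statement and would break the closing inequality $R_T^{\beta}\le CR_T^{\max\{4/3,\,1+\beta/4+\cdots\}}$ in Lemma \ref{cp2l6}. The paper never uses an $L^\infty$ bound on $P$ here. Instead it:
\begin{itemize}
\item writes $P=(P-P(\tilde\rho))+P(\tilde\rho)$ and uses $(2\mu+\lambda)^{-1}\le(2\mu)^{-1}$;
\item uses $\|P-P(\tilde\rho)\|_{L^4}\le C$ from (\ref{2cp42});
\item uses $\|G/(2\mu+\lambda)\|_{L^4}\le C\|G^2/(2\mu+\lambda)\|_{L^2}^{1/2}$.
\end{itemize}
This reduces all of $L_3$--$L_6$ to $C\|\nabla u\|_{L^2}\|G^2/(2\mu+\lambda)\|_{L^2}+C(1+\|\nabla u\|_{L^2}^2)$; see (\ref{2cp59}).

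\textbf{The $\Delta d$ estimate.} There is also a concrete error here. From $\int|u|^2|\nabla^2 d|^2\le C(1+\|\nabla u\|_{L^2}^2)\|\nabla^2 d\|_{L^2}\|\nabla^3 d\|_{L^2}$, Young's inequality leaves $C(1+\|\nabla u\|_{L^2}^2)^2\|\nabla^2 d\|_{L^2}^2$, not $CA_1^2(1+\|\nabla u\|_{L^2}^2)$. The latter would require $(1+\|\nabla u\|_{L^2}^2)\|\nabla^2 d\|_{L^2}^2\le CA_1^2$, whereas you only have the bound $CA_1^4$. After dividing by $A_1^2$, what remains is a product such as $(1+\|\nabla u\|_{L^2}^2)\|\nabla^2 d\|_{L^2}^2$ of two quantities that are merely $L^1$ in time. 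This is not controlled by (\ref{2cp01}), and the inequality becomes Riccati-type, so the logarithmic Gr\"onwall argument does not close. The fix, used in (\ref{2cp513}), is to choose the H\"older exponents so that the uniform bound (\ref{2cp02}) does the work:
$\|u\|_{L^8}^2\|\nabla^2 d\|_{L^{8/3}}^2\le C(1+\|\nabla u\|_{L^2}^2)\|\nabla d\|_{L^4}\|\nabla^3 d\|_{L^2}\le\frac{1}{16}A_2^2+CA_1^4$.
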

\begin{proof}
First, multiplying (\ref{2cp31}) by $2 \dot{u}$ and integrating by parts over $\rr$ yields
\be\la{2cp51}\ba
& \frac{d}{dt} \int \left(\mu \o^2 + \frac{G^2}{2\mu + \lam}\right)dx
+ 2 \int \n |\dot{u}|^2 dx \\
& = - \mu \int \o^2 \div u dx + 4 \int G \nabla u^1 \cdot\nabla^{\perp}u^2 dx
-2 \int G (\div u)^2 dx \\
& \quad - \int \frac{ (\beta-1)\lam - 2\mu }{(2\mu + \lam)^2} G^2 \div u dx
- 2 \beta \int \frac{ \lam ( P-P(\tilde{\n}) ) }{ (2\mu +\lam)^2 } G \div u dx \\
& \quad + 2 \ga \int \frac{P}{2\mu +\lam} G \div u dx
-2 \int \dot{u} \cdot \na d \cdot \Delta d dx
\triangleq \sum_{i=1}^7 L_i.
\ea\ee

Next, we estimate each $L_i$.

From (\ref{2cp31}), we deduce that $G$ and $\o$ satisfy
\be\la{2cp52}\ba
\Delta G = \div(\n \dot{u} + \na d \cdot \Delta d), \quad \mu \Delta \o = \na^\bot \cdot (\n \dot{u} + \na d \cdot \Delta d).
\ea\ee
The standard elliptic estimates along with (\ref{2cp02}) show that
\be\la{2cp53}\ba
\| \na G \|_{L^2} + \| \na \o \|_{L^2}
& \le C \left( \| \n \dot{u} \|_{L^2} + \| \na d \cdot \Delta d \|_{L^2} \right) \\
& \le C R_T^{1/2} \| \sqrt{\n} \dot{u} \|_{L^2} + C \| \na d \|_{L^4} \| \Delta d \|_{L^4} \\
& \le C R_T^{1/2} \| \sqrt{\n} \dot{u} \|_{L^2} + C \| \Delta d \|_{L^2} + C \| \na \Delta d \|_{L^2} \\
& \le C A_1 + C R_T^{1/2} A_2.
\ea\ee
Combining this with (\ref{gn11}) and Young's inequality gives
\be\la{2cp54}\ba
L_1 \le C \| \o \|^2_{L^4} \| \div u \|_{L^2}
& \le C \| \o \|_{L^2} \| \na \o \|_{L^2} \| \na u \|_{L^2} \\
& \le C A_1 \left( A_1 + R_T^{1/2} A_2 \right) \| \na u \|_{L^2} \\
& \le \frac{1}{8} A^2_2 + C R_T A_1^4,
\ea\ee
where we have used the following estimate:
\be\la{2cp55}\ba
\| \na u \|_{L^2} \le C \| \div u \|_{L^2} + C \| \o \|_{L^2}
\le C A_1 + C \| P - P(\tilde{\n}) \|_{L^2} \le C A_1,
\ea\ee
due to (\ref{gw}) and (\ref{2cp42}).

To estimate $L_2$, we apply the method from \cite{P}.
Note that
\be\la{2cp56}\ba
\curl \na u^1=0, \quad \div \na^\bot u^2=0.
\ea\ee
Then, from \cite[Theorem II.1]{CLMS}, we have
\be\la{2cp57}\ba
\| \na u^1 \cdot \na^\bot u^2 \|_{\mathcal{H}^1} \le C \| \na u \|^2_{L^2}.
\ea\ee
Since $\mathcal{BMO}(\rr)$ is the dual space of $\mathcal{H}^1(\rr)$ (see \cite{FC}), this combined with (\ref{2cp55}) implies
\be\la{2cp58}\ba
L_2 & \le C \| G \|_{\mathcal{BMO}} \| \na u^1 \cdot \na^\bot u^2 \|_{\mathcal{H}^1} \\
& \le C \| \na G \|_{L^2} \| \na u \|^2_{L^2} \\
& \le C \left( A_1 + R_T^{1/2} A_2 \right) A_1^2 \\
& \le \frac{1}{8} A^2_2 + C R_T A_1^4.
\ea\ee
Moreover, it follows from (\ref{gw}), (\ref{2cp42}) and (\ref{2cp55}) that for any $\de \in (0,1)$,
\be\la{2cp59}\ba
\sum_{i=3}^{6} |L_i| & \le C \int \frac{G^2 |\div u|}{2\mu+\lam} dx
+ \int \frac{|G|}{2\mu+\lam} \left( |P-P(\tilde{\n})| + P(\tilde{\n}) \right) |\div u| dx \\
& \le C \| \na u \|_{L^2} \left\| \frac{G^2}{2\mu+\lam} \right\|_{L^2}
+ C \| \na u \|_{L^2} \left\| \frac{G^2}{2\mu+\lam} \right\|^{1/2}_{L^2} \| P-P(\tilde{\n}) \|_{L^4} \\
& \quad + C \| \na u \|_{L^2} \left\| \frac{G}{2\mu+\lam} \right\|_{L^2} \\
& \le C \| \na u \|_{L^2} \left\| \frac{G^2}{2\mu+\lam} \right\|_{L^2}
+ C ( 1 + \| \na u \|^2_{L^2} ) \\
& \le C R_T^{ (1+\de\beta)/2 } \| \na u \|_{L^2} A_1 \left( A_2 + A_1 \right)
+ C A_1^2 \\
& \le \frac{1}{8} A^2_2 + C R_T^{ 1+\de\beta } A_1^4,
\ea\ee
where we have used the following estimate:
\be\la{2cp510}\ba
\left\|\frac{G^2}{2\mu+\lam} \right\|_{L^2}
& \le C \left\| \frac{G}{\sqrt{2\mu+\lam}} \right\|_{L^2}^{1-\de}
\| G \|_{ L^{2(1+\de)/\de}}^{1+\de} \\
& \le C (\de) A_1^{ 1-\de } \| G \|_{L^2}^{\de} \| \na G \|_{L^2} \\
& \le C(\de) A_1^{ 1-\de } R_T^{ (\de\beta)/2 } A_1^\de \left( A_1 + R_T^{1/2} A_2 \right) \\
& \le C(\de) R_T^{ (1+\de\beta)/2 } A_1 \left( A_2 + A_1 \right),
\ea\ee
due to (\ref{gn11}) and (\ref{gw}).

For $L_7$, integrating by parts over $\rr$ and using
(\ref{2cp001}), (\ref{2cp02}) and Young's inequality, we derive
\be\la{2cp511}\ba
L_7 & = - 2 \int u_t \cdot \na d \cdot \Delta d dx
- 2 \int u \cdot \na u \cdot \na d \cdot \Delta d dx \\
& \le \frac{d}{dt} \int \left( 2 (\na d \odot \na d) \cdot \na u - |\na d|^2 \div u \right) dx
+ C \int |\na d| |\na u| \left( |\na d_t|  + |u| |\na^2 d| \right) dx \\
& \le \frac{d}{dt} \int \left( 2 (\na d \odot \na d) \cdot \na u - |\na d|^2 \div u \right) dx
+ C \| \nabla d_t \|_{L^2} \| \na d \|_{L^4} \| \na u \|_{L^4} \\
& \quad + C \| u \|_{L^8} \| \na u \|_{L^4} \| \na d \|_{L^8} \| \na^2 d \|_{L^2} \\
& \le \frac{d}{dt} \int \left( 2 (\na d \odot \na d) \cdot \na u - |\na d|^2 \div u \right) dx
+ \frac{1}{8} A^2_2
+ C \| \na u \|^2_{L^4} + C A_1^4 \\
& \le \frac{d}{dt} \int \left( 2 (\na d \odot \na d) \cdot \na u - |\na d|^2 \div u \right) dx
+ \frac{1}{4} A^2_2 + C R_T^{ 1+\de\beta } A_1^4,
\ea\ee
where in the last inequality we have used the following estimate:
\be\la{2cp512}\ba
\| \na u \|^2_{L^4} & \le C \| \div u \|^2_{L^4} + C \| \o \|^2_{L^4} \\
& \le C \left\| \frac{G}{2\mu+\lam} \right\|^2_{L^4} + C \| P-P(\tilde{\n}) \|^2_{L^4} + C \| \o \|_{L^2} \| \na \o \|_{L^2} \\
& \le C(\de) R_T^{ (1+\de\beta)/2 } A_1 \left( A_2 + A_1 \right),
\ea\ee
owing to (\ref{gn11}), (\ref{2cp53}) and (\ref{2cp510}).

Furthermore, from (\ref{gn11}), (\ref{1cp21}), (\ref{2cp001}), (\ref{2cp02}), (\ref{2cp512}) and Young's inequality, we conclude that
\be\la{2cp513}\ba
& \frac{d}{dt} \| \Delta d \|^2_{L^2} + \| \na d_t \|^2_{L^2} + \| \na \Delta d \|^2_{L^2} \\
& \le C \int |\na u|^2 |\na d|^2 + |u|^2 |\na^2 d|^2
+ |\na^2 d|^2 |\na d|^2 + |\na d|^6 dx \\
& \le C \| \na u \|^2_{L^4} \| \na d \|^2_{L^4}
+ C \| u \|^2_{L^8} \| \na^2 d \|^2_{L^\frac{8}{3}}
+ C \| \na^2 d \|^2_{L^4} \| \na d \|^2_{L^4} + C \| \na d \|^6_{L^6} \\
& \le C \| \na u \|^2_{L^4}
+ C ( 1 + \| \na u \|^2_{L^2} ) \| \na d\|_{L^4} \| \na^3 d\|_{L^2}
+ C \| \na^2 d \|_{L^2} \| \na^3 d \|_{L^2} + C \\
& \le \frac{1}{8} A^2_2 + C R_T^{ 1+\de\beta } A_1^4.
\ea\ee
Substituting (\ref{2cp54}), (\ref{2cp58}), (\ref{2cp59}) and (\ref{2cp511}) into (\ref{2cp51}) and combining with (\ref{2cp513}), we obtain
\be\la{2cp514}\ba
\frac{d}{dt} \hat{A}_1 + \frac{1}{4}A^2_2 \le C R_T^{ 1+\de\beta } A_1^4,
\ea\ee
where
\be\la{2cp515}\ba
\hat{A}_1(t) \triangleq A_1^2(t) - \int \left( 2 (\na d \odot \na d) \cdot \na u - |\na d|^2 \div u \right) dx.
\ea\ee
By virtue of (\ref{2cp02}), (\ref{2cp55}), (\ref{2cp515}) and Young's inequality, we have
\be\la{2cp516}\ba
\int \left(2 (\na d \odot \na d) \cdot \na u - |\na d|^2 \div u \right) dx
\le C \| \na d \|^2_{L^4} \| \na u \|_{L^2}
\le \frac{1}{2} A_1^2 + \hat{C}_2,
\ea\ee
which together with (\ref{2cp515}) yields
\be\la{2cp517}\ba
\frac{1}{2} A_1^2(t) \le \hat{A}_1(t) + \hat{C}_2 \le 2 \left( A_1^2(t) + \hat{C}_2 \right).
\ea\ee

Dividing (\ref{2cp514}) by $\hat{A}_1(t) + \hat{C}_2$,
choosing $\de=1/(3\beta)$, and integrating over $(0,T)$,
we arrive at (\ref{2cp05}) after using (\ref{2cp01}), (\ref{2cp42}) and (\ref{2cp517}). This completes the proof of Lemma \ref{cp2l5}.
\end{proof}

\begin{lemma}\la{cp2l6}
There exists a positive constant $C$ depending only on $T$, $\alpha$, $\mu$, $\ga$, $\beta$, and $E_2$ such that
\be\ba\la{2cp06}
& \sup_{0\leq t\leq T} \left( \|\n\|_{L^\infty} + \| u \|_{H^1} + \| \na d \|_{H^1} \right) \\
& + \int_0^T \left( \| \na u \|^2_{L^2} + \| \sqrt{\n} \dot{u} \|^2_{L^2} + \| \na^2 d \|^2_{H^1} + \| \na d_t \|^2_{L^2} \right) dt
\le C.
\ea\ee
\end{lemma}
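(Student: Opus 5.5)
The argument will follow the proof of Lemma \ref{cp1l6}, with the non-vacuum ingredients from Lemmas \ref{cp2l1}--\ref{cp2l5} in place of the weighted vacuum estimates. The key step is the upper bound for $\n$. Once $R_T\le C$ is known, the remaining bounds come directly from (\ref{2cp05}) and the energy estimates: $\sup_t A_1\le C$ and $\int_0^T A_2^2dt\le C$. Then (\ref{2cp55}) gives $\|\na u\|_{L^2}\le CA_1$, (\ref{2cp001}) gives $\|u\|_{H^1}\le C$, and $\|\Delta d\|_{L^2}\le A_1$ together with $\|\na d\|_{L^2}\le C$ and $\|\na^2 d\|_{L^2}\le C\|\Delta d\|_{L^2}$ on $\rr$ gives $\sup_t\|\na d\|_{H^1}\le C$. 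The time integrals $\int_0^T(\|\sqrt\n\dot u\|_{L^2}^2+\|\na^3 d\|_{L^2}^2+\|\na d_t\|_{L^2}^2)dt$ are exactly $\int_0^T A_2^2dt$. The term $\int_0^T\|\na^2 d\|_{L^2}^2dt$ comes from (\ref{2cp01}).

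For the density bound I will integrate the transport identity (\ref{2cp32}) along particle paths. Since $\theta_2$ is increasing and $\theta_2(\n)\ge \beta^{-1}\n^\beta-C$ for large $\n$, and $P-P(\tilde\n)\ge 0$ wherever $\n\ge\tilde\n$, it suffices to bound
\[
\sup_t\|\psi\|_{L^\infty}+\int_0^T\left(\|F_1\|_{L^\infty}+\|F_2\|_{L^\infty}\right)dt .
\]
This yields $R_T^\beta\le C+C\sup_t\|\psi\|_{L^\infty}+C\int_0^T(\|F_1\|_{L^\infty}+\|F_2\|_{L^\infty})dt$.

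\emph{Bound on $F_2$.} This term is handled exactly as in (\ref{1cp68}), using (\ref{2cp36}) and (\ref{2cp02}). It gives $\|F_2\|_{L^\infty}\le CA_1^2+A_2^2/A_1^2$, whose time integral is at most $CR_T^{4/3}$.

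\emph{Bound on $F_1$.} This term follows (\ref{1cp66})--(\ref{1cp67}). I first bound $\|\na u\|_{L^4}\le CR_T^{C}A_1(1+A_2^2/A_1^2)^{1/4}$, using (\ref{2cp512}) and (\ref{2cp42}) in place of the pressure bound. Then (\ref{jhz1}), (\ref{jhz2}) and Lemma \ref{cp2l4} give
\[
\|F_1\|_{L^\infty}\le C(p)R_T^{1+\beta/4+\ep(p)}A_1^2+CA_2^2/A_1^2,
\]
with $\ep(p)\to0$ as $p\to\infty$.

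\emph{Bound on $\psi$.} This is the main obstacle. In the vacuum case, (\ref{1cp69}) used $\|\n u\|_{L^2}\le CR_T^{1/2}$ and the Brezis--Wainger inequality. Here $\psi$ is not integrable in low $L^q$, so I will use the splitting (\ref{2cp317}). For $\psi_1$, the bound $\|\na\psi_1\|_{L^2}\le C\|\sqrt\n u\|_{L^2}\|\sqrt\n-\sqrt{\tilde\n}\|_{L^\infty}\le CR_T^{1/2}$ holds, and (\ref{2cp003}) gives $L^q$ control of $\psi_1$ for suitable $q>2$. For $\psi_2=\sqrt{\tilde\n}(-\Delta)^{-1}\div(\sqrt\n u)$, the gradient is bounded in $L^2$ by energy, and the weighted bound (\ref{2cp03}) controls a low $L^{q}$ norm of $\psi_2$, as in (\ref{2cp320}). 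The $W^{1,q}$ norms inside the logarithm are bounded polynomially in $R_T$ and $A_1$ through Lemma \ref{cp2l4}, so Lemma \ref{BWI} gives
\[
\|\psi\|_{L^\infty}\le CR_T^{1/2}\log^{1/2}(e+R_TA_1)+C\le CR_T^{4/3}
\]
by (\ref{2cp05}).

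Combining the three bounds, and choosing $p$ large, I obtain
\[
R_T^\beta\le CR_T^{1+\beta/4+\ep}\sup_t\log A_1^2 \text{-type terms}\le C R_T^{1+\beta/4+\ep}+CR_T^{4/3}.
\]
Here one must check that the factor of $A_1^2$ appearing in the $F_1$ bound is absorbed after integrating in time: as in the vacuum case, $\int_0^T A_1^2dt$ is controlled via (\ref{2cp01}) and (\ref{2cp42}) through $\int_0^T(\|\na u\|_{L^2}^2+\|\na^2 d\|_{L^2}^2)dt$ plus the $G^2/(2\mu+\lam)$ part. Since $\beta>4/3$, we have $\beta>1+\beta/4$ exactly when $\beta>4/3$, and $\beta>4/3$ as well, so this closes to $R_T\le C$.
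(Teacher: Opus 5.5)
Your proposal is correct and follows essentially the same route as the paper: bound $\int_0^T(\|F_1\|_{L^\infty}+\|F_2\|_{L^\infty})\,dt$ and $\sup_t\|\psi\|_{L^\infty}$ using the commutator estimates, Lemma \ref{cp2l4}, (\ref{2cp05}), $\int_0^T A_1^2\,dt\le C$ and the Brezis--Wainger inequality, then integrate (\ref{2cp32}) along particle paths and close with $\beta>4/3$. The only deviation is the low $L^q$ bound for $\psi$: you obtain it through the splitting (\ref{2cp317}), while the paper gets it directly in (\ref{2cp63}) as $\|\psi\|_{L^{4/\alpha}}\le C\|\n u\|_{L^{4/(2+\alpha)}}\le CR_T^{1/2}\|\tilde{x}^{\alpha}\sqrt{\n}u\|_{L^2}\|\tilde{x}^{-\alpha}\|_{L^{4/\alpha}}\le CR_T^{1/2}$, so your splitting is harmless but unnecessary (and your display with ``$\sup_t\log A_1^2$-type terms'' should simply read $R_T^\beta\le CR_T^{1+\beta/4+\ep}+CR_T^{4/3}$, which is what you actually use).
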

\begin{proof}
First, for $F_1$ and $F_2$ defined in (\ref{1cp34}),
we deduce from the Gagliardo-Nirenberg inequality, (\ref{jhz1}), (\ref{jhz2}), (\ref{2cp02}), (\ref{2cp36}), (\ref{2cp04}),
and (\ref{2cp512}) that for any $p \in (4,\infty)$,
\be\la{2cp61}\ba
\| F_1 \|_{L^\infty}
& \le C(p) \| F_1 \|_{L^p}^{1-4/p} \|\na F_1\|_{L^{4p/(p+4)}}^{4/p} \\
& \le C(p) \left( \|\na u\|_{L^2} \|\n  u\|_{L^p} \right)^{1-4/p}
\left(\|\na u\|_{L^4} \|\n  u\|_{L^p} \right)^{4/p} \\
& \le C(p) \|\na u\|_{L^2}^{1-4/p} \|\na u\|_{L^4}^{4/p} \|\n  u\|_{L^p} \\
& \le C(p) R_T^{1 +\beta/4 + (1+\beta)/p} A_1^{2-2/p}
\left(1+\frac{A_2^2}{A_1^2} \right)^{1/p} \\
& \le C(p) R_T^{ 1+\beta/4 + 2(1+\beta)/(p-1) } A_1^2
+ C \frac{A_2^2}{A_1^2},
\ea\ee
and
\be\la{2cp62}\ba
\| F_2 \|_{L^\infty}
& \le C \| F_2 \|^{1/2}_{L^4} \| \na F_2 \|^{1/2}_{L^4}
\le C \| \na d \cdot \Delta d \|^{1/2}_{L^4} \\
& \le C \| \na d \|^{1/2}_{L^8} \| \Delta d \|^{1/2}_{L^8}
\le C \left( \| \Delta d \|_{L^2} + \| \na \Delta d \|_{L^2} \right)^{1/2} \\
& \le C A_1 + C A_1^{1/2} \left( \frac{A^2_2}{A_1^2} \right)^{1/4}
\le C A_1^2 + C \frac{A^2_2}{A_1^2}.
\ea\ee
On the other hand, Sobolev embedding and (\ref{2cp03}) ensure
\be\la{2cp63}\ba
\|\psi\|_{L^{4/\alpha}} & \le C \| \na \psi \|_{L^{4/(2+\alpha)}}
\le C \| \n u \|_{L^{4/(2+\alpha)}} \\
& \le C R_T^{1/2} \| \sqrt{\n} u \tilde{x}^\alpha \|_{L^2} \| \tilde{x}^{-\alpha} \|_{L^{4/\alpha}} \le C R_T^{1/2},
\ea\ee
which combined with (\ref{bwi}), (\ref{2cp01}) and (\ref{2cp001}) yields
\be\la{2cp64}\ba
\|\psi\|_{L^\infty}
& \le C \left( \|\psi\|_{L^{4/\alpha}} + \| \na \psi \|_{L^2} \right)
\log^{1/2} ( e + \|\psi \|_{W^{1,4/\alpha}} ) + C \\
& \le C \left( R_T^{1/2} + \| \n u\|_{L^2} \right) \log^{1/2}(e + R_T^{1/2} + \|\n u \|_{L^{4/\alpha}}) + C \\
& \le C R_T^{1/2} \log^{1/2} \left( R_T (e+\|\na u \|_{L^2}) \right) + C \\
& \le C R_T^{1/2} \log^{1/2}(e+ A_1^2) + C R_T \\
& \le C R_T^{4/3}.
\ea\ee
Integrating (\ref{2cp32}) over $(0,T)$ and using (\ref{2cp61}), (\ref{2cp62}) and (\ref{2cp64}),
after choosing $p$ sufficiently large, we conclude from $\beta>4/3$ that
\be\la{2cp65}\ba
\sup_{0 \le t \le T} \| \n \|_{L^\infty} \le C,
\ea\ee
which together with (\ref{2cp01}), (\ref{2cp001}) and (\ref{2cp05}) gives (\ref{2cp06}) and completes the proof of Lemma \ref{cp2l6}.
\end{proof}

Finally, the preceding estimates, combined with methods similar to those in Lemmas \ref{cp1l7} and \ref{cp1l9}, lead to the following results.
The detailed proofs are omitted.

\begin{lemma}\la{cp2l7}
There exists a positive constant $C$ depending only on $T$, $\alpha$, $\mu$, $\ga$, $\beta$, $q$, $\| \na \n_0 \|_{L^2 \cap L^q }$, and $E_2$ such that
\be\ba\la{2cp07}
\sup_{0\le t\le T}
t \left( \| \sqrt{\n} \dot{u} \|^2_{L^2} + \| \na d_t \|^2_{L^2} + \| \na^3 d \|^2_{L^2} \right)
+ \int_0^{T} t \left( \| \na \dot{u} \|^2_{L^2} + \| \na^2 d_t \|^2_{L^2} \right) dt \le C,
\ea\ee
and
\be\la{2cp08}\ba
&\sup_{0\le t\le T} \left( \| \n-\tilde{\n} \|_{H^1 \cap W^{1,q}} + t \| \na^2 u \|^2_{L^2} \right) \\
& + \int_0^T \left( \| \na^2 u \|^2_{L^2} + \|\nabla^2 u\|^{(q+1)/q}_{L^q}
+ t \|\nabla^2 u\|_{L^q}^2 + t \| \na^4 d \|^2_{L^2} \right) dt \le C.
\ea\ee
\end{lemma}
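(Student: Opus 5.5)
The plan is to follow the time-weighted arguments of Lemmas \ref{cp1l7}--\ref{cp1l9}. The only changes come from the non-vacuum far field. The weighted bounds (\ref{jqgju}), used there to control $u$ and $\dot u$, are no longer available. In their place we use the Poincar\'e-type inequality (\ref{2cp15}) together with (\ref{2cp001}), the uniform bounds (\ref{2cp06}) and (\ref{2cp02}), and the $L^p$ control of $P-P(\tilde\n)$ from (\ref{2cp42}).

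\textbf{Step 1: the weighted estimate (\ref{2cp07}).} We apply $\p_t+\div(u\,\cdot)$ to the momentum equation to get (\ref{1cp71}), where $P$ now appears through $P\div u$ and $P\p_j u$. We test it with $\dot u$. The term $\||u||\na d||\Delta d|\|_{L^2}^2$ is bounded by $\|u\|_{L^8}^2\|\na d\|_{L^8}^2\|\Delta d\|_{L^8}^2$. Since $\|u\|_{H^1}\le C$ by (\ref{2cp06}), this is at most $C(1+\|\na^3 d\|_{L^2}^2)$. By (\ref{2cp512}) with $R_T\le C$, $\|\na u\|_{L^4}^4\le C(1+A_2^2)$.

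For $\na d_t$ we test (\ref{1cp73}) with $\na d_t$ as in (\ref{1cp74}). The term $\dot u\cdot\na^2 d\cdot\na d_t$ has no weight available. We handle it through (\ref{2cp15}) applied to $\dot u$: $\|\dot u\|_{L^4}\le C(\|\sqrt\n\dot u\|_{L^2}+\|\na\dot u\|_{L^2})$, since $\dot u\in H^1$ in the far-field-nonvacuum setting. Likewise $\|d_t\|_{L^4}$ is bounded via $\|u\|_{L^\infty}\lesssim\|u\|_{H^1}^{1/2}\|\na^2u\|_{L^2}^{1/2}$, and $\|\na^2 u\|_{L^2}$ is controlled by $C(1+A_2)$ using (\ref{2cp53}) and the div--curl estimate. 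This gives
\[
\frac{d}{dt}\big(\|\sqrt\n\dot u\|_{L^2}^2+\|\na d_t\|_{L^2}^2\big)+\mu\|\na\dot u\|_{L^2}^2+\|\na^2 d_t\|_{L^2}^2\le C\big(1+\|\sqrt\n\dot u\|_{L^2}^2+\|\na d_t\|_{L^2}^2\big)(1+A_2^2).
\]
Multiplying by $t$ and applying Gr\"onwall with $\int_0^T A_2^2\,dt\le C$ from (\ref{2cp06}) yields the first part of (\ref{2cp07}). For $t\|\na^3 d\|_{L^2}^2$ we use the elliptic estimate (\ref{1cp84}) without weights: $\|u\|_{L^\infty}$ and $\|\na u\|_{L^4}$ are bounded by $1+\|\sqrt\n\dot u\|_{L^2}$, and $\|\na^3 d\|_{L^2}$ is absorbed on the left.

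\textbf{Step 2: the density estimate (\ref{2cp08}).} We repeat the $\Phi$ argument of Lemma \ref{cp1l9}, using $\|\na G\|_{L^2\cap L^q}$ and $\|\na\o\|_{L^2\cap L^q}$ via (\ref{2cp52}). The quantity $G$ now contains $P-P(\tilde\n)\in L^2\cap L^\infty$, so the Gagliardo--Nirenberg bound on $\|G\|_{L^\infty}$ proceeds as in (\ref{bpg23}). We need $\|\na\n\|_{L^2}$ as well as $\|\na\n\|_{L^q}$, so we run (\ref{bpg22}) for both exponents, $2$ and $q$, and apply Lemma \ref{bkm} in the form of (\ref{bpg24}).

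The key input is $\int_0^T\|\n\dot u\|_{L^q}^{(q+1)/q}dt\le C$. Here, instead of (\ref{jqgj1}), we use $\|\n\dot u\|_{L^q}\le C\|\dot u\|_{L^q}\le C\|\dot u\|_{L^2}^{2/q}\|\na\dot u\|_{L^2}^{1-2/q}$. By (\ref{2cp15}), $\|\dot u\|_{L^2}\le C(\|\sqrt\n\dot u\|_{L^2}+\|\na\dot u\|_{L^2})$. Then Step 1 and H\"older in time give both $\int_0^T\|\n\dot u\|_{L^q}^{(q+1)/q}dt$ and $\int_0^T t\|\n\dot u\|_{L^q}^2dt$ bounded. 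A log-Gr\"onwall argument as in (\ref{bpg27}) then bounds $\|\na\n\|_{L^2\cap L^q}$, and (\ref{bpg25}) yields the $\na^2u$ bounds. Finally, (\ref{1cp93}) gives $\int_0^Tt\|\na^4 d\|_{L^2}^2dt\le C$.

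\textbf{Main obstacle.} The delicate point is justifying the use of (\ref{2cp15}) for $\dot u$ in the $d_t$-coupling terms, which in the vacuum case were handled by the $\bar x$-weights. The inequality (\ref{2cp15}) requires $\dot u\in H^1$; we would justify this via the local regularity of Lemma \ref{lct} and a density argument. We would also need to check that no term produces $\|\na\dot u\|_{L^2}$ with a coefficient that cannot be absorbed. If that coefficient is too large, the fallback is to split $\int|\dot u||\na^2 d||\na d_t|\,dx$ using $\|\na^2 d\|_{L^4}\le C\|\na^3 d\|_{L^2}^{1/2}$ and Young's inequality.
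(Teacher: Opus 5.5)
Your Step 1 follows the paper's route: the argument of Lemma \ref{cp1l7}, with the $\bar x$-weights replaced by $\|u\|_{H^1}\le C$ and (\ref{2cp15}). Step 2, however, has a genuine gap at the point you call the key input.

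\textbf{The gap in Step 2.} You drop the density before interpolating and then apply (\ref{2cp15}). This gives
\[
\|\rho\dot u\|_{L^q}\le C\|\dot u\|_{L^2}^{2/q}\|\nabla\dot u\|_{L^2}^{1-2/q}\le C\|\sqrt{\rho}\dot u\|_{L^2}^{2/q}\|\nabla\dot u\|_{L^2}^{1-2/q}+C\|\nabla\dot u\|_{L^2},
\]
and the last term cannot be controlled. Without a compatibility condition, your only information on $\nabla\dot u$ is $\int_0^T t\|\nabla\dot u\|_{L^2}^2dt\le C$. Then
\[
\int_0^T\|\nabla\dot u\|_{L^2}^r dt\le \Big(\int_0^T t^{-r/(2-r)}dt\Big)^{(2-r)/2}\Big(\int_0^T t\|\nabla\dot u\|_{L^2}^2dt\Big)^{r/2},
\]
which is finite only for $r<1$.

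So neither $\int_0^T\|\rho\dot u\|_{L^q}^{(q+1)/q}dt$ nor even $\int_0^T\|\rho\dot u\|_{L^q}dt$ follows. The latter is exactly what the logarithmic Gr\"onwall argument (\ref{bpg27}) needs. As a result, none of the following is established: the bound on $\|\nabla\rho\|_{L^q}$; the bound on $\int_0^T\|\nabla u\|_{L^\infty}dt$, which your $L^2$-version of (\ref{bpg22}) also uses; and the $\nabla^2u$-integrals in (\ref{2cp08}).

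The loss occurs because near interior vacuum $\|\dot u\|_{L^2}$ is not controlled by $\|\sqrt\rho\dot u\|_{L^2}$. Hence (\ref{2cp15}) brings back $\|\nabla\dot u\|_{L^2}$ with full power.

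\textbf{The fix.} Interpolate $\rho\dot u$ itself, as in (\ref{bpg29}):
\[
\|\rho\dot u\|_{L^q}\le\|\rho\dot u\|_{L^2}^{\frac{2(q-1)}{q^2-2}}\|\rho\dot u\|_{L^{q^2}}^{\frac{q(q-2)}{q^2-2}}.
\]
Use $\|\rho\dot u\|_{L^2}\le C\|\sqrt\rho\dot u\|_{L^2}$. In place of (\ref{jqgj1}), use $\|\rho\dot u\|_{L^{q^2}}\le C\|\dot u\|_{H^1}\le C(\|\sqrt\rho\dot u\|_{L^2}+\|\nabla\dot u\|_{L^2})$, which follows from (\ref{gn11}) and (\ref{2cp15}). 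Now $\|\nabla\dot u\|_{L^2}$ appears only with power $q(q-2)/(q^2-2)<1$, multiplied by a positive power of $\|\sqrt\rho\dot u\|_{L^2}$. Both time integrals then close as in the paper; for the $(q+1)/q$ power, the H\"older condition reduces to $q(q-2)(q+1)<(q^2-2)(q-1)$, which always holds.

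\textbf{A smaller point in Step 1.} The claim $\|\nabla^2u\|_{L^2}\le C(1+A_2)$ is not available there. Indeed $\nabla\div u=\nabla\big((G+P-P(\tilde\rho))/(2\mu+\lambda(\rho))\big)$ contains $\nabla\rho$, which is bounded only in Step 2, and Step 2 itself relies on Step 1. The claim is also unnecessary. You have $\|u\cdot\nabla d\|_{L^4}\le\|u\|_{L^8}\|\nabla d\|_{L^8}\le C$ by (\ref{2cp06}) and (\ref{2cp02}). In the $\nabla^3 d$ estimate, $\|\,|u||\nabla^2 d|\,\|_{L^2}\le\|u\|_{L^4}\|\nabla^2 d\|_{L^4}$ suffices and can be absorbed.

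\textbf{Your ``main obstacle'' is harmless.} After integrating by parts as in (\ref{1cp74}), $\|\nabla\dot u\|_{L^2}$ is paired with $\|\nabla d\|_{L^4}\|\nabla d_t\|_{L^4}$. The term $\dot u\cdot\nabla^2 d\cdot\nabla d_t$ is bounded by $\varepsilon\|\nabla\dot u\|_{L^2}^2+C\|\sqrt\rho\dot u\|_{L^2}^2+C\|\nabla^2 d\|_{L^4}^2\|\nabla d_t\|_{L^2}^2$. All of these fit your Gr\"onwall structure.
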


\section{A Priori Estimates for the Half-space Problem}

In this section, we consider the half-space $\OM = \rr_+$ and derive the upper bound for $\n$ as well as higher-order estimates, which allow us to extend the local solution to a global one.

Since $A$ is initially defined only on the boundary $\p \rr_+$, we extend it smoothly to $\ol{\rr_+}$ as follows.
Let $\psi = \psi(x_2) \in C^\infty(\r)$ be a cutoff function satisfying
\be\nonumber\ba
\psi(0) = 1, \quad \psi(x_2) = 0 \ \text{ for } x_2 \ge 1.
\ea\ee
As $A|_{\p \rr_+} = (A \psi)|_{\p \rr_+}$, we shall still denote $A \psi$ by $A$ without loss of generality throughout the subsequent analysis.

\be\la{b1}\ba
B_1^2(t) \triangleq 1 + \int \left( \mu \o^2 (t) + \frac{ G^2(t) }{2\mu+\lam(\n(t))} + |\Delta d(t)|^2 \right) dx
+ \int_{\p \rr_+} A|u(t)|^2 ds.
\ea\ee

\subsection{The Case of Vacuum Far-field Density}
In this subsection, we consider the initial data $(\n_0,u_0,d_0)$ satisfying (\ref{bkjsol1}) and
\be\la{rho002}\ba
\n_0 >0, \quad \int_{B^+_{\tilde{N}_2}} \n_0 dx \ge \frac{1}{2} \int_{\rr_+} \n_0 dx \ge \frac{1}{2},
\ea\ee
for some positive constant $\tilde{N}_{2}$.

Moreover, let $(\n,u,d)$ be the strong solution to (\ref{nlckv})--(\ref{i30}), (\ref{bkjbjtj1}), (\ref{bkjbjtj2}) on $\rr_+ \times (0,T]$ with $\tilde{\n}=0$.
Here and throughout this subsection, the quantity $E_1$ defined in (\ref{e1}) depends only on the initial data.

First, we establish the standard energy estimate.

\begin{lemma}\la{bkj1l1}
There exists a positive constant $C$ depending only on
$T$, $a$, $\ga$, $\mu$, $\| \sqrt{\n_0} u_0 \|_{L^2}$, $\| {\bar{x}}^a \rho_0 \|_{L^1}$,
$\| \n_0 \|_{L^\ga}$, and $\| \na d_0 \|_{L^2}$ such that
\be\la{1bkj01}\ba
& \sup\limits_{0\le t\le T}\int\left(\n|u|^2+\n^\ga+\n\bar x^a
+ |\na d|^2 \right) dx \\
& + \int_0^T \int\left( \mu |\na u|^2+ \lambda(\n) (\div u)^2 + | \na^2 d |^2 \right) dxdt
\le C,
\ea\ee
and for any $\ep>0$ and $0<\eta\le1$,
\be\la{1bkj001}\ba
\| \bar{x}^{-\eta} u \|_{ L^{(2+\ep)/\eta}(\rr_+) } \le C(\ep,\eta) \left( 1 + \| \na u \|_{L^2(\rr_+)} \right).
\ea\ee
\end{lemma}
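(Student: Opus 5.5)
The proof follows Lemma \ref{cp1l1} and Lemma \ref{jqgj}, adapted to the half-space. The new ingredients are the boundary terms produced by the Navier-slip and Neumann conditions, and the weighted inequality (\ref{1bkj001}), which has to be obtained by an even extension.

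\emph{Energy identity.} Multiply $(\ref{nlckv})_2$ by $u$ and $(\ref{nlckv})_3$ by $-(\Delta d+|\na d|^2 d)$, integrate over $\rr_+$, and add. Because $u\cdot n=0$ and $\p d/\p n=0$ on $\p\rr_+$, no boundary terms arise from the transport, pressure or director parts. For the viscous term we write $-\mu\Delta u=-\mu\na\div u+\mu\na^\bot\o$ and integrate by parts. With $\curl u=-Au\cdot n^\bot$ this produces
\[
\mu\int\o^2dx+\mu\int_{\p\rr_+}A|u|^2ds\ge0.
\]
Since $A\ge0$, this boundary term has the right sign. For $u\cdot n=0$ on a flat boundary, Lemma \ref{dc} (or a direct integration by parts, where the boundary contribution vanishes) gives $\|\na u\|_{L^2}^2\le C(\|\div u\|_{L^2}^2+\|\o\|_{L^2}^2)$, so we control $\mu\|\na u\|_{L^2}^2+\lam(\div u)^2$ in time-integrated form, as in (\ref{bp12}).

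\emph{Second derivatives of $d$.} Since $\rr_+$ is simply connected, Lemma \ref{jzb} provides $\theta$ with $d=(\cos\theta,\sin\theta)$. Then (\ref{bp19}) gives $\int_0^T\|\Delta\theta\|_{L^2}^2dt\le C$. The Neumann condition for $d$ gives $0=\p_n d=(-\sin\theta,\cos\theta)\p_n\theta$, hence $\p_n\theta=0$ on $\p\rr_+$. Lemma \ref{tygj} then yields $\|\na^2\theta\|_{L^2}\le C\|\Delta\theta\|_{L^2}$, and (\ref{bp115}) goes through unchanged using (\ref{gn11}) on $\rr_+$.

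\emph{Weighted density moment.} Multiply $(\ref{nlckv})_1$ by $\bar x^a$ and integrate. There is no boundary flux because $u\cdot n=0$. Since $|\na\bar x^a|\le C\bar x^{a-1}\log^{1+\eta_0}(e+|x|^2)\le C\bar x^{a}\,\bar x^{-1+\delta}$ for small $\delta>0$, we get
\[
\frac{d}{dt}\int\n\bar x^a\le C\|\n\bar x^{a-1+\delta}\|_{L^{2}}\,\ldots,
\]
which we estimate exactly as in \cite[Lemma 3.2]{HL3}. Concretely, $\int\n|u|\bar x^{a-1}dx\le C\|\sqrt\n u\|_{L^2}\big(\int\n\bar x^{2a-2}\big)^{1/2}$ with $2a-2<a$, so Gr\"onwall's inequality closes the bound. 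The same computation with a cutoff supported in $B^+_{N}$ gives the lower mass bound $\inf_t\int_{B^+_{N_2}}\n\ge\frac14\int\n_0$, with no boundary contributions.

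\emph{Weighted $L^p$ bound (\ref{1bkj001}); expected main obstacle.} Lemma \ref{jqgj} is stated only on $\rr$. Extend $\n$ and $u$ evenly across $x_2=0$: set $\bar\n(x_1,x_2)=\n(x_1,|x_2|)$ and $\bar u(x_1,x_2)=u(x_1,|x_2|)$. The even extension of an $H^1_{loc}$ function is again $H^1_{loc}$, with $\|\na\bar u\|_{L^2(\rr)}^2=2\|\na u\|^2_{L^2(\rr_+)}$. The weight $\bar x$ is radial, hence symmetric. The hypotheses of Lemma \ref{jqgj} transfer with doubled constants:
\begin{itemize}
\item $\int_{B_{N_2}}\bar\n\ge M_1$, from the lower mass bound above;
\item $\int\bar\n^\ga\le 2C$;
\item $\int\bar\n\bar x^a\le 2C$.
\end{itemize}
Applying (\ref{jqgj2}) to $\bar u$ and restricting back to $\rr_+$ gives (\ref{1bkj001}), using $\|\sqrt\n u\|_{L^2}\le C$. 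The delicate points are that the lower-mass bound must be proved \emph{before} invoking the extension, and that the constants depend only on the quantities listed in the statement. Both hold by the order of the steps above.
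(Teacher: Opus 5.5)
Your proposal is correct and follows essentially the same route as the paper. The steps match: the energy identity, in which the Navier-slip term $\mu\int_{\p\rr_+}A|u|^2ds$ has the good sign and the polar lifting with $\p_n\theta=0$ lets Lemma \ref{tygj} replace the whole-space elliptic estimate; then the $\bar x^a$-moment and the cutoff lower-mass bound via Gr\"onwall; and finally reflection across $\p\rr_+$ followed by Lemma \ref{jqgj}. The differences are cosmetic. You reflect $\n$ evenly where the paper extends it by zero, which works equally well up to a factor $2$ in the constants. Also, with the paper's convention $\na^\bot=(\p_2,-\p_1)$ one has $-\mu\Delta u=-\mu\na\div u-\mu\na^\bot\o$ rather than $+\mu\na^\bot\o$, but the boundary term you derive is still the correct one.
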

\begin{proof}
First, based on (\ref{tygj}) and (\ref{jzb1}), and adapting the arguments in Lemma \ref{cp1l1}, we obtain
\be\la{1bkj11}\ba
\sup\limits_{0\le t\le T} \int\left( \n |u|^2 + \n^\ga + |\na d|^2 \right) dx
+ \int_0^T \int\left( \mu |\na u|^2+ \lambda(\n) (\div u)^2 + |\na^2 d|^2 \right) dxdt
\le C.
\ea\ee

Next, multiplying $(\ref{nlckv})_1$ by ${\bar{x}}^a$ and integrating by parts over $\rr_+$, we derive after using (\ref{1bkj11}), (\ref{bkjbjtj1}), and H\"older's inequality that
\be\ba\nonumber
\frac{d}{dt} \int_{\rr_+} \n {\bar{x}}^a dx
& \le C \int_{\rr_+} \n |u| {\bar{x}}^{a-1} \log^{1+\eta_0}(e+|x|^2) dx \\
& \le C \left( \int_{\rr_+} \n {\bar{x}}^{2a-2} \log^{2(1+\eta_0)}(e+|x|^2) dx \right)^{1/2}
\left( \int_{\rr_+} \n |u|^2 dx \right)^{1/2} \\
& \le C \left( \int_{\rr_+} \n {\bar{x}}^{a} dx \right)^{1/2},
\ea\ee
which together with Gr\"onwall's inequality yields
\be\la{1bkj12}\ba
\sup_{0 \le t \le T} \int \n {\bar{x}}^{a} dx \le C.
\ea\ee
This, combined with (\ref{1bkj11}), implies (\ref{1bkj01}).

For any $N>1$, choose a smooth cutoff function $ \varphi_N $ on $\rr$ satisfying:
\be\la{1bkj13}\ba
0 \le \varphi_N \le 1, \quad \varphi_N=
\begin{cases}
1,\quad &\mathrm{ if \  } |x| \le N,\\
0,\quad &\mathrm{ if \  } |x| \ge 2N,
\end{cases}
\quad |\na \varphi_N | \le 2 N^{-1}.
\ea\ee

Denote $\varphi^+_N \triangleq \varphi_{N}|_{\rr_+}$.
Multiplying $(\ref{nlckv})_1$ by $\varphi^+_N$, integrating by parts over $\rr_+$, and applying (\ref{bkjbjtj1}), (\ref{1bkj11}), and (\ref{1bkj13}), we derive
\be\ba\nonumber
\frac{d}{dt} \int_{\rr_+} \n \varphi^+_N dx =\int_{\rr_+} \n u \cdot \na \varphi^+_N dx 
\ge -2 N^{-1} \left( \int_{\rr_+} \n dx \right)^{\frac{1}{2}} 
\left( \int_{\rr_+} \n |u|^2 dx \right)^{\frac{1}{2}} \ge -2 \hat{C} N^{-1}.
\ea\ee
Take $N_2=2(1+\tilde{N}_2+8 \hat{C}T)$, where $\tilde{N}_2$ is given by \eqref{rho002}. Then
\be\la{1bkj14}\ba
\inf_{0 \le t \le T} \int_{B^+_{N_2}} \n dx
& \ge \inf_{0 \le t \le T} \int_{\rr_+} \n \varphi^+_{{N_2}/2} dx \\
& \ge \int_{\rr_+} \n_0 \varphi^+_{{N_2}/2} dx - 4 \hat{C} N_2^{-1}t \\
& \ge \int_{\rr_+} \n_0 \varphi^+_{\tilde{N}_2} dx - 4 \hat{C} N_2^{-1}t \\
& \ge \int_{B^+_{\tilde{N}_2}} \n_0 dx - 4 \hat{C} N_2^{-1}t
\ge \frac{1}{4} \int_{\rr_+} \n_0 dx.
\ea\ee

Then, for any $v \in D_+^1(\rr_+)$, we extend it to $\rr$ by
\be\la{1bkj15}\ba
\tilde{v}(x_1,x_2) \triangleq
\begin{cases}
v(x_1,x_2), \ & x_2 \ge 0, \\
v(x_1,-x_2), \ & x_2 < 0,
\end{cases}
\ea\ee
so that $\tilde{v} \in D^1(\rr)$.
In addition, we extend $\n$ by zero outside $\mathbb{R}^2_+$ and denote the extension by $\tilde{\n}$.

By virtue of Lemma \ref{jqgj}, (\ref{1bkj11}), (\ref{1bkj12}), and (\ref{1bkj14}), one has
\be\la{1bkj16}\ba
\| \tilde{v} \bar{x}^{-\eta} \|_{ L^{(2+\ep)/\eta} (\rr) }
\le C \| \sqrt{ \tilde{\n} } \tilde{v} \|_{L^2(\rr)} + C \| \na \tilde{v} \|_{L^2(\rr)},
\ea\ee
which together with (\ref{1bkj15}) gives
\be\la{1bkj17}\ba
\| v \bar{x}^{-\eta} \|_{ L^{(2+\ep)/\eta} (\rr_+) }
\le C \| \sqrt{\n} v \|_{L^2(\rr_+)} + C \| \na v \|_{L^2(\rr_+)}.
\ea\ee
Hence, we obtain (\ref{1bkj001}) after using (\ref{1bkj11}), which completes the proof of Lemma \ref{bkj1l1}.
\end{proof}

\begin{lemma}\la{bkj1l2}
There exists a positive constant $C$ depending only on
$T$, $a$, $\ga$, $\mu$, $\| \sqrt{\n_0} u_0 \|_{L^2}$, $\| {\bar{x}}^a \rho_0 \|_{L^1}$,
$\| \n_0 \|_{L^\ga}$, and $\| \bar{x}^{\frac{a}{2}} \na d_0 \|_{L^2}$ such that
\be\ba\la{1bkj02}
\sup_{0\leq t\leq T} \|\nabla d \bar{x}^{\frac{a}{2}} \|_{L^{2}}^{2}
+ \int_{0}^{T} \|\nabla^{2} d\bar{x}^{\frac{a}{2}}\|_{L^{2}}^{2} dt
\leq C.
\ea\ee
Moreover, for any $2<p<\infty$, there exists a positive constant $C$ depending only on $p$,
$T$, $a$, $\ga$, $\mu$, $N_0$, $\| \sqrt{\n_0} u_0 \|_{L^2}$, $\| {\bar{x}}^a \rho_0 \|_{L^1}$,
$\| \n_0 \|_{L^\ga}$, and $\| \na d_0 \|_{H^1}$ such that
\be\la{1bkj002}\ba
\sup_{0 \le t \le T} \| \na d \|_{L^p} \le C.
\ea\ee
\end{lemma}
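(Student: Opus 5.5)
The plan is to repeat the proof of Lemma \ref{cp1l2} on $\rr_+$, now with $\na d$ obeying the Neumann condition $\p d/\p n=0$. The new work is to check that every boundary integral produced by integrating by parts either vanishes or has a favorable sign. The Cauchy-problem weighted estimate (\ref{jqgju}) is replaced by (\ref{1bkj001}) from Lemma \ref{bkj1l1}. The Gagliardo--Nirenberg inequality (\ref{gn11}) holds on $\rr_+$ by Lemma \ref{gn1}, and the $L^2(0,T;L^2)$ bound on $\na^2 d$ comes from (\ref{1bkj01}).

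\textbf{Weighted bound (\ref{1bkj02}).}
\begin{itemize}
\item Apply $\na$ to $(\ref{nlckv})_3$ to get (\ref{1cp21}), multiply by $\na d\,\bar x^a$, and integrate over $\rr_+$.
\item Integrating the Laplacian by parts gives the boundary term $-\int_{\p\rr_+}\bar x^a\,\p_i d^j\,\p_n\p_i d^j\,ds$. On the flat boundary $\p_n=-\p_2$, so the tangential components vanish by the key identity (\ref{bkjbjds2}): $\na d^j\cdot\na(n\cdot\na d^j)=0$, i.e.\ $\p_1 d^j\,\p_1\p_2 d^j=0$. The normal components vanish because $\p_2 d^j=0$ there. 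So the boundary term is zero.
\item The transport term $\int \na(u\cdot\na d)\cdot\na d\,\bar x^a$ is integrated by parts as in $J_2,J_3$. Its boundary contribution carries $u\cdot n=0$ and vanishes.
\item The terms $J_1$--$J_4$ are then estimated exactly as in (\ref{1cp23})--(\ref{1cp27}), using (\ref{1bkj001}) with $\eta=3/4$, followed by Gr\"onwall's inequality with (\ref{1bkj01}).
\end{itemize}

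\textbf{$L^p$ bound (\ref{1bkj002}).}
\begin{itemize}
\item Multiply (\ref{1cp21}) by $p|\na d|^{p-2}\na d$ and integrate by parts as in (\ref{bp22})--(\ref{bp23}).
\item The diffusion term gives $-p\int_{\p\rr_+}|\na d|^{p-2}\p_i d^j\,\p_n\p_i d^j\,ds$, which vanishes by the same reasoning.
\item The convection term $-p\int|\na d|^{p-2}\p_id^j\,u^k\p_k\p_id^j=-\int u\cdot\na|\na d|^p$ is integrated by parts to $\int \div u\,|\na d|^p$ plus a boundary term with $u\cdot n=0$, which vanishes.
\item The term $\na(|\na d|^2d)$ is handled exactly as before using $|d|=1$. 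Nothing is integrated by parts there except through $d\cdot\p_i d=0$.
\item This yields (\ref{bp25}). Gr\"onwall's inequality with (\ref{1bkj01}) then gives (\ref{1bkj002}).
\end{itemize}

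\textbf{Main obstacle.} The only delicate point is justifying that the second-order boundary traces vanish, namely $\p_1 d^j\,\p_{12} d^j=0$ on $\p\rr_+$. This follows from differentiating $\p_2 d=0$ tangentially, which is valid at the regularity of Lemma \ref{lct}. If the traces are not regular enough, I would first run the argument for smooth approximate solutions and then pass to the limit.
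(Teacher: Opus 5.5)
Your proposal is correct and follows essentially the same route as the paper. You rerun the proof of Lemma \ref{cp1l2} on $\rr_+$ with (\ref{1bkj001}) in place of (\ref{jqgju}), kill the diffusion boundary term via the flat-boundary identity (\ref{bkjbjds2}) (this is the paper's (\ref{h3})), and drop the transport boundary terms using $u\cdot n=0$. Your explicit check of the boundary terms in the weighted estimate and in the convection term fills in what the paper covers only by ``adapting the argument''.
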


\begin{proof}
First, by adapting the argument used to derive (\ref{1cp02}) and using (\ref{bkjbjtj1}), (\ref{bkjbjtj2}), (\ref{1bkj01}), and (\ref{1bkj001}), we obtain (\ref{1bkj02}).
	
It remains to prove (\ref{1bkj002}).
Applying $\na$ to $(\ref{nlckv})_3$, multiplying the resulting equation by $p |\na d|^{p-2} \na d$, integrating by parts over $\rr_+$, and arguing as in (\ref{bp22}), we obtain
	\be\la{h1}\ba
	& \frac{d}{dt} \int |\na d|^p dx
	- p \int |\na d|^{p-2} \p_i d^j \p_k\p_k \p_i d^j dx \\
	& \le C \| \na d \|^{p+2}_{L^{p+2}} + C \| |\na d|^{\frac{p}{2}} \|^2_{L^4} \| \na u \|_{L^2}.
	\ea\ee
	Integrating by parts over $\mathbb{R}_{+}^2$, one has
	\be\la{h2}\ba
	& - p \int |\na d|^{p-2} \p_i d^j \p_k\p_k \p_i d^j dx \\
	& = -p \int_{\p \mathbb{R}_{+}^2} |\na d|^{p-2} \p_i d^j n_k \p_k\p_i d^j ds
	+ p \int \p_k( |\na d|^{p-2} \p_i d^j ) \p_k (\p_i d^j) dx \\
	& = p \int |\na d|^{p-2} |\na^2 d|^2 dx
	+ \frac{4(p-2)}{p} \int |\na (|\na d|^\frac{p}{2})|^2 dx,
	\ea\ee
	where in the second equality we have used the identity:
	\be\la{h3}\ba
	\p_i d^j n_k \p_k\p_i d^j 
	= \na d^j \cdot \na (n \cdot \na d^j)-\p_i d^j \p_i n_k \p_k d^j  = 0 \quad \text{ on } \p \mathbb{R}_{+}^2,
	\ea\ee
	which follows from (\ref{bkjbjds2}).

Combining (\ref{gn11}), (\ref{h1}), (\ref{h2}), and Young's inequality leads to
\be\la{h4}\ba
& \frac{d}{dt} \int |\na d|^p dx + p \int |\na d|^{p-2} |\na^2 d|^2 dx \\
& \le C \| \na d \|^{p+2}_{L^{p+2}} + C \| |\na d|^{\frac{p}{2}} \|^2_{L^4} \| \na u \|_{L^2} \\
& \le C \| \na d \|^p_{L^p} \| \na^2 d \|^2_{L^2}
+ C \| |\na d|^{\frac{p}{2}} \|_{L^2} \| |\na d|^{\frac{p}{2}} \|_{H^1} \| \na u \|_{L^2} \\
& \le C \| \na d \|^p_{L^p} \| \na^2 d \|^2_{L^2} + C \| \na d \|^p_{L^p} \| \na u \|^2_{L^2}
+ \frac{p}{2} \int |\na d|^{p-2} |\na^2 d|^2 dx \\
& \le \frac{p}{2} \int |\na d|^{p-2} |\na^2 d|^2 dx 
+ C \| \na d \|^p_{L^p} ( \| \na^2 d \|^2_{L^2} + \| \na u \|^2_{L^2} ),
\ea\ee
which gives
\be\la{h5}\ba
\frac{d}{dt} \int |\na d|^p dx + \frac{p}{2} \int |\na d|^{p-2} |\na^2 d|^2 dx
\le C \| \na d \|^p_{L^p} ( \| \na^2 d \|^2_{L^2} + \| \na u \|^2_{L^2} ).
\ea\ee
Applying Gr\"onwall's inequality to (\ref{h5}) and using (\ref{1bkj01}), we obtain (\ref{1bkj002}), which completes the proof of Lemma \ref{bkj1l2}.
\end{proof}

\begin{lemma}\la{bkj1l3}
Let $G$ be the effective viscous flux defined in \eqref{gw}.
Then we have the following decomposition:
\be\la{bkjevf02}\ba
G = \frac{\p}{\p t}\tilde{H}_1+H_2+H_3+H_4.
\ea\ee

Moreover, for any $1<p<\infty$, there exists a positive constant $C$ depending only on $p$ such that
\be\la{bkjevf03}\ba
\| \na \tilde{H}_1 \|_{L^p} \le C \| \n u \|_{L^p}, \quad
\| H_2 \|_{L^{p}} \le C \| \n u\otimes u \|_{L^p}, \quad
\| H_3 \|_{L^{p}} \le C \| |\na d|^2 \|_{L^p}.
\ea\ee

For any $2<r<\infty$, there exists a positive constant $C$ depending only on $r$ and $A$ such that
\be\la{bkjevf04}\ba
\| \tilde{H}_1 \|_{L^r} \le C \| \n u \|_{L^{\frac{2r}{2+r}}}, \quad
\| H_4 \|_{L^r} \le C ( 1 + \| \na u \|_{L^2}).
\ea\ee
\end{lemma}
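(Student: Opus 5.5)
The plan is to write the momentum equation in divergence form and solve for $G$ with a Neumann-type Green's function of the half-space. In divergence form,
\[
(\n u)_t+\div(\n u\otimes u)=\na G+\mu\na^\bot\o-\div\Big(\na d\odot\na d-\tfrac12|\na d|^2 I\Big),
\]
using (\ref{1cp39}). Taking divergence gives
\[
\Delta G=\div\big[(\n u)_t+\div(\n u\otimes u)+\div(\na d\odot\na d-\tfrac12|\na d|^2I)\big].
\]
For the boundary data, take the normal component of the momentum equation on $\p\rr_+$. Since $u\cdot n=0$ and $n=(0,-1)$ is constant, we have $(\n u)_t\cdot n=0$. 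By (\ref{bkjbjds}), $\div(\n u\otimes u)\cdot n=\n u\cdot\na u\cdot n=0$, and $\na d\cdot\Delta d\cdot n=\p_n d\cdot\Delta d=0$ by the Neumann condition. Hence $\p_n G=-\mu\na^\bot\o\cdot n$, and with $\o=-Au\cdot n^\bot$ this is a tangential derivative of $A u\cdot n^\bot$. Thus $G$ solves a Neumann problem in $\rr_+$ whose boundary datum is a tangential derivative of $\mu A u\cdot n^\bot$.

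Let $N(x,y)$ be the Neumann function of $\rr_+$, namely $\Gamma(x-y)+\Gamma(x-y^*)$ with $y^*$ the reflection. Define:
\begin{itemize}
\item $\tilde H_1\triangleq-\int_{\rr_+}\na_yN(x,y)\cdot\n u\,dy$;
\item $H_2\triangleq\int\na^2_yN:\n u\otimes u\,dy$;
\item $H_3$ the analogous double-derivative integral against $\na d\odot\na d-\frac12|\na d|^2I$;
\item $H_4\triangleq\mu\int_{\p\rr_+}\p_{\tau_y}N(x,y)\,A u\cdot n^\bot\,ds_y$, the boundary term.
\end{itemize}
The boundary terms from integrating by parts in the interior pieces vanish exactly because of the normal-component identities above. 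This yields $G=\p_t\tilde H_1+H_2+H_3+H_4$, up to checking signs via the representation formula for the Neumann problem and that $G\to0$ at infinity.

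For (\ref{bkjevf03}), the key observation is that with the even extension of the scalar fields, and the appropriate even/odd extension of the components of $\n u$ and of the tensors, each operator reduces to whole-space operators of the form $\na(-\Delta)^{-1}\div$ or $\na^2(-\Delta)^{-1}$ composed with extension and restriction. These are Riesz transforms, bounded on $L^p$ for $1<p<\infty$. This gives $\|\na\tilde H_1\|_{L^p}\le C\|\n u\|_{L^p}$ and the $L^p$ bounds on $H_2$ and $H_3$. The first bound in (\ref{bkjevf04}) then follows from Sobolev (Hardy–Littlewood–Sobolev) embedding, since $\tilde H_1$ is a kernel of order $|x-y|^{-1}$ acting on $\n u$, and $L^{2r/(2+r)}\to L^r$ in two dimensions.

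The main obstacle is $H_4$. The kernel $\p_{\tau_y}N(x,y)$ restricted to $y\in\p\rr_+$ behaves like $|x-y|^{-1}$, so $H_4$ is a harmonic extension of (roughly) the Hilbert transform of $Au\cdot n^\bot$. I would instead integrate by parts tangentially to put the derivative onto $A u$, giving $H_4=-\mu\int_{\p\rr_+}N(x,y)\p_{\tau}(Au\cdot n^\bot)\,ds$. Then $|N|\lesssim|\log|x-y||$, which is awkward. So I prefer keeping the form $\int_{\p\rr_+}\frac{x_1-y_1}{|x-y|^2}f(y_1)dy_1$ with $f=Au^1$. This is the Poisson-conjugate extension, whose $L^r(\rr_+)$ norm is bounded by $C\|f\|_{L^{r'}(\p\rr_+)}$ with $r'=r/2$, by the trace version of Hardy–Littlewood–Sobolev; the extension gains $1/r$ integrability. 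Then I would bound $\|Au\|_{L^{r/2}(\p\rr_+)}$ using the weighted decay of $A$ from (\ref{bkja1}) and the trace of $\bar x^{-\eta}u$. Concretely, write $Au=(\bar x^{1+\eta_1}A)(\bar x^{-1-\eta_1}u)$. The trace inequality applied to $\bar x^{-\eta}u$, combined with (\ref{1bkj001}) and $\na(\bar x^{-\eta})\lesssim\bar x^{-1-\eta}$, controls $\|\bar x^{-\eta}u\|_{L^s(\p\rr_+)}\le C(1+\|\na u\|_{L^2})$. Finally, H\"older with $\bar x^{1+\eta_1}A\in H^1(\p\rr_+)\subset L^\infty$ gives $\|H_4\|_{L^r}\le C(1+\|\na u\|_{L^2})$. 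If the range $r/2\ge1$ fails for $r$ near $2$, I would treat small $r$ by interpolating the kernel estimate between the pointwise decay and the above.
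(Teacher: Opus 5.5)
Your proposal is correct. For $\tilde H_1$, $H_2$, $H_3$ it is essentially the paper's argument. The paper also defines these pieces as solutions of Neumann problems in $\rr_+$; your kernel formulas are just their Green representations. It uses $u\cdot n=0$ and $\p_n d=0$ exactly as you do to check that the Neumann data are compatible. It then obtains the $L^p$ bounds by even/odd reflection to $\mathbb{R}^2$ followed by Calder\'on--Zygmund and Riesz-potential estimates. (It cites Jerison--Kenig for $\|\na\tilde H_1\|_{L^p}$, but reflection gives that too.)

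The genuine difference is $H_4$. The paper never passes to a boundary potential. It uses the interior extension $A\psi(x_2)$ fixed at the start of Section 4 and observes that the boundary datum $-\mu\,\p_\tau(Au\cdot n^\bot)$ is exactly the normal component of the interior, divergence-free field $v=-\mu\na^\bot(Au^1)$. So $H_4$ is one more instance of the generic problem (\ref{bkjevf1}), and (\ref{bkjevf6}) gives $\|H_4\|_{L^r}\le C\|\na^\bot(Au^1)\|_{L^{2r/(r+2)}}$. Interior H\"older then closes the estimate with $\|\bar x\na A\|_{L^2}\|\bar x^{-1}u\|_{L^r}+\|A\|_{L^r}\|\na u\|_{L^2}$ and (\ref{1bkj001}) at $\eta=1$. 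No trace theory is needed.

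Your route is the conjugate Poisson integral of $Au^1|_{\p\rr_+}$, an $L^{r/2}(\p\rr_+)\to L^r(\rr_+)$ extension bound, and then a weighted trace bound for $u$. It also works, and it only uses the boundary values of $A$, independently of the extension. The price is two extra tools.

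The extension bound holds for every $r>2$. It follows from three facts:
\begin{itemize}
\item $|Pg(x_1,x_2)|\le Mg(x_1)$;
\item $|Pg(x_1,x_2)|\le Cx_2^{-2/r}\|g\|_{L^{r/2}}$;
\item the Hilbert transform is bounded on $L^{r/2}$.
\end{itemize}
So your concern that $r/2\ge1$ might fail near $r=2$ is unfounded, and no interpolation fallback is needed.

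The actual constraint is in the H\"older split on the line. The trace step $\int_{\p\rr_+}|w|^s\,ds\le s\int_{\rr_+}|w|^{s-1}|\p_2w|\,dx$ with $w=\bar x^{-\eta}u$ requires $w\in L^{2(s-1)}(\rr_+)$, i.e.\ $s>1+1/\eta$. Choosing $\eta\le\min\{\eta_1,1\}$ and $s>\max\{r/2,\,1+1/\eta\}$ gives $\bar x^{-(1+\eta_1-\eta)}\in L^m(\p\rr_+)$ with $1/m=2/r-1/s$, which closes the estimate for all $r>2$. This choice is available because $\eta$ and $\ep$ in (\ref{1bkj001}) are free.

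One minor correction: because of the logarithm in $\bar x$, one only has $|\na\bar x^{-\eta}|\lesssim\bar x^{-\eta}(e+|x|^2)^{-1/2}$, not $\lesssim\bar x^{-1-\eta}$. This is harmless for the argument.
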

\begin{proof}
First, for $v=(v^1,v^2) \in (L^p(\rr_+))^2$ with $1<p<\infty$, consider the Neumann problem
\be\la{bkjevf1}\ba
\begin{cases}
\Delta{F} = \div (v) \ \   & \text{ in } \rr_+, \\
\frac{\p F}{\p n} = v \cdot n \ \  & \text{ on } \p \rr_+, \\
F \to 0 & \text{ as } \ |x| \to \infty.
\end{cases}
\ea\ee
According to \cite{JK}, this problem admits a unique solution satisfying, for any $1<p<\infty$,
\be\la{bkjevf2}\ba
\| \na F \|_{L^p} \le C \| v \|_{L^p}.
\ea\ee
Next, we estimate $F$ by extending the problem to the whole space.
Define
\be\la{bkjevf3}\ba
\tilde{F}(x_1,x_2) & \triangleq
\begin{cases}
F(x_1,x_2), \ & x_2 \ge 0, \\
F(x_1,-x_2), \ & x_2 < 0,
\end{cases} \\
\tilde{v}^i(x_1,x_2) & \triangleq
\begin{cases}
v^i(x_1,x_2), \quad & x_2 \ge 0, \\
(-1)^{i+1} v^i(x_1,-x_2), \quad & x_2 < 0.
\end{cases}
\ea\ee
A direct calculation shows that $\tilde{F}$ satisfies
\be\la{bkjevf4}\ba
\begin{cases}
\Delta{\tilde{F}} = \div (\tilde{v}) \ \   & \text{ in } \rr, \\
\tilde{F} \to 0 & \text{ as } \ |x| \to \infty.
\end{cases}
\ea\ee
By the properties of Riesz potentials (see \cite{SE}), for any $2<r<\infty$,
\be\la{bkjevf5}\ba
\| \tilde{F} \|_{L^r} \le C(r) \| \tilde{v} \|_{L^{\frac{2r}{2+r}}},
\ea\ee
which together with (\ref{bkjevf3}) gives
\be\la{bkjevf6}\ba
\| F \|_{L^r} \le C(r) \| v \|_{L^{\frac{2r}{2+r}}}.
\ea\ee
Next, let $\tilde{H}_1$ solve the equations:
\be\la{bkjevf7}\ba
\begin{cases}
\Delta{\tilde{H}_1} = \div (\n u) \ \   & \text{ in } \rr_+, \\
\frac{\p \tilde{H}_1}{\p n}=0 \ \  & \text{ on } \p \rr_+, \\
\tilde{H}_1 \to 0 & \text{ as } \ |x| \to \infty.
\end{cases}
\ea\ee

By the boundary condition $u \cdot n = 0$ on $\p \mathbb{R}^2_{+}$,
we deduce from (\ref{bkjevf1}), (\ref{bkjevf2}), and (\ref{bkjevf6}) that the system is solvable,
and for any $1<p<\infty$ and $2<r<\infty$,
\be\la{bkjevf8}\ba
\| \na \tilde{H}_1 \|_{L^p} \le C(p) \| \n u \|_{L^p}, \quad
\| \tilde{H}_1 \|_{L^r} \le C(r) \| \n u \|_{L^{\frac{2r}{2+r}}}.
\ea\ee
On the other hand, let $H_1$ be the solution to
\be\la{bkjevf9}\ba
\begin{cases}
\Delta{H_1} = \frac{\p}{\p t} \div (\n u) \ \   & \text{ in } \rr_+, \\
\frac{\p H_1}{\p n}=0 \ \  & \text{ on } \p \rr_+, \\
H_1 \to 0 & \text{ as } \ |x| \to \infty.
\end{cases}
\ea\ee
For any $t \in [0,T]$, set $\hat{H}(x,t) \triangleq \int_0^t H_1(x,s) ds$.
Then $\hat{H}$ satisfies
\be\la{bkjevf10}\ba
\begin{cases}
\Delta{\hat{H}} = \div(\n u -\n_0 u_0) \ \   & \text{ in } \rr_+, \\
\frac{\p \hat{H}}{\p n}=0 \ \  & \text{ on } \p \rr_+, \\
\hat{H} \to 0 & \text{ as } \ |x| \to \infty.
\end{cases}
\ea\ee
The uniqueness of the Neumann problem (\ref{bkjevf7}) shows that
$\tilde{H}_1(t)-\tilde{H}_1(0) = \hat{H}(x,t) = \int_0^t H_1(x,s) ds$,
hence $H_1 = \frac{\p}{\p t} \tilde{H}_1$.

Next, let $H_2$ be the solution to the boundary value problem:
\be\la{bkjevf11}\ba
\begin{cases}
\Delta{H_2} = \div \div ( \n u\otimes u ) \ \   & \text{ in } \rr_+, \\
\frac{\p H_2}{\p n}= \div ( \n u\otimes u ) \cdot n \ \  & \text{ on } \p \rr_+, \\
H_2 \to 0 & \text{ as } \ |x| \to \infty.
\end{cases}
\ea\ee
To estimate $H_2$, we extend the problem to $\rr$ by defining
\be\la{bkjevf12}\ba
\tilde{H}_2(x_1,x_2) & \triangleq
\begin{cases}
H_2(x_1,x_2), \ & x_2 \ge 0, \\
H_2(x_1,-x_2), \ & x_2 < 0.
\end{cases}
\ea\ee
For $i,j=1,2$, set $M^{ij} \triangleq \n u^i u^j$ and
\be\la{bkjevf13}\ba
\tilde{M}^{ij}(x_1,x_2) & \triangleq
\begin{cases}
M^{ij}(x_1,x_2), \ & x_2 \ge 0, \\
(-1)^{i+j} M^{ij}(x_1,-x_2), \ & x_2 < 0.
\end{cases}
\ea\ee
From (\ref{bkjevf11}), (\ref{bkjevf12}), (\ref{bkjevf13}), and the boundary conditions $u \cdot n = 0$ on $\p \mathbb{R}^2_{+}$, we deduce that $\tilde{H}_2$ satisfies
\be\la{bkjevf14}\ba
\begin{cases}
\Delta{\tilde{H}_2} = \p_i \p_j \tilde{M}^{ij} \ \   & \text{ in } \rr, \\
\tilde{H}_2 \to 0 & \text{ as } \ |x| \to \infty.
\end{cases}
\ea\ee
Standard Calder\'on-Zygmund estimates (see \cite{SE}) imply that for any $1<p<\infty$,
\be\la{bkjevf15}\ba
\| \tilde{H}_2 \|_{L^{p}} \le C \| \tilde{M}^{ij} \|_{L^p}.
\ea\ee
This, combined with (\ref{bkjevf12}) and (\ref{bkjevf13}), gives
\be\la{bkjevf16}\ba
\| H_2 \|_{L^{p}} \le C \| \n u \otimes u \|_{L^p}.
\ea\ee
Now, consider the Neumann problem
\be\la{bkjevf160}\ba
\begin{cases}
\Delta{H_3} = \div ( \na d \cdot \Delta d ) \ \   & \text{ in } \rr_+, \\
\frac{\p H_3}{\p n}= n \cdot \na d \cdot \Delta d \ \  & \text{ on } \p \rr_+, \\
H_3 \to 0 & \text{ as } \ |x| \to \infty.
\end{cases}
\ea\ee
Since
\be\la{bkjevf161}\ba
\na d \cdot \Delta d = \div \left( \na d \odot \na d - \frac{1}{2} |\na d|^2 \mathbb{I}_2 \right),
\ea\ee
where $\mathbb{I}_2$ is the $2 \times 2$ identity matrix,
an argument similar to that for $H_2$ shows that for any $1<p<\infty$,
\be\la{bkjevf162}\ba
\| H_3 \|_{L^{p}} \le C \| |\na d|^2 \|_{L^p}.
\ea\ee
Finally, let $H_4$ solve
\be\ba\nonumber
\begin{cases}
\Delta H_4 = - \mu \div( \na^\bot (A u^1) ) & \text{ in }\, \,  \rr_+, \\
\frac {\p H_4}{\p n}= - \mu \na^\bot (A u^1) \cdot n &\text{ on }\, \,  \p \rr_+, \\
H_4 \to 0 & \text{ as } \ |x| \to \infty.
\end{cases}
\ea\ee
In view of (\ref{bkjevf1}), (\ref{bkjevf6}), and H\"older's inequality, one obtains for any $2< r<\infty$ that
\be\la{bkjevf19}\ba
\| H_4 \|_{L^r} \le C \| \na^\bot (A u^1) \|_{L^{\frac{2r}{r+2}}}
& \le C \| |\na A| |u| \|_{L^{\frac{2r}{r+2}}}
+ C \| A |\na u| \|_{L^{\frac{2r}{r+2}}} \\
& \le C \| \na A \bar{x} \|_{L^2} \| \bar{x}^{-1} u \|_{L^r} + C \| A \|_{L^r} \| \na u \|_{L^2} \\
& \le C (1 + \| \na u \|_{L^2}).
\ea\ee
Furthermore, from $(\ref{nlckv})_2$ and (\ref{gw}), we conclude that
\be\la{bkjevf17}\ba
\rho \dot{u} = \na G + \mu \na^\bot \o - \na d \cdot \Delta d,
\ea\ee
which together with the boundary condition $u \cdot n=0$ on $\p \mathbb{R}^2_{+}$ implies that $G$ satisfies
\be\la{bkjevf18}\ba
\begin{cases}
\Delta G = \div \left( \rho \dot{u} + \na d \cdot \Delta d - \mu \na^\bot (A u^1) \right) & \mathrm{in}\, \,  \rr_+, \\
\frac {\p G}{\p n}= \left( \rho \dot{u} + \na d \cdot \Delta d \right) \cdot n
- \mu \na^\bot (A u^1) \cdot n &\mathrm{on}\, \,  \p \rr_+, \\
G \to 0 & \text{as} |x| \to \infty.
\end{cases}
\ea\ee

By the uniqueness of the Neumann problem and the definitions of $\tilde{H}_1$, $H_2$, $H_3$, and $H_4$, we obtain the decomposition (\ref{bkjevf02}).
Moreover, (\ref{bkjevf8}), (\ref{bkjevf16}), (\ref{bkjevf162}), and (\ref{bkjevf19}) yield (\ref{bkjevf03}) and (\ref{bkjevf04}).
This completes the proof of Lemma \ref{bkj1l3}.
\end{proof}

\begin{lemma}\la{bkj1l4}
For any $1 \le p < \infty$, there exists a positive constant $C$ depending only on
$p$, $T$, $a$, $\ga$, $\mu$, $\beta$, and $E_1$ such that
\be\la{1bkj04}\ba
\sup_{0\le t\le T} \| \n \|_{L^p} \le C.
\ea\ee
\end{lemma}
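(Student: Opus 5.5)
The plan is to imitate the proof of Lemma \ref{cp1l3}, replacing the commutator representation (\ref{1cp33}) by the Neumann decomposition of Lemma \ref{bkj1l3}. Since $\tilde{\n}=0$, the continuity equation gives
\be\nonumber\ba
\frac{D}{Dt}\theta(\n) = -(2\mu+\lam)\div u = -G - P,
\ea\ee
with $\theta$ as in (\ref{1cp36}). Substituting (\ref{bkjevf02}) and writing $\p_t \tilde{H}_1 = \frac{D}{Dt}\tilde{H}_1 - u\cdot\na \tilde{H}_1$, I obtain the transport identity
\be\nonumber\ba
\frac{D}{Dt}\big(\theta(\n)+\tilde{H}_1\big) + P = u\cdot\na\tilde{H}_1 - H_2 - H_3 - H_4 \triangleq \mathcal{F}.
\ea\ee
Then, for an arbitrary large $k$, set $g\triangleq\max\{\theta(\n)+\tilde{H}_1,0\}$, multiply by $\n g^{k-1}$ and integrate over $\rr_+$. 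There is no boundary contribution because $u\cdot n=0$ on $\p\rr_+$. The result is
\be\nonumber\ba
\frac{d}{dt}\int \n g^{k}dx \le C\|\n g^k\|_{L^1}^{1-1/k}\|\n\|^{1/k}_{L^{m}}\|\mathcal{F}\|_{L^{m'}}
\ea\ee
for suitable $m,m'$ (with $m$ chosen as large as the target exponent, in analogy with $2\beta\ga+1$ in (\ref{1cp37})).

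Next I would bound each piece of $\mathcal{F}$ using Lemma \ref{bkj1l3}. The term $H_3$ is bounded by $C\|\na d\|^2_{L^{2r}}\le C$, thanks to (\ref{1bkj002}). The term $H_4$ is bounded by $C(1+\|\na u\|_{L^2})$ by (\ref{bkjevf04}). For $H_2$ and $u\cdot\na\tilde{H}_1$ I use (\ref{bkjevf03}) together with Hölder's inequality, which reduces them to controlling $\|\n|u|^2\|_{L^r}$ and $\|u\bar x^{-\eta}\|_{L^{s}}\|\n u\bar x^{\eta}\|_{L^{s'}}$. These are handled with the weighted inequality (\ref{1bkj001}), by splitting $\n = (\n\bar x^a)^{\sigma}\n^{1-\sigma}\bar x^{-a\sigma}$ and using $\sup_t\|\n\bar x^a\|_{L^1}\le C$ from (\ref{1bkj01}). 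The aim is a bound of the form
\be\nonumber\ba
\|\mathcal{F}\|_{L^{m'}}\le C\big(1+\|\n\|_{L^{m}}\big)^{\kappa}\big(1+\|\na u\|^2_{L^2}\big), \quad \kappa\le 1,
\ea\ee
with a power $\kappa$ small enough to close by Young's inequality.

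The step I expect to be the main obstacle is closing the estimate, that is, converting $\int\n g^k$ back into $\|\n\|_{L^{m}}$ as in (\ref{1cp311}). On $\{\n>2\}$ we have $\n^{\beta}\lesssim g+|\tilde{H}_1|$, so this requires a bound on $\int\n|\tilde{H}_1|^k$. I would control it by the second inequality of (\ref{bkjevf04}), $\|\tilde{H}_1\|_{L^{r}}\le C\|\n u\|_{L^{2r/(2+r)}}\le C\|\n\|^{1/2}_{L^{r}}\|\sqrt\n u\|_{L^2}$, playing the same role as (\ref{1cp312}). Since the power of $\|\n\|$ produced this way is sublinear, Young's inequality absorbs it, giving $\|\n\|^{m}_{L^m}\le \frac12\|\n\|^m_{L^m}+C+C\int\n g^k$. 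The exponents must be chosen compatibly, with $k$ and $m$ tied together as in Lemma \ref{cp1l3}, e.g. $k=\ga'$ large and $m=\beta k+1$. Combining the two displays yields
\be\nonumber\ba
\frac{d}{dt}\int\n g^k\le C\Big(1+\int\n g^k\Big)\big(1+\|\na u\|^2_{L^2}\big).
\ea\ee
Gr\"onwall's inequality together with $\int_0^T\|\na u\|^2_{L^2}dt\le C$ then bounds $\|\n\|_{L^{m}}$. Since $m$ is arbitrary, interpolation with $\|\n\|_{L^1}\le C$ gives (\ref{1bkj04}) for every $1\le p<\infty$.
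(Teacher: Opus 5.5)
Your overall route is the paper's: the same transport identity for $\theta(\rho)+\tilde H_1$, the same truncation tested against $\rho g^{k-1}$, the same absorption of $\int_{\{\rho>2\}}\rho|\tilde H_1|^k$ via $\|\tilde H_1\|_{L^r}\le C\|\rho u\|_{L^{2r/(2+r)}}$, and Gr\"onwall. One step, however, does not go through as you have set it up: the term $u\cdot\nabla\tilde H_1$.

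\textbf{The gap.} After you separate $\|\rho\|_{L^m}^{1/k}$ by H\"older, you need a bound on $\|u\cdot\nabla\tilde H_1\|_{L^{m'}}$. In the vacuum far-field case $u$ has only the weighted integrability (\ref{1bkj001}), so the decay would have to be carried by $\nabla\tilde H_1$. Your reduction to $\|u\bar x^{-\eta}\|_{L^{s}}\|\rho u\bar x^{\eta}\|_{L^{s'}}$ tacitly uses $\|\bar x^{\eta}\nabla\tilde H_1\|_{L^{s'}}\le C\|\bar x^{\eta}\rho u\|_{L^{s'}}$. That is a weighted Calder\'on--Zygmund bound for the Neumann problem. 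The estimate (\ref{bkjevf03}) is unweighted and does not give it. You could try to supply it: even reflection reduces $\nabla\tilde H_1$ to Riesz transforms, and $\bar x^{\eta s'}$ is a Muckenhoupt weight for small $\eta$. But that is an additional tool you would have to prove or cite.

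\textbf{The paper's fix.} In (\ref{1bkj44}) the paper never strips $\rho^{1/k}$ off this term. It estimates $\|\rho^{1/k}|u||\nabla\tilde H_1|\|_{L^k}$ directly and splits $\rho^{1/k}=(\rho\bar x^a)^{s}\,\bar x^{-as}\,\rho^{1/k-s}$ with a small $s\in(0,1/k)$. Then:
\begin{itemize}
\item $\bar x^{-as}$ is absorbed by $u$ through (\ref{1bkj001});
\item $(\rho\bar x^a)^s$ is controlled by $\sup_t\|\rho\bar x^a\|_{L^1}$;
\item $\nabla\tilde H_1$ is measured in an unweighted $L^p$ norm, bounded by $\|\rho u\|_{L^p}$, which is then split the same way.
\end{itemize}
With $k=2m$ and $N=2\beta m+1$ this gives the factor $(1+\|\rho\|_{L^N}^{1+1/(2m)})(1+\|\nabla u\|_{L^2}^2)$. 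Combined with $\|\rho g^{2m}\|_{L^1}^{1-1/(2m)}$ and $\|\rho\|_{L^N}^N\le C+C\|\rho g^{2m}\|_{L^1}$, the estimate closes because $\beta\ge 1$.

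Your handling of $H_2$, $H_3$, $H_4$ and of the closing step is fine. With this change to the $u\cdot\nabla\tilde H_1$ term, your argument becomes the paper's proof.
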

\begin{proof}
First, from $(\ref{nlckv})_1$, (\ref{gw}), and (\ref{bkjevf02}), we deduce that
\be\la{1bkj41}\ba
\frac{D}{Dt} \left( \theta_3(\n) + \tilde{H}_1 \right) + P = u \cdot \na \tilde{H}_1 - H_2 - H_3 - H_4,
\ea\ee
where
\be\la{1bkj42}\ba
\theta_3(\n) \triangleq 2\mu \log \n + \beta^{-1} \n^\beta,
\ea\ee
and $\tilde{H}_1$, $H_2$, $H_3$, and $H_4$ are defined in Lemma \ref{bkj1l3}.

Set $f \triangleq \max\{ \theta_3(\n) + \tilde{H}_1,0 \}$.
Multiplying (\ref{1bkj41}) by $2 m \n f^{2m-1}$ with $m > 1$, integrating by parts over $\rr_+$, and using $(\ref{nlckv})_1$, we derive
\be\la{1bkj43}\ba
& \frac{d}{dt} \int \n f^{2m} dx \le C \int \n f^{2m-1} ( |u \cdot \na \tilde{H}_1| + |H_2| + |H_3| + |H_4|) dx \\
& \le C \int \n^{1-1/(2m)} f^{2m-1} \n^{1/(2m)} ( |u \cdot \na \tilde{H}_1| + |H_2| + |H_3| + |H_4|) dx \\
& \le C \| \n f^{2m} \|^{1-1/(2m)}_{L^1}
\left( \| \n^{1/(2m)} |u| |\na \tilde{H}_1| \|_{L^{2m}}
+ \| \n^{1/(2m)} |H_2| \|_{L^{2m}} \right) \\
& \quad + C \| \n f^{2m} \|^{1-1/(2m)}_{L^1}
\left( \| \n^{1/(2m)} |H_3| \|_{L^{2m}} + \| \n^{1/(2m)} |H_4| \|_{L^{2m}} \right).
\ea\ee
By virtue of (\ref{1bkj01}), (\ref{1bkj001}), (\ref{bkjevf03}), and (\ref{bkjevf04}), we obtain for $s=\frac{1}{2} \frac{3(\beta-1)}{ (3+a) (2\beta m + 1)-3 } \in (0,\frac{1}{2})$ that
\be\la{1bkj44}\ba
& \| \n^{1/(2m)} |u| |\na \tilde{H}_1| \|_{L^{2m}} \\
& \le C \| (\n \bar{x}^a)^{s} \|_{L^{1/s}}
\| u \bar{x}^{-a s} \|_{L^{3/(as)}}
\| \n^{1/(2m)-s} \|_{ L^{ (2\beta m+1)/(1/(2m) - s) } }
\| \na \tilde{H}_1 \|_{ L^{2(2\beta m+1)/(\beta+1)} } \\
& \le C (1+\| \na u \|_{L^2}) \| \n \|^{1/(2m)-s}_{L^{2 \beta m+1}}
\| \n u \|_{ L^{2(2\beta m+1)/(\beta+1)} } \\
& \le C (1+\| \na u \|_{L^2}) \| \n \|^{1/(2m)-s}_{L^{2 \beta m+1}}
\| (\n \bar{x}^a)^{s} \|_{L^{1/s}}
\| u \bar{x}^{-a s} \|_{L^{3/(as)}}
\| \n^{1-s} \|_{ L^{ (2\beta m+1)/(1-s) } } \\
& \le C \left( 1 + \| \n \|^{1+1/(2m)}_{L^{2\beta m+1}} \right) \left(1 + \| \na u \|^2_{L^2} \right),
\ea\ee
and
\be\la{1bkj45}\ba
\| \n^{1/(2m)} |H_2| \|_{L^{2m}}
& \le C \| \n \|^{1/(2m)}_{L^{2\beta m+1}} \| H_2 \|_{ L^{(2\beta m+1)/\beta} } \\
& \le C \| \n \|^{1/(2m)}_{L^{2\beta m+1}} \| \n |u|^2 \|_{ L^{(2\beta m+1)/\beta} } \\
& \le C \| \n \|^{1/(2m)}_{L^{2\beta m+1}}
\| (\n \bar{x}^a)^{2s} \|_{L^{1/(2s)}}
\| |u|^2 \bar{x}^{-2 a s} \|_{L^{3/(2as)}}
\| \n^{1-2s} \|_{ L^{ (2\beta m+1)/(1-2s) } } \\
& \le C \left( 1 + \| \n \|^{1+1/(2m)}_{L^{2\beta m+1}} \right) \left(1 + \| \na u \|^2_{L^2} \right).
\ea\ee
Moreover, it follows from (\ref{1bkj01}), (\ref{bkjevf03}), and (\ref{bkjevf04}) that
\be\la{1bkj46}\ba
& \| \n^{1/(2m)} |H_3| \|_{L^{2m}} + \| \n^{1/(2m)} |H_4| \|_{L^{2m}} \\
& \le C \| \n \|^{1/(2m)}_{L^{2\beta m+1}} \left( \| H_3 \|_{ L^{(2\beta m+1)/\beta} } + \| H_4 \|_{ L^{(2\beta m+1)/\beta} } \right) \\
& \le C \| \n \|^{1/(2m)}_{L^{2\beta m+1}} \left(1 + \| \na u \|^2_{L^2} + \| \na^2 d \|^2_{L^2} \right).
\ea\ee
On the other hand, (\ref{1bkj01}) and Young's inequality show that
\be\la{1bkj47}\ba
\int \n^{2 \beta m +1} dx
& = \int_{ \{\n \le 2\} } \n^{2 \beta m +1} dx + \int_{ \{\n > 2\} } \n^{2 \beta m +1} dx \\
& \le C \int_{ \{\n \le 2\} } \n dx + C \int_{ \{\n > 2\} } \n f^{2 m} dx
+ C \int_{ \{\n > 2\} } \n |\tilde{H}_1|^{2 m} dx \\
& \le C + C \int \n f^{2 m} dx + C \| \n \|_{L^{ (2 \beta m+1)/( 2\beta m +1-m ) } }
\| \tilde{H}_1 \|^{2 m}_{ L^{2(2\beta m+1)} } \\
& \le C + C \int \n f^{2 m} dx
+ C ( 1 + \| \n \|_{ L^{2 \beta m+1} } ) \| \n \|^m_{ L^{2\beta m+1} } \\
& \le \frac{1}{2} \int \n^{2 \beta m +1} dx + C + C \int \n f^{2 m} dx,
\ea\ee
where we have used the following estimate:
\be\la{1bkj48}\ba
\| \tilde{H}_1 \|_{L^{2(2\beta m+1)}}
\le C \| \n u \|_{L^{(2\beta m+1)/(\beta m+1)}}
\le C \| \n \|^{1/2}_{ L^{2\beta m+1} },
\ea\ee
due to (\ref{1bkj01}) and (\ref{bkjevf04}).

Combining (\ref{1bkj43}), (\ref{1bkj44}), (\ref{1bkj45}), (\ref{1bkj46}), and (\ref{1bkj47}) leads to
\be\la{1bkj49}\ba
\frac{d}{dt} \int \n f^{2m} dx
& \le C \left( 1 + \| \n f^{2m} \|_{L^1} \right)
\left( 1 + \| \na u \|^2_{L^2} + \| \na^2 d \|^2_{L^2} \right).
\ea\ee
Applying Gr\"onwall's inequality and (\ref{1bkj01}) yields
\be\la{1bkj410}\ba
\sup_{0 \le t \le T} \int \n f^{2m} dx \le C,
\ea\ee
which together with (\ref{1bkj47}) and H\"older's inequality gives (\ref{1bkj04}) and completes the proof of Lemma \ref{bkj1l4}.
\end{proof}

\begin{lemma}\la{bkj1l5}
There exists a positive constant $C$ depending only on
$T$, $a$, $\ga$, $\mu$, $\beta$, $E_1$, and $A$ such that
\be\la{1bkj05}\ba
\sup_{0\le t\le T}\int \n |u|^{2+\nu} dx \le C,
\ea\ee
where
\be\la{1bkj005}\ba
\nu \triangleq R_T^{-\frac{\beta}{2}} \nu_1,
\ea\ee
for some suitably small generic constant $\nu_1 \in (0,1)$ depending only on $\mu$ and $\ga$.
\end{lemma}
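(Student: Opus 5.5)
We follow the strategy of Lemma \ref{cp1l4}, multiplying $(\ref{nlckv})_2$ by $|u|^\nu u$ and integrating over $\rr_+$. The new ingredient is the boundary contribution coming from the Navier-slip condition (\ref{bkjbjtj1}). The viscous term is first written as $-\mu\Delta u = \mu\na^\bot\o - \mu\na\div u$, so that
\[
-\mu\Delta u-\na((\mu+\lam)\div u) = \mu\na^\bot\o - \na((2\mu+\lam)\div u).
\]
We test this against $|u|^\nu u$ and integrate by parts.

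\emph{Gradient part.} Since $u\cdot n=0$ on $\p\rr_+$, the term $-\na((2\mu+\lam)\div u)$ produces no boundary term.

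\emph{Vorticity part.} The term $\mu\na^\bot\o$ produces $\mu\int \o\,\na^\bot\cdot(|u|^\nu u)\,dx$ plus a boundary integral of the form $\pm\mu\int_{\p\rr_+}\o\,|u|^\nu u\cdot n^\bot ds$. By $\curl u=-Au\cdot n^\bot$, this boundary integral equals $\mu\int_{\p\rr_+}A|u|^{2+\nu}ds\ge0$, provided the sign works out. Since $A\ge0$ and the term is the analogue of the boundary energy in $B_1$, we expect it to appear with a favorable sign on the left-hand side. If the sign turns out unfavorable, we would instead bound it by a trace inequality, $\int_{\p\rr_+}A|u|^{2+\nu}\le C\||u|^{1+\nu/2}\|_{L^2}\|\na |u|^{1+\nu/2}\|_{L^2}$ localized by the decay of $A$. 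This bound would be combined with (\ref{1bkj001}) and $A\bar x^{1+\eta_1}\in H^1(\p\rr_+)$, and absorbed into the dissipation.

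\emph{Interior terms.} The interior terms $\o\na^\bot(|u|^\nu)\cdot u$ and $\div u\,\na(|u|^\nu)\cdot u$ are bounded by $\nu\int(\mu+\lam)|u|^\nu|\na u|^2$. Because $\nu=R_T^{-\beta/2}\nu_1$ with $\nu_1$ small, we have $\nu^2(\mu+\lam)\le \nu_1^2(\mu+R_T^\beta)R_T^{-\beta}$. This lets us absorb these terms exactly as in (\ref{1cp42}). The coercive quantity here is $\mu\int|u|^\nu\o^2+\int(2\mu+\lam)|u|^\nu(\div u)^2$. To compare it with $\int|u|^\nu|\na u|^2$ we use Lemma \ref{dc}, or rather we avoid this comparison altogether by bounding the cross terms directly in terms of $\o$ and $\div u$ weighted by $|u|^\nu$.

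\emph{Remaining terms.} The pressure term is handled by $\int P|u|^\nu|\na u|\le \ep\int|u|^\nu|\na u|^2+C\int\n|u|^{2+\nu}+C\int\n^{(2+\nu)\ga-\nu/2}$, and the last integral is bounded by Lemma \ref{bkj1l4}. The director term $\int|u|^\nu|\na u||\na d|^2$ is bounded by $\ep\int|u|^\nu|\na u|^2+C\int|u|^\nu|\na d|^4$. For the latter, we split $|u|^\nu\le 1+|u|$ and use (\ref{1bkj001}), (\ref{1bkj02}) and (\ref{1bkj002}) as in (\ref{1cp42}), which gives $C(1+\|\na u\|_{L^2}^2)$.

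\emph{Conclusion.} Collecting these estimates gives
\[
\frac{d}{dt}\int\n|u|^{2+\nu}dx\le C\int\n|u|^{2+\nu}dx+C(1+\|\na u\|_{L^2}^2),
\]
and Gr\"onwall together with (\ref{1bkj01}) yields (\ref{1bkj05}).

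\emph{Main obstacle.} We expect the main difficulty to be the passage from $\int|u|^\nu|\na u|^2$ to the $\o$/$\div$ quantities in the presence of the boundary, where no weighted div–curl estimate is available. We would try first to work entirely with the decomposition into $\o$ and $\div u$, so that the boundary term is exactly the nonnegative $A$-term. If the cross terms still require $|\na u|^2$, we would use the identity $\int|u|^\nu|\na u|^2=\int|u|^\nu(\o^2+(\div u)^2)+$ boundary/commutator terms of size $\nu\int|u|^\nu|\na u|^2$ plus boundary terms controlled by $A$ and the flat-boundary identity $u\cdot\na u\cdot n=0$ from (\ref{bkjbjds}).
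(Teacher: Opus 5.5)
Your outline matches the paper's proof: test $(\ref{nlckv})_2$ with $|u|^\nu u$, split the viscous term into $\o$ and $\div u$, absorb the cross terms using $\nu^2(2\mu+\lam)\le C\nu_1^2$, treat the pressure and director terms as in (\ref{1cp42}), then apply Gr\"onwall. However, the step you leave open is exactly the extra ingredient the paper needs. That ingredient is the weighted div--curl inequality (\ref{1bkj51}),
\[
\int|u|^\nu|\na u|^2dx\le\Lambda\int|u|^\nu\left((\div u)^2+\o^2\right)dx,\qquad \nu\in(0,\hat\nu),
\]
with $\Lambda$ independent of $\nu$. The paper obtains it from (\ref{dc1}) and \cite[Lemma 3.4]{FLL}.

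Without it the argument does not close. After Cauchy--Schwarz the cross terms leave $C\nu_1^2\int|u|^\nu|\na u|^2$, and your pressure and director estimates leave $\ep\int|u|^\nu|\na u|^2$. The left-hand side, however, only controls $\int|u|^\nu(\mu\o^2+(2\mu+\lam)(\div u)^2)$. Your first option, bounding the cross terms ``directly in terms of $\o$ and $\div u$'', cannot work. The cross terms contain $\na|u|^\nu$, which involves the full gradient of $u$, and $|\na u|$ is not pointwise controlled by $|\div u|$ and $|\o|$.

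Contrary to your remark, such a weighted estimate is available, and your fallback proves it easily on the flat boundary. Use $|\na u|^2=(\div u)^2+\o^2-2\det\na u$ with $\det\na u=\p_2(u^2\p_1u^1)-\p_1(u^2\p_2u^1)$ and integrate against $|u|^\nu$. The boundary integral carries the factor $u^2=-u\cdot n=0$ on $\p\rr_+$, so it vanishes identically; neither $A$ nor (\ref{bkjbjds}) plays any role. This gives
\[
\int|u|^\nu|\na u|^2dx=\int|u|^\nu\left((\div u)^2+\o^2\right)dx+2\int u^2\left(\p_1u^1\,\p_2|u|^\nu-\p_2u^1\,\p_1|u|^\nu\right)dx .
\]
Since $|\na|u|^\nu|\le\nu|u|^{\nu-1}|\na u|$ and $|u^2|\le|u|$, Cauchy--Schwarz bounds the last integral by $2\nu\int|u|^\nu|\na u|^2$. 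Hence $\Lambda=2$ works for $\nu\le1/4$, and $\nu_1$ and your Young parameters are then chosen relative to $\Lambda$.

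Two smaller points. With the paper's convention $\o=\na^\bot\cdot u$ one has $-\mu\Delta u=-\mu\na^\bot\o-\mu\na\div u$, so your sign on the vorticity term is off. With the correct sign the boundary term is $+\mu\int_{\p\rr_+}A|u|^{2+\nu}ds$ on the left-hand side, so no trace inequality is needed.

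You could also bypass (\ref{1bkj51}) entirely on the half-plane by testing $-\mu\Delta u$ directly as in (\ref{1cp42}). Since $u^2=0$ on $\{x_2=0\}$ gives $\p_2u^1=\o=Au^1$ there, the boundary term is again $\mu\int_{\p\rr_+}A|u|^{2+\nu}ds\ge0$. The coercive term $\mu\int|u|^\nu|\na u|^2$ then appears directly. The paper's div--curl route has the advantage of also working for curved boundaries, as in \cite{FLL}.
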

\begin{proof}
First, using (\ref{dc1}) and following the proof of \cite[Lemma 3.4]{FLL}, we find that there exist positive constants $\Lambda$ and $\hat{\nu}$ such that for any $\nu \in (0,\hat{\nu})$,
\be\ba\la{1bkj51}
\int_\OM |u|^{\nu} |\na u|^2 dx \le \Lambda \int_\OM |u|^{\nu} \left( (\div u)^2 +\o^2  \right)dx.
\ea\ee
Next, we multiply $(\ref{nlckv})_2$ by $|u|^\nu u$, integrate by parts over $\rr_+$, and use (\ref{1bkj02}), (\ref{1bkj002}), (\ref{1bkj04}), and (\ref{1bkj51}) to derive
\be\ba\nonumber
& \frac{1}{(2+\nu)} \frac{d}{dt}\int \n |u|^{2+\nu} dx
+ \int |u|^\nu \left(\mu \o^2
+ (2\mu+\lam) (\div u)^2 \right) dx \\
& \le C \nu  \int  \left( (2\mu+\lam) |\div u|+\mu |\o| \right)  |u|^\nu |\na u| dx
+ C \int \n^\ga |u|^\nu |\na u| dx + C \int |u|^\nu |\na u| |\na d|^2 dx \\
& \le \frac{1}{4} \int |u|^\nu \left(\mu \o^2
+ (2\mu+\lam) (\div u)^2 \right) dx
+ C\nu_1^2 \int |u|^\nu |\na u|^2 dx + C \int \n |u|^{2+\nu} dx \\
& \quad + C \int \n^{(2+\nu)\ga-\nu/2} dx
+ \frac{\mu}{4 \Lambda} \int |u|^{\nu} |\na u|^2 dx + C \int |u|^\nu |\na d|^4 dx \\
& \le \left( \frac{1}{2} + C \nu_1^2 \right) \int |u|^\nu \left(\mu \o^2 + (2\mu+\lam) (\div u)^2 \right) dx
+ C \int \n |u|^{2+\nu} dx \\
& \quad + C + C \| u \bar{x}^{-\frac{a}{2}} \|_{L^4} \| \na d \bar{x}^{\frac{a}{2}} \|_{L^2} \| \na d \|^3_{L^{12}} + C \| \na d \|^4_{L^4} \\
& \le \left( \frac{1}{2} + C \nu_1^2 \right) \int |u|^\nu \left(\mu \o^2 + (2\mu+\lam) (\div u)^2 \right) dx
+ C \int \n |u|^{2+\nu} dx + C ( 1 + \| \na u \|^2_{L^2} ).
\ea\ee
Choosing suitably small $\nu_1$ then yields
\be\la{1bkj52}\ba 
\frac{d}{dt}\int \n |u|^{2+\nu}dx \le C \int \n |u|^{2+\nu} dx + C ( 1 + \| \na u \|^2_{L^2} ),
\ea\ee
which together with Gr\"onwall's inequality and (\ref{1bkj01}) implies (\ref{1bkj05}) and finishes the proof of Lemma \ref{bkj1l5}.
\end{proof}

\begin{lemma}\la{bkj1l6}
For any $2 < p<\infty$ and $\ep \in (0,1)$, there exists a positive constant $C$ depending only on $T$, $a$, $\mu$, $\ga$, $p$, $\ep$, $E_1$,  $\beta$, and $A$ such that
\be\la{1bkj06}\ba
\| \nabla u \|_{L^{p}}
& \le C R^{\frac{1}{2}-\frac{1}{p}+\ep}_T B_1 \left( \frac{A^2_2}{B_1^2} \right)^{\frac{1}{2}-\frac{1}{p}} + C R^\ep_T B_1,
\ea\ee
where $A_2$, $R_T$, and $B_1$ are defined in \eqref{a2}, \eqref{mdsj}, and \eqref{b1}, respectively.
\end{lemma}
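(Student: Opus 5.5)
We use Lemma \ref{dc} with $k=0$, since $u\cdot n=0$ on $\p\rr_+$, and then bound $\|\div u\|_{L^p}$ and $\|\o\|_{L^p}$ separately:
\[
\|\na u\|_{L^p}\le C\left(\|\div u\|_{L^p}+\|\o\|_{L^p}\right).
\]
Write $\div u=\frac{G+P}{2\mu+\lam}$, so that
\[
\|\div u\|_{L^p}\le C\|G\|_{L^p}+C\|P\|_{L^p}.
\]
By Lemma \ref{bkj1l4}, $\|P\|_{L^p}\le C$, and this is absorbed into $CR_T^\ep B_1$ because $B_1\ge1$. For $G$ and $\o$ we apply the Gagliardo--Nirenberg inequality (\ref{gn11}) on $\rr_+$:
\[
\|G\|_{L^p}\le C\|G\|_{L^2}^{2/p}\|\na G\|_{L^2}^{1-2/p},
\]
and similarly for $\o$. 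This leaves two tasks: bounding $\|G\|_{L^2}$ and $\|\o\|_{L^2}$ by $B_1$ up to a small power of $R_T$, and bounding $\|\na G\|_{L^2}$ and $\|\na\o\|_{L^2}$ by $R_T^{1/2}A_2$ plus lower-order terms.

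The zero-order bounds come directly from (\ref{b1}). We have $\|\o\|_{L^2}\le CB_1$. For $G$, the weight $2\mu+\lam$ costs a power of $R_T$. To keep that power at $\ep$, interpolate between the weighted $L^2$ norm and a higher norm: with $\de$ small,
\[
\|G\|_{L^2}\le \left\|\frac{G}{\sqrt{2\mu+\lam}}\right\|_{L^2}^{1-\de}\,\|G\|_{L^{r}}^{\de}\,\|(2\mu+\lam)\|_{L^{s}}^{\cdots},
\]
as in (\ref{2cp510}). Alternatively, simply use $\|G\|_{L^2}\le CR_T^{\beta/2}B_1$ and accept the loss. Since the statement demands $R_T^\ep$, I would take the first route. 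I would control $\|G\|_{L^r}$ crudely through Lemma \ref{bkj1l3}: $\|\tilde H_1\|$, $\|H_2\|$, $\|H_3\|$, $\|H_4\|$ are bounded by polynomial powers of $R_T$ times $(1+\|\na u\|_{L^2})$, and $\|\na u\|_{L^2}\le CB_1$ by the argument of (\ref{1cp057}). Raised to the power $\de$, these powers of $R_T$ become $R_T^{\ep}$ once $\de$ is small. That this route works is more plausible if we first prove $\|\n u\|_{L^r}\le C R_T^{c}(1+\|\na u\|_{L^2})$ via Lemma \ref{bkj1l5} and the weighted estimate (\ref{1bkj001}).

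For the gradient bounds we use (\ref{bkjevf17}): $\na G+\mu\na^\bot\o=\n\dot u+\na d\cdot\Delta d$. On the half space the Navier-slip condition gives $\o=-Au^1$ on $\p\rr_+$, so the natural object is $\o+Au^1$, which vanishes on the boundary. Taking div and curl, $G$ solves the Neumann problem (\ref{bkjevf18}) and $\o+Au^1$ solves a Dirichlet problem. Standard $L^2$ elliptic estimates then give
\[
\|\na G\|_{L^2}+\|\na\o\|_{L^2}\le C\left(\|\n\dot u\|_{L^2}+\|\na d\cdot\Delta d\|_{L^2}+\|\na(Au)\|_{L^2}\right).
\]
The three terms on the right are bounded as follows:
\begin{itemize}
\item $\|\n\dot u\|_{L^2}\le R_T^{1/2}A_2$.
\item $\|\na d\cdot\Delta d\|_{L^2}\le C\|\Delta d\|_{L^2}+C\|\na\Delta d\|_{L^2}\le CB_1+CA_2$, using (\ref{1bkj002}) and (\ref{tygj1}), as in (\ref{1cp54}).
\item $\|\na(Au)\|_{L^2}\le C(1+\|\na u\|_{L^2})\le CB_1$, using (\ref{bkja1}) and (\ref{1bkj001}), as in (\ref{bkjevf19}).
\end{itemize}
Hence $\|\na G\|_{L^2}+\|\na\o\|_{L^2}\le C(B_1+R_T^{1/2}A_2)$.

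Combining the pieces,
\[
\|\na u\|_{L^p}\le CR_T^{\ep}B_1^{2/p}\left(B_1+R_T^{1/2}A_2\right)^{1-2/p}+CR_T^\ep B_1 .
\]
Splitting the power $(x+y)^{1-2/p}\le x^{1-2/p}+y^{1-2/p}$ and writing $A_2^{1-2/p}=B_1^{1-2/p}(A_2^2/B_1^2)^{1/2-1/p}$ gives exactly (\ref{1bkj06}). The main obstacle is the zero-order bound $\|G\|_{L^2}\le CR_T^{\ep}B_1$, which requires the interpolation with a small power rather than the crude factor $R_T^{\beta/2}$. The boundary treatment of $\o$ is a secondary point, but it is harmless since $A$ is smooth and decays.
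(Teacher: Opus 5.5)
\textbf{There is a genuine gap, in the zero-order bound for $G$.} The rest of your plan is fine and matches the paper: the div-curl reduction, the elliptic bound $\|\na G\|_{L^2}+\|\na\o\|_{L^2}\le CR_T^{1/2}A_2+CB_1$, and the treatment of $\o$ and $P$.

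Once you write $\|\div u\|_{L^p}\le C\|G\|_{L^p}+C\|P\|_{L^p}$, you have discarded the weight $1/(2\mu+\lam)$. You then need $\|G\|_{L^2}\le CR_T^{\ep}B_1$ pointwise in time, and this is not available.
\begin{itemize}
\item $B_1$ controls only $\int G^2/(2\mu+\lam)\,dx$. Converting to $\int G^2\,dx$ costs the factor $2\mu+\lam\le CR_T^\beta$ wherever $G$ sits on high density.
\item You could instead use Hölder with $\sup_t\|\n\|_{L^q}\le C$ from Lemma \ref{bkj1l4}. But then you need $G\in L^r$ for some $r>2$, which brings in $\|\na G\|_{L^2}$ and hence $A_2$. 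No bound by $B_1$ alone results. This is also why (\ref{2cp510}), which you cite as a model, contains $A_2$.
\item Your fallback of bounding $\|G\|_{L^r}$ via Lemma \ref{bkj1l3} fails too. There $G=\p_t\tilde H_1+H_2+H_3+H_4$, and the lemma controls $\tilde H_1$ and $\na\tilde H_1$ but not $\p_t\tilde H_1$. That term is precisely the $\n\dot u$ part of $G$.
\end{itemize}
With the honest bound $\|G\|_{L^2}\le CR_T^{\beta/2}B_1$, your Gagliardo--Nirenberg step yields $R_T^{\beta/p}$ instead of $R_T^{\ep}$. For fixed $p$ this is strictly weaker than (\ref{1bkj06}).

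\textbf{The fix, as in the paper:} keep the weight, and arrange for $\|G\|_{L^2}$ to enter only to the power $\ep$. Using $2\mu+\lam\ge 2\mu$, interpolate
\[
\left\|\frac{G}{2\mu+\lam}\right\|_{L^p}\le C\left\|\frac{G}{2\mu+\lam}\right\|_{L^2}^{\frac2p-\ep}\|G\|_{L^r}^{1-\frac2p+\ep},\qquad r=\frac{2(1+\ep)p-4}{p\ep}.
\]
Then do two things.
\begin{itemize}
\item Bound the first factor by $\|G/(2\mu+\lam)\|_{L^2}\le CB_1$, which involves no loss.
\item Apply (\ref{gn11}) to $\|G\|_{L^r}$. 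The exponents work out exactly to give $CB_1^{2/p-\ep}\|G\|_{L^2}^{\ep}\|\na G\|_{L^2}^{1-2/p}$.
\end{itemize}
Now the crude bound $\|G\|_{L^2}\le CR_T^{\beta/2}B_1$ costs only $R_T^{\beta\ep/2}$. Combining with your gradient estimate and renaming $\ep$ gives (\ref{1bkj06}).
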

\begin{proof}
First, noticing that (\ref{bkjevf17}) and the boundary conditions (\ref{bkjbjtj1}) imply that $G$ satisfies (\ref{bkjevf18}) and $\o$ satisfies the following elliptic equation:
\be\la{1bkj62}\ba
\begin{cases}
\mu \Delta \o =\na^\bot \cdot \left( \rho \dot{u} + \na d \cdot \Delta d \right) & \mathrm{in}\, \,  \rr_+, \\
\o = -A u \cdot n^\bot &\mathrm{on}\, \,  \p \rr_+.
\end{cases}
\ea\ee
Applying the standard $L^p$ estimate of elliptic equations (see \cite{JK}) together with (\ref{1bkj001}), we obtain for any $2\le p<\infty$,
\be\la{1bkj63}\ba
\| \na G \|_{L^p} + \| \na \o \|_{L^p} \le C \left( \| \n \dot{u}\|_{L^p}
+ \| \na d \cdot \Delta d \|_{L^p} + \| \na (A u \cdot n^\bot) \|_{L^p} \right).
\ea\ee
It thus follows from (\ref{1bkj63}), (\ref{gn11}) and (\ref{1bkj002}) that
\be\la{1bkj64}\ba
\| \na G \|_{L^2} + \| \na \o \|_{L^2}
& \le C \left( \| \rho \dot{u} \|_{L^2} + \| \na d \cdot \Delta d \|_{L^2} + \| \na (A u \cdot n^\bot) \|_{L^2} \right) \\
& \le C \left( R^{1/2}_T A_2 + \| \na d \|_{L^4} \| \Delta d \|_{L^4}
+ \| |\na A| |u| \|_{L^2} + \| A \na u \|_{L^2} \right) \\
& \le C R^{1/2}_T A_2 + C B_1,
\ea\ee
where in the second inequality we have used the following estimates:
\be\la{1bkj65}\ba
\| \na u \|_{L^2} & \le C \left( \| \div u \|_{L^2} + \| \o \|_{L^2} \right) \\
& \le C \left\| \frac{G}{2\mu+\lam} \right\|_{L^2}
+ C \left\| \frac{P}{2\mu+\lam} \right\|_{L^2} + C \| \o \|_{L^2}
\le C B_1,
\ea\ee
owing to (\ref{1bkj04}).

Moreover, (\ref{1bkj04}) and H\"older's inequality give
\be\la{1bkj66}\ba
\| G \|^2_{L^2} \le C R^\beta_T B_1^2, \quad
\left\| \frac{G}{2\mu+\lam} \right\|^2_{L^2} \le C B_1^2.
\ea\ee
In view of (\ref{gn11}), (\ref{dc1}), and (\ref{1bkj66}), one derives
\be\la{1bkj67}\ba
\| \na u \|_{L^p}
& \le C \left(\| \div u \|_{L^p} + \| \o \|_{L^p} \right) \\
& \le C \left\| \frac{G}{2\mu+\lam} \right\|_{L^p}
+ C \left\| \frac{P}{2\mu+\lam} \right\|_{L^p}
+ C \| \o \|^{\frac{2}{p}}_{L^2} \| \o \|^{1-\frac{2}{p}}_{H^1} \\
& \le C \left\| \frac{G}{2\mu+\lam} \right\|^{\frac{2}{p}-\ep}_{L^2}
\| G \|_{L^{\frac{2(1+\ep)p-4}{p\ep}}}^{-\frac{2}{p}+1+\ep}
+ C \left( B_1^{\frac{2}{p}} \| \na \o \|^{1-\frac{2}{p}}_{L^2} + B_1 \right) \\
& \le C B_1^{\frac{2}{p}-\ep} \| G \|^{\ep}_{L^2} \| \na G \|^{1-\frac{2}{p}}_{L^2}
+ C \left( B_1^{\frac{2}{p}} \| \na \o \|^{1-\frac{2}{p}}_{L^2} + B_1 \right) \\
& \le C R^{\frac{\beta \ep}{2}}_T B_1^{\frac{2}{p}}
\left( \| \na G \|_{L^2} + \| \na \o \|_{L^2} \right)^{1-\frac{2}{p}} + C B_1.
\ea\ee
Combining this with (\ref{1bkj64}) yields (\ref{1bkj06}), which completes the proof of Lemma \ref{bkj1l6}.
\end{proof}

\begin{lemma}\la{bkj1l7}
For any $\ep \in (0,1)$, there exists a positive constant $C$ depending only on
$T$, $a$, $\mu$, $\ga$, $\ep$, $E_1$,  $\beta$, and $A$ such that
\be\la{1bkj07}\ba
&\sup_{0\le t\le T} \log B_1^2(t) + \int_0^T\frac{A_2^2(t)}{ B_1^2(t)}dt
\le C R_T^{1+\ep}.
\ea\ee
\end{lemma}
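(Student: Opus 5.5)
We follow the scheme of Lemma \ref{cp1l5}, now in $\rr_+$ with the boundary contributions entering through $B_1$. First we multiply (\ref{bkjevf17}) by $2\dot u$ and integrate over $\rr_+$. Using $\dot u\cdot n=0$ on $\p\rr_+$ from (\ref{bkjbjds}) together with the boundary condition $\o=-Au\cdot n^\bot$, the term $\mu\int\na^\bot\o\cdot\dot u\,dx$ produces, besides the interior contributions, the boundary term
\[
\frac{d}{dt}\int_{\p\rr_+}A|u|^2ds
\]
plus lower-order boundary integrals involving $u\cdot\na u$. The term $\int\na G\cdot\dot u\,dx$ has no boundary contribution. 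We then invoke (\ref{bp701}) and (\ref{bp7001}) to obtain
\[
\frac{d}{dt}\left(B_1^2-\int|\Delta d|^2dx\right)+2\|\sqrt\n\dot u\|_{L^2}^2=\sum_i I_i+\text{(boundary terms)},
\]
where the $I_i$ are the same interior terms as in (\ref{1cp55}).

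Next we estimate the $I_i$ by Hölder's inequality. In place of the Hardy--BMO argument we use Lemma \ref{bkj1l6} with $p=4$ and small $\ep$, together with (\ref{1bkj04}), (\ref{1bkj64}) and (\ref{1bkj66}); the bound (\ref{1bkj04}) for every $L^p$ norm of $\n$ is what allows us to lose only $R_T^\ep$. For example,
\[
\int|G||\na u|^2dx\le\|G\|_{L^2}^{\ep'}\cdots,
\]
which is handled by interpolating $G$ between $\|G/\sqrt{2\mu+\lam}\|_{L^2}$ and $\|\na G\|_{L^2}$ as in (\ref{2cp510}). This yields
\[
\sum_{i\le 6}I_i\le\frac14A_2^2+CR_T^{1+\ep}B_1^2\left(1+\|\na u\|_{L^2}^2\right).
\]
The coupling term $-2\int\dot u\cdot\na d\cdot\Delta d\,dx$ is treated as in (\ref{1cp57}), converting it into a time derivative plus error terms, using (\ref{1bkj001}), (\ref{1bkj02}) and (\ref{1bkj002}). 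When integrating by parts here, the boundary integrals vanish because $u\cdot n=0$ and $n\cdot\na d=0$, and the resulting $u_t$-terms against $\na d\odot\na d$ are rewritten using $\dot u\cdot n=0$. Then the $\Delta d$ estimate (\ref{1cp58}) is repeated, now in $\rr_+$: Lemma \ref{tygj} replaces $\|\na^2 d\|_{L^2}$ by $\|\Delta d\|_{L^2}$, and (\ref{bkjbjds2}) removes the boundary term arising from $\int\na d_t\cdot\na\Delta d\,dx$, because $\p_n d_t=0$ and $\p_n\Delta d=0$ on the flat boundary (by an even-reflection argument).

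The main obstacle is the boundary integrals coming from the Navier-slip condition. These are, for instance, $\int_{\p\rr_+}A\,u\cdot\na u\cdot u\,ds$ and terms in which $\o\,\div u$ meets $A$. Our first approach is to write $\int_{\p\rr_+}A u^1 (u\cdot\na u^1)\,ds=\frac12\int_{\p\rr_+}A\,u^1\p_1|u^1|^2\,ds$ (using $u^2=0$), integrate by parts along $\p\rr_+$ to put the derivative on $A$ and $u^1$, and then convert back to an interior integral via the extension of $A$, giving
\[
\int_{\rr_+}\left(|\na A||u|^3+A|u|^2|\na u|\right)dx.
\]
These are bounded using (\ref{bkja1}), the weighted estimate (\ref{1bkj001}) (the weight $\bar{x}^{1+\eta_1}$ on $A$ absorbing $\bar{x}^{-\eta}$ on $u$) and $\|\na u\|_{L^4}$ from Lemma \ref{bkj1l6}, which yields $\frac14A_2^2+CR_T^{1+\ep}B_1^2(1+\|\na u\|_{L^2}^2)$. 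Finally we define $\tilde B_1$ as in (\ref{1cp510}), noting that it is comparable to $B_1^2+\hat C$ by (\ref{1bkj002}). We then divide by $\tilde B_1$ and integrate in time using (\ref{1bkj01}) and (\ref{1bkj02}), which gives (\ref{1bkj07}).
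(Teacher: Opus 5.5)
Your proposal follows essentially the paper's proof. The steps match: testing (\ref{bkjevf17}) with $2\dot u$, splitting off $-\mu\frac{d}{dt}\int_{\p\rr_+}A|u|^2\,ds$ and turning the cubic boundary remainder into an interior integral controlled via the extension of $A$, (\ref{1bkj001}) and $\|\na u\|_{L^4}$, using Lemma \ref{bkj1l6} with the interpolation (\ref{1bkj75}) instead of the Hardy--BMO bound, treating the coupling term as in (\ref{1cp57}), adding the $\Delta d$ estimate, and dividing by $\tilde B_1+\hat C_3$.

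Two minor corrections are needed:
\begin{itemize}
\item In $\int G\,\na u^1\cdot\na^\bot u^2\,dx$ you must pair $\|G\|_{L^p}$, with $p$ large, with Lemma \ref{bkj1l6} at the exponent $2p/(p-1)$ near $2$. Using $p=4$ there forces a full power of $\|G\|_{L^2}\le CR_T^{\beta/2}B_1$.
\item $\p_n\Delta d$ does not vanish on $\p\rr_+$ when $A\not\equiv0$: the director equation gives $\p_2\Delta d=\o\,\p_1 d=-A(u\cdot n^\bot)\p_1 d$ there. However, only $\p_n d_t=0$ is needed to obtain $-\int\na d_t\cdot\na\Delta d\,dx=\frac12\frac{d}{dt}\|\Delta d\|_{L^2}^2$.
\end{itemize}
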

\begin{proof}
Multiplying (\ref{bkjevf17}) by $2 \dot{u}$, integrating by parts over $\rr_+$, and using (\ref{bkjbjds}), we arrive at
\be\la{1bkj71}\ba
& \frac{d}{dt} \int \left(\mu \o^2 + \frac{G^2}{2\mu + \lam}\right)dx
+ 2 \int \n |\dot{u}|^2 dx \\
& = - \mu \int \o^2 \div u dx + 4 \int G \nabla u^1 \cdot\nabla^{\perp}u^2 dx
-2 \int G (\div u)^2 dx \\
& \quad - \int \frac{ (\beta-1)\lam - 2\mu }{(2\mu + \lam)^2} G^2\div u dx
-2\beta \int \frac{ \lam P }{ (2\mu +\lam)^2 } G \div u dx \\
& \quad + 2\ga \int \frac{P}{2\mu +\lam} G \div u dx
+ 2 \mu \int_{\p \rr_+} \o (\dot{u} \cdot n^\bot) ds
-2 \int \dot{u} \cdot \na d \cdot \Delta d dx
= \sum_{i=1}^8 I_i.
\ea\ee

We now estimate each $I_i$ as follows:

By virtue of (\ref{gn11}), (\ref{1bkj64}), and Young's inequality, we have
\be\la{1bkj72}\ba
I_1 \le C \| \o \|^2_{L^4} \| \div u \|_{L^2}
\le C \| \o \|_{L^2} \| \o \|_{H^1} B_1
\le \frac{1}{8} A^2_2 + C R_T B_1^4.
\ea\ee
It follows from (\ref{1bkj67}), (\ref{1bkj64}), and H\"older's inequality that
\be\la{1bkj73}\ba
I_2 & \le C \int |G| |\na u|^2 dx \\
& \le C \| G \|_{L^p} \| \na u \|^2_{L^{2p/(p-1)}} \\
& \le C \| G \|^{2/p}_{L^2} \| \na G \|^{1-2/p}_{L^2}
\left( R^{\frac{\beta \ep}{2}}_T B_1^{1-\frac{1}{p}} \left( \| \na G \|_{L^2} + \| \na \o \|_{L^2} \right)^{\frac{1}{p}} + B_1 \right)^2 \\
& \le C R_T^{\frac{\beta}{p}+\beta\ep} B_1^2 \left( \| \na G \|_{L^2} + \| \na \o \|_{L^2} \right)
+ C R_T^{\frac{\beta}{p}} B_1^{2+\frac{2}{p}} \| \na G \|^{1-\frac{2}{p}}_{L^2} \\
& \le C R_T^{\frac{\beta}{p}+\beta\ep} B_1^2 \left( \| \na G \|_{L^2} + \| \na \o \|_{L^2} \right) + C R_T^{\frac{\beta}{p}} B_1^4 \\
& \le \frac{1}{8} A^2_2 + C R^{1+\ep}_T B_1^4.
\ea\ee
Combining (\ref{1bkj04}), (\ref{1bkj65}), and (\ref{1bkj66}), we obtain for any $\ep \in (0,1)$,
\be\la{1bkj74}\ba
\sum_{i=3}^{6} |I_i| & \le C \int \frac{G^2 |\div u|}{2\mu+\lam} dx
+ \int \frac{|G|}{2\mu+\lam} P |\div u| dx \\
& \le C \| \na u \|_{L^2} \left\| \frac{G^2}{2\mu+\lam} \right\|_{L^2}
+ C \| \na u \|_{L^2} \left\| \frac{G^2}{2\mu+\lam} \right\|^{1/2}_{L^2} \| P \|_{L^4} \\
& \le C \| \na u \|_{L^2} \left\| \frac{G^2}{2\mu+\lam} \right\|_{L^2}
+ C ( 1 + \| \na u \|^2_{L^2} ) \\
& \le C R_T^{ (1+\ep\beta)/2 } \| \na u \|_{L^2} B_1 \left( A_2 + B_1 \right) + C B_1^2 \\
& \le \frac{1}{8} A^2_2 + C R_T^{ 1+\ep\beta } B_1^4,
\ea\ee
where we have used the following estimate:
\be\la{1bkj75}\ba
\left\|\frac{G^2}{2\mu+\lam} \right\|_{L^2}
& \le C \left\| \frac{G}{\sqrt{2\mu+\lam}} \right\|_{L^2}^{1-\ep}
\| G \|_{ L^{2(1+\ep)/\ep}}^{1+\ep} \\
& \le C (\ep) B_1^{ 1-\ep } \| G \|_{L^2}^{\ep} \| \na G \|_{L^2} \\
& \le C(\ep) B_1^{ 1-\ep } R_T^{ (\ep\beta)/2 } B_1^\ep \left( B_1 + R_T^{1/2} A_2 \right) \\
& \le C(\ep) R_T^{ (1+\ep\beta)/2 } B_1 \left( A_2 + B_1 \right),
\ea\ee
due to (\ref{gn11}) and (\ref{1bkj66}).

By (\ref{1bkj001}), (\ref{1bkj65}), and the boundary condition (\ref{bkjbjtj1}), one derives
\be\la{1bkj76}\ba
I_7 &= 2\mu \int_{\p \rr_+ }\o ({\dot{u}}\cdot n^\bot )\,\mathrm{d}s \\
&=-2\mu \int_{\p \rr_+ } A(u\cdot n^\bot )\cdot (u\cdot n^\bot )_t \,ds -2\mu \int_{\p \rr_+ } A(u\cdot n^\bot )(u\cdot \nabla )u\cdot n^\bot \,ds\\
&=-\mu \frac{d}{dt} \int_{\p \rr_+ } A(u\cdot n^\bot )^2 \,ds
-2\mu \int_{\p \rr_+ } A(u\cdot n^\bot )^2(n^\bot \cdot \nabla )u\cdot n^\bot \,ds\\
&=-\mu \frac{d}{dt} \int_{\p \rr_+ } A\vert u\vert ^2 \,ds -\frac{2\mu }{3} \int_{\p \rr_+ } A(n^\bot \cdot \nabla )(u\cdot n^\bot )^3 \,ds\\
&\quad + 2 \mu \int_{\p \rr_+ } A(u\cdot n^\bot )^2 (n^\bot \cdot \nabla) n^\bot \cdot u\,ds \\
&\leq -\mu \frac{d}{dt} \int_{\p \rr_+ } A\vert u\vert ^2 \,ds + \frac{2\mu }{3}\int \mathrm {div}\big (\nabla ^\bot (u\cdot n^\bot )^3A\big ) \,ds \\
&\leq -\mu \frac{d}{dt} \int_{\p \rr_+ } A\vert u\vert ^2 \,ds
+ C \int_{\rr_+ } |\nabla u| |u|^2 |\na A| \,dx \\
& \leq -\mu \frac{d}{dt} \int_{\p \rr_+ } A \vert u \vert ^2 \,ds + \frac{1}{8} A_2^2 + C R_T B_1^4,
\ea\ee
where in the last inequality we have used the following estimate:
\be\nonumber\ba
C \int |\na u| |u|^2 |\na A| dx
& = C \int |\na u| \bar{x}^{-1} |u|^2 \bar{x} |\na A| dx \\
& \le C \| \na u \|_{L^4} \| \bar{x}^{-1} |u|^2 \|_{ L^4 } \| \bar{x} |\na A| \|_{L^2} \\
& \le C \left( R_T^{\frac{1}{4} + \ep} A_2^{\frac{1}{2}} B_1^{\frac{1}{2}} + R_T B_1 \right) \| \bar{x}^{-\frac{1}{2}} |u| \|^2_{L^8} \\
& \le C \left( R_T^{\frac{1}{4} + \ep} A_2^{\frac{1}{2}} B_1^{\frac{1}{2}} + R_T B_1 \right) \left( 1 + \| \na u \|_{L^2}^2 \right) \\
& \le \frac{1}{8} A_2^2 + C R_T B_1^4,
\ea\ee
owing to (\ref{1bkj001}), (\ref{1bkj06}), and Young's inequality.

For $I_8$, proceeding similarly to the derivation of (\ref{1cp57}) and
using (\ref{1bkj01}), (\ref{1bkj001}), (\ref{1bkj02}), (\ref{1bkj002}), and (\ref{1bkj65}), we obtain
\be\la{1bkj77}\ba
I_8 \le \frac{d}{dt} \int \left( 2 (\na d \odot \na d) \cdot \na u - |\na d|^2 \div u \right) dx
+ \frac{1}{4} A^2_2 + C B_1^4.
\ea\ee

Moreover, applying $\na$ to $(\ref{nlckv})_3$, taking the inner product of the resulting equation with itself, and arguing as in (\ref{1cp58})
while using (\ref{1bkj01}), (\ref{1bkj001}), (\ref{1bkj02}), (\ref{1bkj002}), and (\ref{1bkj65}), we get
\be\la{1bkj78}\ba
\frac{d}{dt} \| \Delta d \|^2_{L^2} + \| \na d_t \|^2_{L^2} + \| \na \Delta d \|^2_{L^2}
\le \frac{1}{8} \| \na \Delta d \|^2_{L^2} + C B_1^2 \left( B_1^2 + \| \na^2 d \bar{x}^{\frac{a}{2}} \|^2_{L^2} \right).
\ea\ee

Substituting (\ref{1bkj72})--(\ref{1bkj74}), (\ref{1bkj76}), and (\ref{1bkj77}) into (\ref{1bkj71}), and adding the result to (\ref{1bkj78}), we arrive at
\be\la{1bkj79}\ba
\frac{d}{dt} \tilde{B}_1 + A^2_2 \le C R^{1+\ep}_T B_1^2 \left( B_1^2 + \| \na^2 d \bar{x}^{\frac{a}{2}} \|^2_{L^2} \right),
\ea\ee
where
\be\la{1bkj710}\ba
\tilde{B}_1(t) \triangleq B_1^2(t) - \int \left( 2 (\na d \odot \na d) \cdot \na u - |\na d|^2 \div u \right) dx.
\ea\ee
From (\ref{1bkj002}), (\ref{1bkj65}), and Cauchy's inequality, we have
\be\la{1bkj711}\ba
\int \left(2 (\na d \odot \na d) \cdot \na u - |\na d|^2 \div u \right) dx
\le C \| \na d \|^2_{L^4} \| \na u \|_{L^2}
\le \frac{1}{2} B_1^2 + \hat{C}_3,
\ea\ee
which together with (\ref{1bkj710}) yields
\be\la{1bkj712}\ba
\frac{1}{2} B_1^2(t) \le \tilde{B}_1(t) + \hat{C}_3 \le 2 \left( B_1^2(t) + \hat{C}_3 \right).
\ea\ee

Dividing (\ref{1bkj79}) by $\tilde{B}_1(t) + \hat{C}_3$, integrating over $(0,T)$, and using (\ref{1bkj01}), (\ref{1bkj02}), (\ref{1bkj04}), and (\ref{1bkj712}), we obtain (\ref{1bkj07}).
This completes the proof of Lemma \ref{bkj1l7}.
\end{proof}

To derive the upper bound for the density, following the arguments used for bounded domains and Cauchy problem, we need to derive pointwise estimates of the effective viscous flux.

Note that for $x,y \in \rr_+$, the Neumann Green's function for $\rr_+$ is given by
\be\la{bkjglhs}\ba
N(x,y) = -\frac{1}{2\pi} \left( \log|x-y| + \log|x-y^*| \right),
\ea\ee
where $y^* \triangleq (y_1,-y_2)$ and $\frac{\p N}{\p n_y}|_{\p \rr_+}=0$.

Using the Green's function (\ref{bkjglhs}) and the fact that $G$ satisfies the elliptic equation (\ref{bkjevf18}), we establish the required pointwise estimates of the effective viscous flux, as presented in the following lemma.

\begin{lemma}\label{bkj1l8}
For any $x \in \rr_+$ and $\ep \in (0,1)$, there exists a positive constant $C$ depending only on
$T$, $a$, $\mu$, $\ga$, $\ep$, $E_1$,  $\beta$, and $A$ such that
\be\la{1bkj08}\ba
-G(x,t) & \le \frac{D}{Dt} \psi (x,t)
+ C R^{\frac{1}{4}+\ep}_T B_1^{\frac{1}{2}} A^{\frac{1}{2}}_2 + C R^\ep_T B_1 + |J|,
\ea\ee
where
\be\la{1bkj008}\ba
\psi \triangleq \int_{\rr_+} \na_y N(x,y) \cdot \n u(y) dy,
\ea\ee
and $J$ satisfies
\be\la{1bkj0008}\ba
|J| \le C \sup_{x \in \ol{\rr_+} } \int_{\rr_+} \frac{ |u(x)-u(y)| }{ |x-y|^2 } \n |u| dy.
\ea\ee
\end{lemma}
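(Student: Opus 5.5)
We represent $G$ through the Neumann Green's function \eqref{bkjglhs}, using the Neumann problem \eqref{bkjevf18}. Since $\frac{\p N}{\p n_y}=0$ on $\p\rr_+$, Green's representation applied to \eqref{bkjevf18} gives, for $x\in\rr_+$,
\be\nonumber\ba
G(x,t) = -\int_{\rr_+} \na_y N(x,y)\cdot\left(\n\dot u + \na d\cdot\Delta d - \mu\na^\bot(Au^1)\right)(y)\,dy .
\ea\ee
In this identity the boundary flux term cancels exactly against the Neumann datum. We then treat the three source pieces separately.

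\textbf{The $\n\dot u$ piece.} Write $\n\dot u = (\n u)_t + \div(\n u\otimes u)$ using $(\ref{nlckv})_1$, and compare with
\be\nonumber\ba
\frac{D}{Dt}\psi = \int_{\rr_+}\na_y N(x,y)\cdot(\n u)_t\,dy + u(x)\cdot\na_x\int_{\rr_+}\na_y N(x,y)\cdot\n u(y)\,dy .
\ea\ee
Integrating by parts in $y$ in the $\div(\n u\otimes u)$ term produces $\int \p_{y_j}\na_y N\,\n u^j u\,dy$. The boundary terms vanish because $u\cdot n=0$ on the flat boundary. The $\n u\otimes u$ piece and the transport piece $u(x)\cdot\na_x\na_y N$ then combine. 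For the $\log|x-y|$ part one has $\na_x = -\na_y$. For the reflected part $\log|x-y^*|$ one has $\p_{x_1}=-\p_{y_1}$ and $\p_{x_2} = \p_{y_2}$, which we absorb using the reflection symmetry of the kernel. After this combination we obtain $-\int \na_y\na_y N(x,y):(u(x)-u(y))\otimes \n u(y)\,dy$ plus a remainder. The second derivatives of $N$ are bounded by $C|x-y|^{-2}$ (note $|x-y^*|\ge|x-y|$), so this term is bounded by $|J|$ as in \eqref{1bkj0008}. We expect the remainder to vanish or to be absorbed into $J$ after the reflection bookkeeping. This gives $-G \le \frac{D}{Dt}\psi + |J| + (\text{remaining terms})$.

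\textbf{The remaining pieces.} The $\na d\cdot\Delta d$ term is rewritten via \eqref{bkjevf161} as a divergence. Integrating by parts, with the boundary term killed by $n\cdot\na d=0$ and the identity \eqref{bkjbjds2}, it becomes a singular integral of $|\na d|^2$. A direct pointwise bound for this is awkward. Instead we bound the potential $\int\na_yN\cdot(\na d\cdot\Delta d)\,dy$ in $L^\infty$: split into $|x-y|<1$ and $|x-y|\ge1$, use $\|\na d\cdot\Delta d\|_{L^4}$ near and $L^{4/3}$-type norms far. By \eqref{1bkj002} and Gagliardo--Nirenberg this gives $C(1+\|\na^3 d\|_{L^2})^{1/2}\le C B_1^{1/2}A_2^{1/2}+CB_1$. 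The $A u^1$ term is handled the same way. We use $|\na_yN|\le C|x-y|^{-1}$ and H\"older with $\na(Au)\in L^{2+\eta}$ near and weighted $L^{r}$ far. The far part uses \eqref{bkja1} and \eqref{1bkj001}, the near part uses \eqref{1bkj06} with $p$ slightly above $2$, which gives at most $CR_T^\ep B_1 + CR_T^{1/4+\ep}B_1^{1/2}A_2^{1/2}$.

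\textbf{Main obstacle.} The delicate step is the commutator algebra in the $\n\dot u$ piece with the reflected kernel. We must check that $\na_x$ acting on $\log|x-y^*|$ pairs correctly with the $y$-derivatives after integrating by parts, so that only the difference $u(x)-u(y)$ survives. We would first verify this by extending $\n u$ evenly/oddly as in \eqref{bkjevf3} and \eqref{bkjevf13}, which reduces the computation to the whole-space one of \cite{HL3}.
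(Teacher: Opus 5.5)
Your architecture matches the paper's: Neumann Green's representation, the $\frac{D}{Dt}\psi$-plus-commutator rewriting of the $\n\dot u$ term, near/far splitting of the $\na d\cdot\Delta d$ potential, and the $A$-term as a volume integral against $|\na_yN|\le C|x-y|^{-1}$ controlled by \eqref{1bkj06}. However, the step you flag as the main obstacle is a genuine gap, and your expectation that the remainder vanishes is false. For the image part, $(\p_{x_i}+\p_{y_i})\log|x-y^*|=2\delta_{i2}\,\p_{x_2}\log|x-y^*|$. So if you write the commutator with $\p_{x_i}\p_{y_j}N$, as the paper does, the leftover is
\[
-\frac{1}{\pi}\int_{\rr_+}\p_{y_j}\p_{x_2}\log|x-y^*|\,\n u^2u^j(y)\,dy .
\]
This is only bounded by $C\int_{\rr_+}|x-y^*|^{-2}|u^2(y)|\,\n|u|(y)\,dy$. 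With your choice $\na_y\na_yN$ you get the same kernel with $u^2(x)$ in front instead. This term contains no velocity difference. As $x\to\p\rr_+$ its kernel tends to $|\tilde x-y|^{-2}$, which is not integrable near $\tilde x$. So it cannot be absorbed into the commutator by kernel bounds, and it does not vanish.

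The missing idea is to use the flat boundary and $u\cdot n=0$ at the projected point $\tilde x\triangleq(x_1,0)$. Since $u^2(\tilde x)=0$ and $|\tilde x-y|\le|x-y^*|$ for $x,y\in\rr_+$,
\[
\frac{|u^2(y)|}{|x-y^*|^2}=\frac{|u^2(\tilde x)-u^2(y)|}{|x-y^*|^2}\le\frac{|u(\tilde x)-u(y)|}{|\tilde x-y|^2}.
\]
Hence the remainder is bounded by the commutator integral evaluated at the boundary point $\tilde x$. This is exactly why \eqref{1bkj0008} carries a supremum over $\ol{\rr_+}$ rather than the value at $x$.

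Your fallback of extending $\n$ evenly and $u$ even/odd does not bypass this. $\psi$ does become the whole-space potential of $\tilde\n\tilde u$, and the computation of \cite{HL3} goes through. But on the reflected half-plane the commutator involves $|u(x)-(u^1,-u^2)(y)|\le|u(x)-u(y)|+2|u^2(y)|$ against $|x-y^*|^{-2}$, which is the same leftover term. You still need the projection argument to reach \eqref{1bkj0008}. A bound by the whole-space commutator of the extended fields would in fact suffice for the later use in Lemma \ref{bkj1l9}, since $|\tilde u(x)-\tilde u(z)|\le C\|\na u\|_{L^p}|x-z|^{1-2/p}$ holds for the extension. However, that is a different statement from the one you are proving.

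A smaller slip concerns the $\na d\cdot\Delta d$ potential. Taking the product in $L^4$ on $\{|x-y|<1\}$ forces $\|\na^2 d\|_{L^q}$ with $q>4$, hence a power of $\|\na^3 d\|_{L^2}$ strictly larger than $1/2$. Take it in $L^3$ instead, i.e.\ use $\|\na d\|_{L^{12}}\|\na^2 d\|_{L^4}$, to get $CB_1+CB_1^{1/2}A_2^{1/2}$. Your handling of the $A$-term with any $p\in(2,4]$ in \eqref{1bkj06} is fine.
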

\begin{proof}
Since $G$ satisfies (\ref{bkjevf18}), the Green's representation formula gives, for any $x \in \rr_+$,
\be\la{1bkj81}\ba
G(x,t) = & - \int_{\rr_+} \na_y N(x,y) \cdot \left( \rho \dot{u} + \na d \cdot \Delta d \right) dy
+ \mu \int_{\partial \rr_+} N(x,y) n^\bot \cdot \na(A u \cdot n^\bot) dS_y.
\ea\ee

For the first term on the right-hand side of (\ref{1bkj81}), integrating by parts and using the boundary condition $u \cdot n = 0$ on $\p \rr_+$ yield
\be\la{1bkj82}\ba
& - \int_{\rr_+} \na_y N(x,y) \cdot \n \dot{u} dy \\
& = - \int_{\rr_+} \na_y N(x,y) \cdot \left( (\n u)_t + \div(\n u \otimes u ) \right) dy \\
& = - \frac{\p}{\p t} \int_{\rr_+} \na_y N(x,y) \cdot \n u(y) dy
+ \int_{\rr_+} \p_{y_i} \p_{y_j} N(x,y) \n u^i u^j (y) dy \\
& = - ( \p_t + u \cdot \na ) \int_{\rr_+} \na_y N(x,y) \cdot \n u(y) dy
+ J,
\ea\ee
where
\be\la{1bkj83}\ba
J \triangleq \int_{\rr_+} \left( \p_{x_i} \p_{y_j} N(x,y) u^i(x) + \p_{y_i} \p_{y_j} N(x,y) u^i(y) \right) \n u^j (y) dy.
\ea\ee
We rewrite $J$ as
\be\la{1bkj84}\ba
J & = \int_{\rr_+} \p_{x_i} \p_{y_j} N(x,y) \left( u^i(x) - u^i(y) \right) \n u^j (y) dy \\
& + \int_{\rr_+} \left( \p_{x_i} \p_{y_j} N(x,y) + \p_{y_i} \p_{y_j} N(x,y) \right) \n u^i u^j (y) dy.
\ea\ee
For $i,j=1,2$, direct calculation shows
\be\la{1bkj85}\ba
|\p_{x_i} \p_{y_j} N(x,y)| \le C |x-y|^{-2}, \quad
\p_{x_i} \log|x-y| + \p_{y_i} \log|x-y| = 0,
\ea\ee
and
\be\la{1bkj86}\ba
& \p_{y_j} \left( \p_{x_i} \log|x-y^*| + \p_{y_i} \log|x-y^*| \right) \\
& = \p_{y_j} \left( \frac{ (x_i-y_i^*) (\p_{x_i}(x_i) - \p_{y_i}(y_i^*) ) }{|x-y^*|^2} \right) \\
& = \frac{ -\p_{y_j}(y_i^*) (\p_{x_i}(x_i) - \p_{y_i}(y_i^*) ) }{|x-y^*|^2}
+ 2 \frac{ \p_{y_j}(y_j^*) (x_i-y_i^*) (x_j-y_j^*)  (\p_{x_i}(x_i) - \p_{y_i}(y_i^*) ) }{|x-y^*|^4},
\ea\ee
which implies
\be\la{1bkj87}\ba
\left| \left( \p_{x_i} \p_{y_j} N(x,y) + \p_{y_i} \p_{y_j} N(x,y) \right) u^i(y) \right|
\le C \frac{ | u^2(y) | }{|x-y^*|^2}.
\ea\ee

Moreover, for $x=(x_1,x_2) \in \rr_+$, let $\tilde{x}=(x_1,0) \in \p \rr_+$.
Since $u \cdot n = 0$ on $\p \rr_+$, we have $u^2(\tilde{x})=0$.
Consequently,
\be\la{1bkj88}\ba
\frac{ | u^2(y) | }{|x-y^*|^2} = \frac{ | u^2(\tilde{x}) - u^2(y) | }{|x-y^*|^2}
\le \frac{ | u^2(\tilde{x}) - u^2(y) | }{|\tilde{x}-y|^2},
\ea\ee
where we have used the following fact:
\be\la{1bkj89}\ba
|\tilde{x}-y| \le |x-y^*|,
\ea\ee
due to $x_2,\ y_2 \ge 0$.

Combining (\ref{1bkj84}), (\ref{1bkj85}), (\ref{1bkj87}), and (\ref{1bkj88}) leads to
\be\la{1bkj810}\ba
|J| \le C \sup_{x \in \ol{\rr_+} } \int_{\rr_+} \frac{ |u(x)-u(y)| }{ |x-y|^2 } \n |u| dy.
\ea\ee

On the other hand, (\ref{gn11}), (\ref{tygj1}), (\ref{1bkj002}), and H\"older's inequality give
\be\la{1bkj811}\ba
& \left| \int_{\rr_+} \nabla_y N(x,y) \cdot \na d \cdot \Delta d dy \right| \\
& \le \int_{\rr_+}  |x-y|^{-1} | \na d | |\na^2 d| dy \\
& \le \int_{|x-y|<1}  |x-y|^{-1} | \na d | |\na^2 d| dy
+ \int_{|x-y| \ge 1}  |x-y|^{-1} | \na d | |\na^2 d| dy \\
& \le C \| \na d \|_{L^{12}} \| \na^2 d \|_{L^4} + C \| \na d \|_{L^{\frac{12}{5}}} \| \na^2 d \|_{L^4} \\
& \le C B_1 + C B_1^{\frac{1}{2}} A^{\frac{1}{2}}_2.
\ea\ee

For the boundary term in (\ref{1bkj81}), using (\ref{1bkj65}), (\ref{1bkj06}), and H\"older's inequality, we derive
\be\la{1bkj812}\ba
& \left| \mu \int_{\partial \rr_+} N(x,y) n^\bot \cdot \na(A u \cdot n^\bot) dS_y \right| \\
& = \mu \left| \int_{\rr_+} \div \left( \na^\bot (A u \cdot n^\bot) N(x,y) \right) dy \right| \\
& \le C \int_{\rr_+} |\na N(x,y)| \left( |A| |\na u| + |\na A| |u| \right) dy \\
& \le C \int_{|x-y| < 1} |x-y|^{-1} \left( |A| |\na u| + |\na A| |u| \right) dy \\
& \quad + C \int_{|x-y| \ge 1} |x-y|^{-1} \left( |A| |\na u| + |\na A| |u| \right) dy \\
& \le C \| \na u \|_{L^4} + C B_1 \\
& \le C R^{\frac{1}{4}+\ep}_T B_1^{\frac{1}{2}} A^{\frac{1}{2}}_2 + C R^\ep_T B_1.
\ea\ee

The combination of (\ref{1bkj81}), (\ref{1bkj82}), (\ref{1bkj810}), (\ref{1bkj811}), and (\ref{1bkj812}) yields (\ref{1bkj08}) and completes the proof of Lemma \ref{bkj1l8}.
\end{proof}

\begin{lemma}\la{bkj1l9}
There exists a positive constant $C$ depending only on
$T$, $a$, $\mu$, $\ga$, $E_1$,  $\beta$, and $A$ such that
\be\ba\la{1bkj09}
& \sup_{0\leq t\leq T} \left( \|\n\|_{L^\infty} + \| \na u \|_{L^2} + \| \na d \|_{H^1} \right) \\
& + \int_0^T \left( \| \na u \|^2_{L^2} + \| \sqrt{\n} \dot{u} \|^2_{L^2} + \| \na^2 d \|^2_{H^1} + \| \na d_t \|^2_{L^2} \right) dt
\le C.
\ea\ee
\end{lemma}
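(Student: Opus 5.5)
The plan follows Lemma \ref{cp1l6}. We integrate the transport identity for $\theta_3(\n)$ along particle paths, using the pointwise bound of Lemma \ref{bkj1l8} in place of the commutator representation. Since $\div u=(G+P)/(2\mu+\lam)$, the continuity equation gives
\begin{equation*}
\frac{D}{Dt}\theta_3(\n)=-\n\theta_3'(\n)\div u=-G-P .
\end{equation*}
Lemma \ref{bkj1l8} then gives
\begin{equation*}
\frac{D}{Dt}\big(\theta_3(\n)-\psi\big)\le C R_T^{\frac14+\ep}B_1^{\frac12}A_2^{\frac12}+CR_T^{\ep}B_1+|J| .
\end{equation*}
Integrating along the flow over $(0,T)$ and using $\theta_3(\n)\ge \beta^{-1}\n^\beta+2\mu\log\n$ (the logarithm is harmless where $\n$ is large), we get
\begin{equation*}
R_T^\beta\le C+C\sup_t\|\psi\|_{L^\infty}+C\int_0^T\Big(R_T^{\frac14+\ep}B_1^{\frac12}A_2^{\frac12}+R_T^\ep B_1+\|J\|_{L^\infty}\Big)dt .
\end{equation*}
By Young's inequality and Lemma \ref{bkj1l7}, $\int_0^T B_1^{1/2}A_2^{1/2}\,dt\le \int_0^T\big(B_1^2+A_2^2/B_1^2\big)dt$. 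We have $\int_0^T B_1^2\,dt\le C$ from (\ref{1bkj01}) and (\ref{1bkj04}), since $\|G\|^2/(2\mu+\lam)\le C(\|\na u\|^2_{L^2}+1)$ and the boundary term is controlled by the trace inequality. Also $\int_0^T A_2^2/B_1^2\,dt\le CR_T^{1+\ep}$. These terms therefore contribute at most $CR_T^{5/4+2\ep}$.

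For $\psi$ we argue as in (\ref{1cp69}). Since $\na_yN$ is bounded by $|x-y|^{-1}$, after the even extension $\psi$ is a Riesz-type potential. We split the integral into $|x-y|<1$ and $|x-y|\ge1$ and use H\"older together with (\ref{1bkj05}) and (\ref{1bkj001}). Alternatively, the Brezis--Wainger inequality (Lemma \ref{BWI}) applied to the extended function gives $\|\psi\|_{L^\infty}\le CR_T^{1/2}\log^{1/2}(e+B_1^2)+CR_T\le CR_T^{1+\ep}$, using $\sup_t\log B_1^2\le CR_T^{1+\ep}$. Either way this term is $o(R_T^\beta)$ whenever $\beta>1$.

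The main obstacle is $J$. Here I would follow \cite{FLL,HL3} via a Hölder/Morrey-type argument: $|u(x)-u(y)|\le C|x-y|^{1-2/p}\|\na u\|_{L^p}$ for $p>2$ (after even/odd extension of $u$ across $\p\rr_+$). This gives
\begin{equation*}
|J|\le C\|\na u\|_{L^p}\int|x-y|^{-1-2/p}\n|u|\,dy\le C\|\na u\|_{L^p}\|\n u\|_{L^r\cap L^{s}}
\end{equation*}
for suitable $r<2<s$. The factor $\|\n u\|$ is bounded in terms of $R_T$ via (\ref{1bkj05}), weighted in the spirit of Lemma \ref{cp1l4}, namely $\|\n u\|_{L^s}\le CR_T^{1+\beta/4+\beta\eta_0/2}(1+\|\na u\|_{L^2})^{1-2/s}$ with the extension argument and (\ref{1bkj001}). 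The factor $\|\na u\|_{L^p}$ is bounded by Lemma \ref{bkj1l6}. Taking $p$ close to $2$ and $\ep$ small, time integration with Young's inequality against $A_2^2/B_1^2$ and $B_1^2$ yields
\begin{equation*}
\int_0^T\|J\|_{L^\infty}dt\le CR_T^{1+\beta/4+\beta\eta_0+\ep'} .
\end{equation*}
Collecting everything gives $R_T^\beta\le CR_T^{1+\beta/4+\beta\eta_0+\ep'}$. Because $\beta-\beta/4-\beta\eta_0=\tfrac58\beta+\tfrac12>1$ exactly when $\beta>4/3$, choosing $\ep'$ small yields $\sup_t\|\n\|_{L^\infty}\le C$. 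With $R_T$ bounded, Lemma \ref{bkj1l7} gives $\sup_t B_1+\int_0^TA_2^2\,dt\le C$. Then (\ref{1bkj65}), (\ref{1bkj01}) and Lemma \ref{tygj} (for $\|\na^2 d\|_{L^2}\le C\|\Delta d\|_{L^2}$, and similarly for $\na^3 d$ via $\na\Delta d$) give (\ref{1bkj09}). The delicate bookkeeping is keeping the $R_T$ exponent of the $J$ term strictly below $\beta$.
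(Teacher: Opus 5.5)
Your skeleton matches the paper's: the identity $\frac{D}{Dt}\theta_3(\n)+P=-G$, the pointwise bound of Lemma \ref{bkj1l8}, the near/far splitting of $\psi$ with (\ref{1bkj05}), Morrey's inequality for $J$, and closing via $\beta>4/3$. The step that fails is your estimate of $\int_{\rr_+}|x-y|^{-1-2/p}\n|u|(y)\,dy$ using a fixed splitting and fixed exponents $r<2<s$.

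\begin{itemize}
\item Local integrability of $|x-y|^{-(1+2/p)s'}$ near $x$ forces $\frac1s<\frac12-\frac1p$.
\item The Lemma~\ref{cp1l4}-type bound you invoke gives $\|\n u\|_{L^s}\le CR_T^{(\cdots)}(1+\|\na u\|_{L^2})^{1-2/s}\le CR_T^{(\cdots)}B_1^{1-2/s}$.
\item Combined with Lemma \ref{bkj1l6}, this yields $|J|\le CR_T^{(\cdots)}B_1^{2-2/s}(A_2^2/B_1^2)^{1/2-1/p}+\cdots$.
\end{itemize}
In time you only control $\int_0^TB_1^2\,dt\le C$ and $\int_0^TA_2^2/B_1^2\,dt\le CR_T^{1+\ep}$. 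H\"older or Young therefore closes only if $(1-\frac1s)+(\frac12-\frac1p)\le1$, i.e.\ $\frac1s+\frac1p\ge\frac12$. That is exactly the opposite of the integrability condition. The surplus power of $B_1$ could then only be absorbed through $\sup_tB_1$, which Lemma \ref{bkj1l7} bounds merely by $\exp(CR_T^{1+\ep})$, and the polynomial comparison $R_T^\beta\le CR_T^{\kappa}$ is lost. So the delicate bookkeeping is the power of $B_1$, not only the power of $R_T$, and your claimed bound $\int_0^T\|J\|_{L^\infty}dt\le CR_T^{1+\beta/4+\beta\eta_0+\ep'}$ is not justified.

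The missing idea is the $B_1$-dependent cut-off in (\ref{1bkj96})--(\ref{1bkj99}).
\begin{itemize}
\item Split at $|x-y|=\tau$, with $\tau$ chosen by $\tau^{-2/p+\nu/(2+\nu)}=B_1^{2/p}$.
\item Bound the far part using only (\ref{1bkj05}), which carries no factor of $B_1$.
\item Bound the near part by (\ref{1bkj95}) with $r=(1+\ep_0)/\ep_0$ and $\ep_0=\frac{(p-2)\nu}{8+(6-p)\nu}$. Track the constant explicitly: $r^{\eta_0+1/2}\le C\nu^{-(\eta_0+1/2)}\le CR_T^{\beta/4+\beta\eta_0/2}$.
\end{itemize}
The smallness of $\tau$ then exactly offsets the factor $B_1$ coming from $\|\na u\|_{L^2}$. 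This gives $\int|x-y|^{-1-2/p}\n|u|\,dy\le CR_T^{1+\beta/4+\beta\eta_0/2}B_1^{2/p}$. Hence $|J|\le CR_T^{(\cdots)}B_1^{1+2/p}(A_2^2/B_1^2)^{1/2-1/p}+CR_T^{(\cdots)}B_1^2$. Now the H\"older exponents $(\frac12+\frac1p)^{-1}$ and $(\frac12-\frac1p)^{-1}$ match $\int_0^TB_1^2$ and $\int_0^TA_2^2/B_1^2$ exactly, which gives (\ref{1bkj912}).

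Two smaller slips:
\begin{itemize}
\item $\beta-\frac\beta4-\beta\eta_0=\frac38\beta+\frac12$, not $\frac58\beta+\frac12$; the corrected expression is indeed $>1$ if and only if $\beta>\frac43$.
\item $\int_0^TB_1^2\,dt\le C$ follows from the dissipation $\int_0^T\!\int\lam(\div u)^2$ in (\ref{1bkj01}). It does not follow from a pointwise bound $\int G^2/(2\mu+\lam)\le C(1+\|\na u\|_{L^2}^2)$, which fails without control of $\lam(\n)$.
\end{itemize}
Your Young's-inequality treatment of $\int_0^TR_T^{1/4+\ep}B_1^{1/2}A_2^{1/2}\,dt$ is lossier than the paper's H\"older bound $CR_T^{1/2+2\ep}$. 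It is still harmless, since $\frac54<\frac43$.
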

\begin{proof}
First, recall from (\ref{1bkj42}) that
\be\la{1bkj91}\ba
\frac{D}{Dt} \theta_3(\n) + P = -G,
\ea\ee
where $\theta_3(\n) = 2\mu \log \n + \beta^{-1} \n^\beta$.

For $\nu$ given in Lemma \ref{bkj1l5}, using (\ref{1bkj01}), (\ref{1bkj04}), (\ref{1bkj05}), and H\"older's inequality, one derives
\be\la{1bkj92}\ba
& \int_{\rr_+} \na_y N(x,y) \cdot \n u(y) dy \\
& \le C \int_{\rr_+} |x-y|^{-1} \n |u(y)| dy \\
& \le C \int_{ |x-y| \le 1 } |x-y|^{-1} \n |u(y)| dy
+ C \int_{ |x-y| > 1 } |x-y|^{-1} \n |u(y)| dy \\
& \le C \left( \int_{ |x-y| \le 1 } |x-y|^{-\frac{2+\nu}{1+\nu}} dy \right)^{\frac{1+\nu}{2+\nu}}
\left(\int_{\rr_+} \n^{2+\nu} |u|^{2+\nu} dy \right)^{\frac{1}{2+\nu}} \\
& \quad + C \left( \int_{ |x-y| > 1 } |x-y|^{-4} dy \right)^{\frac{1}{4}} \| \sqrt{\n} u \|_{L^2} \| \sqrt{\n} \|_{L^4} \\
& \le C \nu^{-\frac{1+\nu}{2+\nu}} R_T^{\frac{1+\nu}{2+\nu}} + C \\
& \le C R_T^{\frac{2+\beta}{3}},
\ea\ee
which along with (\ref{1bkj008}) shows
\be\la{1bkj93}\ba
\| \psi \|_{L^\infty} \le C R_T^{\frac{2+\beta}{3}}.
\ea\ee

For any $2<p<\infty$, the Sobolev embedding theorem (Theorem 4 of \cite[Chapter 5]{EL}) implies that for any $x, y\in \overline{\rr_+}$,
\bnn
|u(x)-u(y)|\leq C(p )\|\nabla u\|_{L^p}|x-y|^{1-\frac{2}{p}},
\enn
and thus
\be\la{1bkj94}\ba
\int_{\rr_+} \frac{|u(x )-u(y)|}{|x-y|^2}\rho|u|(y) dy
& \leq C(p) \int_{\rr_+} \frac{\|\nabla u\|_{L^p}\cdot|x-y|^{1-\frac{2}{p}}}{|x-y|^2}\rho|u|(y) dy \\
& = C(p) \|\nabla u\|_{L^p}\int_{\rr_+} |x-y|^{-(1+\frac{2}{p})} \rho|u|(y)dy.
\ea\ee

In view of (\ref{jqgj2}), (\ref{1bkj01}), (\ref{1bkj14}), and extending $u$ evenly to $\rr$, we obtain for any $1<r<\infty$,
\be\la{1bkj95}\ba
\|\n u\|_{L^r(\rr_+)}
& \le C r^{\eta_0 + \frac{1}{2}}
\left( 1 + \|\n\|_{L^\infty(\rr_+)} \right)
\left( \| \sqrt{\n} u \|_{L^2(B^{+}_{N_2})} + \|\na  u\|_{L^2(\rr_+)} \right) \\
& \le C r^{\eta_0 + \frac{1}{2}} R_T (1 + \| \na u \|_{L^2(\rr_+)}),
\ea\ee
where $N_2$ is defined in Lemma \ref{bkj1l1}.

Then, for $2<p<6$, $\tau>0$ and $\varepsilon_0\in (0,\frac{p-2}{8})$ to be chosen later, by (\ref{1bkj65}), (\ref{1bkj95}), and H\"older's inequality, we have
\be\la{1bkj96}\ba
& \int_{|x-y|<\tau}|x-y|^{-\left(1+\frac{2}{p}\right)}\rho|u|(y)dy\\
&\leq C \left(\int_{|x-y|<\tau} |x-y|^{-\left(1+\frac{2}{p}\right) (1+\varepsilon_0)} dy\right)^{\frac{1}{1+\varepsilon_0}}
\|\n u\|_{L^{\frac{1+\varepsilon_0}{\varepsilon_0}}} \\
&\leq C(p) R_T (1 + \|\na u\|_{L^2}) \varepsilon_0^{-( \frac{1}{2}+\eta_0 )}
\tau^{1-\frac{2}{p}-\frac{2\varepsilon_0}{1+\varepsilon_0}} \\
&\leq C(p) R_T B_1 \varepsilon_0^{-( \frac{1}{2}+\eta_0 )}
\tau^{1-\frac{2}{p}-\frac{2\varepsilon_0}{1+\varepsilon_0}}.
\ea\ee
Furthermore, from (\ref{1bkj05}) and H\"older's inequality, we conclude that
\be\la{1bkj97}\ba
&\int_{|x-y|>\tau}|x-y|^{-\left(1+\frac{2}{p}\right)}\rho|u|(y)dy\\
&\leq  \left(\int_{|x-y|>\tau} |x-y|^{-\left(1+\frac{2}{p}\right) (\frac{2+\nu}{1+\nu})} dy\right)^{\frac{1+\nu}{2+\nu}}
\left(\int_\Omega\rho^{2+\nu}|u|^{2+\nu}dx\right)^{\frac{1}{2+\nu}}\\
&\leq C(p) R^{ \frac{1+\nu}{2+\nu} }_T \tau^{-\frac{2}{p}+\frac{\nu}{2+\nu}}.
\ea\ee
Choose $\tau>0$ such that
\be\la{1bkj98}\ba
\tau^{-\frac{2}{p}+\frac{\nu}{2+\nu}} = B_1^{ \frac{2}{p} },
\ea\ee
and set
\be\ba\nonumber
\varepsilon_0=\frac{(p-2)\nu}{8+(6-p) \nu}\in \left(0, \frac{p-2}{8}\right),
\ea\ee
which yields
\be\la{1bkj99}\ba 
B_1 \tau^{1-\frac{2}{p}-\frac{2\varepsilon_0}{1+\varepsilon_0}} = B_1^{ \frac{2}{p} }.
\ea\ee
Combining (\ref{1bkj96}), (\ref{1bkj97}), (\ref{1bkj98}), and (\ref{1bkj99}) shows that
\be\la{1bkj910}\ba
\int_{\rr_+} |x-y|^{-(1+\frac{2}{p})} \rho|u|(y)dy
&\leq C(p) R_T \varepsilon_0^{-( \frac{1}{2}+\eta_0 )} B_1^{ \frac{2}{p} }
+ C(p) R^{ \frac{1+\nu}{2+\nu} }_T B_1^{ \frac{2}{p} } \\
& \leq C(p) R_T^{1+\frac{\beta}{4} + \frac{\beta \eta_0}{2}} B_1^{ \frac{2}{p} }.
\ea\ee
Here we used $\varepsilon_0^{-\frac{1}{2}}\le C(p)\nu^{-\frac{1}{2}}\le C(p)R_T^{\frac{\beta}{4}}$, which follows from (\ref{1bkj005}).

From (\ref{1bkj94}), (\ref{1bkj910}), and (\ref{1bkj06}), we deduce that
\be\ba\la{1bkj911}
& \int_{\rr_+} \frac{|u(x )-u(y)|}{|x-y|^2}\rho|u|(y) dy \\
& \le C(p) R_T^{1+\frac{\beta}{4} + \frac{\beta \eta_0}{2}} B_1^{ \frac{2}{p} } \|\nabla u\|_{L^p} \\
& \le C(p) R_T^{\frac{3}{2} + \frac{\beta}{4} + \frac{\beta \eta_0}{2} - \frac{1}{p} + \ep} B_1^{1+\frac{2}{p}} \left( \frac{A^2_2}{B_1^2} \right)^{\frac{1}{2}-\frac{1}{p}}
+ C(p) R_T^{1+\frac{\beta}{4} + \frac{\beta \eta_0}{2} + \ep} B_1^{ 1+\frac{2}{p} } \\
& \le C (\ep) R_T^{1+\frac{\beta}{4} + \frac{\beta \eta_0}{2} + 2\ep} B_1^{1+\frac{2}{p}} \left( \frac{A^2_2}{B_1^2} \right)^{\frac{1}{2}-\frac{1}{p}}
+ C(\ep) R_T^{1+\frac{\beta}{4} + \frac{\beta \eta_0}{2} + \ep} B_1^2,
\ea\ee
provided $2<p \le \frac{2}{1-2\ep}$.

By virtue of (\ref{1bkj911}), (\ref{1bkj01}), (\ref{1bkj04}), (\ref{1bkj07}), and H\"older's inequality, one has
\be\la{1bkj912}\ba
& \int_0^T \int_{\rr_+} \frac{|u(x )-u(y)|}{|x-y|^2}\rho|u|(y) dy dt \\
& \le C(\ep) R_T^{1+\frac{\beta}{4} + \frac{\beta \eta_0}{2} + 2\ep}
\left( \int_0^T B_1^2 dt \right)^{\frac{1}{2}+\frac{1}{p}}
\left( \int_0^T \frac{A^2_2}{B_1^2} dt \right)^{\frac{1}{2}-\frac{1}{p}}
+ C(\ep) R_T^{1+\frac{\beta}{4} + \frac{\beta \eta_0}{2} +\ep} \\
& \le C(\ep) R_T^{1+\frac{\beta}{4} + \frac{\beta \eta_0}{2} + 3\ep}.
\ea\ee
Moreover, by (\ref{1bkj01}), (\ref{1bkj04}), and (\ref{1bkj07}), we obtain
\be\la{1bkj913}\ba
& \int_0^T \left( R^{\frac{1}{4}+\ep}_T B_1^{\frac{1}{2}} A^{\frac{1}{2}}_2 + C R^\ep_T B_1 \right) dt \\
& \le C R^{\frac{1}{4}+\ep}_T \left( \int_0^T B_1^2 dt \right)^{\frac{3}{4}}
\left( \int_0^T \frac{A^2_2}{B_1^2} dt \right)^{\frac{1}{4}} + C R^\ep_T \\
& \le C R^{\frac{1}{2}+2\ep}_T.
\ea\ee
Integrating (\ref{1bkj91}) over $(0,T)$ and applying (\ref{1bkj08}), (\ref{1bkj93}), (\ref{1bkj912}), and (\ref{1bkj913}), we arrive at
\be\la{1bkj914}\ba
R_T^\beta \le C R_T^{ \max\{ \frac{2+\beta}{3}, 1+\frac{\beta}{4} + \frac{\beta \eta_0}{2} + 3 \ep \} },
\ea\ee
which along with $\beta>\frac{4}{3}$ gives
\be\la{1bkj915}\ba
\sup_{0\le t \le T} \| \n \|_{L^\infty} \le C.
\ea\ee
This, combined with (\ref{1bkj01}) and (\ref{1bkj07}), implies (\ref{1bkj09}), thereby completing the proof of Lemma \ref{bkj1l9}.
\end{proof}

\begin{lemma}\la{bkj1l10}
There exists a positive constant $C$ depending only on
$T$, $a$, $\mu$, $\ga$, $\ep$, $E_1$,  $\beta$, and $A$ such that
\be\ba\la{1bkj010}
\sup_{0\le t\le T}
t \left( \| \sqrt{\n} \dot{u} \|^2_{L^2} + \| \na d_t \|^2_{L^2} \right)
+ \int_0^{T} t \left( \| \na \dot{u} \|^2_{L^2} + \| \na^2 d_t \|^2_{L^2} \right) dt \le C.
\ea\ee
\end{lemma}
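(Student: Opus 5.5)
We follow the proof of Lemma \ref{cp1l7}, now on $\rr_+$, and we control the boundary terms with (\ref{bkjbjds}), (\ref{bkjbjds2}) and the Navier-slip condition. Lemma \ref{bkj1l9} gives the uniform bounds $\|\n\|_{L^\infty}+\|\na u\|_{L^2}+\|\na d\|_{H^1}\le C$ and $\int_0^T A_2^2\,dt\le C$. With $R_T\le C$, Lemma \ref{bkj1l6} yields $\|\na u\|_{L^4}\le C(1+A_2)^{1/2}$. We also use the weighted inequality (\ref{1bkj001}) for $u$ and for $\dot u$; the latter is obtained from (\ref{1bkj17}) applied to $v=\dot u$. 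Finally, (\ref{1bkj02}) and (\ref{1bkj002}) supply the bounds on $d$.

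\textbf{Momentum part.} We apply $\dot u^j[\pa_t+\div(u\,\cdot)]$ to $(\ref{nlckv})_2^j$ and integrate over $\rr_+$, as in (\ref{1cp71})--(\ref{1cp72}). Before doing so, it is convenient to rewrite the viscous term as $\na((2\mu+\lam)\div u)-\mu\na^\bot\o$, i.e.\ $\na G-\mu\na^\bot\o$ plus the pressure gradient, following Cai-Li \cite{CL} and Fan-Li-Li \cite{FLL}. The highest-order interior terms are then $\int (2\mu+\lam)(\div\dot u)^2+\mu\int(\curl \dot u)^2$ up to commutators. Lemma \ref{dc1} turns these into control of $\|\na\dot u\|_{L^2}^2$, modulo a boundary contribution.

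Since $\dot u\cdot n=0$ on the flat boundary, the boundary terms reduce to $\int_{\p\rr_+}\dot\o\,(\dot u\cdot n^\bot)\,ds$ together with lower-order ones. On the boundary we have $\o=-Au\cdot n^\bot$. Hence $\dot\o=-A(\dot u\cdot n^\bot)-A\,u\cdot\na n^\bot\cdot u+u\cdot\na A\,(u\cdot n^\bot)$, and $\na n^\bot=0$ because the boundary is flat. The term $-\int A|\dot u\cdot n^\bot|^2$ has a favorable sign. The remaining terms are bounded by trace inequalities of the form $\int_{\p\rr_+}|f|\,ds\le C\int_{\rr_+}|\na f|$ applied to cubic products $A|u|^2|\dot u|$. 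These are then handled with (\ref{1bkj001}), the weights in (\ref{bkja1}), and Young's inequality, giving $\frac{\mu}{8}\|\na\dot u\|_{L^2}^2+C(1+A_2^2)(1+\|\sqrt\n\dot u\|_{L^2}^2)$. The coupling terms $\p_k(\p_jd\cdot\p_kd)_t$ and $\div(u\,\p_jd\cdot\Delta d)$ are treated exactly as in (\ref{1cp72}). Any boundary terms they produce involve $n\cdot\na d=0$ or $\dot u\cdot n=0$, and therefore vanish.

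\textbf{Director part.} We differentiate (\ref{1cp21}) in $t$ and test with $\na d_t$. The Neumann condition $\p_n d_t=0$ and identity (\ref{bkjbjds2}) make the boundary term from $\int\Delta\na d_t\cdot\na d_t$ vanish. It is more convenient to test $(\ref{nlckv})_3$ differentiated in $t$ against $-\Delta d_t$. Lemma \ref{tygj} then gives $\|\na^2d_t\|_{L^2}\le C\|\Delta d_t\|_{L^2}$. The interior terms are estimated as in (\ref{1cp74})--(\ref{1cp75}). Adding the two parts, we obtain the analogue of (\ref{1cp76}):
\[
\frac{d}{dt}\big(\|\sqrt\n\dot u\|_{L^2}^2+\|\na d_t\|_{L^2}^2\big)+\mu\|\na\dot u\|_{L^2}^2+\|\na^2d_t\|_{L^2}^2\le C\big(1+\|\sqrt\n\dot u\|_{L^2}^2+\|\na d_t\|_{L^2}^2\big)\big(1+A_2^2+\|\na^2d\bar x^{a/2}\|_{L^2}^2\big).
\]
We multiply by $t$ and apply Gr\"onwall's inequality, using $\int_0^TA_2^2\,dt\le C$ and (\ref{1bkj02}). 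This gives (\ref{1bkj010}).

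\textbf{Main obstacle.} The hardest step is the boundary term $\int_{\p\rr_+}\dot\o\,\dot u\cdot n^\bot\,ds$. It is also the place where the coercivity of the $\na\dot u$ term must be recovered via Lemma \ref{dc1}, which requires $\dot u\cdot n=0$. We first try to write the term as $-\frac{d}{dt}$ of a boundary integral plus cubic remainders. If that does not close, we fall back on the trace estimate $\|f\|_{L^2(\p\rr_+)}^2\le C\|f\|_{L^2}\|\na f\|_{L^2}$ together with the weighted bound $\|\bar x^{-\eta}\dot u\|_{L^{(2+\ep)/\eta}}\le C(\|\sqrt\n\dot u\|_{L^2}+\|\na\dot u\|_{L^2})$ and the decay of $A$ assumed in (\ref{bkja1}).
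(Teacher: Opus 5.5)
You set up the same scheme as the paper: apply $\dot u^j[\p_t+\div(u\,\cdot)]$ to $\n\dot u=\na G+\mu\na^\bot\o-\na d\cdot\Delta d$, recover $\|\na\dot u\|_{L^2}$ from $\div\dot u$ and $\curl\dot u$ via $\dot u\cdot n=0$ and Lemma \ref{dc}, handle the director part with $\p_n d_t=0$ and Lemma \ref{tygj}, then multiply by $t$ and use Gr\"onwall. The gap is the boundary term, which you flag as the main obstacle but do not close.

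With $\o_t=\dot\o-u\cdot\na\o$ and integration by parts along the flat boundary, the boundary contribution is (up to sign convention)
\[
\mu\int_{\p\rr_+}(\dot u\cdot n^\bot)\big(\dot\o+\o\,\p_1u^1\big)\,ds .
\]
Your treatment of the $\dot\o$-part is fine. On $\p\rr_+$ one has $\dot\o=-A\,\dot u\cdot n^\bot-(u\cdot\na A)(u\cdot n^\bot)$; note the minus sign on the last term. This gives the good term $-\mu\int A|\dot u\cdot n^\bot|^2$ plus a remainder with no derivative of $u$.

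The part you leave as ``lower-order ones'' is $-\mu\int_{\p\rr_+}A(u\cdot n^\bot)(\dot u\cdot n^\bot)\,\p_1u^1\,ds$. It is not a product of the form $A|u|^2|\dot u|$: it carries a tangential derivative of $u$. Tangential integration by parts only moves that derivative onto $\dot u$. Either of your trace inequalities then produces $\na^2u$ or $\na^2\dot u$.

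Neither is available at this stage. $\|\na^2u\|_{L^2}$ needs $\|\na\n\|_{L^2}$ through $\na\div u=\na\big((G+P)/(2\mu+\lam)\big)$, and that bound is obtained only in Lemma \ref{bkj1l11}, using the present lemma. $\na^2\dot u$ is not controlled at all. The ``$-\frac{d}{dt}$ of a boundary integral'' idea belongs to the lower-order estimate ($I_7$ in Lemma \ref{bkj1l7}) and does not help here.

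The missing step is the paper's divergence trick.
\begin{itemize}
\item After dropping $-\mu\int A(\dot u\cdot n^\bot)^2$, keep the boundary terms as $\mu\int_{\p\rr_+}\big[A(\dot u\cdot n^\bot)(u\cdot\na u\cdot n^\bot)+A(u\cdot n^\bot)(u\cdot\na\dot u\cdot n^\bot)\big]ds$.
\item On the flat boundary write $u\cdot\na=(u\cdot n^\bot)\,n^\bot\cdot\na$, so each term has the form $\int_{\p\rr_+}g\,n^\bot\cdot\na h\,ds$ with $h=u\cdot n^\bot$ or $h=\dot u\cdot n^\bot$.
\item Apply $\int_{\p\rr_+}F\cdot n^\bot\,ds=\int_{\rr_+}\na^\bot\cdot F\,dx$ with $F=g\na h$. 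Since $\na^\bot\cdot\na h=0$, one gets $\na^\bot\cdot(g\na h)=\na^\bot g\cdot\na h$, which is first order.
\end{itemize}
This bounds the boundary terms by
\[
C\int\big(|\na u||\na\dot u||Au|+|\na u||\dot u||\na A||u|+A|\na u|^2|\dot u|+|\na\dot u||\na A||u|^2\big)dx\le C\big(\|\na\dot u\|_{L^2}+\|\sqrt\n\dot u\|_{L^2}\big)\big(1+\|\na u\|_{L^4}^2\big).
\]
This closes with the weighted bounds for $u$ and $\dot u$ together with $\|\na u\|_{L^4}^2\le C(1+A_2)$. With this replacement, the rest of your argument goes through.
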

\begin{proof}
The proof follows the ideas in \cite{CL,H1,FLL}.
We apply the operator $ \dot{u}^j[\frac{\pa}{\pa t}+\div(u\cdot)]$ to
$(\ref{bkjevf17})^j$, sum with respect to $j$,
and integrate by parts over $\rr_+$ to obtain
\be\la{1bkj101}\ba
\frac{d}{dt}\left(\frac{1}{2}\int\rho|\dot{u}|^2dx \right)
&=\int \bigg( {\dot{u}}\cdot \nabla G_t + {\dot{u}}^j\mathrm {div}(u \partial_j G ) \bigg) dx \\
&\quad + \mu \int \bigg( {\dot{u}} \cdot \na^\bot \o_t
+ {\dot{u}}^j\partial _k( u^k ( \na^\bot \o )^j ) \bigg) dx \\
& \quad - \int \bigg( {\dot{u}} \cdot (\na d \cdot \Delta d)_t
+ {\dot{u}}^j\partial _k( u^k \p_j d \cdot \Delta d ) \bigg) dx \\
& \triangleq I_1+I_2+I_3.
\ea\ee

Integrating by parts and using (\ref{bkjbjds}), (\ref{1bkj04}), (\ref{1bkj06}), (\ref{1bkj09}), and Young's inequality yield
\be\la{1bkj102}\ba
I_1 & = \int_{\partial \rr_+} G_t ( \dot{u} \cdot n) ds
- \int \div \dot{u} \left( \dot{G} - u \cdot \na G \right) dx
- \int u \cdot \na \dot{u}^j \p_j G dx \\
& = - \int \div \dot{u} \dot{G} dx
+ \int \left( u \cdot \na G \div \dot{u} - u \cdot \na \dot{u}^j \p_j G \right) dx \\
& = - \int (2\mu+\lam) (\div \dot{u})^2 dx
+ \int \n \lam'(\n)(\div u)^2 \div \dot{u} dx -\ga \int P \div u \div \dot{u} dx \\
& \quad + \int (2\mu+\lam) \p_i u^j \p_j u^i \div \dot{u} dx
+ \int \left( - \div u  \div \dot{u} G + \p_j u \cdot \na \dot{u}^j G \right) dx \\
& \le - 2 \mu \| \div \dot{u} \|^2_{L^2}
+ C \| \na \dot{u} \|_{L^2} \| \na u \|_{L^4} \left( \| \na u \|_{L^4} + \| P \|_{L^4} + \| G \|_{L^4} \right) \\
& \le - 2 \mu \| \div \dot{u} \|^2_{L^2}
+ \varepsilon \Vert \nabla \dot{u} \Vert_{L^2}^2 + C(\ep) ( 1 + A^2_2 ),
\ea\ee
where in the third equality we have used the following fact:
\be\ba\nonumber
\dot{G} & = G_t + u \cdot \na G \\
& = \lam_t \div u + (2\mu+\lam) \div u_t
+ u \cdot \na ( (2\mu+\lam) \div u ) - P_t - u \cdot \na P \\
& = (\lam_t + u \cdot \na \lam ) \div u
+ (2\mu+\lam) \div \dot{u}
- (2\mu+\lam) \div ( u \cdot \na u ) \\
& \quad + (2\mu+\lam) u \cdot \na \div u + \ga P \div u
 \\
& = -\n \lam'(\n)(\div u)^2 + (2\mu+\lam) \div \dot{u}
- (2\mu+\lam) \p_i u^j \p_j u^i
+ \ga P \div u,
\ea\ee
due to $(\ref{nlckv})_1$.

For $I_2$, integrating over $\rr_+$ and using the boundary condition (\ref{bkjbjtj1}) and Young's inequality, we derive
\be\la{1bkj103}\ba
I_2&=\mu \int \left({\dot{u}}\cdot \nabla ^\bot \omega _t
+{\dot{u}}^j \p_k \left( u^k (\na^\bot \o)^j \right) \right) dx \\
&=\mu \int_{\partial \rr_+} ({\dot{u}}\cdot n^\bot )\omega _t ds
-\mu \int \curl \dot{u} \o_t dx-\mu \int u \cdot \na \dot{u} \cdot \na^\bot \o dx \\
&=\mu \int_{\partial \rr_+} \left( -A ({\dot{u}}\cdot n^\bot)^2 + ({\dot{u}}\cdot n^\bot) A (u \cdot \na u \cdot n^\bot) \right) ds
-\mu \int (\curl \dot{u})^2 dx \\
&\quad + \mu \int \curl \dot{u} \curl (u \cdot \na u) dx
- \mu \int_{\p \OM} u \cdot \na \dot{u} \cdot n^\bot \o dx
+ \mu \int \left( u \cdot \na \curl \dot{u} + \na^\bot_j u \cdot \na \dot{u}^j \right) \o dx \\
& \le \mu \int_{\partial \rr_+} \left( ({\dot{u}}\cdot n^\bot) A (u \cdot \na u \cdot n^\bot) + u \cdot \na \dot{u} \cdot n^\bot A (u \cdot n^\bot) \right) ds
-\mu \int (\curl \dot{u})^2 dx \\
&\quad + \mu \int \curl \dot{u} (\na^\bot_j u^i \p_i u^j + u \cdot \na \o) dx
+ \mu \int \left( u \cdot \na \curl \dot{u} + \na^\bot_j u \cdot \na \dot{u}^j \right) \o dx \\
& = \mu \int_{\partial \rr_+} \left( ({\dot{u}}\cdot n^\bot) A (u \cdot \na u \cdot n^\bot) + u \cdot \na \dot{u} \cdot n^\bot A (u \cdot n^\bot) \right) ds
-\mu \int (\curl \dot{u})^2 dx \\
&\quad + \mu \int \curl \dot{u} (\na^\bot_j u^i \p_i u^j - \o \div u ) dx
+ \mu \int \na^\bot_j u \cdot \na \dot{u}^j \o dx \\
& \le -\mu \| \curl \dot{u}\|^2_{L^2} + 2 \ep \| \na \dot{u} \|^2_{L^2} + C(\ep) ( 1 + A^2_2 ),
\ea\ee
where the boundary term is treated via
\be\ba\nonumber
& \mu \int_{\partial \rr_+} \left( ({\dot{u}}\cdot n^\bot) A (u \cdot \na u \cdot n^\bot) + u \cdot \na \dot{u} \cdot n^\bot A (u \cdot n^\bot) \right) ds \\
& = \mu \int_{\partial \rr_+} \left( ({\dot{u}}\cdot n^\bot) A ( (u \cdot n^\bot) n^\bot \cdot \na u \cdot n^\bot)
+ (u \cdot n^\bot) n^\bot \cdot \na \dot{u} \cdot n^\bot A (u \cdot n^\bot) \right) ds \\
& = \mu \int \na^\bot \cdot \left( \na u \cdot n^\bot ({\dot{u}}\cdot n^\bot) A (u \cdot n^\bot)
+ \na \dot{u} \cdot n^\bot A (u \cdot n^\bot)^2 \right) dx \\
& \le C \int \left( |\na u| |\na \dot{u}| | A u| + |\na u| |\dot{u}| |\na A| |u| + A |\na u|^2 |\dot{u}| + |\na \dot{u}| |\na A| |u|^2 \right) dx \\
& \le C \left( \| \na \dot{u} \|_{L^2} + \| \sqrt{\n} \dot{u} \|_{L^2} \right) \left( 1 + \| \na u \|^2_{L^4} \right) \\
& \le \ep \| \na \dot{u} \|^2_{L^2} + C(\ep) ( 1 + A^2_2 ),
\ea\ee
which follows from (\ref{1bkj001}), (\ref{1bkj06}), and Young's inequality.

By virtue of (\ref{1cp39}), (\ref{bkjbjds}), (\ref{1bkj001}), (\ref{1bkj02}), (\ref{1bkj002}), and Young's inequality, it holds that
\be\la{1bkj104}\ba
I_3 & = \int \left( \p_k \dot{u}^j (\p_j d \cdot \p_k d)_t + \frac{1}{2} \div \dot{u} (|\na d|^2)_t + \p_k \dot{u}^j u^k \p_j d \cdot \Delta d \right) dx \\
& \le C \| \na \dot{u} \|_{L^2} \left( \| \na d \|_{L^4} \| \na d_t \|_{L^4} + \| |u| |\na d| |\Delta d| \|_{L^2} \right) \\
& \le \ep \| \na \dot{u} \|^2_{L^2} + C(\ep) \| \na d_t \|^2_{L^4}
+ C(\ep) \| |u| |\na d| |\Delta d| \|^2_{L^2} \\
& \le \ep \| \na \dot{u} \|^2_{L^2} + C(\ep) \| \na d_t \|_{L^2} \| \na^2 d_t \|_{L^2} + C(\ep) \| u \bar{x}^{-\frac{a}{4}} \|^2_{L^8} \| \na d \bar{x}^{\frac{a}{2}} \|_{L^2} \| \na d \|_{L^8} \| \Delta d \|^2_{L^{16}} \\
& \le \ep \| \na \dot{u} \|^2_{L^2} + \frac{1}{4} \| \na^2 d_t \|^2_{L^2} + C(\ep) (1 + A^2_2).
\ea\ee
Substituting (\ref{1bkj102}), (\ref{1bkj103}), and (\ref{1bkj104}) into (\ref{1bkj101}) leads to
\be\la{1bkj105}\ba
& \frac{d}{dt}\left(\frac{1}{2}\int\rho|\dot{u}|^2dx \right)
+ 2 \mu \| \div \dot{u} \|^2_{L^2} + \mu \| \curl \dot{u}\|^2_{L^2} \\
& \le 4 \varepsilon \Vert \nabla {\dot{u}}\Vert _{L^2}^2
+ \frac{1}{4} \| \na^2 d_t \|^2_{L^2} + C(\ep) (1 + A^2_2).
\ea\ee

On the other hand, differentiating $(\ref{nlckv})_3$ with respect to $x$ and $t$, multiplying the resulting equation by $\na d_t$, and following a procedure similar to the derivation of (\ref{1cp74}), we obtain by using (\ref{1bkj16}), (\ref{1bkj02}), (\ref{1bkj002}), and (\ref{1bkj09}) that
\be\la{1bkj106}\ba
& \frac{1}{2}\frac{d}{dt}\|\nabla d_{t}\|_{L^{2}}^{2} + \frac{3}{4} \| \na^2 d_{t}\|_{L^{2}}^{2} \\
& \le \ep \| \na \dot{u} \|^2_{L^2}
+ C(\ep) \left( 1 + \| \nabla d_t \|^2_{L^{2}} \right)
\left( 1 + A^2_2 + \| \nabla^{2}d \bar{x}^{\frac{a}{2}} \|^2_{L^{2}} \right),
\ea\ee
Adding (\ref{1bkj106}) to (\ref{1bkj105}) yields
\be\la{1bkj107}\ba
& \frac{d}{dt} \left( \| \sqrt{\n} \dot{u} \|^2_{L^2} + \| \na d_t \|^2_{L^2} \right)
+ 2\mu \left( \| \div \dot{u} \|^2_{L^2} + \| \curl \dot{u}\|^2_{L^2} \right) + \| \na^2 d_t \|^2_{L^2} \\
& \le 10 \varepsilon \Vert \nabla {\dot{u}}\Vert _{L^2}^2
+ C(\ep) \left( 1 + \| \sqrt{\n} \dot{u} \|^2_{L^2} + \| \nabla d_t \|^2_{L^{2}} \right)
\left( 1 + A^2_2 + \| \nabla^{2}d \bar{x}^{\frac{a}{2}} \|^2_{L^{2}} \right).
\ea\ee
Note that the boundary condition (\ref{bkjbjds}) together with (\ref{dc1}) implies
\be\la{1bkj108}\ba
\| \na \dot{u} \|_{L^2} \le C \left( \| \div \dot{u} \|_{L^2} + \| \curl \dot{u} \|_{L^2} \right).
\ea\ee
Combining this with (\ref{1bkj107}) and choosing $\ep$ sufficiently small, we arrive at
\be\la{1bkj109}\ba
& \frac{d}{dt} \left( \| \sqrt{\n} \dot{u} \|^2_{L^2} + \| \na d_t \|^2_{L^2} \right)
+ \mu \left( \| \div \dot{u} \|^2_{L^2} + \| \curl \dot{u}\|^2_{L^2} \right) + \| \na^2 d_t \|^2_{L^2} \\
& \le C \left( 1 + \| \sqrt{\n} \dot{u} \|^2_{L^2} + \| \nabla d_t \|^2_{L^{2}} \right)
\left( 1 + A^2_2 + \| \nabla^{2}d \bar{x}^{\frac{a}{2}} \|^2_{L^{2}} \right).
\ea\ee

Finally, multiplying (\ref{1bkj109}) by $t$, applying Gr\"onwall's inequality and (\ref{1bkj108}), we get (\ref{1bkj010}).
This completes the proof of Lemma \ref{bkj1l10}.
\end{proof}

Based on (\ref{1bkj02}), (\ref{1bkj09}), and (\ref{1bkj010}), and adapting arguments similar to those in Lemmas \ref{cp1l8} and \ref{cp1l9}, we can obtain the following estimates.
\begin{lemma}\la{bkj1l11}
There exists a positive constant $C$ depending only on
$T$, $a$, $\mu$, $\ga$, $\ep$, $E_1$,  $\beta$, $\| \n_0 \|_{W^{1,q}}$, and $A$ such that
\be\ba\la{1bkj011}
\sup_{0\leq t\leq T} t \left( \|\nabla^{2} d\bar{x}^{\frac{a}{2}}\|_{L^{2}}^{2} + \| \na^3 d \|^2_{L^2} \right)
+ \int_{0}^{T} t\|\nabla^{3} d \bar{x}^{\frac{a}{2}}\|_{L^{2}}^{2} dt
\leq C,
\ea\ee
and
\be\la{1bkj0011}\ba
&\sup_{0\le t\le T} \left( \| \n \|_{W^{1,q}} + t \| \na^2 u \|^2_{L^2} \right) \\
& + \int_0^T \left( \| \na^2 u \|^2_{L^2} + \|\nabla^2 u\|^{(q+1)/q}_{L^q}
+ t \|\nabla^2 u\|_{L^q}^2 + t \| \na^4 d \|^2_{L^2} \right) dt\le C.
\ea\ee
\end{lemma}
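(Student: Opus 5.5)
The plan is to adapt the proofs of Lemmas \ref{cp1l8} and \ref{cp1l9} to $\rr_+$, using the density bound (\ref{1bkj09}) and the time-weighted bound (\ref{1bkj010}) as inputs. The only new ingredient is handling the boundary terms created by integration by parts, which will be done with (\ref{bkjbjds}), (\ref{bkjbjds2}) and the flatness of $\p\rr_+$.

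\textbf{Step 1: the weighted bound (\ref{1bkj011}).} Multiply (\ref{1cp21}) by $-\Delta\na d\,\bar{x}^a$ and integrate over $\rr_+$. Integrating by parts, the boundary integrals are of the form
\[
\int_{\p\rr_+}\p_t\p_i d^j\, n_k\p_k\p_i d^j\,\bar{x}^a\,ds .
\]
Since $n=(0,-1)$ is constant and $\p_2 d=0$ on $\p\rr_+$, tangential derivatives of $\p_2 d$ also vanish there. The identity (\ref{h3}) then shows these terms cancel, exactly as in Lemma \ref{bkj1l2}.

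The interior terms are estimated as in (\ref{1cp81}), with these substitutions:
\begin{itemize}
\item (\ref{1bkj001}) and (\ref{1bkj16}) replace (\ref{jqgju});
\item (\ref{1bkj02}), (\ref{1bkj002}) and (\ref{1bkj09}) replace their whole-space counterparts;
\item for $\|\na u\|_{L^4}$, use (\ref{1bkj06}) with $R_T\le C$, giving $\|\na u\|_{L^4}^2\le C(1+A_2)$.
\end{itemize}
The Gagliardo--Nirenberg inequality (\ref{gn11}) holds on $\rr_+$. This yields an analogue of (\ref{1cp82}). Multiplying by $t$ and using (\ref{1bkj02}) and $\int_0^T A_2^2\,dt\le C$ from (\ref{1bkj09}) gives the weighted part of (\ref{1bkj011}).

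For $t\|\na^3 d\|_{L^2}^2$, the elliptic estimate needs care on the half-space. Each $\p_1 d$ still satisfies the Neumann condition, so Lemma \ref{tygj} applies to $\p_1 d$. The normal-derivative part $\p_2^2\p_2 d$ is recovered from the equation via $\p_2^2 = \Delta - \p_1^2$. This reproduces (\ref{1cp84}); combining it with (\ref{1bkj010}) closes (\ref{1bkj011}).

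\textbf{Step 2: the estimate for $\n$ in $W^{1,q}$.} Set $\Phi^i=(2\mu+\lam)\p_i\n$. Equation (\ref{bpg21}) is transport, and $u\cdot n=0$ on the boundary, so (\ref{bpg22}) holds on $\rr_+$ with no boundary contribution.

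The estimates for $\na G$ and $\na\o$ in $L^q$ come from (\ref{1bkj63}). Compared with the whole-space case, there is an extra term $\|\na(Au\cdot n^\bot)\|_{L^q}$. This is bounded by $C(1+\|\na u\|_{L^q})\le C(1+A_2)$, using the smoothness and decay of $A$ together with (\ref{1bkj001}) and (\ref{1bkj06}).

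Next:
\begin{itemize}
\item Lemma \ref{dc} (valid since $u\cdot n=0$) gives the analogue of (\ref{bpg25}).
\item Lemma \ref{bkm}, stated for $u\in\tilde D^1_+$, gives the analogue of (\ref{bpg24}).
\item The bound (\ref{bpg29}) for $\|\n\dot u\|_{L^q}$ comes from (\ref{1bkj95}) with $R_T\le C$ and (\ref{1bkj010}).
\end{itemize}
Gr\"onwall's inequality applied to $\log(e+\|\Phi\|_{L^q})$ then gives $\sup_t\|\na\n\|_{L^q}\le C$. The $L^2$ bound for $\na\n$ follows the same way with $q=2$. The bounds on $\na^2 u$ follow from (\ref{bpg25}) and its $L^2$ version, together with (\ref{1bkj010}).

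\textbf{Step 3: the bound on $t\|\na^4 d\|_{L^2}^2$.} I expect this to be the main obstacle. Testing $\na^2$ of the director equation against $\na^2\Delta d$ on $\rr_+$ produces third-order boundary traces. I would avoid this by using a different route:
\begin{enumerate}
\item Write $\Delta\na d=\na d_t+\na(u\cdot\na d)-\na(|\na d|^2d)$.
\item Apply Lemma \ref{tygj} to $\p_1 d$ and $\p_2 d$, estimating $\p_2\na^2 d$ through the equation. This bounds $\|\na^4 d\|_{L^2}$ by
\[
C\big(\|\na^2 d_t\|_{L^2}+\|\na^2(u\cdot\na d)\|_{L^2}+\|\na^2(|\na d|^2 d)\|_{L^2}\big).
\]
\item Estimate the right-hand side as in (\ref{1cp93}). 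Multiplying by $t$ and integrating gives $\int_0^T t\|\na^4 d\|_{L^2}^2\,dt\le C$ from the bounds above.
\end{enumerate}
The delicate point is justifying the Neumann elliptic estimate at third order. If $\p_2 d$ cannot be handled directly, the fallback is to extend $d$ evenly across $\p\rr_+$ and use whole-space regularity.
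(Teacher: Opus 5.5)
Your Steps 1 and 2 are the adaptation of Lemmas \ref{cp1l8}--\ref{cp1l9} that the paper intends, and your boundary bookkeeping there is sound. The $\p_t$ trace vanishes because $\p_t\p_2 d=0$ and $\p_2\p_1 d=\p_1\p_2 d=0$ on $\p\rr_+$; this is an analogue of (\ref{h3}), not (\ref{h3}) itself. The transport and weighted-elliptic boundary terms vanish for the same reasons, together with $u^2=0$.

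\textbf{Gap in Step 3.} You deliberately avoid integrating by parts, but then say the right-hand side is estimated ``as in (\ref{1cp93})''. That estimate never bounds $u\cdot\nabla\nabla^2 d$ in $L^2$. It integrates $\int u\cdot\nabla\nabla^2 d\cdot\nabla^2\Delta d\,dx$ by parts to obtain $\|\nabla u\|_{L^2}\|\nabla^3 d\|_{L^4}^2$. In your route the term $\||u||\nabla^3 d|\|_{L^2}$ remains. Since $\tilde\rho=0$, you have no $L^\infty$ or unweighted $L^p$ control of $u$, only the weighted bound (\ref{1bkj001}). The step is repairable with the output of your Step 1:
\[
\||u||\nabla^3 d|\|_{L^2}\le \|u\bar{x}^{-a/4}\|_{L^8}\,\|\nabla^3 d\,\bar{x}^{a/2}\|_{L^2}^{1/2}\,\|\nabla^3 d\|_{L^4}^{1/2}.
\]
Then use Gagliardo--Nirenberg on $\|\nabla^3 d\|_{L^4}$, Young's inequality to absorb $\|\nabla^4 d\|_{L^2}$, and finally $\int_0^T t\left(\|\nabla^3 d\,\bar{x}^{a/2}\|_{L^2}^2+\|\nabla^3 d\|_{L^2}^2\right)dt\le C$. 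If you instead keep the paper's integration by parts, a boundary integral of $u^1\,\p_1\p_2^2 d\cdot\p_2^3 d$ survives and has to be estimated by traces.

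\textbf{Two smaller points.}
\begin{enumerate}
\item Your fallback of reflecting $d$ evenly fails at fourth order. Differentiate the director equation in $x_2$ and use $\p_2 d=0$, $u^2=0$ and $\omega=Au^1$ on $\p\rr_+$; this gives $\p_2^3 d=Au^1\p_1 d$ there. Hence the even extension of $d$ is $H^3$ but in general not $H^4$.
\item The bound $\|\nabla^4 d\|_{L^2}\le C\|\nabla^2\Delta d\|_{L^2}$ should come from Lemma \ref{tygj} applied to $\p_1^2 d$, which does satisfy the Neumann condition; $\p_2 d$ satisfies a Dirichlet condition, not Neumann. Then recover $\p_1\p_2^3 d$ and $\p_2^4 d$ from $\p_2^2=\Delta-\p_1^2$. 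With that correction your primary route for the elliptic estimate is fine.
\end{enumerate}
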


\begin{lemma}\la{bkj1l12}
There exists a positive constant $C$ depending only on
$T$, $a$, $\mu$, $\ga$, $\ep$, $E_1$,  $\beta$, $\| {\bar{x}}^a \rho_0 \|_{W^{1,q}}$, and $A$ such that
\be\ba\la{1bkj012}
\sup_{0\leq t\leq T} \| \rho \bar{x}^a \|_{L^1 \cap H^{1}\cap W^{1,q}} \leq C.
\ea\ee
\end{lemma}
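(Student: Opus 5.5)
We estimate the three parts of $\|\rho\bar x^a\|_{L^1\cap H^1\cap W^{1,q}}$ one after another. We take as given the bounds of Lemmas \ref{bkj1l9}--\ref{bkj1l11}: the uniform bound on $\|\rho\|_{L^\infty}$, the bound on $\|\nabla u\|_{L^2}$, the bound $\int_0^T\|\nabla u\|_{L^\infty}dt\le C$, and the weighted inequality (\ref{1bkj001}). The argument follows \cite[Lemma 4.3]{HL3}; the only new point is the boundary of $\rr_+$. Every integration by parts we perform has integrand containing $\rho u\cdot n$ or $u\cdot n$ on $\p\rr_+$, and these vanish by (\ref{bkjbjtj1}). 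So no boundary terms appear, and the half-space computation is the same as in $\rr$.

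\emph{Step 1: the $L^1$ bound.} This is already contained in (\ref{1bkj12}).

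\emph{Step 2: the $L^\infty$ bound on the weighted density.} Write $w=\rho\bar x^a$. From $(\ref{nlckv})_1$,
\[
w_t+u\cdot\na w+w\div u=a\,w\,u\cdot\na\log\bar x .
\]
We have $|\na\log\bar x|\le C\bar x^{-1}$, and by (\ref{1bkj001}) with a Sobolev/BMO argument, $|u|\bar x^{-1}$ is controlled in $L^\infty$ by $C(1+\|\na u\|_{L^2}+\|\na u\|_{L^q})$. Along particle paths this gives
\[
\frac{D}{Dt}w\le C\bigl(\|\na u\|_{L^\infty}+\|\na u\|_{W^{1,q}}+1\bigr)w .
\]
Grönwall's inequality, combined with $\na u\in L^1(0,T;W^{1,q})$ from (\ref{1bkj0011}), then bounds $\|w\|_{L^\infty}$.

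\emph{Step 3: the gradient bound.} Differentiate the equation for $w$ and test with $|\na w|^{r-2}\na w$, for $r=2$ and $r=q$. This yields
\[
\frac{d}{dt}\|\na w\|_{L^r}\le C\bigl(1+\|\na u\|_{L^\infty}+\|u\bar x^{-1}\|_{L^\infty}\bigr)\|\na w\|_{L^r}+C\|w\|_{L^\infty}\bigl(\|\na^2u\|_{L^r}+\|\,|u||\na^2\log\bar x|\,\|_{L^r}\bigr)+\ldots
\]
The source terms are integrable in time by (\ref{1bkj0011}) and (\ref{1bkj001}), and Grönwall's inequality closes the estimate. The transport term $\int u\cdot\na|\na w|^r$ yields $\int\div u|\na w|^r$ plus a boundary integral with factor $u\cdot n=0$, so it gives nothing new.

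\emph{Main obstacle.} The step we expect to be hardest is making the bound on $u\bar x^{-1}$ and $\na(u\bar x^{-1})$ in $L^\infty\cap L^q$ rigorous near the boundary. We plan to use the even extension (\ref{1bkj15}), which preserves the $D^1$ and $W^{1,q}$ norms, and then apply the whole-space version of (\ref{1bkj001}) to get these bounds.
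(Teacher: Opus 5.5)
Your proposal is correct and follows essentially the paper's argument: you use the transport equation for $v=\rho\bar x^a$, an $L^p$-gradient estimate tested against $|\nabla v|^{p-2}\nabla v$ and closed by Gr\"onwall for $p=q$ and $p=2$, together with $\|u\bar x^{-\varepsilon}\|_{L^\infty}\le C+C\|\nabla u\|_{L^q}$ and $\nabla u\in L^1(0,T;W^{1,q})$; the only real difference is that you bound $\|\rho\bar x^a\|_{L^\infty}$ separately along particle paths, whereas the paper absorbs it via $\|v\|_{L^\infty}\le C(1+\|\nabla v\|_{L^q})$ (interpolating with the $L^1$ bound) and runs the $p=q$ Gr\"onwall first. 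One small correction: $|\nabla\log\bar x|$ decays only like $(e+|x|^2)^{-1/2}$, not like $\bar x^{-1}$, so you need $\|u\bar x^{-\varepsilon}\|_{L^\infty}$ with $\varepsilon\in(0,1)$ in place of $\|u\bar x^{-1}\|_{L^\infty}$ --- this is exactly what (\ref{1bkj121}) provides, obtained directly on $\rr_+$ by Gagliardo--Nirenberg, so no extension argument is needed there.
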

\begin{proof}
First, by the Gagliardo-Nirenberg inequality, (\ref{1bkj001}), and (\ref{1bkj09}), we have for any $\ve\in(0,1]$ that
\be\la{1bkj121}\ba
\| u \bar{x}^{-\ep} \|_{L^\infty}
& \le C \| u \bar{x}^{-\ep} \|_{L^{q/\ep}} + C \| \na( u \bar{x}^{-\ep} ) \|_{L^q} \\
& \le C + C \| \na u \|_{L^q} + C \| u \bar{x}^{-1} \|_{L^q} \\
& \le C + C \| \na u \|_{L^q}.
\ea\ee

Set $ v\triangleq\n\bar x^a$.
From $(\ref{nlckv})_1$ we deduce that $v$ satisfies
\bnn\ba
v_t+u\cdot\na v-a vu\cdot\na \log \bar x+v\div u=0,
\ea\enn
which together with (\ref{1bkj121}) and H\"older's inequality shows that for any $p\in [2,q]$,
\be\la{1bkj122}\ba 
(\|\na v\|_{L^p} )_t
& \le C(\|\na u\|_{L^\infty}+\|u\cdot \na \log \bar x\|_{L^\infty}) \|\na v\|_{L^p} \\
&\quad +C\|v\|_{L^\infty}\left( \||\na u||\na\log \bar x|\|_{L^p}+\||  u||\na^2\log \bar x|\|_{L^p}+\| \na^2 u \|_{L^p}\right)\\
& \le C(1 +\|\na u\|_{W^{1,q}})  \|\na v\|_{L^p} \\
& \quad + C \|v\|_{L^\infty} \left( \|\na u\|_{L^p} 
+ \| u \bar x^{-2/5} \|_{L^{4p}} \|\bar x^{-3/2}\|_{L^{4p/3}} + \|\na^2 u\|_{L^p}\right) \\
& \le C(1 +\|\na^2u\|_{L^p}+\|\na u\|_{W^{1,q}})(1+ \|\na v\|_{L^p}+\|\na v\|_{L^q}). \ea\ee
Choosing $p=q$ in (\ref{1bkj122}) and applying (\ref{1bkj0011}) and Gr\"onwall's inequality, we arrive at
\be\la{1bkj123}\ba
\sup\limits_{0\le t\le T}\|\na (\n \bar x^a)\|_{L^q} \le C.
\ea\ee

Furthermore, taking $p=2$ in (\ref{1bkj122}), we obtain after using Gr\"onwall's inequality, (\ref{1bkj0011}), and (\ref{1bkj123}) that
\bnn
\sup\limits_{0\le t\le T}\|\na(\n \bar x^a)\|_{L^2 } \le C.
\enn
Combining this with (\ref{1bkj123}) and (\ref{1bkj01}) gives (\ref{1bkj012}), thus completing the proof of Lemma \ref{bkj1l12}.
\end{proof}

\subsection{The Case of Non-vacuum Far-field Density}

In this subsection, we suppose that the initial data $(\n_0,u_0,d_0)$ satisfy (\ref{cp2sol1}) and $\n_0 >0$.
Let $(\n,u,d)$ be the corresponding strong solution to (\ref{nlckv})--(\ref{i30}), (\ref{bkjbjtj1}), (\ref{bkjbjtj2}) on $\rr_+ \times (0,T]$ with $\tilde{\n}>0$, and let $E_2$ be the quantity defined in (\ref{e2}), which depends only on the initial data.

We now state the standard energy estimate.

\begin{lemma}\la{bkj2l1}
There exists a positive constant $C$ depending only on $T$, $\mu$, $\ga$,
$\| K(\n_0) \|_{L^1}$, $\| \sqrt{\n_0} u_0 \|_{L^2}$, and $\| \na d_0 \|_{L^2}$ such that
\be\la{2bkj01}\ba
& \sup\limits_{0\le t\le T} \int\left( K(\n) + \n |u|^2 + |\na d|^2 \right) dx \\
& + \int_0^T \int\left( \mu |\na u|^2+ \lambda(\n) (\div u)^2 + | \na^2 d |^2 \right) dxdt
\le C,
\ea\ee
and
\be\la{2bkj001}\ba
\| u \|_{H^1} \le C \left( 1 + \| \na u \|_{L^2} \right), \quad t \in [0,T].
\ea\ee
\end{lemma}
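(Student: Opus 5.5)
The plan is to follow the proofs of Lemma \ref{cp2l1} and Lemma \ref{bkj1l1}, checking at each integration by parts that the half-space boundary terms either vanish or carry a good sign.

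\emph{Energy identity.} Multiply $(\ref{nlckv})_1$ by $K'(\n)$ and $(\ref{nlckv})_2$ by $u$. Multiply $(\ref{nlckv})_3$ by $-(\Delta d+|\na d|^2 d)$. Then integrate over $\rr_+$.

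\emph{Momentum boundary terms.} Write the viscous term as $-\mu\Delta u=\mu\na^\bot\o-\mu\na\div u$. Since $u\cdot n=0$, the pressure and $\div u$ boundary terms vanish. The vorticity term produces
\[
\mu\int_{\p\rr_+}\o\, u\cdot n^\bot\, ds=-\mu\int_{\p\rr_+}A|u|^2ds\le 0,
\]
so the dissipation is $\int\big(\mu\o^2+(2\mu+\lam)(\div u)^2\big)dx+\mu\int_{\p\rr_+}A|u|^2ds$.

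\emph{Director terms.} The coupling term $-\int u\cdot\na d\cdot\Delta d$ cancels against the transport term $\int u\cdot\na d\cdot(\Delta d+|\na d|^2d)$, using $d\cdot\na d=0$. The time term $-\int d_t\cdot\Delta d=\frac12\frac{d}{dt}\|\na d\|_{L^2}^2$ has no boundary contribution by the Neumann condition $\p d/\p n=0$, and $d_t\cdot d=0$ handles the remaining part.

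\emph{Far field.} Since $\tilde{\n}>0$, the transport terms combine as $\frac{d}{dt}\int K(\n)\,dx$, using $u\cdot n=0$ and the decay at infinity.

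This gives the analogue of (\ref{bp12}) with $K(\n)$ in place of $P/(\ga-1)$. Because $\lam\ge0$, it controls $\int_0^T\int(\mu\o^2+\mu(\div u)^2)\,dx\,dt$. Lemma \ref{dc} with $u\cdot n=0$ then gives $\int_0^T\|\na u\|_{L^2}^2\,dt\le C$. The term $\lam(\div u)^2$ is already in the dissipation.

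\emph{Second derivatives of $d$.} For $\na^2d$, use Lemma \ref{jzb} on the simply connected $\rr_+$ to write $d=(\cos\theta,\sin\theta)$. As in (\ref{bp19}), this gives $|\Delta d+|\na d|^2d|=|\Delta\theta|$ and $|\na d|=|\na\theta|$. Since $d=(\cos\theta,\sin\theta)$, $\p_n d=(-\sin\theta,\cos\theta)\,\p_n\theta$, so the Neumann condition gives $\p_n\theta=0$ on $\p\rr_+$. Lemma \ref{tygj} then gives $\|\na^2\theta\|_{L^2}\le C\|\Delta\theta\|_{L^2}$. Combine this with (\ref{gn11}) for $\|\na\theta\|_{L^4}^4\le C\|\na\theta\|_{L^2}^2\|\na^2\theta\|_{L^2}^2$ and the bound on $\|\na d\|_{L^2}$, exactly as in (\ref{bp115}). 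This yields $\int_0^T\|\na^2d\|_{L^2}^2\,dt\le C$ and completes (\ref{2bkj01}).

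For (\ref{2bkj001}), the Poincar\'e-type inequality (\ref{2cp15}) is proved exactly as in (\ref{2cp14}). This needs (\ref{2cp12}), (\ref{2cp13}) and (\ref{gn11}), and Lemma \ref{gn1} holds on $\rr_+$. Applying it with $v=u$ and the bound on $\|\sqrt\n u\|_{L^2}$ gives $\|u\|_{H^1}\le C(1+\|\na u\|_{L^2})$.

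The main obstacle is the boundary bookkeeping in the energy identity, namely getting the correct sign for the Navier-slip term. The key identity to verify is $\int(-\mu\Delta u)\cdot u=\mu\|\o\|^2+\mu\|\div u\|^2+\mu\int_{\p}A|u|^2$, using $\Delta u=\na\div u-\na^\bot\o$, $u\cdot n=0$, and $\o=-Au\cdot n^\bot$. I would check this computation first, since an orientation error in $n^\bot$ would flip the sign.
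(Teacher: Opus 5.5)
Your proposal is correct and follows the paper's route: the energy identity adapted from Lemmas \ref{cp1l1} and \ref{cp2l1}, the polar representation of Lemma \ref{jzb} with $\p_n\theta=0$ together with the Neumann estimate of Lemma \ref{tygj} for $\na^2 d$, and the Poincar\'e-type inequality (\ref{2bkj14}) reproved on $\rr_+$, with the boundary bookkeeping that the paper leaves implicit now written out. One sign needs fixing: with the paper's conventions $\na^\bot=(\p_2,-\p_1)$ and $\o=\na^\bot\cdot u$, the decomposition is $\Delta u=\na\div u+\na^\bot\o$ (cf. (\ref{1cp31})), not $\na\div u-\na^\bot\o$; with this, $-\mu\int\na^\bot\o\cdot u\,dx=\mu\|\o\|_{L^2}^2-\mu\int_{\p\rr_+}\o\,(u\cdot n^\bot)\,ds=\mu\|\o\|_{L^2}^2+\mu\int_{\p\rr_+}A|u|^2ds$, so the identity you flagged does hold with the favorable sign.
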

\begin{proof}
First, following the argument of Lemma \ref{cp2l1}, we can obtain (\ref{2bkj01}) and
\be\la{2bkj11}\ba
(\n - \tilde{\n})^2 \le C K(\n), \text{ if } \n < 2 \tilde{\n}, \quad
(\n - \tilde{\n})^\ga \le C K(\n), \text{ if } \n \ge 2 \tilde{\n},
\ea\ee
where the constant $C$ depends on $\ga$ and $\tilde{\n}$.

Combining (\ref{2bkj11}) with (\ref{2bkj01}) yields
\be\la{2bkj12}\ba
\| \n - \tilde{\n} \|_{L^2( \n < 2 \tilde{\n} )}
+ \| \n - \tilde{\n} \|_{L^\ga( \n \ge 2 \tilde{\n} )} \le C.
\ea\ee
Next, for any $v \in H^1(\rr_+)$, from (\ref{gn11}), (\ref{2bkj12}), and H\"older's inequality, we deduce that
\be\la{2bkj13}\ba
\int |v|^2 dx & \le C \int \n |v|^2 dx + C \int |\n - \tilde{\n}| |v|^2 dx \\
& \le C \| \sqrt{\n} v \|^2_{L^2} + C \int_{ \left\{\n < 2 \tilde{\n} \right\} } |\n - \tilde{\n}| |v|^2 dx
+ C \int_{ \left\{\n \ge 2 \tilde{\n} \right\} } |\n - \tilde{\n}| |v|^2 dx \\
& \le C \| \sqrt{\n} v \|^2_{L^2} + C \| \n - \tilde{\n} \|_{L^2( \n < 2 \tilde{\n} )} \| v \|^2_{L^4}
+ C \| \n - \tilde{\n} \|_{L^\ga( \n \ge 2 \tilde{\n} )} \| v \|^2_{L^\frac{2\ga}{\ga-1}} \\
& \le C \| \sqrt{\n} v \|^2_{L^2} + C \| v \|_{L^2} \| \na v \|_{L^2}
+ C \| v \|^{\frac{2(\ga-1)}{\ga}}_{L^2} \| \na v \|^{\frac{2}{\ga}}_{L^2} \\
& \le \frac{1}{2} \| v \|^2_{L^2} + C \| \sqrt{\n} v \|^2_{L^2} + C \| \na v \|^2_{L^2},
\ea\ee
which shows
\be\la{2bkj14}\ba
\| v \|_{L^2} \le C \| \sqrt{\n} v \|_{L^2} + C \| \na v \|_{L^2}.
\ea\ee
Taking $v=u$ in (\ref{2bkj14}) and using (\ref{2bkj01}), we obtain (\ref{2bkj001}) and finish the proof of Lemma \ref{bkj2l1}.
\end{proof}

Adapting the approach of Lemma \ref{bkj1l2} and applying (\ref{2bkj01}) and (\ref{2bkj001}), we derive the following estimates.

\begin{lemma}\la{bkj2l2}
For any $2<p<\infty$, there exists a positive constant $C$ depending only on 
$p$, $T$, $\mu$, $\ga$,
$\| K(\n_0) \|_{L^1}$, $\| \sqrt{\n_0} u_0 \|_{L^2}$, and $\| \na d_0 \|_{H^1}$ such that
\be\la{2bkj02}\ba
\sup_{0 \le t \le T} \| \na d \|_{L^p} \le C.
\ea\ee
\end{lemma}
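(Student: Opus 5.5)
The bound follows the same route as Lemma \ref{bkj1l2} (and Lemma \ref{cp2l2} for the whole space). The one difference is in controlling $\|\na u\|_{L^2}$ and the lower-order terms. Here we use the energy bound (\ref{2bkj01}) and the Poincar\'e-type inequality (\ref{2bkj001}) in place of the weighted estimate (\ref{1bkj001}).

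First, apply $\na$ to $(\ref{nlckv})_3$ to get (\ref{1cp21}) on $\rr_+$. Multiply by $p|\na d|^{p-2}\na d$ and integrate over $\rr_+$. The convection term is handled by writing $-\int |\na d|^{p-2}\p_i d^j u^k \p_k\p_i d^j\,dx = -\frac1p\int u\cdot\na |\na d|^p\,dx$. Integrating by parts, the boundary contribution carries $u\cdot n=0$ and vanishes, leaving $\frac1p\int \div u\,|\na d|^p\,dx$. The term from $|\na d|^2 d$ is treated exactly as in (\ref{bp22}), using $|d|=1$, so that $d\cdot\p_i d=0$. This gives (\ref{h1}) with a bound by $C\|\na d\|^{p+2}_{L^{p+2}}+C\||\na d|^{p/2}\|^2_{L^4}\|\na u\|_{L^2}$.

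Next, for the diffusion term we integrate by parts as in (\ref{h2}). The boundary integral $\int_{\p\rr_+}|\na d|^{p-2}\p_i d^j n_k\p_k\p_i d^j\,ds$ vanishes by (\ref{h3}), which rests on (\ref{bkjbjds2}): the boundary is flat, $n$ is constant, and $\frac{\p d}{\p n}=0$. This yields the coercive quantity $p\int|\na d|^{p-2}|\na^2 d|^2dx+\frac{4(p-2)}{p}\int|\na(|\na d|^{p/2})|^2dx$.

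The nonlinear terms are then absorbed as in (\ref{h4}), using the Gagliardo--Nirenberg inequality (\ref{gn11}), which holds on $\rr_+$. The term $\|\na d\|^{p+2}_{L^{p+2}}=\||\na d|^{p/2}\|^{2(p+2)/p}_{L^{2(p+2)/p}}$ is interpolated between $\||\na d|^{p/2}\|_{L^2}$ and $\|\na|\na d|^{p/2}\|_{L^2}$, at the cost of a factor of $\|\na d\|_{L^2}$ and $\|\na^2 d\|_{L^2}$. The $\|\na d\|_{L^2}$ factor is bounded by (\ref{2bkj01}); the $\|\na^2 d\|_{L^2}$ factor is arranged to appear squared, giving $C\|\na d\|^p_{L^p}\|\na^2 d\|_{L^2}^2$. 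The $L^4$ term is interpolated and absorbed by Young's inequality. The result is
\[
\frac{d}{dt}\|\na d\|^p_{L^p}\le C\|\na d\|^p_{L^p}\left(\|\na^2 d\|^2_{L^2}+\|\na u\|^2_{L^2}\right).
\]

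Finally, (\ref{2bkj01}) gives $\int_0^T(\|\na u\|^2_{L^2}+\|\na^2d\|^2_{L^2})dt\le C$. The initial value $\|\na d_0\|_{L^p}$ is controlled by $\|\na d_0\|_{H^1}$ via (\ref{gn11}). Gr\"onwall's inequality then gives (\ref{2bkj02}).

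The only point needing care is the boundary terms. Everything else is unchanged from the vacuum case, since no weighted estimate for $u$ is used. The needed integrability of $\na^2 d$ in $L^2(\rr_+\times(0,T))$ is already contained in (\ref{2bkj01}), which came from the polar representation of Lemma \ref{jzb} combined with Lemma \ref{tygj}.
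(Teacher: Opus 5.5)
Your proposal is correct and follows the paper's argument: the paper proves this lemma by repeating the proof of Lemma \ref{bkj1l2}, i.e., testing the differentiated director equation with $p|\na d|^{p-2}\na d$, removing the boundary terms via $u\cdot n=0$ and (\ref{h3}), and closing with Gr\"onwall's inequality using (\ref{2bkj01}). As you note yourself, (\ref{2bkj001}) is not actually needed, since the argument only uses $\sup_t\|\na d\|_{L^2}$ and $\int_0^T(\|\na u\|_{L^2}^2+\|\na^2 d\|_{L^2}^2)\,dt$.
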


\begin{lemma}\la{bkj2l3}
There exists a positive constant $C$ depending only on $T$, $\alpha$, $\mu$, $\ga$, $\beta$, $E_2$, and $A$ such that
\be\la{2bkj03}\ba
\sup_{0\le t\le T} \left( \| \tilde{x}^{2 \alpha} K(\rho) \|_{L^1}
+ \| \tilde{x}^{\alpha} \sqrt{\n} u \|_{L^2} + \| \tilde{x}^\alpha \na d \|_{L^2} \right) \le C.
\ea\ee
Moreover, for any $2 \le p <\infty$, there exists a positive constant $C$ depending only on $p$, $T$, $\alpha$, $\mu$, $\ga$, $\beta$, $E_2$, and $A$ such that
\be\la{2bkj003}\ba
\sup_{0\le t\le T} \| \n - \tilde{\n} \|_{L^p} \le C.
\ea\ee
\end{lemma}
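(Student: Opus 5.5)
The plan is to mirror Lemma \ref{cp2l3}, with the whole-space objects replaced by their half-space counterparts. The Riesz-commutator decomposition \eqref{1cp33} is replaced by the Neumann decomposition of Lemma \ref{bkj1l3}. Writing $f\triangleq\max\{\theta_2(\n)+\tilde H_1,0\}$ with $\theta_2$ from \eqref{2cp33}, equations $(\ref{nlckv})_1$, \eqref{gw} and \eqref{bkjevf02} give
\be\nonumber\ba
\frac{D}{Dt}\left(\theta_2(\n)+\tilde H_1\right)+P-P(\tilde\n)=u\cdot\na\tilde H_1-H_2-H_3-H_4 .
\ea\ee

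\emph{Step 1 (density equation).} We multiply this identity by $2m\n f^{2m-1}$ and integrate over $\rr_+$; no boundary terms appear, since $u\cdot n=0$. The right-hand side is then estimated as in \eqref{2cp37}. For the terms involving the velocity we cannot use weights, because the far field is non-vacuum. Instead we use \eqref{2bkj001} and Gagliardo--Nirenberg, $\|u\|_{L^r}\le C(1+\|\na u\|_{L^2})$, together with $\n\le \tilde\n+|\n-\tilde\n|$. This yields
\[
\|\n u\|_{L^r}+\|\n|u|^2\|_{L^r}\le C(1+\|\n-\tilde\n\|_{L^{k}})^{2}(1+\|\na u\|_{L^2}^2),
\]
with $k$ large. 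The remaining pieces are handled as follows. $H_3$ is bounded by Lemma \ref{bkj2l2}. $H_4$ is bounded by \eqref{bkjevf19}, with $\|\na A\,\bar x\|_{L^2}\|\bar x^{-1}u\|_{L^r}$ replaced by $\|\na A\|_{L^{s_2}}\|u\|_{L^{r'}}$ and $\|A\|_{L^r}$, which is available from \eqref{bkja2} through interpolation of $A\in L^{s_1}$ and the trace-type bound on $\na A$ (the extension by the cutoff in $x_2$ preserves integrability). The term $u\cdot\na\tilde H_1$ is bounded using $\|\na\tilde H_1\|_{L^p}\le C\|\n u\|_{L^p}$. Altogether this gives
\be\nonumber\ba
\frac{d}{dt}\int\n f^{2m}dx\le C\left(\|\n f^{2m}\|_{L^1}+\|\n-\tilde\n\|^{N}_{L^{2\beta m+1}}+1\right)\left(1+\|\na u\|_{L^2}^2+\|\na^2 d\|_{L^2}^2\right)
\ea\ee
for some power $N$. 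We have to check that $N$ can be absorbed, exactly as in \eqref{2cp37}; if it cannot, we first close the case $m=2\ga$ and then bootstrap in $m$.

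\emph{Step 2 (weighted energy).} We repeat \eqref{2cp38}--\eqref{2cp315} for $H(t)$ on $\rr_+$. The boundary terms vanish because $\p_n\tilde x=0$ on $x_2=0$ and $u\cdot n=\p_n d=0$ there. The viscous boundary term $\mu\int_{\p\rr_+}A|u|^2\tilde x^{2\alpha}ds$ carries a good sign, so it can be dropped.

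\emph{Step 3 (closing, the main obstacle).} The crux is the analogue of \eqref{2cp318}--\eqref{2cp321}, namely controlling $\int_{\{\n\ge2\tilde\n\}}|\n-\tilde\n||\tilde H_1|^{2m}dx$. Here we split $\n u=\sqrt\n(\sqrt\n-\sqrt{\tilde\n})u+\sqrt{\tilde\n}\sqrt\n u$ and write $\tilde H_1=\tilde H_{11}+\tilde H_{12}$ correspondingly, solving each by the Neumann problem \eqref{bkjevf7}. We then use \eqref{bkjevf04}, i.e. $\|\tilde H_{1i}\|_{L^r}\le C\|\cdot\|_{L^{2r/(2+r)}}$, which is proved by even reflection. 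For $\tilde H_{11}$ this gives $\|\sqrt\n u\|_{L^2}\|\sqrt\n-\sqrt{\tilde\n}\|_{L^{\cdot}}$, as in \eqref{2cp319}. For $\tilde H_{12}$ we interpolate $\|\sqrt\n u\|_{L^2}^{1-p}\|\tilde x^\alpha\sqrt\n u\|_{L^2}^p\|\tilde x^{-\alpha p}\|$, as in \eqref{2cp320}. Choosing $\epsilon$ small gives $\|\n-\tilde\n\|^{2\beta m+1}_{L^{2\beta m+1}}\le C\|\n f^{2m}\|_{L^1}+C\|\tilde x^\alpha\sqrt\n u\|^2_{L^2}+C$. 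Adding Steps 1 and 2 and applying Gr\"onwall's inequality with \eqref{2bkj01} then yields \eqref{2bkj03} together with an $L^{2\beta m+1}$ bound for $\n-\tilde\n$.

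To reach every $p<\infty$ in \eqref{2bkj003}, we take $m$ arbitrarily large. The lower range of $p$ follows by interpolation with \eqref{2bkj12}, since $|\n-\tilde\n|\le C$ on $\{\n<2\tilde\n\}$.

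I expect the delicate point to be the bookkeeping of exponents when $m$ is large. The $\tilde H_{12}$ estimate requires $s=1/(\alpha m p)>2$ with $p=2\beta/(2\beta m+1)$, so that $2m s>2/\alpha$ keeps $\tilde x^{-\alpha p}$ integrable. If this fails for general $m$, I would first run the argument with $m=2\ga$ to get \eqref{2bkj03}, and then obtain higher $L^p$ norms from a second Gr\"onwall argument in which $\|\tilde x^\alpha\sqrt\n u\|_{L^2}$ is already known to be bounded.
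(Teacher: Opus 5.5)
Your plan is the paper's proof. It uses the same Neumann decomposition from Lemma~\ref{bkj1l3}, the same transport identity \eqref{2bkj32} for $\theta_4(\rho)+\tilde H_1$ (your $\theta_2$), the same weighted energy $D(t)$, and the same splitting $\tilde H_1=\tilde H_{11}+\tilde H_{12}$ leading to \eqref{2bkj324} and then Gr\"onwall. The only structural difference is that the paper works with $f^{4m}$, $m\ge\gamma$, where you use $f^{2m}$.

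The two exponent checks you left open do close directly for every large $m$, so no bootstrap in $m$ is needed. They do not close, however, with the bounds exactly as you wrote them.

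First, the velocity terms must be estimated linearly in $\|\rho-\tilde\rho\|_{L^{2\beta m+1}}$. Split only the single factor $\rho=\tilde\rho+(\rho-\tilde\rho)$, as in \eqref{2bkj35}--\eqref{2bkj36}. After Young's inequality this gives the power $2m+1\le 2\beta m+1$, which is absorbed because $\beta\ge 1$. Your quadratic bound would instead give $4m+1$, and the argument would then close only for $\beta\ge 2$.

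Second, in the $\tilde H_{12}$ term, H\"older forces $\tilde x^{-\alpha p}$ into the same Lebesgue space as $\tilde H_{12}$. The integrability condition is therefore $2ms\,\alpha p>2$, not $2ms>2/\alpha$. With your choice $s=1/(\alpha m p)$ this becomes an equality, and $\tilde x^{-\alpha p}\notin L^{2ms}(\mathbb{R}^2_+)$ (logarithmic divergence). There are two easy fixes:
\begin{itemize}
\item Take $s$ larger. Then $\|\rho-\tilde\rho\|_{L^{s/(s-1)}(\{\rho\ge 2\tilde\rho\})}$ is still bounded by $C(1+\|\rho-\tilde\rho\|_{L^{2\beta m+1}})$, using $\|\rho-\tilde\rho\|_{L^1(\{\rho\ge2\tilde\rho\})}\le C\|\rho-\tilde\rho\|^{\gamma}_{L^{\gamma}(\{\rho\ge2\tilde\rho\})}$. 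The power $2mp$ of $\|\tilde x^{\alpha}\sqrt{\rho}u\|_{L^2}$ does not depend on $s$, and with $p=2\beta/(2\beta m+1)$ it becomes exactly $2$ after Young.
\item Use $f^{4m}$ as the paper does. There the same formula for $s$ gives $4ms\,\alpha p=4$.
\end{itemize}

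Finally, your handling of $H_4$ directly from \eqref{bkja2} is more explicit than the paper's, which simply reuses \eqref{bkjevf04} from the vacuum case. Note that the H\"older step you describe requires $r\ge\max\{s_1,s_2\}$ with $s_1,s_2<\infty$. This is harmless here, since only the large exponents $2m$ and $(2\beta m+1)/\beta$ occur.
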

\begin{proof}
Let $G$ be the effective viscous flux defined in (\ref{gw}), and let $\tilde{H}_1$, $H_2$, $H_3$, and $H_4$ be as in Lemma \ref{bkj1l3}.
Arguing as in Lemma \ref{bkj1l3}, we have the decomposition
\be\la{2bkj31}\ba
G = \frac{\p}{\p t}\tilde{H}_1+H_2+H_3+H_4,
\ea\ee
and $\tilde{H}_1$, $H_2$, $H_3$, and $H_4$ satisfy (\ref{bkjevf03}) and (\ref{bkjevf04}).

Furthermore, we deduce from $(\ref{nlckv})_1$, (\ref{bkjevf17}), (\ref{bkjevf02}) that
\be\la{2bkj32}\ba
\frac{D}{Dt} \left( \theta_4(\n) + \tilde{H}_1 \right) + P - P(\tilde{\n}) = u \cdot \na \tilde{H}_1 - H_2 - H_3 - H_4,
\ea\ee
where
\be\la{2bkj33}\ba
\theta_4(\n) \triangleq 2\mu (\log \n - \log \tilde{\n}) + \beta^{-1}(\n^\beta-\tilde{\n}^\beta).
\ea\ee

Defining $f \triangleq \max\{ \theta_4(\n) + \tilde{H}_1,0 \}$, multiplying (\ref{2bkj32}) by $4 m \n f^{4m-1}$ with $m \ge \ga$, and integrating over $\rr_+$, we derive after using (\ref{2bkj12}) that
\be\la{2bkj34}\ba
& \frac{d}{dt} \int \n f^{4m} dx + 4 m \int \n f^{4m-1} |P - P(\tilde{\n})| dx \\
& \le C \int_{ \{ \n < 2\tilde{\n} \} } \n f^{4m-1} |P - P(\tilde{\n})| dx
+ C \int \n f^{4m-1} ( |u \cdot \na \tilde{H}_1| + |H_2| + |H_3| + |H_4|) dx \\
& \le C \int_{ \{ \n < 2\tilde{\n} \} } \n^{1-1/(4m)} f^{4m-1} |\n - \tilde{\n}|^{1/(2m)} dx \\
& \quad + C \int (1 + |\n-\tilde{\n}|^{1/(4m)} ) \n^{1-1/(4m)} f^{4m-1} ( |u \cdot \na \tilde{H}_1| + |H_2| + |H_3| + |H_4|) dx \\
& \le C \| \n f^{4m} \|^{1-1/(4m)}_{L^1} \| \n - \tilde{\n} \|^{1/(2m)}_{L^2(\n<2\tilde{\n})}
+ C \| \n f^{4m} \|^{1-1/(4m)}_{L^1} \left( \| u \cdot \na \tilde{H}_1 \|_{L^{4m}} + \| H_2 \|_{L^{4m}} \right) \\
& \quad + C \| \n f^{4m} \|^{1-1/(4m)}_{L^1} \left( \| H_3 \|_{L^{4m}} + \| H_4 \|_{L^{4m}} \right)
+ C \| \n f^{4m} \|^{1-1/(4m)}_{L^1} \| \n -\tilde{\n} \|^{1/(4m)}_{L^{4\beta m+1}} \| u \cdot \na \tilde{H}_1 \|_{L^{(4\beta m+1)/\beta}} \\
& \quad + C \| \n f^{4m} \|^{1-1/(4m)}_{L^1} \| \n -\tilde{\n} \|^{1/(4m)}_{L^{4\beta m+1}}
\left( \| H_2 \|_{L^{(4\beta m+1)/\beta}}
+ \| H_3 \|_{L^{(4\beta m+1)/\beta}} + \| H_4 \|_{L^{(4\beta m+1)/\beta}} \right).
\ea\ee

From (\ref{bkjevf03}), (\ref{bkjevf04}), (\ref{2bkj01}), (\ref{2bkj001}), and Young's inequality, we obtain for any $2<s<4\beta m+1$,
\be\la{2bkj35}\ba
\| u \cdot \na \tilde{H}_1 \|_{L^s}
& \le \| \na \tilde{H}_1 \|_{L^{(4\beta m+1+s)/2}}
\| u \|_{L^{s(4\beta m+1+s)/(4\beta m+1-s)}} \\
& \le C \| \n u \|_{L^{(4\beta m+1+s)/2}} \| u \|_{H^1} \\
& \le C \| (\n-\tilde{\n}) u \|_{L^{(4\beta m+1+s)/2}} \| u \|_{H^1}
+ C \| u \|_{L^{(4\beta m+1+s)/2}} \| u \|_{H^1} \\
& \le C \| \n -\tilde{\n} \|_{L^{4 \beta m+1}}
\| u \|_{L^{(4\beta m+1+s)(4\beta m+1)/(4\beta m+1-s)}} \| u \|_{H^1} + C \| u \|^2_{H^1} \\
& \le C \left( 1 + \| \n -\tilde{\n} \|_{L^{4 \beta m+1}} \right) \left( 1 + \| \na u \|^2_{L^2} \right),
\ea\ee
\be\la{2bkj36}\ba
\| H_2 \|_{L^s} \le C \| \n |u|^2 \|_{L^s}
& \le C \| (\n-\tilde{\n}) |u|^2 \|_{L^s} + C \| |u|^2 \|_{L^s} \\
& \le C \| \n-\tilde{\n} \|_{L^{4\beta m+1}}
\| |u|^2 \|_{L^{s(4\beta m+1)/(4\beta m+1-s)}} + C \| u \|^2_{H^1} \\
& \le C \left( 1 + \| \n -\tilde{\n} \|_{L^{4 \beta m+1}} \right) \left( 1 + \| \na u \|^2_{L^2} \right),
\ea\ee
and
\be\la{2bkj37}\ba
\| H_3 \|_{L^s} + \| H_4 \|_{L^s} \le C ( 1 + \| \na u \|^2_{L^2} + \| \na^2 d \|^2_{L^2} ).
\ea\ee
Combining (\ref{2bkj34}), (\ref{2bkj35}), (\ref{2bkj36}), and (\ref{2bkj37}) yields
\be\la{2bkj38}\ba
& \frac{d}{dt} \int \n f^{4m} dx + 4 m \int \n f^{4m-1} |P - P(\tilde{\n})| dx \\
& \le C \left( \| \n f^{4m} \|_{L^1} + \| \n -\tilde{\n} \|^{4m+1}_{L^{4 \beta m+1}} + 1 \right) \left( 1 + \| \na u \|^2_{L^2} + \| \na^2 d \|^2_{L^2} \right).
\ea\ee

Next, we multiply $(\ref{nlckv})_1$, $(\ref{nlckv})_2$, and $(\ref{nlckv})_3$ by
$\tilde{x}^{2\alpha} K'(\n)$, $\tilde{x}^{2\alpha} u$,
$-\tilde{x}^{2\alpha}(\Delta d + |\na d|^2 d)$, respectively,
sum the results, and integrate by parts to derive
\be\la{2bkj39}\ba
& \frac{d}{dt} D(t) + \int \left( \mu \o^2 + (2\mu+\lam)(\div u)^2 \right) \tilde{x}^{2\alpha} dx \\
& \le C \int \n |u|^3 |\na \tilde{x}^{2\alpha}| dx
+ C \int |\o| |u| |\na \tilde{x}^{2\alpha}| dx
+ C \int (2 \mu + \lam) |\div u| |u| |\na \tilde{x}^{2\alpha}| dx \\
& \quad + C \int \left( |u| |\na d| + |\na^2 d| + |\na d|^2 \right) |\na d| |\na \tilde{x}^{2\alpha}| dx \\
& \quad + C \int |K(\n) + P - P(\tilde{\n})| |u| |\na \tilde{x}^{2\alpha}| dx \triangleq \sum_{i=1}^{5} I_i,
\ea\ee
where
\be\la{2bkj310}\ba
D(t) \triangleq \frac{1}{2} \int \left( \n |u|^2 + |\na d|^2 + 2 K(\n) \right) \tilde{x}^{2\alpha} dx.
\ea\ee
Now, we will use (\ref{2bkj01}), (\ref{2bkj001}), and Young's inequality to estimate each term on the right-hand side of (\ref{2bkj39}) as follows:
\be\la{2bkj311}\ba
|I_1| & \le C \int \n^{1/2} |u| \tilde{x}^{\alpha} \left( |u|^2 + |\n -\tilde{\n}|^{1/2} |u|^2 \right) dx \\
& \le C \| \sqrt{\n} u \tilde{x}^{\alpha} \|_{L^2}
\left( \| u \|^2_{L^4} + \|\n -\tilde{\n}\|^{1/2}_{L^{4\beta m+1}} \| u \|^2_{L^{(4\beta m+1)/(\beta m)}} \right) \\
& \le C \left( \| \sqrt{\n} u \tilde{x}^{\alpha} \|^2_{L^2}
+ \|\n -\tilde{\n}\|_{L^{4\beta m+1}} + 1 \right) \left( \| \na u \|^2_{L^2} + 1 \right),
\ea\ee
\be\la{2bkj312}\ba
I_2 \le \| \o \tilde{x}^{\alpha} \|_{L^2} \| u \|_{L^2}
\le \frac{\mu}{4} \| \o \tilde{x}^{\alpha} \|^2_{L^2} + C \left( \| \na u \|^2_{L^2} + 1 \right),
\ea\ee
\be\la{2bkj313}\ba
I_3 & \le \frac{1}{4} \int (2\mu+\lam)(\div u)^2 \tilde{x}^{2\alpha} dx
+ C \int \left( |\n - \tilde{\n}|^\beta + 1 \right) |u|^2 dx \\
& \le \frac{1}{4} \int (2\mu+\lam)(\div u)^2 \tilde{x}^{2\alpha} dx + C \| u \|^2_{L^2} + C \|\n -\tilde{\n}\|^\beta_{L^{4\beta m+1}}
\| u \|^2_{L^{ 2(4\beta m+1)/(4\beta m+1-\beta) }} \\
& \le \frac{1}{4} \int (2\mu+\lam)(\div u)^2 \tilde{x}^{2\alpha} dx
+ C \left( \|\n -\tilde{\n}\|^\beta_{L^{4\beta m+1}} + 1 \right) \left( \| \na u \|^2_{L^2} + 1 \right),
\ea\ee
\be\la{2bkj314}\ba
I_4 & \le C \| \na d \tilde{x}^{\alpha} \|_{L^2}
\left( \| u \|_{L^4} \| \na d \|_{L^4} + \| \na^2 d \|_{L^2} + \| \na d \|^2_{L^4} \right) \\
& \le C \left( \| \na d \tilde{x}^{\alpha} \|^2_{L^2} + 1 \right)
\left( \| \na u \|^2_{L^2} + \| \na^2 d \|^2_{L^2} + 1 \right),
\ea\ee
and
\be\la{2bkj315}\ba
I_5 & \le C \int_{ \{\n < 2\tilde{\n}\} } |\n - \tilde{\n}| |u| \tilde{x}^{\alpha} dx
+ C \int_{ \{\n \ge 2\tilde{\n}\} } |\n - \tilde{\n}|^{\ga} |u| \tilde{x}^{\alpha} dx \\
& \le C \int_{ \{\n < 2\tilde{\n}\} } K(\n)^{1/2} |u| \tilde{x}^{\alpha} dx
+ C \int_{ \{\n \ge 2\tilde{\n}\} } K(\n)^{1/2} |\n - \tilde{\n}|^{m/2} |u| \tilde{x}^{\alpha} dx \\
& \le C \| K(\n) \tilde{x}^{2\alpha} \|^{1/2}_{L^1}
\left( \| u \|_{L^2} + C \|\n -\tilde{\n}\|^{m/2}_{L^{4\beta m+1}}
\| u \|_{ L^{2(4\beta m+1)/(4\beta m +1-m)} } \right) \\
& \le C \left( \| K(\n) \tilde{x}^{2\alpha} \|_{L^1} + \|\n -\tilde{\n}\|^{m}_{L^{4\beta m+1}} \right)
\left( \| \na u \|^2_{L^2} + 1 \right).
\ea\ee
Substituting (\ref{2bkj311})--(\ref{2bkj315}) into (\ref{2bkj39}) leads to
\be\la{2bkj316}\ba
& \frac{d}{dt} D(t) + \frac{1}{2} \int \left( \mu \o^2 + (2\mu+\lam)(\div u)^2 \right) \tilde{x}^{2\alpha} dx \\
& \le C \left( D(t) + \|\n -\tilde{\n} \|^{4\beta m+1}_{L^{4\beta m+1}} + 1 \right)
\left( \| \na u \|^2_{L^2} + \| \na^2 d \|^2_{L^2} + 1 \right).
\ea\ee
Adding (\ref{2bkj316}) to (\ref{2bkj38}) yields
\be\la{2bkj317}\ba
& \frac{d}{dt} \left( \| \n f^{4m} \|_{L^1} + D(t) \right) + \frac{1}{2} \int \left( \mu \o^2 + (2\mu+\lam)(\div u)^2 \right) \tilde{x}^{2\alpha} dx \\
& \le C \left( \| \n f^{4m} \|_{L^1} + D(t) + \|\n -\tilde{\n} \|^{4\beta m+1}_{L^{4\beta m+1}} + 1 \right)
\left( \| \na u \|^2_{L^2} + \| \na^2 d \|^2_{L^2} + 1 \right).
\ea\ee

To estimate $\|\n -\tilde{\n} \|_{L^{4\beta m+1}}$,
decompose $\tilde{H}_1$ as $\tilde{H}_1 = \tilde{H}_{11} + \tilde{H}_{12}$, where $\tilde{H}_{11}$ and $\tilde{H}_{12}$ solve
\be\la{2bkj318}\ba
\begin{cases}
\Delta{\tilde{H}_{11}} = \div (\sqrt{\n} u (\sqrt{\n}-\sqrt{\tilde{\n}}) ) \ \   & \text{ in } \rr_+, \\
\frac{\p \tilde{H}_{11}}{\p n}=0 \ \  & \text{ on } \p \rr_+, \\
\tilde{H}_{11} \to 0 & \text{ as } \ |x| \to \infty,
\end{cases}
\ea\ee
and
\be\la{2bkj319}\ba
\begin{cases}
\Delta{\tilde{H}_{12}} = \sqrt{\tilde{\n}} \div (\sqrt{\n} u) \ \   & \text{ in } \rr_+, \\
\frac{\p \tilde{H}_{12}}{\p n}=0 \ \  & \text{ on } \p \rr_+, \\
\tilde{H}_{12} \to 0 & \text{ as } \ |x| \to \infty.
\end{cases}
\ea\ee
Moreover, (\ref{bkjevf1}) and (\ref{bkjevf6}) imply that for any $2<r<\infty$,
\be\la{2bkj320}\ba
\| \tilde{H}_{11} \|_{L^r} \le C \| \sqrt{\n} u (\sqrt{\n}-\sqrt{\tilde{\n}}) \|_{L^{\frac{2r}{2+r}}}, \quad
\| \tilde{H}_{12} \|_{L^r} \le C \| \sqrt{\n} u \|_{L^{\frac{2r}{2+r}}}.
\ea\ee
By virtue of (\ref{2bkj12}) and the definition of $f$, we have
\be\la{2bkj321}\ba
\int |\n -\tilde{\n}|^{4\beta m+1} dx
& \le C \int_{ \{\n<2\tilde{\n} \} } |\n - \tilde{\n}|^2 dx
+ C \int_{ \{\n \ge 2\tilde{\n} \} } |\n - \tilde{\n}|^{4\beta m+1} dx \\
& \le C + C \int_{ \{\n \ge 2\tilde{\n} \} } |\n - \tilde{\n}|
\left( f^{4m} + |\tilde{H}_1|^{4m} \right) dx \\
& \le C + C \| \n f^{4m} \|_{L^1}
+ C \int_{ \{\n \ge 2\tilde{\n} \} } |\n - \tilde{\n}| \left( |\tilde{H}_{11}|^{4m} + |\tilde{H}_{12}|^{4m}\right) dx.
\ea\ee
In view of (\ref{2bkj01}), (\ref{2bkj12}), and (\ref{2bkj320}), it holds that
\be\la{2bkj322}\ba
\int_{ \{\n \ge 2\tilde{\n} \} } |\n - \tilde{\n}| |\tilde{H}_{11}|^{4m} dx
& \le C \| \n -\tilde{\n} \|_{L^{4\beta m+1}} \| \tilde{H}_{11} \|^{4m}_{L^{(4\beta m+1)/\beta}} \\
& \le C \| \n -\tilde{\n} \|_{L^{4\beta m+1}} \| \sqrt{\n} u (\sqrt{\n}-\sqrt{\tilde{\n}}) \|^{4m}_{L^{2(4\beta m+1)/(4\beta m+1+2\beta)}} \\
& \le C \| \n -\tilde{\n} \|_{L^{4\beta m+1}} \| \sqrt{\n} u \|^{4m}_{L^2}
\| \sqrt{\n} - \sqrt{\tilde{\n}} \|^{4m}_{L^{(4\beta m+1)/\beta}} \\
& \le C \| \n -\tilde{\n} \|_{L^{4\beta m+1}}
\| \n - \tilde{\n} \|^{2m}_{L^{(4\beta m+1)/(2\beta)}} \\
& \le C \| \n -\tilde{\n} \|_{L^{4\beta m+1}}
\left( \| \n - \tilde{\n} \|^{2m}_{L^2(\n<2\tilde{\n})}
+ \| \n - \tilde{\n} \|^{2m}_{L^\ga(\n \ge 2\tilde{\n})}
+ \| \n - \tilde{\n} \|^{2m}_{L^{4\beta m+1}} \right) \\
& \le C \| \n -\tilde{\n} \|_{L^{4\beta m+1}}
\left( 1 + \| \n - \tilde{\n} \|^{2m}_{L^{4\beta m+1}} \right) \\
& \le \ep \| \n - \tilde{\n} \|^{4\beta m+1}_{L^{4\beta m+1}} + C(\ep).
\ea\ee
For $p = \frac{2\beta}{4\beta m+1} \in (0,1)$ and $s = \frac{1}{\alpha m p} \in (2,\infty)$,
we use (\ref{2bkj01}), (\ref{2bkj12}), and Young's inequality to derive
\be\la{2bkj323}\ba
& \int_{ \{\n \ge 2\tilde{\n} \} } |\n - \tilde{\n}| |\tilde{H}_{12}|^{4m} dx \\
& \le C \| \n -\tilde{\n} \|_{L^{\frac{s}{s-1}} (\n \ge 2\tilde{\n}) } \| \tilde{H}_{12} \|^{4m}_{L^{4m s}} \\
& \le C \left( \| \n -\tilde{\n} \|_{L^{4\beta m+1} (\n \ge 2\tilde{\n})}
+ \| \n -\tilde{\n} \|_{L^1 (\n \ge 2\tilde{\n})} \right)
\left( \| \sqrt{\n} u \|^{1-p}_{L^2} \| \sqrt{\n} u \tilde{x}^{\alpha} \|^{p}_{L^2}
\| \tilde{x}^{-\alpha p} \|_{L^{4m s}} \right)^{4m} \\
& \le C \left( \| \n -\tilde{\n} \|_{L^{4\beta m+1}}
+ \| \n -\tilde{\n} \|^\ga_{L^\ga (\n \ge 2\tilde{\n})} \right)
\| \sqrt{\n} u \tilde{x}^{\alpha} \|^{4m p}_{L^2} \\
& \le C \left( \| \n -\tilde{\n} \|_{L^{4\beta m+1}} + 1 \right)
\| \sqrt{\n} u \tilde{x}^{\alpha} \|^{4m p}_{L^2} \\
& \le \ep \| \n - \tilde{\n} \|^{4\beta m+1}_{L^{4\beta m+1}}
+ C (\ep) \| \sqrt{\n} u \tilde{x}^{\alpha} \|^2_{L^2} + C (\ep).
\ea\ee
Combining (\ref{2bkj321}), (\ref{2bkj322}), and (\ref{2bkj323}), we arrive at
\be\la{2bkj324}\ba
\int |\n -\tilde{\n}|^{4\beta m+1} dx
& \le C \| \n f^{4m} \|_{L^1} + C \| \sqrt{\n} u \tilde{x}^{\alpha} \|^2_{L^2} + C.
\ea\ee
Putting (\ref{2bkj324}) into (\ref{2bkj317}) yields
\be\la{2bkj325}\ba
& \frac{d}{dt} \left( \| \n f^{4m} \|_{L^1} + D(t) \right) + \frac{1}{2} \int \left( \mu \o^2 + (2\mu+\lam)(\div u)^2 + |\Delta d + |\na d|^2 d |^2 \right) \tilde{x}^{2\alpha} dx \\
& \le C \left( \| \n f^{4m} \|_{L^1} + D(t) + 1 \right)
\left( \| \na u \|^2_{L^2} + \| \na^2 d \|^2_{L^2} + 1 \right).
\ea\ee
Applying Gr\"onwall's inequality and (\ref{2bkj01}) shows that
\be\la{2bkj326}\ba
\sup_{0\le t\le T} \int \left( \n f^{4m} + \left( \n |u|^2 + |\na d|^2 + 2 K(\n) \right) \tilde{x}^{2\alpha} \right) dx \le C.
\ea\ee
Finally, from (\ref{2bkj12}) and (\ref{2bkj324}), we deduce that
\be\la{2bkj327}\ba
\int \left( |\n - \tilde{\n}|^2 + |\n - \tilde{\n}|^{4\beta m+1 }\right) dx
& \le C \int_{ \{\n < 2\tilde{\n}\} } |\n - \tilde{\n}|^2 dx
+ C \int |\n - \tilde{\n}|^{4\beta m+1 } dx \\
& \le C \| \n f^{4m} \|_{L^1} + C \| \sqrt{\n} u \tilde{x}^{\alpha} \|^2_{L^2} + C,
\ea\ee
which together with (\ref{2bkj326}) gives
\be\nonumber\ba
\sup_{0\le t\le T} \int \left( |\n - \tilde{\n}|^2 + |\n - \tilde{\n}|^{4 \beta m + 1 }\right) dx \le C.
\ea\ee
This, combined with H\"older's inequality, implies (\ref{2bkj03}) and completes the proof of Lemma \ref{bkj2l3}.
\end{proof}

\begin{lemma}\la{bkj2l4}
For any $2 < p<\infty$ and $\ep \in (0,1)$, there exists a positive constant $C$ depending only on $p$, $\ep$, $T$, $\alpha$, $\mu$, $\ga$, $\beta$, $E_2$, and $A$ such that
\be\la{2bkj04}\ba
\| \nabla u \|_{L^{p}}
& \le C R^{\frac{1}{2}-\frac{1}{p}+\ep}_T B_1 \left( \frac{A^2_2}{B_1^2} \right)^{\frac{1}{2}-\frac{1}{p}} + C R^\ep_T B_1,
\ea\ee
where $A_2$, $R_T$, and $B_1$ are defined in \eqref{a2}, \eqref{mdsj}, and \eqref{b1}, respectively.
\end{lemma}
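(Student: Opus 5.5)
We follow the proof of Lemma \ref{bkj1l6} almost verbatim; the only changes are in how the density and the velocity are controlled in the non-vacuum far-field setting. First, by the boundary conditions \eqref{bkjbjtj1}, $G$ solves the Neumann problem \eqref{bkjevf18} and $\o$ solves the Dirichlet-type problem \eqref{1bkj62}. The $L^p$ theory of \cite{JK} therefore gives
\[
\| \na G \|_{L^2} + \| \na \o \|_{L^2} \le C \left( \| \n \dot u \|_{L^2} + \| \na d \cdot \Delta d \|_{L^2} + \| \na (A u\cdot n^\bot) \|_{L^2} \right).
\]
We bound the right-hand side term by term. For the first term, $\| \n \dot u\|_{L^2} \le R_T^{1/2} A_2$. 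For the second, we use \eqref{2bkj02} together with Gagliardo--Nirenberg and Lemma \ref{tygj}, which gives $\|\na d\|_{L^4}\|\Delta d\|_{L^4} \le C(B_1 + A_2)$. For the boundary term, the weighted bound \eqref{1bkj001} used in the vacuum case is no longer available, so we proceed differently. We use $A\in L^{s_1}$, $\na A\in L^{s_2}$ from \eqref{bkja2}, after the cutoff extension, so that $A,\na A$ lie in suitable $L^r(\rr_+)$ spaces with bounded support in $x_2$. We then apply the Poincaré-type bound \eqref{2bkj001}, $\|u\|_{H^1}\le C(1+\|\na u\|_{L^2})$, and the Sobolev embedding to control $\||\na A||u|\|_{L^2}+\|A\na u\|_{L^2}$. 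The term $A|\na u|$ needs $A\in L^\infty$, which holds since $A$ is smooth with $\na A\in L^{s_2}$, $s_2>2$, by Morrey on the line plus the cutoff. Altogether this yields the analogue of \eqref{1bkj64}:
\[
\| \na G \|_{L^2} + \| \na \o \|_{L^2} \le C R_T^{1/2}A_2 + C B_1.
\]

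Second, we establish the analogue of \eqref{1bkj65}, namely $\|\na u\|_{L^2}\le C B_1$. Here Lemma \ref{dc} gives $\|\na u\|_{L^2}\le C(\|\div u\|_{L^2}+\|\o\|_{L^2})$. We then write $\div u = (G + P - P(\tilde\n))/(2\mu+\lam)$ and use \eqref{2bkj003} to bound $\|P-P(\tilde\n)\|_{L^p}$ for every $p\ge 2$. Note that the constant $P(\tilde\n)$ is absorbed because $G$ is defined with $P-P(\tilde\n)$. Next, \eqref{2bkj003} and Hölder give $\|G\|_{L^2}^2\le C R_T^{\beta}B_1^2$ and $\|G/(2\mu+\lam)\|_{L^2}\le CB_1$, exactly as in \eqref{1bkj66}.

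Third, we repeat the interpolation chain \eqref{1bkj67}. For $\div u$ we interpolate $\|G/(2\mu+\lam)\|_{L^p}$ between $\|G/(2\mu+\lam)\|_{L^2}$ and a high $L^q$ norm of $G$. We bound the latter by Lemma \ref{gn1}, which gives $\|G\|_{L^2}^{\ep}\|\na G\|_{L^2}^{1-2/p}$ up to constants, at the cost of a factor $R_T^{\beta\ep/2}$. The pressure part $\|(P-P(\tilde\n))/(2\mu+\lam)\|_{L^p}$ is bounded by a constant. The vorticity part is $\|\o\|_{L^2}^{2/p}\|\o\|_{H^1}^{1-2/p}$. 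Combining these with the second step, and renaming $\ep$ (replacing $\beta\ep/2$ by $\ep$), gives
\[
\|\na u\|_{L^p}\le C R_T^{\ep}B_1^{2/p}\left(R_T^{1/2}A_2+B_1\right)^{1-2/p}+CB_1,
\]
which is \eqref{2bkj04}.

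The main obstacle is the boundary contribution $\|\na(Au\cdot n^\bot)\|_{L^2}$ under the weaker hypothesis \eqref{bkja2}. We handle it with unweighted Sobolev bounds on $u$, which are available via \eqref{2bkj001}; we must also make sure that the constants depend only on $A$ through \eqref{bkja2} and on the cutoff extension.
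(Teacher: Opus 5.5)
Your proposal is correct and follows the paper's proof essentially step by step. You use the same three ingredients: the $L^2$ elliptic estimates for $G$ and $\omega$, with the boundary contribution $\nabla(Au\cdot n^\bot)$ controlled through \eqref{2bkj001}; the bound $\|\nabla u\|_{L^2}\le CB_1$ from the div--curl estimate and \eqref{2bkj003}; and the interpolation chain of \eqref{1bkj67}. One small correction: Morrey's estimate for $\nabla A\in L^{s_2}$ alone gives only H\"older continuity, not boundedness, so you must combine it with $A\in L^{s_1}(\partial\mathbb{R}^2_+)$, e.g.\ $|A(x)|\le\int_x^{x+1}|A|\,dy+\|A'\|_{L^{s_2}}$.
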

\begin{proof}
First, from the boundary conditions (\ref{bkjbjtj1}), we deduce that $G$ and $\o$ satisfy the following elliptic systems:
\be\la{2bkj41}\ba
\begin{cases}
\Delta G = \div \left( \rho \dot{u} + \na d \cdot \Delta d \right) & \mathrm{in}\, \,  \rr_+, \\
\frac {\p G}{\p n}= \left( \rho \dot{u} + \na d \cdot \Delta d \right) \cdot n
- \mu n^\bot \cdot \na (A u \cdot n^\bot) &\mathrm{on}\, \,  \p \rr_+,
\end{cases}
\ea\ee
and
\be\la{2bkj42}\ba
\begin{cases}
\mu \Delta \o =\na^\bot \cdot \left( \rho \dot{u} + \na d \cdot \Delta d \right) & \mathrm{in}\, \,  \rr_+, \\
\o = -A u \cdot n^\bot &\mathrm{on}\, \,  \p \rr_+.
\end{cases}
\ea\ee

By the standard $L^p$ estimate for elliptic equations (see \cite{JK}) and (\ref{2bkj001}), we obtain for any $2\le p<\infty$,
\be\la{2bkj43}\ba
\| \na G \|_{L^p} + \| \na \o \|_{L^p} \le C \left( \| \n \dot{u}\|_{L^p}
+ \| \na d \cdot \Delta d \|_{L^p} + \| \na u \|_{L^p} + 1 \right).
\ea\ee
Note that (\ref{2bkj003}) implies, for any $s \in [2,4]$,
\be\la{2bkj44}\ba
\int |P - P(\tilde{\n})|^s dx \le C \int_{ \{ \n<2\tilde{\n} \} } |\n -\tilde{\n}|^2 dx
+ C \int_{ \{ \n \ge 2\tilde{\n} \} } |\n -\tilde{\n}|^{4\ga} dx \le C,
\ea\ee
which together with (\ref{dc1}) yields
\be\la{2bkj45}\ba
\| \na u \|_{L^2} & \le C \left( \| \div u \|_{L^2} + \| \o \|_{L^2} \right) \\
& \le C \left\| \frac{G}{2\mu+\lam} \right\|_{L^2}
+ C \left\| \frac{P-P(\tilde{\n})}{2\mu+\lam} \right\|_{L^2} + C \| \o \|_{L^2}
\le C B_1.
\ea\ee
Combining (\ref{2bkj43}), (\ref{2bkj45}), and (\ref{2bkj02}) leads to
\be\la{2bkj46}\ba
\| \na G \|_{L^2} + \| \na \o \|_{L^2}
& \le C \left( \| \rho \dot{u} \|_{L^2} + \| \na d \cdot \Delta d \|_{L^2} + \| \na u \|_{L^2} + 1 \right) \\
& \le C \left( R^{1/2}_T A_2 + \| \na d \|_{L^4} \| \Delta d \|_{L^4}
+ B_1 \right) \\
& \le C R^{1/2}_T A_2 + C B_1.
\ea\ee
Moreover, a direct computation gives
\be\la{2bkj47}\ba
\| G \|^2_{L^2} \le C R^\beta_T B_1^2, \quad
\left\| \frac{G}{2\mu+\lam} \right\|^2_{L^2} \le C B_1^2.
\ea\ee
By virtue of (\ref{gn11}), (\ref{dc1}), and (\ref{2bkj47}), and following the same argument as in (\ref{1bkj67}), we arrive at
\be\la{2bkj48}\ba
\| \na u \|_{L^p} \le C R^{\frac{\beta \ep}{2}}_T B_1^{\frac{2}{p}}
\left( \| \na G \|_{L^2} + \| \na \o \|_{L^2} \right)^{1-\frac{2}{p}} + C,
\ea\ee
which together with (\ref{2bkj46}) yields (\ref{2bkj04}) and completes the proof of Lemma \ref{bkj2l4}.
\end{proof}

Building on (\ref{2bkj01}), (\ref{2bkj001}), (\ref{2bkj02}), (\ref{2bkj04}), and (\ref{2bkj44}), and arguing as in Lemma \ref{bkj1l7}, we obtain the following estimate.

\begin{lemma}\la{bkj2l5}
For any $\ep \in (0,1)$, there exists a positive constant $C$ depending only on $\ep$, $T$, $\alpha$, $\mu$, $\ga$, $\beta$, $E_2$, and $A$ such that
\be\la{2bkj05}\ba
\sup_{0 \le t \le T} \log B_1^2 + \int_0^T \frac{A^2_2}{B_1^2} dt
\le C R^{1+\ep}_T.
\ea\ee
\end{lemma}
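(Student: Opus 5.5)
We follow the proof of Lemma \ref{bkj1l7}, replacing the vacuum-far-field inputs by their non-vacuum analogues. Multiplying (\ref{bkjevf17}) by $2\dot u$ and integrating over $\rr_+$ with (\ref{bkjbjds}) gives the identity (\ref{1bkj71}), with $P$ replaced by $P-P(\tilde\n)$ in the fifth term. We then estimate the eight terms $I_1,\dots,I_8$ one by one. In the spirit of (\ref{2cp54}), the ingredients are as follows: (\ref{2bkj45}) gives $\|\na u\|_{L^2}\le CB_1$, (\ref{2bkj46}) controls $\|\na G\|_{L^2}+\|\na\o\|_{L^2}$ by $CR_T^{1/2}A_2+CB_1$, and (\ref{2bkj47}) controls $\|G\|_{L^2}$. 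These immediately give $I_1\le \frac18A_2^2+CR_TB_1^4$. For $I_2$ we argue as in (\ref{1bkj73}), using Hölder's inequality together with (\ref{2bkj04}) at a $p$ close to $2$ and $\ep$ small, which gives $\frac18A_2^2+CR_T^{1+\ep}B_1^4$.

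For $I_3$--$I_6$ we argue as in (\ref{1bkj74})--(\ref{1bkj75}). The only change is that $P$ is split as $(P-P(\tilde\n))+P(\tilde\n)$. The first piece is bounded in $L^4$ by (\ref{2bkj44}), and the second is a constant, giving an extra term $C\|\na u\|_{L^2}\|G/(2\mu+\lam)\|_{L^2}\le CB_1^2$, exactly as in (\ref{2cp59}). The coupling term $I_8$ is treated as in (\ref{2cp511}): we move the time derivative onto $\int(2(\na d\odot\na d)\cdot\na u-|\na d|^2\div u)$ and bound the remaining terms. For these we use (\ref{2bkj001}) to control $\|u\|_{L^8}$ by $CB_1$ (no weights are needed now), (\ref{2bkj02}) for $\na d$, and (\ref{2bkj04}) for $\|\na u\|_{L^4}$. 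The $\Delta d$ equation is handled as in (\ref{2cp513}), giving $\frac{d}{dt}\|\Delta d\|_{L^2}^2+\|\na d_t\|_{L^2}^2+\|\na\Delta d\|_{L^2}^2\le\frac18A_2^2+CR_T^{1+\ep}B_1^4$.

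The main obstacle is the boundary term $I_7=2\mu\int_{\p\rr_+}\o(\dot u\cdot n^\bot)ds$. Here the vacuum-case argument used the weighted bound $\bar x\na A\in L^2$ together with (\ref{1bkj001}), and neither is available under (\ref{bkja2}). We still perform the computation of (\ref{1bkj76}): the slip condition yields $-\mu\frac{d}{dt}\int_{\p\rr_+}A|u|^2ds$ plus a tangential-derivative term. Using the flatness of the boundary ($n^\bot$ constant), the latter is converted via the divergence theorem with the extension of $A$ into $C\int_{\rr_+}(|\na A||u|^3+A|u|^2|\na u|)dx$. The extension is supported in $\{0\le x_2\le1\}$, so $A,\na A$ lie in $L^{s_1}\cap L^{s_2}$-type spaces on $\rr_+$. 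We bound the integral by Hölder's inequality and the unweighted Sobolev bound $\|u\|_{L^r}\le C\|u\|_{H^1}\le CB_1$ from (\ref{2bkj001}). If Hölder with $L^{s_1}$ and $s_1$ near $1$ is insufficient, we interpolate $A$ between $L^{s_1}$ and $L^\infty$ (smoothness together with $\na A\in L^{s_2}$, $s_2>2$, gives $A\in L^\infty$). The $\|\na u\|_{L^4}$ factor is bounded by (\ref{2bkj04}). The outcome is $\le\frac18A_2^2+CR_TB_1^4$.

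Summing the estimates gives $\frac{d}{dt}\tilde B_1+A_2^2\le CR_T^{1+\ep}B_1^4$, where $\tilde B_1$ is defined as in (\ref{1bkj710}) and is comparable to $B_1^2$ by (\ref{2bkj02}), (\ref{2bkj45}) as in (\ref{1bkj712}). We divide by $\tilde B_1+\hat C$ and integrate in time, using $\int_0^TB_1^2dt\le C$. This integrability follows from (\ref{2bkj01}), (\ref{2bkj44}), the bound $\int G^2/(2\mu+\lam)\le C\|\na u\|^2_{(2\mu+\lam)}+C$, and the trace inequality $\int_{\p\rr_+}A|u|^2\le C\|u\|_{H^1}^2$. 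This yields (\ref{2bkj05}).
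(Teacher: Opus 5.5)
Your proposal is correct and is essentially the paper's argument: the paper proves Lemma \ref{bkj2l5} by rerunning the proof of Lemma \ref{bkj1l7} with the unweighted inputs (\ref{2bkj01}), (\ref{2bkj001}), (\ref{2bkj02}), (\ref{2bkj04}), (\ref{2bkj44}) replacing the weighted ones, which is exactly your plan, including your treatment of $I_7$ via (\ref{2bkj001}) instead of $\bar{x}\nabla A\in L^2$. One small correction in $I_7$: the identity $\int_{\partial\mathbb{R}^2_+} A\,\partial_{x_1} f\,ds=\int_{\mathbb{R}^2_+}\nabla A\cdot\nabla^\bot f\,dx$ (which uses $\div\nabla^\bot f=0$) produces the interior term $C\int_{\mathbb{R}^2_+}|\nabla A||u|^2|\nabla u|\,dx$, not the integrand you wrote, but under (\ref{bkja2}) this is still bounded by $C\|u\|_{H^1}^2\|\nabla u\|_{L^2}\le CB_1^3$ using the same H\"older--Sobolev ingredients you list, so the conclusion is unchanged.
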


\begin{lemma}\la{bkj2l6}
There exists a positive constant $C$ depending only on $T$, $\alpha$, $\mu$, $\ga$, $\beta$, $E_2$, and $A$ such that
\be\ba\la{2bkj06}
& \sup_{0\leq t\leq T} \left( \|\n\|_{L^\infty} + \| u \|_{H^1} + \| \na d \|_{H^1} \right) \\
& + \int_0^T \left( \| \na u \|^2_{L^2} + \| \sqrt{\n} \dot{u} \|^2_{L^2} + \| \na^2 d \|^2_{H^1} + \| \na d_t \|^2_{L^2} \right) dt
\le C.
\ea\ee
\end{lemma}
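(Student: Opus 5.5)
The plan is to mirror the proof of Lemma \ref{bkj1l9}, with the modifications already used in Lemma \ref{cp2l6} for non-vacuum far field. The key step is an upper bound $R_T^\beta\le C R_T^{\kappa}$ with some $\kappa<\beta$, obtained from the transport equation
\[
\frac{D}{Dt}\theta_4(\n)+P-P(\tilde{\n})=-G .
\]
Once $\sup_t\|\n\|_{L^\infty}\le C$ is known, Lemma \ref{bkj2l5} gives $\sup_t B_1^2\le C$ and $\int_0^T A_2^2\,dt\le C$. Then (\ref{2bkj45}), (\ref{2bkj001}), (\ref{2bkj01}) and (\ref{2bkj02}) control $\|u\|_{H^1}$, $\|\na d\|_{H^1}$, $\|\na d_t\|_{L^2}$ and $\|\na^2 d\|_{H^1}$, exactly as in the closing steps of Lemmas \ref{cp1l6} and \ref{bkj1l9}.

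\textbf{Pointwise bound on $-G$.} I will rerun the Green's function argument of Lemma \ref{bkj1l8} using the Neumann problem (\ref{2bkj41}), which gives
\[
-G\le \frac{D}{Dt}\psi+C R_T^{1/4+\ep}B_1^{1/2}A_2^{1/2}+CR_T^\ep B_1+|J| .
\]
\begin{itemize}
\item The $\na d\cdot\Delta d$ term is handled by (\ref{2bkj02}) and Lemma \ref{tygj}.
\item The boundary term involving $A$ is handled by (\ref{2bkj04}) and $A\in L^{s_1}$, $\na A\in L^{s_2}$. With $\|u\|_{H^1}\le C(1+\|\na u\|_{L^2})$ available, no weights are needed: bound $\|A\na u\|$ and $\||\na A||u|\|$ by H\"older together with the Sobolev embedding of $u\in H^1$.
\item The commutator $J$ is reduced, as in (\ref{1bkj94}), to $\|\na u\|_{L^p}\int|x-y|^{-(1+2/p)}\n|u|\,dy$.
\end{itemize}

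\textbf{Main obstacle: bounds for $\psi$ and the integral in $J$.} In the vacuum case these came from (\ref{1bkj05}) and the weighted estimate (\ref{1bkj95}), which are unavailable now. I will first rederive the analogue of Lemma \ref{bkj1l5}: $\sup_t\int\n|u|^{2+\nu}\,dx\le C$ with $\nu=\nu_1R_T^{-\beta/2}$. This uses (\ref{1bkj51}), (\ref{2bkj44}) and $\|u\|_{L^r}\le C(r)(1+\|\na u\|_{L^2})$, following (\ref{2cp43}).

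For $\|\psi\|_{L^\infty}$, I will split the kernel at $|x-y|=1$.
\begin{itemize}
\item The near part is treated as in (\ref{1bkj92}), giving $CR_T^{(2+\beta)/3}$.
\item For the far part, I will avoid the non-integrable $\n\in L^1$ by writing $\n u=\sqrt{\n}(\sqrt{\n}-\sqrt{\tilde{\n}})u+\sqrt{\tilde{\n}}\sqrt{\n}u$. The first piece is controlled by H\"older with $|x-y|^{-1}\in L^{r}(|x-y|>1)$, $r>2$, and (\ref{2bkj003}). The second is controlled by $\|\tilde{x}^\alpha\sqrt{\n}u\|_{L^2}\le C$ from (\ref{2bkj03}), using that $|x-y|^{-1}\tilde{x}(y)^{-\alpha}\in L^2(|x-y|>1)$ uniformly in $x$, since $\alpha>0$.
\end{itemize}

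For the $J$-integral, on $|x-y|<\tau$ I replace (\ref{1bkj95}) by
\[
\|\n u\|_{L^r}\le CR_T\,r^{1/2}(1+\|\na u\|_{L^2}),
\]
which follows from Gagliardo--Nirenberg and (\ref{2bkj001}). On $|x-y|>\tau$ I use the $\n|u|^{2+\nu}$ bound, plus the same $\tilde{x}^\alpha$ splitting if $\tau$ turns out large. With the same choice of $\tau$ and $\varepsilon_0$ as in (\ref{1bkj98})--(\ref{1bkj99}), this gives the bound $CR_T^{1+\beta/4+\ep}B_1^{2/p}$. This is in fact better than the vacuum case, since no $\eta_0$ appears.

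\textbf{Conclusion.} Integrate in time, using Lemma \ref{bkj2l5} and H\"older as in (\ref{1bkj912})--(\ref{1bkj913}). This yields
\[
R_T^\beta\le C R_T^{\max\{(2+\beta)/3,\;1+\beta/4+3\ep\}} .
\]
Since $\beta>4/3$, choosing $\ep$ small closes the estimate and gives the density bound.
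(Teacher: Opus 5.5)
Your proposal is correct and follows the paper's proof of Lemma \ref{bkj2l6} essentially step for step. The shared steps are: the bound $\sup_t\int\rho|u|^{2+\nu}dx\le C$ with $\nu\sim R_T^{-\beta/2}$; the Green's-function bound on $-G$ via (\ref{2bkj41}); the $J$-integral estimate with (\ref{1bkj95}) replaced by the Gagliardo--Nirenberg bound from (\ref{2bkj001}); and the closing inequality $R_T^\beta\le CR_T^{\max\{(2+\beta)/3,\,1+\beta/4+3\varepsilon\}}$. The only cosmetic difference is the far part of $\psi$: the paper simply writes $\rho|u|\le R_T^{1/2}\sqrt{\rho}|u|$ and uses $\|\tilde{x}^\alpha\sqrt{\rho}u\|_{L^2}\le C$, which gives $CR_T^{1/2}$, whereas your splitting of $\rho u$ gives a constant; both are dominated by $R_T^{(2+\beta)/3}$. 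Your hedge about large $\tau$ is unnecessary, since the choice of $\tau$ forces $\tau\le 1$, and the bound (\ref{1bkj97}) uses only the $\rho|u|^{2+\nu}$ estimate, not any integrability of $\rho$.
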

\begin{proof}
First, using (\ref{gw}), we rewrite $(\ref{nlckv})_1$ as
\be\la{2bkj61}\ba
\frac{D}{Dt} \theta_4(\n) + P - P(\tilde{\n}) = -G,
\ea\ee
where $\theta_4(\n) = 2\mu (\log \n - \log \tilde{\n}) + \beta^{-1} (\n^\beta - \tilde{\n}^\beta)$.

Using the Green's function (\ref{bkjglhs}) and the fact that $G$ satisfies (\ref{2bkj41}), we arrive at
\be\la{2bkj62}\ba
-G(x,t) = & \int_{\rr_+} \na_y N(x,y) \cdot \left( \rho \dot{u} + \na d \cdot \Delta d \right) dy
- \mu \int_{\partial \rr_+} N(x,y) n^\bot \cdot \na(A u \cdot n^\bot) dS_y.
\ea\ee
As in (\ref{1bkj82}), we have
\be\la{2bkj63}\ba
- \int_{\rr_+} \na_y N(x,y) \cdot \n \dot{u} dy
= - ( \p_t + u \cdot \na ) \int_{\rr_+} \na_y N(x,y) \cdot \n u(y) dy
+ J,
\ea\ee
where $J$ is defined in (\ref{1bkj83}) and satisfies
\be\la{2bkj64}\ba
|J| \le C \sup_{x \in \ol{\rr_+} } \int_{\rr_+} \frac{ |u(x)-u(y)| }{ |x-y|^2 } \n |u| dy.
\ea\ee

Moreover, using (\ref{2bkj01}), (\ref{2bkj001}), (\ref{2bkj02}), and (\ref{2bkj44}), and adapting the argument of Lemma \ref{bkj1l5}, there exists a $\nu_2 \in (0,1)$ such that
\be\la{2bkj65}\ba
\sup_{0\le t\le T}\int \n |u|^{2+\nu} dx \le C,
\ea\ee
with $\nu \triangleq R_T^{-\frac{\beta}{2}} \nu_2$.

It thus follows from (\ref{2bkj65}), (\ref{2bkj03}), and H\"older's inequality that
\be\la{2bkj66}\ba
& \int_{\rr_+} \na_y N(x,y) \cdot \n u(y) dy \\
& \le C \int_{\rr_+} |x-y|^{-1} \n |u(y)| dy \\
& \le C \int_{ |x-y| \le 1 } |x-y|^{-1} \n |u(y)| dy
+ C \int_{ |x-y| > 1 } |x-y|^{-1} \n |u(y)| dy \\
& \le C \left( \int_{ |x-y| \le 1 } |x-y|^{-\frac{2+\nu}{1+\nu}} dy \right)^{\frac{1+\nu}{2+\nu}}
\left(\int_{\rr_+} \n^{2+\nu} |u|^{2+\nu} dy \right)^{\frac{1}{2+\nu}} \\
& \quad + C R^{\frac{1}{2}}_T \left( \int_{ |x-y| > 1 } |x-y|^{-\frac{4}{2-\alpha}} dy \right)^{\frac{2-\alpha}{4}} \| \sqrt{\n} u \tilde{x}^\alpha \|_{L^2} \| \tilde{x}^{-\alpha} \|_{L^\frac{4}{\alpha}} \\
& \le C \nu^{-\frac{1+\nu}{2+\nu}} R_T^{\frac{1+\nu}{2+\nu}} + C R^{\frac{1}{2}}_T \\
& \le C R_T^{\frac{2+\beta}{3}}.
\ea\ee

Arguing as in (\ref{1bkj94}), (\ref{1bkj96}), (\ref{1bkj97}), and using (\ref{2bkj001}) and (\ref{2bkj04}), one obtains
\be\ba\la{2bkj67}
& \int_{\rr_+} \frac{|u(x)-u(y)|}{|x-y|^2}\rho|u|(y) dy \\
& \le C(p) R_T^{1+\frac{\beta}{4}} B_1^{ \frac{2}{p} } \|\nabla u\|_{L^p} \\
& \le C(p) R_T^{\frac{3}{2} + \frac{\beta}{4} - \frac{1}{p} + \ep} B_1^{1+\frac{2}{p}} \left( \frac{A^2_2}{B_1^2} \right)^{\frac{1}{2}-\frac{1}{p}}
+ C(p) R_T^{1+\frac{\beta}{4} + \ep} B_1^{ 1+\frac{2}{p} } \\
& \le C (\ep) R_T^{1+\frac{\beta}{4} + 2\ep} B_1^{1+\frac{2}{p}} \left( \frac{A^2_2}{B_1^2} \right)^{\frac{1}{2}-\frac{1}{p}}
+ C(\ep) R_T^{1+\frac{\beta}{4} + \ep} B_1^2,
\ea\ee
which together with (\ref{2bkj64}) yields
\be\la{2bkj68}\ba
\int_0^T |J(t)| dt
& \le C(\ep) R_T^{1+\frac{\beta}{4} + 2\ep}
\left( \int_0^T B_1^2 dt \right)^{\frac{1}{2}+\frac{1}{p}}
\left( \int_0^T \frac{A^2_2}{B_1^2} dt \right)^{\frac{1}{2}-\frac{1}{p}}
+ C(\ep) R_T^{1+\frac{\beta}{4} + \ep} \\
& \le C(\ep) R_T^{1+\frac{\beta}{4} + 3 \ep}.
\ea\ee

A similar calculation to (\ref{1bkj811}), one gets
\be\la{2bkj69}\ba
\left| \int_{\rr_+} \nabla_y N(x,y) \cdot \na d \cdot \Delta d dy \right|
& \le C \left( \| \na d \|_{L^{12}} \| \na^2 d \|_{L^4} + \| \na d \|_{L^{\frac{12}{5}}} \| \na^2 d \|_{L^4} \right) \\
& \le C B_1 + C B_1^{\frac{1}{2}} A^{\frac{1}{2}}_2.
\ea\ee

Finally, for the boundary term, H\"older's inequality gives
\be\la{2bkj610}\ba
& \left| \mu \int_{\partial \rr_+} N(x,y) n^\bot \cdot \na(A u \cdot n^\bot) dS_y \right| \\
& = \mu \left| \int_{\rr_+} \div \left( \na^\bot (A u \cdot n^\bot) N(x,y) \right) dy \right| \\
& \le C \int_{\rr_+} |\na N(x,y)| \left( |A| |\na u| + |\na A| |u| \right) dy \\
& \le C \int_{|x-y| < 1} |x-y|^{-1} \left( |A| |\na u| + |\na A| |u| \right) dy \\
& \quad + C \int_{|x-y| \ge 1} |x-y|^{-1} \left( |A| |\na u| + |\na A| |u| \right) dy \\
& \le C \| \na u \|_{L^4} + C B_1 \\
& \le C R^{\frac{1}{4}+\ep}_T B_1^{\frac{1}{2}} A^{\frac{1}{2}}_2 + C R^\ep_T B_1.
\ea\ee
By virtue of (\ref{2bkj05}) and H\"older's inequality, it holds that
\be\la{2bkj611}\ba
& \int_0^T \left( R^{\frac{1}{4}+\ep}_T B_1^{\frac{1}{2}} A^{\frac{1}{2}}_2 + C R^\ep_T B_1 \right) dt \\
& \le C R^{\frac{1}{4}+\ep}_T \left( \int_0^T B_1^2 dt \right)^{\frac{1}{4}}
\left( \int_0^T \frac{A^2_2}{B_1^2} dt \right)^{\frac{1}{4}} + C R^\ep_T \\
& \le C R^{\frac{1}{2}+2\ep}_T.
\ea\ee
Thus, integrating (\ref{2bkj61}) over $(0,T)$, and using (\ref{2bkj62}), (\ref{2bkj63}), (\ref{2bkj66}), (\ref{2bkj68}), (\ref{2bkj69}), (\ref{2bkj610}), (\ref{2bkj611}), we arrive at
\be\nonumber\ba
R_T^\beta \le C R_T^{ \max\{ \frac{2+\beta}{3}, 1+\frac{\beta}{4} + 3 \ep \} }.
\ea\ee
Since $\beta>\frac{4}{3}$, this implies
\be\la{2bkj612}\ba
\sup_{0\le t \le T} \| \n \|_{L^\infty} \le C.
\ea\ee
Combining this with (\ref{2bkj01}), (\ref{2bkj001}), (\ref{2bkj45}), and (\ref{2bkj05}) yields (\ref{2bkj06}), which completes the proof of Lemma \ref{bkj2l6}.
\end{proof}

Finally, from (\ref{2bkj001}) and (\ref{2bkj06}), we can establish the following higher-order estimates.
The proof is standard and is therefore omitted.

\begin{lemma}\la{bkj2l7}
There exists a positive constant $C$ depending only on $T$, $\alpha$, $\mu$, $\ga$, $\beta$, $q$, $\| \na \n_0 \|_{L^2 \cap L^q }$, $E_2$, and $A$ such that
\be\ba\la{2bkj07}
\sup_{0\le t\le T}
t \left( \| \sqrt{\n} \dot{u} \|^2_{L^2} + \| \na d_t \|^2_{L^2} + \| \na^3 d \|^2_{L^2} \right)
+ \int_0^{T} t \left( \| \na \dot{u} \|^2_{L^2} + \| \na^2 d_t \|^2_{L^2} \right) dt \le C,
\ea\ee
and
\be\la{2bkj007}\ba
&\sup_{0\le t\le T} \left( \| \n-\tilde{\n} \|_{H^1 \cap W^{1,q}} + t \| \na^2 u \|^2_{L^2} \right) \\
& + \int_0^T \left( \| \na^2 u \|^2_{L^2} + \|\nabla^2 u\|^{(q+1)/q}_{L^q}
+ t \|\nabla^2 u\|_{L^q}^2 + t \| \na^4 d \|^2_{L^2} \right) dt \le C.
\ea\ee
\end{lemma}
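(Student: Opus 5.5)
Both estimates will be derived by repeating the arguments of Lemmas \ref{bkj1l10} and \ref{bkj1l11}, now in the non-vacuum setting. The weighted inequality (\ref{1bkj001}) is replaced everywhere by the inhomogeneous bound (\ref{2bkj001}), which gives $\|u\|_{L^r}\le C(r)(1+\|\na u\|_{L^2})\le C$ for every $r\in[2,\infty)$ thanks to (\ref{2bkj06}). All weights $\bar{x}^{a/2}$ therefore disappear, and the terms $\| \na^2 d\bar{x}^{a/2}\|_{L^2}$ are no longer needed.

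\textbf{Estimate (\ref{2bkj07}).} I will apply $\dot u^j[\p_t+\div(u\,\cdot)]$ to $(\ref{bkjevf17})^j$ and split the result into $I_1+I_2+I_3$ exactly as in (\ref{1bkj101}).

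For $I_1$, the boundary term vanishes by (\ref{bkjbjds}). The remaining terms are controlled by $\|\na u\|_{L^4}$, $\|P-P(\tilde\n)\|_{L^4}$ and $\|G\|_{L^4}$, which are bounded by $C(1+A_2)$ using (\ref{2bkj04}), (\ref{2bkj44}) and (\ref{2bkj46}) with $R_T\le C$.

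For $I_2$, the Navier-slip boundary integrals are converted to interior integrals via $\na^\bot\cdot$, as done after (\ref{1bkj103}). Terms such as $|\na A||u|^2|\na\dot u|$ are then handled by $A\in L^{s_1}$, $\na A\in L^{s_2}$ with $s_2>2$ from (\ref{bkja2}), together with $u\in L^r$ for all $r$. I expect this boundary term to be the main obstacle: the weighted control (\ref{1bkj001}) is no longer available, so integrability has to come from (\ref{bkja2}) and (\ref{2bkj001}). I will place $\na A$ in $L^{s_2}$, $|u|^2$ in $L^{2s_2/(s_2-2)}$, and $\na\dot u$ in $L^2$. For $|\na u||\dot u||\na A||u|$, I will need $\dot u$ in some $L^r$; this follows from the Poincaré-type inequality (\ref{2bkj14}), which gives $\|\dot u\|_{L^r}\le C(\|\sqrt\n\dot u\|_{L^2}+\|\na\dot u\|_{L^2})$.

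For $I_3$, I use $\|u\|_{L^\infty}$-free bounds such as $\||u||\na d||\Delta d|\|_{L^2}\le \|u\|_{L^8}\|\na d\|_{L^8}\|\Delta d\|_{L^4}$.

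The differentiated director equation tested with $\na d_t$ is handled as in (\ref{1cp74}), with boundary terms vanishing by (\ref{bkjbjds2}). This yields
\[
\frac{d}{dt}\big(\|\sqrt\n\dot u\|_{L^2}^2+\|\na d_t\|_{L^2}^2\big)+\mu\|\na\dot u\|_{L^2}^2+\|\na^2d_t\|_{L^2}^2\le C\big(1+\|\sqrt\n\dot u\|_{L^2}^2+\|\na d_t\|_{L^2}^2\big)(1+A_2^2),
\]
where (\ref{1bkj108}) is used to recover $\|\na\dot u\|_{L^2}$. Multiplying by $t$, applying Grönwall and using (\ref{2bkj06}) gives the first part of (\ref{2bkj07}). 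The bound on $t\|\na^3d\|_{L^2}^2$ then follows from the elliptic estimate of Lemma \ref{tygj} applied to $\na d$ (whose normal derivative is controlled via the Neumann condition on the flat boundary), exactly as in (\ref{1cp84}) but without weights.

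\textbf{Estimate (\ref{2bkj007}).} I set $\Phi^i=(2\mu+\lam)\p_i\n$ and derive (\ref{bpg22}) for $p=2$ and $p=q$. The quantity $\|\na G\|_{L^q}+\|\na\o\|_{L^q}$ is bounded by (\ref{2bkj43}), and $\|\na^2u\|_{L^q}$ by Lemma \ref{dc}. I then use the Beale-Kato-Majda inequality, Lemma \ref{bkm}, as in (\ref{bpg24})--(\ref{bpg27}). Since the measure of the density is no longer finite, the time integrability $\int_0^T\|\n\dot u\|_{L^q}^{(q+1)/q}dt\le C$ comes from $\|\n\dot u\|_{L^q}\le C\|\dot u\|_{L^q}$ combined with (\ref{2bkj14}) and (\ref{2bkj07}), replacing (\ref{jqgj1}). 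Grönwall then gives $\sup_t\|\na\n\|_{L^2\cap L^q}\le C$. Together with (\ref{2bkj003}), this yields $\n-\tilde\n\in H^1\cap W^{1,q}$.

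The remaining bounds follow as in (\ref{bpg211}) and (\ref{1cp93}). The estimate on $t\|\na^2u\|_{L^2}^2$ comes from $\|\na^2u\|_{L^2}\le C(1+\|\sqrt\n\dot u\|_{L^2}+\|\na\n\|_{L^2})$. The estimate on $t\|\na^4d\|_{L^2}^2$ comes from elliptic regularity for $\na^2 d$, again using Lemma \ref{tygj} with the Neumann condition.
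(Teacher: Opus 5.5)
Your plan is the adaptation the paper has in mind; its proof is omitted as standard. You remove the weights using (\ref{2bkj001}), treat the Navier-slip boundary integrals with (\ref{bkja2}) and the Poincar\'e-type inequality (\ref{2bkj14}), and run the $t$-weighted Gr\"onwall argument. One step, however, fails as you state it: the time integrability of $\|\rho\dot u\|_{L^q}$.

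Once you bound $\|\rho\dot u\|_{L^q}\le C\|\dot u\|_{L^q}$, Gagliardo--Nirenberg plus (\ref{2bkj14}) can only give
\[
\|\rho\dot u\|_{L^q}\le C\bigl(\|\sqrt{\rho}\dot u\|_{L^2}+\|\nabla\dot u\|_{L^2}\bigr).
\]
The reason is that (\ref{2bkj14}) reintroduces $\|\nabla\dot u\|_{L^2}$ to the first power when it controls $\|\dot u\|_{L^2}$. Since $\rho$ may vanish, nothing better is possible once the factor $\rho$ has been discarded. The only available information on $\nabla\dot u$ is $\int_0^T t\|\nabla\dot u\|_{L^2}^2dt\le C$. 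The H\"older bound for $\int_0^T\|\nabla\dot u\|_{L^2}^{(q+1)/q}dt$ then involves $\int_0^T t^{-(q+1)/(q-1)}dt=\infty$. Even $\int_0^T\|\nabla\dot u\|_{L^2}dt$ is not controlled, since Cauchy--Schwarz produces $\int_0^T t^{-1}dt$. Consequently neither the Gr\"onwall step for $\log(e+\|\Phi\|_{L^q})$, which needs $\|\rho\dot u\|_{L^q}\in L^1(0,T)$, nor the bound on $\int_0^T\|\nabla^2u\|_{L^q}^{(q+1)/q}dt$ closes. This is the core of (\ref{2bkj007}).

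The remedy is to keep the factor $\rho$ in the low-integrability piece, as in the display preceding (\ref{bpg29}):
\[
\|\rho\dot u\|_{L^q}\le\|\rho\dot u\|_{L^2}^{\theta}\,\|\rho\dot u\|_{L^{q^2}}^{1-\theta},\qquad\theta=\frac{2(q-1)}{q^2-2}.
\]
Bound $\|\rho\dot u\|_{L^2}\le C\|\sqrt{\rho}\dot u\|_{L^2}$, and bound only the $L^{q^2}$ factor by $C\|\dot u\|_{L^{q^2}}\le C(\|\sqrt{\rho}\dot u\|_{L^2}+\|\nabla\dot u\|_{L^2})$. This is exactly where (\ref{2bkj14}) replaces (\ref{jqgj1}). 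You obtain
\[
\|\rho\dot u\|_{L^q}\le C\|\sqrt{\rho}\dot u\|_{L^2}+C\|\sqrt{\rho}\dot u\|_{L^2}^{\theta}\|\nabla\dot u\|_{L^2}^{1-\theta}.
\]
Using $\sup_t t\|\sqrt{\rho}\dot u\|_{L^2}^2\le C$ and $\int_0^Tt\|\nabla\dot u\|_{L^2}^2dt\le C$, H\"older in time leaves the weight $t^{-\kappa}$ with $\kappa=\frac{(q+1)(q^2-2)}{q(q+2)(q-1)}<1$. Hence $\int_0^T\|\rho\dot u\|_{L^q}^{(q+1)/q}dt\le C$, and the rest of your argument then goes through.

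A smaller point: Lemma \ref{tygj} does not apply verbatim to $\partial_2 d$, which has zero Dirichlet rather than Neumann data. Nor does it apply to $\nabla^2 d$: from the director equation and the boundary conditions, $\partial_2\Delta d=\partial_2u^1\,\partial_1 d$ on $\partial\mathbb{R}^2_+$, which is nonzero in general under the Navier-slip condition. Apply the lemma instead to the tangential derivatives $\partial_1 d$ (resp.\ $\partial_1^2 d$), which do satisfy homogeneous Neumann conditions. Recover the remaining components via $\partial_2^2=\Delta-\partial_1^2$. This still yields $\|\nabla^3d\|_{L^2}\le C\|\nabla\Delta d\|_{L^2}$ and $\|\nabla^4d\|_{L^2}\le C\|\nabla^2\Delta d\|_{L^2}$.
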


\section{Proofs of Theorems \ref{thcp1}--\ref{thbkj2}}
With the a priori estimates established in Sections 3 and 4 at hand, the proofs of Theorems \ref{thcp1}--\ref{thbkj2} follow standard arguments in \cite{FLL,HL2,HL3,LZLL,WX,ZZ,ZZ2}, and thus we omit the details.

\begin {thebibliography} {99}

\bibitem{ADN} S. Agmon, A. Douglis and L. Nirenberg,
Estimates near the boundary for solutions of elliptic partial differential equations satisfying general boundary conditions. II,
Comm. Pure Appl. Math. {\bf 17} (1964), 35--92.

\bibitem{AJ} J. Aramaki, 
$L^p$ theory for the div-curl system,
Int. J. Math. Anal. (Ruse) {\bf 8} (2014), no.~5-8, 259--271.

\bibitem{BKM} J.~T. Beale, T. Kato and A.~J. Majda,
Remarks on the breakdown of smooth solutions for the $3$-D Euler equations,
Comm. Math. Phys. {\bf 94} (1984), no.~1, 61--66.

\bibitem{BW} H.~R. Brezis and S. Wainger,
A note on limiting cases of Sobolev embeddings and convolution inequalities,
Comm. Partial Differential Equations. {\bf 5} (1980), no.~7, 773--789.

\bibitem{CL} G.~C. Cai and J. Li,
Existence and exponential growth of global classical solutions to the compressible Navier-Stokes equations with slip boundary conditions in 3D bounded domains,
Indiana Univ. Math. J. {\bf 72} (2023), no.~6, 2491--2546.

\bibitem{CLMS} R.~R. Coifman, Lions, P. L, Meyer, Y, Semmes, S.,
Compensated compactness and Hardy spaces,
J. Math. Pures Appl. (9) {\bf 72} (1993), no.~3, 247--286.

\bibitem{CRW} R.~R. Coifman, R. Rochberg and G.~L. Weiss,
Factorization theorems for Hardy spaces in several variables,
Ann. of Math. (2) {\bf 103} (1976), no.~3, 611--635.

\bibitem{CM} R.~R. Coifman and Y.~F. Meyer,
On commutators of singular integrals and bilinear singular integrals,
Trans. Amer. Math. Soc. {\bf 212} (1975), 315--331.

\bibitem{EJL} J.~L. Ericksen,
Conservation laws for liquid crystals,
Trans. Soc. Rheol. {\bf 5} (1961), 23--34.

\bibitem{E} H. Engler,
An alternative proof of the Brezis-Wainger inequality,
Comm. Partial Differential Equations {\bf 14} (1989), no.~4, 541--544.

\bibitem{EL} L.~C. Evans,
Partial differential equations, second edition,
Graduate Studies in Mathematics, 19, Amer. Math. Soc., Providence, RI, 2010.

\bibitem{F}  E. Feireisl,
Dynamics of Viscous Compressible Fluids,
Oxford Lecture Series in Mathematics and its Applications vol. 26, Oxford University Press, Oxford, 2004.

\bibitem{FNP} E. Feireisl, A. Novotn\'y{} and H. Petzeltov\'a,
On the existence of globally defined weak solutions to the Navier-Stokes equations,
J. Math. Fluid Mech. {\bf 3} (2001), no.~4, 358--392.

\bibitem{FC} C.~L. Fefferman,
Characterizations of bounded mean oscillation,
Bull. Amer. Math. Soc. {\bf 77} (1971), 587--588.

\bibitem{FLL} X. Fan, J. X. Li and J. Li,
Global existence of strong and weak solutions to 2D compressible Navier-Stokes system in bounded domains with large data and vacuum,
Arch. Ration. Mech. Anal. {\bf 245} (2022), no.~1, 239--278.

\bibitem{FLW} X. Fan, J. Li and X. Wang,
Large-Time Behavior of the 2D Compressible Navier-Stokes System in Bounded Domains with Large Data and Vacuum,
arXiv:2310.15520.

\bibitem{GT}  D. Gilbarg and N.~S. Trudinger,
Elliptic partial differential equations of second order, Springer, 2001.

\bibitem{GTY} J. Gao, Q. Tao and Z. Yao,
Long-time behavior of solution for the compressible nematic liquid crystal flows in $\mathbb{R}^3$,
J. Differential Equations {\bf 261} (2016), no.~4, 2334--2383.

\bibitem{H1} D. Hoff,
Global solutions of the Navier-Stokes equations for multidimensional compressible flow with discontinuous initial data,
J. Differential Equations {\bf 120} (1995), no.~1, 215--254.

\bibitem{H3} D. Hoff,
Compressible flow in a half-space with Navier boundary conditions,
J. Math. Fluid Mech. {\bf 7} (2005), no.~3, 315--338.

\bibitem{HW} X. Hu and H. Wu,
Global solution to the three-dimensional compressible flow of liquid crystals,
SIAM J. Math. Anal. {\bf 45} (2013), no.~5, 2678--2699.

\bibitem{HL2} X.-D. Huang and J. Li,
Existence and blowup behavior of global strong solutions to the two-dimensional barotrpic compressible Navier-Stokes system with vacuum and large initial data,
J. Math. Pures Appl. (9) {\bf 106} (2016), no.~1, 123--154.

\bibitem{HL3} X.-D. Huang and J. Li,
Global well-posedness of classical solutions to the Cauchy problem of two-dimensional barotropic compressible Navier-Stokes system with vacuum and large initial data,
SIAM J. Math. Anal. {\bf 54} (2022), no.~3, 3192--3214.

\bibitem{HL} X.-D. Huang and J. Li,
Global classical and weak solutions to the three-dimensional full compressible Navier-Stokes system with vacuum and large oscillations,
Arch. Ration. Mech. Anal. {\bf 227} (2018), no.~3, 995--1059.

\bibitem{HLX2} X.-D. Huang, J. Li and Z. Xin,
Global well-posedness of classical solutions with large oscillations and vacuum to the three-dimensional isentropic compressible Navier-Stokes equations,
Comm. Pure Appl. Math. {\bf 65} (2012), no.~4, 549--585.

\bibitem{HWW} T. Huang, C.~Y. Wang and H. Wen,
Strong solutions of the compressible nematic liquid crystal flow,
J. Differential Equations {\bf 252} (2012), no.~3, 2222--2265.

\bibitem{JK} D.~S. Jerison and C.~E. Kenig,
The Neumann problem on Lipschitz domains,
Bull. Amer. Math. Soc. (N.S.) {\bf 4} (1981), no.~2, 203--207.

\bibitem{JJW1} F. Jiang, S. Jiang and D. Wang,
On multi-dimensional compressible flows of nematic liquid crystals with large initial energy in a bounded domain,
J. Funct. Anal. {\bf 265} (2013), no.~12, 3369--3397.

\bibitem{JJW2} F. Jiang, S. Jiang and D. Wang,
Global weak solutions to the equations of compressible flow of nematic liquid crystals in two dimensions,
Arch. Ration. Mech. Anal. {\bf 214} (2014), no.~2, 403--451.

\bibitem{JWX1} Q. Jiu, Y. Wang and Z. Xin,
Global well-posedness of 2D compressible Navier-Stokes equations with large data and vacuum,
J. Math. Fluid Mech. {\bf 16} (2014), no.~3, 483--521.

\bibitem{JWX2} Q. Jiu, Y. Wang and Z. Xin,
Global classical solution to two-dimensional compressible Navier-Stokes equations with large data in $\mathbb{R}^2$,
Phys. D {\bf 376/377} (2018), 180--194.

\bibitem{K} T. Kato,
Remarks on the Euler and Navier-Stokes equations in ${\bf R}^2$,
Proc. Sympos. Pure Math., {\bf 45}, (1986),1--7.

\bibitem{LFM} F.~M. Leslie,
Some constitutive equations for liquid crystals,
Arch. Rational Mech. Anal. {\bf 28} (1968), no.~4, 265--283.

\bibitem{LLL} J. Li, Z. Liang,
On local classical solutions to the Cauchy problem of the two-dimensional barotropic compressible Navier-Stokes equations with vacuum,
J. Math. Pures Appl. (9) {\bf 102} (2014), no.~4, 640--671.

\bibitem{LLW} J. Lin, B. Lai and C.~Y. Wang,
Global finite energy weak solutions to the compressible nematic liquid crystal flow in dimension three,
SIAM J. Math. Anal. {\bf 47} (2015), no.~4, 2952--2983.

% \bibitem{LX} J. Li and Z. Xin,
% Some uniform estimates and blowup behavior of global strong solutions to the Stokes approximation equations for two-dimensional compressible flows,
% J. Differential Equations {\bf 221} (2006), no.~2, 275--308.

\bibitem{LX2} J. Li and Z. Xin,
Global well-posedness and large time asymptotic behavior of classical solutions to the compressible Navier-Stokes equations with vacuum,
Ann. PDE {\bf 5} (2019), no.~1, Paper No. 7, 37 pp.

\bibitem{LXZ} J. Li, Z.~H. Xu and J.~W. Zhang,
Global existence of classical solutions with large oscillations and vacuum to the three-dimensional compressible nematic liquid crystal flows,
J. Math. Fluid Mech. {\bf 20} (2018), no.~4, 2105--2145.

\bibitem{LZ} Y. Liu and X. Zhong,
Global existence of strong solutions with large oscillations and vacuum to the compressible nematic liquid crystal flows in 3D bounded domains,
Discrete Contin. Dyn. Syst. Ser. B {\bf 29} (2024), no.~5, 2158--2191.

\bibitem{LZLL} Y. Liu et al.,
Strong solutions to Cauchy problem of 2D compressible nematic liquid crystal flows,
Discrete Contin. Dyn. Syst. {\bf 37} (2017), no.~7, 3921--3938.

\bibitem{LZZ} J. Li, J.~W. Zhang and J.~N. Zhao,
On the global motion of viscous compressible barotropic flows subject to large external potential forces and vacuum,
SIAM J. Math. Anal. {\bf 47} (2015), no.~2, 1121--1153.

\bibitem{L1}  P.L. Lions,
Mathematical Topics in Fluid Mechanics. Vol. 1: Incompressible Models,
Oxford University Press, New York, 1996.

\bibitem{L2}  P.L. Lions,
Mathematical Topics in Fluid Mechanics. Vol. 2: Compressible Models,
Oxford University Press, New York, 1998.

\bibitem{MN1} A. Matsumura, T. Nishida,  The initial value problem for the equations of motion of viscous and heat-conductive gases,
J. Math. Kyoto Univ. {\bf 20}(1) (1980), 67--104.

\bibitem{MD} D.~I.~R. Mitrea,
Integral equation methods for div-curl problems for planar vector fields in nonsmooth domains,
Differential Integral Equations {\bf 18} (2005), no.~9, 1039--1054.

\bibitem{NI} L. Nirenberg,
On elliptic partial differential equations,
Ann. Scuola Norm. Sup. Pisa Cl. Sci. (3) {\bf 13} (1959), 115--162.

% \bibitem{NS}  A. Novotn\'y{} and I. Stra\v skraba,
% Introduction to the mathematical theory of compressible flow,
% Oxford Lecture Series in Mathematics and its Applications, 27, Oxford Univ. Press, Oxford, 2004.

\bibitem{P} M. Perepelitsa,
On the global existence of weak solutions for the Navier-Stokes equations of compressible fluid flows,
SIAM J. Math. Anal. {\bf 38} (2006), no.~4, 1126--1153.

\bibitem{RW}W. Rudin,
Principles of mathematical analysis,
third edition, International Series in Pure and Applied Mathematics, McGraw-Hill, New York-Auckland-D\"usseldorf, 1976.

\bibitem{SE} E.~M. Stein, Singular integrals and differentiability properties of functions,
Princeton Mathematical Series, No. 30, Princeton Univ. Press, Princeton, NJ, 1970.

% \bibitem{SES}  E.~M. Stein and R. Shakarchi,
% Complex analysis, Princeton Univ. Press, Princeton, NJ, 2003.

% \bibitem{STT} M.~A. Sadybekov, B.~T. Torebek and B.~K. Turmetov,
% Representation of Green's function of the Neumann problem for a multi-dimensional ball,
% Complex Var. Elliptic Equ. {\bf 61} (2016), no.~1, 104--123.

\bibitem{TG} G.~G. Talenti,
Best constant in Sobolev inequality,
Ann. Mat. Pura Appl. (4) {\bf 110} (1976), 353--372.

\bibitem{VK} V.~A. Vaigant and A.~V. Kazhikhov,
On existence of global solutions to the two-dimensional Navier–Stokes equations for a compressible viscous fluid,
Sib. Math. J. 36 (6) (1995) 1283–1316.

\bibitem{WT} T. Wang,
Global existence and large time behavior of strong solutions to the 2-D compressible nematic liquid crystal flows with vacuum,
J. Math. Fluid Mech. {\bf 18} (2016), no.~3, 539--569.

\bibitem{WWV} W. von~Wahl,
Estimating $\nabla u$ by ${\rm div}\, u$ and ${\rm curl}\, u$,
Math. Methods Appl. Sci. {\bf 15} (1992), no.~2, 123--143.

\bibitem{WX} X. Wang and X.~J. Xu,
Global existence of strong solutions to the compressible magnetohydrodynamic equations with large initial data and vacuum in $\mathbb R^2$,
J. Differential Equations {\bf 415} (2025), 722--763.

\bibitem{ZZ} X. Zhong and X. Zhou,
Global well-posedness to the Cauchy problem of 2D compressible nematic liquid crystal flows with large initial data and vacuum,
Math. Ann. {\bf 390} (2024), no.~1, 1541--1581.

\bibitem{ZZ2} X. Zhong and X. Zhou,
Global well-posedness to the 2D compressible nematic liquid crystal flows in bounded domains with large initial data and vacuum, J. Math. Pures Appl. (9) {\bf 212} (2026), Paper No. 103915, 37 pp.

% \bibitem{ZAA} A.~A. Zlotnik,
% Uniform estimates and the stabilization of symmetric solutions of a system of quasilinear equations,
% Differ. Equ. {\bf 36} (2000), no.~5, 701--716.

\end {thebibliography}
\end{document}